\documentclass[reqno,11pt,a4paper]{amsart}
\usepackage{kotex}
\usepackage{amssymb,amsmath,amsthm}
\usepackage{amssymb}
\usepackage{graphicx}
\usepackage{mathrsfs}
\usepackage{amssymb}
\usepackage{amsfonts,bm,bbm}
\usepackage{amsthm}
\usepackage[colorlinks,linkcolor=blue,anchorcolor=blue,citecolor=blue,urlcolor=purple]{hyperref}
\usepackage{fullpage}
\usepackage{color}
\usepackage{enumitem}
\usepackage{underscore}
\usepackage{cite}
\usepackage{doi}

\DeclareFontFamily{U}{mathx}{}
\DeclareFontShape{U}{mathx}{m}{n}{<-> mathx10}{}
\DeclareSymbolFont{mathx}{U}{mathx}{m}{n}
\DeclareMathAccent{\widecheck}{0}{mathx}{"71}

\usepackage[open,openlevel=2]{bookmark}
\numberwithin{equation}{section}
\newtheorem{Thm}{Theorem}[section]

\newtheorem{Lem}[Thm]{Lemma}
\newtheorem{Coro}[Thm]{Corollary}

\theoremstyle{remark}
\newtheorem{remark}{Remark}[section]

\newtheorem{theorem}{Theorem}[section]

\newtheorem{lemma}[theorem]{Lemma}

\newcommand{\R}{\mathbb{R}}
\newcommand{\T}{\mathbb{T}}

\newcommand{\ve}{\varepsilon}
\newcommand{\vt}{\vartheta}

\renewcommand{\S}{\mathbb{S}}

\let\p=\partial

\newcommand{\vertiii}[1]{{\left\vert\kern-0.25ex\left\vert\kern-0.25ex\left\vert #1 \right\vert\kern-0.25ex\right\vert\kern-0.25ex\right\vert}}

\usepackage{tikz,pgfplots}
\usepackage{tikz-cd}
\usepackage{tikz-3dplot}
\usepackage{amsmath,amssymb,amsthm,amsfonts,bm}
\usepackage{color}
\usepackage{tcolorbox}
\usepackage{pdfpages}
\usepackage{textcomp}
\usetikzlibrary{patterns}
\usetikzlibrary{shapes.geometric}
\usetikzlibrary{arrows.meta,arrows}
\usetikzlibrary{decorations.pathreplacing,decorations.pathmorphing}
\usetikzlibrary{hobby}
\usetikzlibrary{math}
\usepgfplotslibrary{fillbetween}
\usepackage{graphicx}
\usepackage{multicol}
\usepackage{float}
\usepackage{subfigure}

\allowdisplaybreaks

\makeatletter
\def\@tocline#1#2#3#4#5#6#7{\relax
	\ifnum #1>\c@tocdepth 
	\else
	\par \addpenalty\@secpenalty\addvspace{#2}%
	\begingroup \hyphenpenalty\@M
	\@ifempty{#4}{%
		\@tempdima\csname r@tocindent\number#1\endcsname\relax
	}{%
		\@tempdima#4\relax
	}%
	\parindent\z@ \leftskip#3\relax \advance\leftskip\@tempdima\relax
	\rightskip\@pnumwidth plus4em \parfillskip-\@pnumwidth
	#5\leavevmode\hskip-\@tempdima
	\ifcase #1
	\or\or \hskip 1em \or \hskip 2em \else \hskip 3em \fi%
	#6\nobreak\relax
	\hfill\hbox to\@pnumwidth{\@tocpagenum{#7}}\par
	\nobreak
	\endgroup
	\fi}
\makeatother

\begin{document}

\title[Asymptotic behavior of large-amplitude solutions to the Boltzmann equation with soft interactions in $L^p_v L^\infty_x$ spaces ]
{Asymptotic behavior of large-amplitude solutions to the Boltzmann equation with soft interactions in $L^p_v L^\infty_x$ spaces }

\author{Jong-in Kim}

\address{Department of Mathematics, Pohang University of Science and Technology, South Korea }
\email{kimjim@postech.ac.kr}

\author{Gyounghun Ko}
\address{Academy of Mathematics and Systems Sciences, Chinese Academy of Sciences, Beijing, 100190, China}
\email{gyeonghungo@amss.ac.cn}

\begin{abstract}	
In this paper, we study the global well-posedness of the Boltzmann equation within the $L_{v}^{p}L_{x}^{\infty}$ framework for soft potential models with angular cutoff in a periodic box $\T^3$. By using a time-involved weight function, inspired by the works of \cite{Liu2017ARMA, Duan2013MMMAS, Ko2022JDE}, we overcome the absence of a spectral gap. An analytical difficulty in the $L_v^p L_x^\infty$ setting is that the standard arguments used in \cite{Ko2022JDE,Li2022SIMA} for the nonlinear loss term are no longer applicable when dealing with time integration involving the collision frequency. To resolve this, we introduce a modified solution operator. Furthermore, we control the nonlinear gain term by deriving pointwise estimates bounded by $L_v^p$ and $L_v^\ell$ (for some $\ell <p$) norms. Thanks to the smallness of the initial relative entropy and Gr\"{o}nwall's inequality, we prove the global existence of unique solutions for large-amplitude initial data and obtain a sub-exponential convergence rate toward equilibrium.
\end{abstract}

\maketitle
\tableofcontents

\vspace{0.2cm}
\section{Introduction}
\subsection{Boltzmann equation}

The Boltzmann equation is one of the fundamental kinetic equations which describe the statistical evolution of dilute gases through the particle distribution function, taking into account binary collisions between particles. The governing equation is given by
\begin{align} \label{Boltzmanneq}
	\partial_t F+v\cdot\nabla_xF =Q(F,F) , \quad (t,x,v) \in [0,\infty) \times \T^3 \times \R^3
\end{align}
with initial data
\begin{align}\label{initialdata}
	F(0,x,v) = F_0(x,v), \quad (x,v) \in \T^3 \times \R^3.
\end{align}
Here, $F = F(t,x,v)$ represents the density distribution function of gas particles with position $x \in \T^3$ and velocity $v \in \R^3$ at time $t\ge 0$. The collision operator $Q$ has the nonlocal bilinear form
\begin{align} \label{collisionoperator}
	Q(F_1,F_2) = \int_{\R^3} \int_{\S^2} B(v-u,\omega) \left[F_1(v')F_2(u')-F_1(v)F_2(u) \right]d\omega du,
\end{align}
where the post-collision velocity pair $(v',u')$ and the pre-collision velocity pair $(v,u)$ satisfy the relation
\begin{align} \label{omegarep}
	v' = v- [(v-u)\cdot \omega]\omega, \ u' = u+[(v-u)\cdot \omega]\omega
\end{align}
with $\omega \in \S^2$, which follows from the conservation of momentum and  energy of the two particles before and after collision:
\begin{align} \label{momentumenergyconserv}
	v+u=v'+u', \quad       |v|^2+|u|^2=|v'|^2+|u'|^2.
\end{align}
The collision kernel $B=B(v-u,\omega)$ consists of the relative velocity $|v-u|$ and the angular part $\cos \theta := \frac{(v-u) \cdot \omega}{|v-u|}$. In what follows, we assume that it has the following form:
\begin{align*}
	B(v-u,\omega) = b(\cos \theta)|v-u|^\gamma, \quad 0\le b(\cos \theta ) \le C_b,
\end{align*}
where $-3 < \gamma <0$ and $0\le \theta \le \pi$, which represents soft potentials with angular cutoff. Under this assumption, we can write the collision operator $Q$ as
\begin{align*}
	Q(F_1,F_2) &= \int_{\R^3} \int_{\S^2} B(v-u,\omega) F_1(v')F_2(u')d\omega du -\int_{\R^3} \int_{\S^2} B(v-u,\omega)F_1(v)F_2(u) d\omega du\\
	& =: Q^+(F_1,F_2)-Q^-(F_1,F_2),
\end{align*}
where $Q^+(F_1,F_2)$ and $Q^-(F_1,F_2)$ mean the gain term and loss term, respectively. 
A fundamental structural property of the Boltzmann equation is the H-theorem, which asserts that the entropy functional
\begin{align*}
\int_{\T^3} \int_{\R^3} F \ln F \, dv dx
\end{align*}
is non-increasing along solutions of \eqref{Boltzmanneq}. Moreover, the entropy dissipation vanishes if and only if the distribution function is a local Maxwellian $\mathcal{M}$ of the form 
\begin{align*}
	\mathcal{M}(t,x,v)
	:= \frac{\rho(t,x)}{(2\pi T(t,x))^{3/2}}
	\exp\!\left(-\frac{|v-u(t,x)|^2}{2T(t,x)}\right),
\end{align*}
where $\rho(t,x)$, $u(t,x)$, and $T(t,x)$ denote the local density, bulk velocity, and temperature, respectively. Since local Maxwellians characterize equilibrium states, the study of the asymptotic behavior naturally reduces to understanding the relaxation of solutions toward a Maxwellian. Accordingly, we look for convergence toward the normalized global Maxwellian $\mu$ 
\begin{align*}
	\mu(v):= \frac{1}{(2\pi)^{3/2}}e^{-\frac{|v|^2}{2}}.
\end{align*}

In the periodic box $\T^3$, the mass, momentum, and energy are conserved:
\begin{align*}
	\int_{\T^3 \times \R^3} F(t,x,v)\begin{pmatrix}
			1 \\ v \\ |v|^2 
		\end{pmatrix}dxdv =\int_{\T^3 \times \R^3}F_0(x,v)\begin{pmatrix}
			1 \\ v \\ |v|^2 
		\end{pmatrix}dxdv ,\quad t>0
\end{align*}
for any solution to the equation \eqref{Boltzmanneq} with initial condition \eqref{initialdata}. 

\bigskip

\subsection{Notations}

\begin{enumerate}
\item We denote the Japanese bracket $\langle v \rangle = (1+|v|^2)^{1/2}$. 
\item As a convention, we denote the following function spaces for $p \in [1,\infty]$,
\begin{align*}
	L^p_{x,v}=L^{p}(\T^3 \times \mathbb{R}^3_v), \quad L^p_x=L^p(\T^3), \quad L^p_v=L^p(\mathbb{R}^3_v).
\end{align*}
\item  We define the mixed function space $L^p_vL^\infty_x$ with the following norm:
\begin{align*}
	\|f\|_{L^p_vL^\infty_x}:= \left(\int_{\R^3} \Big|\sup_{x\in \T^3}|f(x,v)|\Big|^p dv \right)^{1/p}.
\end{align*} 
\item For a positive Lebesgue measurable function $m$ on $\R^3$, we define the weighted function space $L^p_{x,v}(m)$ given by the norm
\begin{align*}
	\|f\|_{L^p_{x,v}(m)} := \|mf\|_{L^p_{x,v}}.
\end{align*}

\item For a positive Lebesgue measurable function $m$ on $\R^3$, we define the weighted function space $L^p_vL^\infty_x(m)$ given by the norm
\begin{align*} 
	\|f\|_{L^p_vL^\infty_x(m)} := \|mf\|_{L^p_vL^\infty_x}= \left(\int_{\R^3} \left|\sup_{x\in \T^3}|m(v)f(x,v)|\right|^p dv \right)^{1/p}.
\end{align*}
\item We set $(f,g)_{L^2_v} = \int_{\mathbb{R}^3} f(v)g(v)dv$ the inner product in $L^2(\mathbb{R}^3)$\\
\item $p'$ is the exponent conjugate of $p$.\\
\item If not specifically mentioned, $C_a$ or $C(a)$ is the generic positive constant depending on $a$, while $C_0, C_1, C_2, \cdots$ denote some specific positive constants.
\end{enumerate}

\subsection{Perturbation framework near Maxwellian}
We apply the perturbation near Maxwellian $F(t,x,v) = \mu(v) + \mu^{1/2}(v) f(t,x,v)$ to the Boltzmann equation \eqref{Boltzmanneq}, which becomes
\begin{align} \label{FPBER}
	\partial_t f+v \cdot \nabla_x f +Lf = \Gamma(f,f).
\end{align} 
Here, the operator $L$ is a linear operator of the form
\begin{align*}
	Lf = -\mu^{-1/2} \left\{Q(\mu, \mu^{1/2}f)+Q(\mu^{1/2}f,\mu)\right\} = \nu(v) f -Kf,
\end{align*}
with the collision frequency $\nu(v) = \int_{\R^3}\int_{\S^2} B(v-u,\omega)\mu(u) d\omega du \sim (1+|v|)^\gamma$, and the operator $K=K_2-K_1$ is defined by
\begin{align*} 
	(K_1f)(t,x,v) &= \int_{\R^3} \int_{\S^2} B(v-u,\omega) \mu^{1/2}(v)\mu^{1/2}(u)f(t,x,u)d\omega du,\\
	(K_2f)(t,x,v) &= \int_{\R^3} \int_{\S^2} B(v-u,\omega) \mu^{1/2}(u)\mu^{1/2}(u')f(t,x,v')d\omega du\nonumber\\ 
	& \quad + \int_{\R^3} \int_{\S^2} B(v-u,\omega) \mu^{1/2}(u)\mu^{1/2}(v')f(t,x,u')d\omega du.
\end{align*}
It is well-known that the operator $L$ has a kernel
\begin{align*}
	\text{Ker}(L) = \text{span}\left\{\mu^{1/2},v_1\mu^{1/2},v_2\mu^{1/2},v_3\mu^{1/2}, \frac{|v|^2-3}{\sqrt{6}}\mu^{1/2} \right\}.
\end{align*}
The nonlinear term $\Gamma(f,f) = \Gamma^+(f,f) - \Gamma^-(f,f)$ is given by
\begin{align*}
	\Gamma^+(f,f) = \mu^{-1/2}Q^+(\mu^{1/2}f,\mu^{1/2}f) , \quad \Gamma^-(f,f)=\mu^{-1/2}Q^-(\mu^{1/2}f,\mu^{1/2}f).
\end{align*}
Denoting
\begin{align*} 
	R(f)(t,x,v) = \int_{\R^3} \int_{\S^2} B(v-u,\omega) \left[\mu(u)+\mu^{1/2}(u)f(t,x,u)\right]d\omega du,
\end{align*}
the equation \eqref{FPBER} can be rewritten as
\begin{align*}
	\partial_t f +v\cdot \nabla_x f + R(f)f =Kf+ \Gamma^+(f,f).
\end{align*}
By imposing initial data $F_0$ such that
\begin{align*}
	\int_{\T^3 \times \R^3} F_0(x,v)\begin{pmatrix}
			1 \\ v \\ |v|^2 
		\end{pmatrix}dxdv =\int_{\T^3 \times \R^3}\mu(v)\begin{pmatrix}
			1 \\ v \\ |v|^2 
		\end{pmatrix}dxdv,
\end{align*}
we may assume that
\begin{align*}
	\int_{\T^3 \times \R^3} \sqrt{\mu(v)} f_0(x,v) \begin{pmatrix}
			1 \\ v \\ |v|^2 
		\end{pmatrix}dvdx = 0.
\end{align*}

\bigskip

\subsection{Main results}
We introduce the time-involved weight function $w=w_{q,\vt,\beta}=w_{q,\vt,\beta}(t,v)$, used in \cite{Liu2017ARMA, Duan2013MMMAS, Ko2022JDE}, given by
\begin{align} \label{weight}
	w_{q,\vt,\beta}(t,v) = (1+|v|^2)^{\beta/2}\exp\left\{\frac{q}{8}\left(1+\frac{1}{(1+t)^{\vt}}\right)|v|^2\right\},
\end{align}
where $\beta \ge 0$, $0<q<1$, and $0\le \vt <-\frac{2}{\gamma}$.
By considering a perturbation of the form $\mu+\mu^{1/2}f$ near the Maxwellian $\mu$ and setting $h(t,x,v)= w_{q,\vt,\beta}(t,v) f(t,x,v)$, the Boltzmann equation \eqref{Boltzmanneq} can be written as the perturbed Boltzmann equation for $h$ with $R(f)$:
\begin{align} \label{WPBE2}
	\partial_t h + v\cdot \nabla_x h + R(f)h = K_wh +w\Gamma^+\left(f,f\right),
\end{align}
with initial data $h_0$, where the weighted operator $K_w$ is defined by
\begin{align*} \label{woperK}
	K_w h := wK\left(\frac{h}{w}\right).
\end{align*}
Applying Duhamel principle to the equation \eqref{WPBE2}, the mild form for $h$ is
\begin{equation*} \label{mildsolh}
	\begin{aligned}
	h(t,x,v) &= G_v(t,0)h_0(x-tv,v)+\int_0^t G_v(t,s)K_wh(s,x-v(t-s),v)ds\\
	& \quad + \int_0^t G_v(t,s)w\Gamma^+\left(f, f\right)(s,x-v(t-s),v)ds,
\end{aligned}
	\end{equation*}
where $G_v(t,s)$ is the solution operator for the equation
\begin{align*}
	\partial_t h +v\cdot \nabla_x h  +R(f)h=0,
\end{align*}
and it can be written as
\begin{align*}
	G_v(t,s) = e^{-\int_s^t R(f) (\tau,x-v(t-\tau),v)d\tau}.
\end{align*}
Before introducing our main goal, we need to establish the small perturbation problem to the Boltzmann equation \eqref{Boltzmanneq} with the smallness of initial relative entropy $\mathcal{E}(F_0)$ defined by
\begin{align*}
	\mathcal{E}(F)= \int_{\T^3 \times \mathbb{R}^3} \left(\frac{F}{\mu}\ln\frac{F}{\mu}-\frac{F}{\mu}+1\right) \mu dvdx.
\end{align*}

\bigskip

\begin{Thm} [Small perturbation problem] \label{smallmain}
	Let $p$ and $\beta$ satisfy the condition
	\begin{align*}
		\begin{cases}
			p>13, \quad \beta >\frac{9}{2} \quad &\text{if } -1 \le \gamma < 0,\\
			p>\frac{3 \sqrt{8\gamma^2+9}+3-2\gamma}{-\gamma^2-3\gamma}, \quad \beta>5 \quad &\text{if } -3 < \gamma < -1.
		\end{cases}
	\end{align*}	
	Assume that $F_0(x,v) = \mu(v) + \mu^{1/2}(v) f_0(x,v) \ge 0$ satisfying \begin{align*}
		\iint_{\T^3 \times \R^3} \sqrt{\mu(v)} f_0(x,v) \begin{pmatrix}
			1 \\ v \\ |v|^2 
		\end{pmatrix}dvdx = 0 .
	\end{align*} 
 Then there is  $\eta_0  \ll  1$ so that there exists a constant $\ve_1 = \ve_1(\eta_0) >0$, depending only on $\eta_0$, such that if
	\begin{align*}
		\|w_{q,\vt,\beta}f_0\|_{L^p_vL^\infty_x} \le \eta_0, \quad \mathcal{E}(F_0)\le \ve_1,
	\end{align*}
	then there exists a unique solution $F(t,x,v) =\mu(v) + \mu^{1/2}(v) f(t,x,v) \ge 0 $ to the Boltzmann equation \eqref{Boltzmanneq} with initial data $F(0,x,v)=F_0(x,v)$. Moreover, there exist $C_{\infty}>0$ and $ \lambda_\infty>0$ such that
	\begin{align*}
		\|w_{q,\vt,\beta}f(t)\|_{L^p_v L^\infty_x} \le C_{\infty} e^{-\lambda_\infty (1+t)^\rho}\|w_{q,\vt,\beta}f_0\|_{L^p_v L^\infty_x}
	\end{align*}
	for all $t \ge 0$, where $\rho -1 =\frac{(1+\vt)\gamma}{2-\gamma}$.
\end{Thm}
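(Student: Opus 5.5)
The plan is to combine a local existence result with a global a priori estimate driven by the linearized decay, treating the small-perturbation problem as a continuation argument.

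\textbf{Local existence.} I would first construct a local-in-time solution on $[0,T_0]$. Iterate the mild form for $h=wf$ written above, using $G_v(t,s)$ with $R(f)$ evaluated at the previous iterate. Nonnegativity of $F$ is preserved because the iteration can be run on $F$ itself in the form $\partial_t F+v\cdot\nabla_x F+F\,R = Q^+$ with a nonnegative gain term. Closing the contraction in $L^p_vL^\infty_x(w)$ requires a bilinear bound for $w\Gamma^\pm(f,g)$ in terms of $\|wf\|_{L^p_vL^\infty_x}\|wg\|_{L^p_vL^\infty_x}$. This is where $p$ large and $\beta$ large enter: H\"older in $u$ with $|v-u|^\gamma\in L^{p'}_{loc}$ requires $p'\gamma>-3$, and the weight $\langle u\rangle^{-\beta}$ must be integrable in $L^{p'}$.

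\textbf{Linear decay.} Next I would establish the sub-exponential decay for the linearized problem $\partial_t h+v\cdot\nabla_x h+\nu h=K_wh$. Start from $L^2$ decay: by the standard energy/macro--micro method for soft potentials on $\T^3$ with the conservation laws, $\|f(t)\|_{L^2}\lesssim e^{-\lambda(1+t)^\rho}\|e^{q|v|^2/8}f_0\|_{L^2}$. The time-dependent factor $\exp\{\frac q8(1+t)^{-\vt}|v|^2\}$ in $w$ produces an extra dissipation $\frac{q\vt}{8}(1+t)^{-\vt-1}|v|^2h$. Splitting velocities at $|v|\sim (1+t)^{(1+\vt)/(2-\gamma)}$ then balances $\nu\sim\langle v\rangle^\gamma$ against this term, which yields exactly $\rho-1=\frac{(1+\vt)\gamma}{2-\gamma}$. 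I would then upgrade to $L^p_vL^\infty_x$ by iterating Duhamel twice, the standard double-Duhamel argument. The $K_w$ kernel estimate splits $|v|$ large, $|v'|$ large, and $s-s_1$ small, which gives small contributions. In the remaining compact region, the change of variables $y=x-v(t-s)-u(s-s_1)$ converts the $u$-integral into an $L^2_{x,v}$ norm, which is already controlled.

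\textbf{Nonlinear a priori estimate.} Assume $\sup_{s\le T}\|h(s)\|_{L^p_vL^\infty_x}\le \bar M$, where $\bar M$ is possibly large. I would bound the nonlinear terms as follows.

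(i) Loss term. Since $F\ge0$, $R(f)\ge 0$, but a lower bound $R(f)\ge \tfrac12\nu$ fails for large data. Following the abstract, I would use a modified solution operator: write $R(f)=\nu+\int B\mu^{1/2}f$ and show that $\int_s^t\big|\int B\mu^{1/2}f(\tau)\big|d\tau$ is small except on a short time set. This uses the entropy bound $\mathcal E(F(t))\le \mathcal E(F_0)\le \ve_1$, which gives $\int |F-\mu|\mathbf 1_{|F-\mu|\le\mu}+\dots\lesssim\sqrt{\ve_1}$, together with the a priori $\bar M$.

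(ii) Gain term. I would prove a pointwise bound
\[
|w\Gamma^+(f,f)(v)|\lesssim \langle v\rangle^{\gamma}\|h\|_{L^p_vL^\infty_x}^{1-a}\|h\|_{L^\ell_v L^\infty_x}^{a}\cdot(\text{stuff}).
\]
The $L^\ell$ piece, with $\ell<p$, is interpolated against the entropy-controlled $L^1$-type smallness. This produces a factor $\ve_1^{c}$ times a power of $\bar M$. This interpolation step imposes the explicit threshold on $p$ when $-3<\gamma<-1$; the square root in that threshold presumably comes from solving a quadratic constraint on $\ell$.

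Combining (i), (ii), and the double-Duhamel estimate, then applying Gr\"onwall's inequality, yields
\[
\|h(t)\|\le C e^{-\lambda(1+t)^\rho}\big(\|h_0\|+ C(\bar M)\ve_1^c\big).
\]
Choosing $\eta_0$ small and then $\ve_1=\ve_1(\eta_0)$ smaller closes the bootstrap with $\bar M=2C\eta_0$.

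\textbf{Uniqueness and the main obstacle.} Uniqueness follows from the same bilinear estimates applied to the difference of two solutions, followed by Gr\"onwall's inequality. I expect the main obstacle to be step (i): controlling $e^{-\int_s^tR(f)}$ in $L^p_vL^\infty_x$, where $\sup_x$ and the characteristic time integral do not commute with an $L^p_v$ norm. My first attempt would be to bound $\int_s^t\int B\mu^{1/2}|f|(\tau,x-v(t-\tau),u)\,du\,d\tau$ by splitting in $u$ and applying the $y$-change of variables in $\tau$, so that the $L^1_{x,v}$ entropy smallness appears directly.
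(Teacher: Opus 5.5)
Your local existence scheme, the use of the modified operator $R(f)$, and the double-Duhamel structure all match the paper. The decay step, however, has a genuine gap.

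\textbf{Why the decay step fails.} Every piece of smallness you draw from the relative entropy is uniform in time but does not decay. Lemma \ref{relativedecrease} only gives $\mathcal{E}(F(t))\le\mathcal{E}(F_0)$. The compact-velocity piece, after the change of variables and Lemma \ref{L1L2cont}, is bounded by $\mathcal{E}(F_0)^{1/2}+\|h\|_{L^p_vL^\infty_x}^{\frac{p}{2p-2}}\mathcal{E}(F_0)^{\frac{p-2}{2p-2}}$. This carries no factor $e^{-\lambda(1+s)^\rho}$, and its first term carries no factor of $h$ at all. Gr\"onwall therefore gives exactly what the paper obtains in \eqref{LA66}--\eqref{LA69}:
$\|h(t)\|_{L^p_vL^\infty_x}\lesssim\eta_0(1+Et)e^{-\frac{\lambda}{2}(1+t)^\rho}+E$, with a constant remainder $E$. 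That closes the bootstrap $\sup_t\|h(t)\|_{L^p_vL^\infty_x}\le\bar\eta/2$, but it proves no decay. Your bound $Ce^{-\lambda(1+t)^\rho}(\|h_0\|+C(\bar M)\ve_1^c)$ places the entropy contribution inside the decaying factor without justification. Even if granted, it is not the statement, which requires $C_\infty e^{-\lambda_\infty(1+t)^\rho}\|h_0\|_{L^p_vL^\infty_x}$.

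\textbf{What is missing.} You need a second, entropy-free argument, run once the small bootstrap bound is in hand.
First, establish nonlinear $L^2$ decay as in Lemma \ref{L2decayunderapriori}. Multiply \eqref{L2est3} by $e^{\lambda(1+\tau)^\rho}$. Split the term produced by the time weight on $\{(1+\tau)^{\rho-1}\le\kappa_0\langle v\rangle^\gamma\}$ and its complement, using the weighted bound from Lemma \ref{L2estw} on the complement. Absorb $\|\nu^{-1/2}\Gamma(f,f)\|_{L^2_{x,v}}\lesssim\|h\|_{L^p_vL^\infty_x}\|f\|_{L^2_{x,v}(\nu^{1/2})}$ using the smallness of $\bar\eta$. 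You state only linear $L^2$ decay and never say how $\Gamma$ enters the energy estimate.
Second, redo the double Duhamel in the norm $\sup_s e^{\frac{\lambda}{4}(1+s)^\rho}\|h(s)\|_{L^p_vL^\infty_x}$. Bound the compact piece by this decaying $L^2$ norm instead of the entropy. Absorb the gain term through $\|h(s)\|^2\le\bar\eta\|h(s)\|$ and $C_{\Gamma\Gamma}\bar\eta<\frac14$, as in \eqref{cond4apriori}, with no interpolation against $\mathcal{E}$.

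\textbf{Smaller points.}
Your step (i) is not the obstacle in this regime. With $\|h\|_{L^p_vL^\infty_x}\le\bar\eta$, H\"older with $p'\gamma>-3$ gives $\int B\mu^{1/2}|f|\,d\omega du\le C\nu(v)\bar\eta$, so $R(f)\ge\frac12\nu$ holds for all times directly.
Your proposed change of variables in $\tau$ could not work in any case. The map $\tau\mapsto x-v(t-\tau)$ traces a line and never produces an integral over $\T^3$. The paper instead uses a further Duhamel step and the map $u\mapsto x-u(t-s)$, whose Jacobian is $(t-s)^3\ge\delta^3$.
A gain bound carrying only a factor $\langle v\rangle^\gamma$ loses all $v$-decay once the time integral absorbs $\nu(v)$. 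The threshold on $p$ comes from requiring the extra decay in Lemma \ref{pointwiseGamma+estimate} to be $L^p_v$-integrable (see \eqref{pcondition1} and \eqref{pcondition2}), not from an interpolation in $\ell$.
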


\begin{remark}
	Note that $\eta_0$ depends only on $p$, $\gamma$, $\rho$, $\vt$, and $\beta$.
\end{remark}

\begin{remark}
	In Theorem \ref{smallmain}, the condition on $p$ is specified in \eqref{pcondition1} and \eqref{pcondition2} (See Corollary \ref{smalllp}.) 
\end{remark}

\begin{remark}
	In Theorem \ref{smallmain}, the conditions on $\beta$ is determined by \eqref{betacondition1} and \eqref{betacondition2}. The conditions \eqref{betacondition1} and \eqref{betacondition2} essentially stem from the application of the pointwise estimate in Lemma \ref{pointwiseGamma+estimate} for the gain term $\Gamma^+$ in the proof of Lemma \ref{exp.h.esti}. 
\end{remark}

\bigskip

The following main result of this paper represents the global well-posedness of solutions to the Boltzmann equation \eqref{Boltzmanneq} with initial data of large amplitude in the weighted $L^p_vL^\infty_x$ space, under a smallness assumption on initial relative entropy. 

\bigskip

\begin{Thm} [Large amplitude problem] \label{largemain} 
	Let $p$ and $\beta$ satisfy the condition
	\begin{align*}
		\begin{cases}
			p>13, \quad \beta >\frac{9}{2} \quad &\text{if } -1 \le \gamma < 0,\\
			p>\frac{3 \sqrt{8\gamma^2+9}+3-2\gamma}{-\gamma^2-3\gamma}, \quad \beta>5 \quad &\text{if } -3 < \gamma < -1.
		\end{cases}
	\end{align*}
	Assume that $F_0(x,v) = \mu(v) + \sqrt{\mu(v)} f_0(x,v)\geq 0$ satisfying 
	\begin{align*}
		\iint_{\T^3 \times \R^3} \sqrt{\mu(v)} f_0(x,v) \begin{pmatrix}
			1 \\ v \\ |v|^2 
		\end{pmatrix}dvdx = 0 .
	\end{align*} 
	Then, for any $M_0\ge 1$, there exists a constant $\ve_0>0$, depending only on $\eta_0$ and $M_0$, such that if $f_0$ satisfies
	\begin{align*}
		\Vert w_{q,\vt,\beta}f_0 \Vert_{L^p_v L^\infty_x} \leq M_0, \quad \mathcal{E}(F_0) \leq \ve_0,
	\end{align*}
	there exists a unique global solution $F(t,x,v) = \mu(v) + \sqrt{\mu(v)} f(t,x,v)\geq 0$ to the Boltzmann equation \eqref{Boltzmanneq} with initial data $F_0$ on $\R^+ \times \T^3 \times \R^3$. Moreover, $f$ satisfies 
	\begin{align*}
			\Vert w_{q,\vt,\beta}f(t) \Vert_{L^p_v L^\infty_x} \leq C_1 e^{-\lambda_0 (1+t)^\rho} \Vert w_{q,\vt,\beta}f_0 \Vert_{L^p_v L^\infty_x},  \quad \forall t\geq0,
	\end{align*}
	for some constants $C_1>0$ and $\lambda_0>0$, where $\rho-1 = \frac{(1+\vartheta)\gamma}{2-\gamma}$.
\end{Thm}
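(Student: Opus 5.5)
The plan follows the large-amplitude strategy of Duan–Huang–Wang–Yang, adapted to the $L^p_vL^\infty_x$ setting. I would prove Theorem~\ref{largemain} in three steps.

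\textbf{Step 1: local existence.} For data with $\|w f_0\|_{L^p_vL^\infty_x}\le M_0$, I first show that a unique solution exists on some interval $[0,T_*]$ with $T_*$ depending only on $M_0$. On this interval I want $\|wf(t)\|_{L^p_vL^\infty_x}\le 2C M_0$, and $F\ge0$ is preserved. The construction is an iteration on the mild form for $h=wf$. Positivity comes from writing the equation with the loss term $R(f)h$ on the left, so the gain part stays nonnegative.

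\textbf{Step 2: a priori estimate on the whole time interval.} This step only assumes a uniform bound $\sup_{0\le s\le t}\|h(s)\|_{L^p_vL^\infty_x}\le \bar M$, with $\bar M$ large and depending on $M_0$. The key ingredients are as follows.
\begin{enumerate}
\item[(a)] A lower bound for $R(f)$. I write $R(f)=\nu+\int B\mu^{1/2}f\,du$. The perturbative part is controlled by $\|f\|_{L^1_v}$-type quantities. These I split into a region where $|F-\mu|\le\mu$, which is bounded by the relative entropy $\mathcal E(F_0)\le\ve_0$ via the H-theorem, and its complement, where $(F/\mu)\ln(F/\mu)$ dominates. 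Interpolating with the $L^p_vL^\infty_x$ bound handles the singularity $|v-u|^\gamma$. The outcome is $R(f)\ge \tfrac12\nu(v)$ except on a set of times of small total measure. This is the modified solution operator mentioned in the abstract. Since a pointwise bound on $R(f)$ fails in $L^p_v$, I would instead replace $G_v$ by $e^{-\frac12\int_s^t\nu}$ times an error factor $\exp\bigl(C\int_s^t\|h(\tau)\|\,d\tau\bigr)$-type correction. That correction is then absorbed by the time-weight decay.
\item[(b)] Estimating the mild form. I use the Duhamel formula with the time-dependent weight. Here $\partial_t w$ gives the extra damping $\frac{q\vt}{8}(1+t)^{-\vt-1}|v|^2$, which compensates for the degenerate $\nu\sim\langle v\rangle^\gamma$ at large $|v|$ and produces the rate $e^{-\lambda(1+t)^\rho}$. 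The $K_w$ term is iterated twice along characteristics. The resulting term is then bounded by $\delta\bar M+C\|f\|_{L^1_{x,v}}$-type quantities, controlled by $\ve_0$. The change of variables $y=x-v(t-s)$ requires $p$ large to go from $L^\infty_x$ to integrable norms, which is where the conditions on $p$ enter.
\item[(c)] The gain term $w\Gamma^+(f,f)$. This is handled with Lemma~\ref{pointwiseGamma+estimate}, which gives a pointwise bound by $\|h\|_{L^p_vL^\infty_x}$ times a lower $L^\ell_v$ norm. The $L^\ell$ norm interpolates to $\mathcal{E}(F_0)^{\theta}\bar M^{1-\theta}$; this is where $\beta>9/2$ or $\beta>5$ is used.
\end{enumerate}
Gronwall's inequality then gives
\[
\|h(t)\|\le C\bigl(1+\ve_0^{\theta}\,\mathrm{poly}(\bar M)\bigr)e^{C\bar M^{k}}M_0
\quad\text{for } t\le T_1,
\]
and shows that $\|h(T_1)\|\le\eta_0$ for some finite $T_1(M_0)$, provided $\ve_0$ is small enough.

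\textbf{Step 3: continuity and closing.} Choosing $\bar M$ larger than the Step~2 bound, and then $\ve_0$, a continuity argument extends the local solution up to $T_1$ without the norm exceeding $\bar M$. At time $T_1$ the solution satisfies the hypotheses of Theorem~\ref{smallmain}, since entropy is nonincreasing so $\mathcal E(F(T_1))\le\ve_0\le\ve_1$. That theorem gives global existence and decay from $T_1$ on. Combining this with the bounded growth on $[0,T_1]$ yields the stated decay with adjusted $C_1,\lambda_0$, and uniqueness follows from Gronwall on the difference of two solutions.

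I expect the main obstacle to be Step~2(a)–(b): getting decay through the nonlinear loss term in $L^p_v$ and not merely in $L^\infty$. The hard part is bounding $\int_s^t R(f)$ from below along characteristics using only entropy smallness and $L^p_vL^\infty_x$ control. I would first try to split $\int B\mu^{1/2}|f|\,du$ by Hölder with exponent $p'$, where $p'|\gamma|<3$ needs $p$ large. I would then put the $|f|>\mu^{1/2}$ part into the entropy estimate, accepting an $\exp(C\bar M t)$ loss only on the bounded interval $[0,T_1]$.
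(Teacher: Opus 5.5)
There is a genuine gap in Step~2(a), and it undermines the rest of the argument.

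\textbf{Why the correction factor fails.} You assert that a pointwise lower bound on $R(f)$ is unavailable in $L^p_vL^\infty_x$. You then replace it by a correction factor $\exp\bigl(C\int_s^t\|h(\tau)\|_{L^p_vL^\infty_x}d\tau\bigr)$.
\begin{itemize}
\item Under the a priori bound $\|h\|\le\bar M$ with $\bar M$ large, this factor is as bad as $e^{C\bar M(t-s)}$.
\item No sub-exponential decay $e^{-\lambda(1+t)^\rho}$ with $\rho<1$ can absorb such growth. Equivalently, a Gronwall argument with an exponential-in-$\int\|h\|$ loss closes only when $M_0$ is small.
\item Your own Step~2 output shows this: the bound $C(1+\ve_0^\theta\mathrm{poly}(\bar M))e^{C\bar M^k}M_0$ has no decaying factor.
\end{itemize}
So nothing in your plan yields $\|h(T_1)\|\le\eta_0$, and the handoff to Theorem~\ref{smallmain} in Step~3 is unsupported. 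The ``exceptional set of times of small measure'' is also unjustified. Entropy smallness is uniform in $t$ and controls only $x$-integrated quantities, so it says nothing about the measure of bad times.

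\textbf{The missing idea.} The pointwise bound does hold, and the $L^p_vL^\infty_x$ norm is exactly what makes it possible: $\sup_x\int e^{-|u|^2/8}|h(t,x,u)|\,du\lesssim\|h(t)\|_{L^p_vL^\infty_x}$. Smallness then comes from Lemma~\ref{exp.h.esti}, as follows.
\begin{itemize}
\item Insert the Duhamel formula for $h$, built with the linear damping $\tilde\nu$ and the full $\Gamma$, into $\int_{|u|\le N}e^{-|u|^2/8}|h(t,x,u)|\,du$.
\item Separate off the interval $[t-\delta,t]$, the singular part $K^s$, and large velocities.
\item For $s\le t-\delta$, change variables $y=x-u(t-s)$, whose Jacobian is at least $\delta^3$. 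This turns the $u$-integral along characteristics into an integral over $\T^3$.
\item Lemma~\ref{L1L2cont} plus interpolation bounds that $\T^3$-integral by quantities like $\mathcal E(F_0)^{1/2}+\bar M^{\frac{p}{2p-2}}\mathcal E(F_0)^{\frac{p-2}{2p-2}}$.
\end{itemize}
The only non-small term is the free part $e^{-\lambda(1+t)^\rho}\|h_0\|$. Hence $R(f)\ge\frac12\nu+\frac{\vt q|v|^2}{8(1+t)^{\vt+1}}$ for all $t\ge t_*$, where $t_*$ depends only on $M_0$ (Lemma~\ref{large.Rf}). On $[0,t_*]$ one uses only $R(f)=\int BF\,d\omega\,du\ge0$. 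This costs the fixed constant $e^{\lambda(1+t_*)^\rho}$ in \eqref{G,v}, not $e^{C\bar M t}$. The paper's ``modified solution operator'' is simply $e^{-\int_s^tR(f)}$, made effective by this bound.

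\textbf{How decay then follows.} With this lower bound, the $L^p_vL^\infty_x$ estimate gives a linear Gronwall inequality for $1+\int_0^t\|h\|$, with integrable coefficient $CM_0e^{-\frac\lambda2(1+t)^\rho}$. The result is $\|h(t)\|\le CM_0e^{CM_0}(1+Dt)e^{-\frac\lambda2(1+t)^\rho}+D$, where $D$ is small. This bound eventually falls below $\eta_0$, which is what allows the handoff to Theorem~\ref{smallmain}.
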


\bigskip

\begin{remark}
	In Theorem \ref{largemain}, the conditions on $p$ and $\beta$ are determined for similar conditions as in Theorem \ref{smallmain}.	
\end{remark}

\bigskip

In this paper, we study the global well-posedness of the Boltzmann equation in the $L^p_vL^\infty_x$ framework under the assumption of soft intermolecular interactions. We review some known results that are closely related to our work.\\
\indent In the seminal work \cite{Diperna1989}, the authors constructed the global-in-time existence of renormalized solutions to the Boltzmann equation with general nonnegative initial data having finite physical quantities on the whole space. Later, Hamdache \cite{Hamdache1992} extended this result to general boundary conditions. However, the uniqueness issue for these solutions remains open. On the other hand, the convergence to equilibrium is one of the topics of significant interest in the study for the Boltzmann equation. In this regard, Desvillettes-Villani \cite{Desvillettes2005} proved that under the assumption of the existence of a global solution satisfying some a priori high-order Sobolev bounds and a Gaussian lower bound, the solution converges almost exponentially to the global equilibrium. Subsequently, Gualdani-Mischler-Mouhot \cite{Gualdani2017} further improved the result for the hard sphere model by establishing the exponential convergence to equilibrium under suitable high-order assumptions. To describe this literature in more detail, the authors only covered the global well-posedness on hard sphere model ($\gamma =1$) in $L^1_vL^\infty_x(\langle v \rangle^\beta)$ because of the lack of spectral gap for soft potentials. By contrast, our work extends these results to soft potentials in unbounded function spaces, $L^p_vL^\infty_x(\langle v \rangle^\beta \mu^{-1/2})$, although we adapt exponential tail models. We also obtain the asymptotic behavior for large amplitude problem without the high-order a priori assumption. \\
\indent At the same time, the well-posedness and large-time behavior for weak solutions have been studied for the perturbation framework near Maxwellian. Based on the idea of Vidav \cite{Vidav1970}, Ukai \cite{Ukai1974} constructed the global-in-time solutions for the Boltzmann equation in a periodic box when initial data $f_0 = \frac{F_0-\mu}{\sqrt{\mu}}$ are sufficiently small in some high-order Sobolev spaces. Following this work, such research in high-order Sobolev spaces was developed by Guo \cite{Guo2002, Guo2003a, Guo2012}. Especially, we mention the work \cite{Strain2008} by Strain-Guo on soft potentials. \\
\indent Unfortunately, for general bounded domains, high-order regularity of solutions to the Boltzmann equation is not guaranteed in general. These phenomena have been studied in \cite{Guo2016, Kim2011}. To overcome these difficulties, studies on low-regularity solutions have been carried out. Guo \cite{Guo2010} introduced the $L^2$-$L^\infty$ bootstrap argument to treat the global existence, uniqueness, and asymptotic behavior of low-regularity solutions in weighted $L^\infty_{x,v}$ spaces to the Boltzmann equation under physical boundary conditions. Subsequently, Kim-Lee \cite{Kim2017} extended this result to general $C^3$ uniformly convex bounded domains, and Ko-Kim-Lee \cite{Ko2023} improved it to certain non-convex domains. The literature mentioned above is limited to hard potential models. Additionally, initial-boundary value problems with polynomial tails were studied in \cite{Briant2017, Briant2016}. \\
\indent On the other hand, for soft potential models, the lack of the spectral gap of the linearized Boltzmann operator $L$ occurs in contrast to  the hard potentials. In other words, the operator $\nu(v)$ has no positive lower bound over large velocities for soft potentials. To resolve such difficulties, Caflisch \cite{Caflisch1980, Caflisch1980a} obtained sub-exponential decay in time by losing part of the exponential velocity weight. Instead of sacrificing some velocity weights, Liu-Yang \cite{Liu2017ARMA} introduced time-involved weight functions for boundary problems, and Deng-Duan \cite{Deng2023CMP} used the weight function depending on spatial and velocity variables. Here, the latter even achieved the exponential time-decaying rate. However, the aforementioned studies are restricted to small perturbation regimes.\\
\indent To go beyond such restrictions, low-regularity theory for the Boltzmann equation with cutoff has been extended to accommodate sufficiently large amplitude initial data with a smallness condition in $L^p$ class. As a first step, Duan-Huang-Wang-Yang \cite{Duan2017a} demonstrated the global-in-time existence and uniqueness of solutions to the Boltzmann equation for large amplitude initial data with a smallness condition on initial relative entropy and the $L^1_xL^\infty_v$ norm, both in a periodic box and in the whole space. Subsequently, in Duan-Wang \cite{Duan2019b}, the authors only consider a smallness condition for initial relative entropy and extended the aforementioned result to the boundary problem, concerning diffuse reflection boundary conditions. In \cite{Jong2025arXiv}, the classical weighted $L^\infty_{x,v}$ space was generalized to unbounded function classes, weighted $L^p_vL^\infty_x$ spaces. In our paper, we extend the result for hard potentials in \cite{Jong2025arXiv} to soft potential models by introducing the time-involved weight function \eqref{weight}. As for soft potential cases, we mention the following literature. Duan-Huang-Wang-Zhang \cite{Duan2019} first overcame the difficulty caused by degenerated spectral gap thanks to advantage of diffuse reflection boundary conditions, and derived the sub-exponential time decay for solutions by using velocity weight function. Next, Ko-Lee-Park \cite{Ko2022JDE} is inspired by \cite{Liu2017ARMA} and extended the result for small perturbation regime to the large amplitude problem on a periodic box. Following the paper \cite{Duan2017a}, Li \cite{Li2022SIMA} established the global-in-time existence and uniqueness for solutions in $L^p_vL^\infty_{[0,T]}L^\infty_x$ space, but did not obtain the result for convergence to equilibrium. Later, we will deal with the paper in more detail. We briefly mention \cite{Ko2025JDE, Wang2019JDE, Ko2023JSP, Cao2022JFA, Jiang2025arXiv} for related literature on large amplitude problems. \\

\indent In this paper, the main difficulty lies in controlling the nonlinear term $\Gamma(f,f)$. In particular, we address the velocity growth arising from the loss term $\Gamma^-(f,f)$, which behaves like $\nu(v)$. In our approach to solving our problem, we need to handle the following time integration for the loss term:
\begin{align} \label{maindifficulty1}
	\int_0^t e^{-\nu(v)(t-s)}w\Gamma^-(f,f)(s,x-v(t-s),v)ds.
\end{align}
As in \cite{Ko2022JDE} and \cite{Li2022SIMA}, if we work in norm of $X=L^\infty_{x,v}\ \text{or}\ L^p_vL^\infty_{[0,T]}L^\infty_x$, the norm of the term \eqref{maindifficulty1} can be controlled by 
\begin{align*}
	&\left\|\int_0^t e^{-\nu(v)(t-s)}w\Gamma^-(f,f)(s,x-v(t-s),v)ds \right\|_{X}\\
	&\quad  \lesssim \left\| \nu(v)^{-1}w\Gamma^-(f,f)(v)\int_0^t e^{-\nu(v)(t-s)}\nu(v) ds \right\|_{X}\\
	& \quad \lesssim  \left\| \nu(v)^{-1}w\Gamma^-(f,f)(v) \right\|_{X}\\
	& \quad \lesssim  \left\|wf \right\|_{X}^2.
\end{align*}
In contrast, the above argument is not available in $L^p_vL^\infty_x$ spaces due to the factor $\nu(v)$ in the loss term $\Gamma^-$:
\begin{align*}
	\left\|\int_0^t e^{-\nu(v)(t-s)}w\Gamma^-(f,f)(s)ds \right\|_{L^p_vL^\infty_x}
	& \lesssim  \left( \int_{\R^3} \sup_{x\in \T^3} \left| \int_0^t e^{-\nu(v)(t-s)}\nu(v) |wf(s)|\|wf(s)\|_{L^p_vL^\infty_x} ds\right|^p dv\right)^{1/p}	\\
	&  \nleq  \int_0^t e^{-\nu(v)(t-s)} \left\|wf(s) \right\|_{L^p_vL^\infty_x}^2ds.
\end{align*}
To resolve this issue, we introduce the term 
\begin{align*}
	R(f)(t,x,v) =\int_{\R^3} \int_{\S^2} B(v-u,\omega) \left[\mu(u)+\mu^{1/2}(u)f(t,x,u)\right]d\omega du.
\end{align*}
By combining  the a priori assumption \eqref{apriorismall} with a smallness condition on initial relative entropy, we can deduce a lower bound for $R(f)$. (See Lemma \ref{Rfestimatesmall}.) This leads to a time-decay factor, thereby ensuring convergence to equilibrium.\\
\indent A central contribution in this work is the derivation of a pointwise estimate for the gain term $\Gamma^+(f,g)$ with the weight function $w_{q,\vt,\beta}$, which is bounded in terms of $L^p_v$ and $L^q_v$ norms with $q<p$. A significant challenge arises from the singularity in relative velocity part $|v-u|^\gamma$, $-3<\gamma<0$, within $L^p_v$ framework. Consequently, controlling this singularity requires delicate analysis to obtain the desired bound for $\Gamma^+$. (See subsection \ref{Estimates on nonlinear operators} for details.)\\
\indent We introduce a bridge mechanism that connects the small perturbation regime and the large-amplitude regime. By imposing a smallness condition on initial relative entropy, we can control large-amplitude solution. Specifically, after a sufficient time $T$, the large amplitude gradually decays until it falls below the amplitude $\eta_0$ defined in Theorem \ref{smallmain} . Furthermore, Lemma \ref{relativedecrease} establishes the monotonicity of the relative entropy. By choosing $\ve_0>0$, which is upper bound for the relative entropy of large-amplitude initial data, to be smaller than $\ve_1$ in Theorem \ref{smallmain}, it is possible to unify the small amplitude regime and the large amplitude regime.

\bigskip
 
\subsection{Arrangement of the paper}
In Section \ref{section2}, we provide the analytical tools for the weighted $L^p_v L^\infty_x$ framework. In particular, we derive pointwise estimates for the linear operator $K$ and for the weighted gain term $w\Gamma^+(f,f)$, both of which are bounded in terms of $L^p_v$ norms. We also review relative entropy and lemmas associated with it. In Section \ref{section4}, we treat the small perturbation problem within the weighted $L^p_vL^\infty_x$ framework. Assuming both small initial data in the weighted $L^p_vL^\infty_x$ norm and small initial relative entropy, we derive the estimate for the solution operator combining the collision frequency and the nonlinear loss term. Based on the estimate, we construct a global-in-time solution and prove convergence to equilibrium with a sub-exponential decay rate. In Section \ref{section5}, we address the large amplitude problem. Starting from initial data that may be large in $L^p_v L^\infty_x$ but has small relative entropy, we apply Gr\"onwall inequality to show that the solution eventually enters the small regime. This yields global existence and convergence to equilibrium. In Appendix \ref{appendix}, we provide the local-in-time existence theory. 
 
\bigskip

\section{Preliminaries} \label{section2}
\subsection{Estimates on linear operators}
In this subsection, we introduce several estimates for the integral operator $K$ which will be used to derive $L^p_v L^\infty_x$ estimates. In the soft potential case, the integral kernel of $K$ has a singularity near $v \sim u$, and a careful decomposition together with suitable cutoff arguments is required to handle this difficulty. Recall the definition of operator $K:=K_2-K_1$
\begin{align} \label{Kf}
	(Kf)(v)&= \int_{\R^3} k(v,u) f(u)\, du =\int_{\R^3} k_2(v,u) f (u) \, du - \int_{\R^3} k_1 (v,u)f(u) \, du,
\end{align} 
where $k_i(v,u)$ is a symmetric integral kernel of $K_i$ for $i=1,2$. 
\begin{Lem} \label{Kker} \cite{Duan2017a}
 The integral kernel $k_1$ and $k_2$ of \eqref{Kf} satisfy 
\begin{equation*}
	0\leq k_1(v,u) \leq C \vert v-u \vert ^\gamma e^{-\frac{\vert v \vert^2}{4}} e^{-\frac{\vert u \vert ^2}{4}}, 
\end{equation*}
and 
\begin{equation*}
	0\leq k_2(v,u) \leq \frac{C_\gamma}{\vert v -u \vert^{\frac{3-\gamma}{2}}} e^{-\frac{\vert v-u\vert^2}{8}} e^{-\frac{\vert \vert v \vert^2 - \vert u \vert^2 \vert^2}{8\vert v-u\vert^2}},
\end{equation*}
where $C>0$ is a generic constant and $C_{\gamma}$ is a constant depending on $\gamma$. 
\end{Lem}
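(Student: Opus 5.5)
The plan is to derive both kernel bounds directly from the definitions of $K_1$ and $K_2$, using the $\omega$-representation \eqref{omegarep} and the cutoff bound $b\le C_b$.

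\textbf{The bound on $k_1$.} This part is immediate. By definition,
\begin{align*}
k_1(v,u)=\mu^{1/2}(v)\mu^{1/2}(u)|v-u|^\gamma\int_{\S^2}b(\cos\theta)\,d\omega .
\end{align*}
The angular integral is at most $4\pi C_b$. Moreover $\mu^{1/2}(v)\mu^{1/2}(u)=(2\pi)^{-3/2}e^{-|v|^2/4}e^{-|u|^2/4}$. Together these give the stated bound, and clearly $k_1\ge0$.

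\textbf{The bound on $k_2$.} Here I follow the classical Grad/Carleman argument, adapted to $|v-u|^\gamma$ as in Strain--Guo and \cite{Duan2017a}.

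1. In each of the two terms of $K_2$, I change variables so that the argument of $f$ becomes the integration variable. Write $v-u=z$ and split $z=z_\parallel+z_\perp$ with $z_\parallel=(z\cdot\omega)\omega$.
2. For the term with $f(v')$, the argument is $v'=v-z_\parallel$. For the term with $f(u')$, the argument is $u'=v-z_\perp$.
3. Rewrite $d\omega\,du$ as $dz_\parallel\,dz_\perp/|z_\parallel|^2$, up to constants. After this, the kernel at the point $\eta$ (either $v'$ or $u'$) is an integral of $|v-\eta|^{-2}\,|z|^\gamma\,\mu^{1/2}\mu^{1/2}$ over a two-dimensional plane orthogonal to $v-\eta$.
4. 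Complete the square in the Gaussian factors. With $\eta=v-z_\parallel$ and $u=\eta-z_\perp$ for $z_\perp\perp(v-\eta)$, the exponent $-\tfrac14(|u|^2+|u'|^2)$, or its analogue, splits as
\begin{align*}
-\tfrac18|v-\eta|^2-\tfrac18\frac{(|v|^2-|\eta|^2)^2}{|v-\eta|^2}-\tfrac12|\zeta+\xi_\perp|^2 ,
\end{align*}
where $\xi_\perp$ is the component of $(v+\eta)/2$ orthogonal to $v-\eta$ and $\zeta$ is the integration variable in the plane.
5. The first two pieces are exactly the claimed exponential factors.
6. The remaining factor is $|z|^\gamma$ with $|z|^2=|v-\eta|^2+|\zeta|^2$, integrated against a Gaussian in the plane.

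\textbf{Main obstacle: the singular weight $|z|^\gamma$ with $\gamma<0$.} I would bound $|z|^\gamma\le(|v-\eta|^2+|\zeta|^2)^{\gamma/2}$ and estimate the planar integral
\begin{align*}
I=\int_{\R^2}(|v-\eta|^2+|\zeta|^2)^{\gamma/2}\,e^{-|\zeta+\xi_\perp|^2/2}\,d\zeta .
\end{align*}
I split into $|\zeta|\le|v-\eta|$ and $|\zeta|>|v-\eta|$.

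On $|\zeta|\le|v-\eta|$, the integrand is at most $|v-\eta|^\gamma$ and the region has area $\lesssim|v-\eta|^2$, so this piece contributes $\lesssim|v-\eta|^{\gamma+2}$.

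On $|\zeta|>|v-\eta|$, the integrand is at most $|\zeta|^\gamma e^{-|\zeta+\xi_\perp|^2/2}$. Since $\gamma>-3$, I interpolate the local singularity against the Gaussian decay, which gives $\lesssim|v-\eta|^{(\gamma+1)/2}$, or $\lesssim1$.

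Combining the two regions, $I\lesssim|v-\eta|^{(1+\gamma)/2}$ uniformly in $\xi_\perp$ (using $\min(a^{\gamma+2},1)\le a^{(\gamma+1)/2}$-type interpolation, absorbing surplus powers of $|v-\eta|$ for large $|v-\eta|$ into slightly less than $\tfrac18$ of the Gaussian). Multiplying by the prefactor $|v-\eta|^{-2}$ gives the claimed
\begin{align*}
|v-\eta|^{-2}\cdot|v-\eta|^{(1+\gamma)/2}=|v-\eta|^{-\frac{3-\gamma}{2}} .
\end{align*}
The delicate bookkeeping in this step is exactly what \cite{Duan2017a} carries out, and I would cite it for the constant $C_\gamma$.
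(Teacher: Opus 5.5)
The paper takes this lemma from \cite{Duan2017a} without proof. Your bound for $k_1$ is correct. For the $v'$-term of $K_2$, your set-up is also correct: the measure $d\omega\,du\sim|v-\eta|^{-2}\,d\eta\,d\zeta$ and the completed square in step 4 are both right.

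\textbf{The gap is the final claim $I\lesssim|v-\eta|^{\frac{1+\gamma}{2}}$.} With only $b\le C_b$ and $\gamma>-2$, the region $|\zeta|>|v-\eta|$ contributes an amount of order one, not $\min(a^{\gamma+2},1)$. For instance, with $\xi_\perp=0$ you get $I\to\int_{\R^2}|\zeta|^{\gamma}e^{-|\zeta|^2/2}\,d\zeta>0$ as $|v-\eta|\to0$. So for $-1<\gamma<0$ your route only yields $k_2\lesssim|v-\eta|^{-2}$ near the diagonal, which is strictly more singular than $|v-\eta|^{-\frac{3-\gamma}{2}}$. (For $-3<\gamma\le-1$ your splitting does handle the $v'$-term.) No finer bookkeeping can repair this. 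If $b$ is bounded below near $\cos\theta=0$, the same exact formula (all terms being nonnegative) gives $k_2(v,\eta)\gtrsim|v-\eta|^{-2}$ as $\eta\to v$. Hence under $b\le C_b$ alone the stated bound is false for $\gamma\in(-1,0)$.

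\textbf{A second error concerns the $u'$-term.} Its measure is not the one you wrote. There $v-\eta=z_\perp$, the in-plane variable is $\zeta=z_\parallel$, and $d\omega\,du\sim d\eta\,d\zeta/(|v-\eta|\,|\zeta|)$.

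\textbf{The missing ingredient is Grad's angular cutoff} $b(\cos\theta)\le C|\cos\theta|$, equivalently $B\le C|z|^{\gamma-1}|z_\parallel|$. The paper itself invokes this in the proof of Lemma \ref{pointwiseGamma+estimate}. In the $v'$-term, $|z_\parallel|=|v-\eta|$ cancels one power of $|v-\eta|^{-2}$. In the $u'$-term, $|z_\parallel|=|\zeta|$ cancels the factor $|\zeta|^{-1}$. Both terms are then bounded by
\begin{align*}
\frac{C}{|v-\eta|}\,e^{-\frac{|v-\eta|^2}{8}-\frac{(|v|^2-|\eta|^2)^2}{8|v-\eta|^2}}\int_{\zeta\perp(v-\eta)}\big(|v-\eta|^2+|\zeta|^2\big)^{\frac{\gamma-1}{2}}e^{-\frac12|\zeta+\xi_\perp|^2}\,d\zeta .
\end{align*}
Two facts finish the proof:
\begin{itemize}
\item $(a^2+|\zeta|^2)^{\frac{\gamma-1}{2}}\le(2a|\zeta|)^{\frac{\gamma-1}{2}}$, since the exponent is negative and $a^2+|\zeta|^2\ge 2a|\zeta|$;
\item $\sup_{\xi}\int_{\R^2}|\zeta|^{\frac{\gamma-1}{2}}e^{-\frac12|\zeta+\xi|^2}\,d\zeta\le C_\gamma$, precisely because $\frac{\gamma-1}{2}>-2$ is equivalent to $\gamma>-3$.
\end{itemize}
Together these give $|v-\eta|^{-1}\cdot|v-\eta|^{\frac{\gamma-1}{2}}=|v-\eta|^{-\frac{3-\gamma}{2}}$ in one line. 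No region splitting is needed, and nothing is borrowed from the Gaussian; borrowing would in any case change the constants $\frac18$ in the statement.
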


\bigskip

To treat the singularity of $K$, we introduce modified kernel with smooth cutoff function $0 \leq \chi \leq 1$ such that  
\begin{equation*} \label{cutoff}
	\chi(\vert v-u \vert) =
	\begin{cases}
	1, \quad \textrm{if} \quad\vert v-u \vert \geq 2\ve, \\ 
	0, \quad \textrm{if} \quad \vert v-u \vert \leq \ve,
	\end{cases}
\end{equation*}
where $0<\ve<1$.
We split the operator $K$ using cutoff function $\chi$ :
\begin{equation*}
	Kf=K^{ns}f + K^{s}f, \quad K_2f=K_2^{ns}f+K_2^{s}f, \quad \textrm{and} \quad K_1f=K_1^{ns} f + K_1^{s}f.
\end{equation*}
Specifically,  for $i=1,2$, we define 
\begin{align} \label{K-chi}
	\begin{split}
		K^{ns} f &= \int_{\R^3} \chi(\vert v-u \vert) k(v,u) f(u) \, du = \int_{\R^3} k^{ns}(v,u)f(u) \, du,\\ 
		K^{s} f &= \int_{\R^3} (1-\chi(\vert v-u \vert) k(v,u) f(u) \, du = \int_{\R^3} k^{s} (v,u) f(u) du, \\ 
		K_i^{ns} f &= \int_{\R^3} \chi(\vert v-u \vert) k_i(v,u) f(u) = \int_{\R^3} k_i^{ns} (v,u) f(u) \, du, \\
		K_i^{s}f &= \int_{\R^3} (1-\chi(\vert v-u \vert) k_i(v,u) f(u) \, du =\int_{\R^3} k_i^{s}(v,u) f(u) \, du.
	\end{split}
\end{align}
\begin{Lem} \label{k1ker} 
	There exists a constant $C>0$ such that 
	\begin{align*}
		|k_1^{ns}(v,u)| \leq C \varepsilon^{\gamma} e^{-\frac{|v|^2}{4}} e^{-\frac{|u|^2}{4}}. 
	\end{align*}
\end{Lem}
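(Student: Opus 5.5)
The bound follows directly from Lemma \ref{Kker} once the cutoff is taken into account. By definition \eqref{K-chi}, $k_1^{ns}(v,u)=\chi(|v-u|)k_1(v,u)$. Since $0\le\chi\le1$ and $\chi$ vanishes for $|v-u|\le\ve$, the kernel $k_1^{ns}$ is supported in the region $\{|v-u|\ge\ve\}$. On that region we may use the pointwise bound for $k_1$ from Lemma \ref{Kker}:
\begin{align*}
	0\le k_1^{ns}(v,u)\le \mathbf{1}_{\{|v-u|\ge\ve\}}\, C|v-u|^{\gamma}e^{-\frac{|v|^2}{4}}e^{-\frac{|u|^2}{4}}.
\end{align*}

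The only remaining step is to control the factor $|v-u|^\gamma$. Because $\gamma<0$, the map $r\mapsto r^\gamma$ is decreasing on $(0,\infty)$. Hence on $\{|v-u|\ge\ve\}$ we have $|v-u|^\gamma\le\ve^\gamma$. Substituting this gives
\begin{align*}
	|k_1^{ns}(v,u)|\le C\ve^{\gamma}e^{-\frac{|v|^2}{4}}e^{-\frac{|u|^2}{4}},
\end{align*}
where $C$ is the same constant as in Lemma \ref{Kker}, so it does not depend on $\ve$.

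No step here is really an obstacle. The one point to check is that we use the support property $\chi=0$ on $\{|v-u|\le\ve\}$, rather than the region $\{|v-u|\ge2\ve\}$ where $\chi=1$. It is the support property that lets the singular factor be bounded by $\ve^\gamma$ on the whole support of $k_1^{ns}$.
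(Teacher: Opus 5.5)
Your proposal is correct: the paper states this lemma without writing out a proof, and your argument is exactly the immediate reasoning it relies on. That argument is that $k_1^{ns}=\chi k_1$ vanishes on $\{|v-u|\le\varepsilon\}$, while on the complement the bound from Lemma \ref{Kker} and the monotonicity of $r\mapsto r^{\gamma}$ for $\gamma<0$ give $|v-u|^{\gamma}\le\varepsilon^{\gamma}$, with a constant independent of $\varepsilon$.
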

\begin{Lem} \label{k2ker} \cite{Strain2008}
	There are constants $C>0$ and $C_{\ve}>0$ depending on $\ve$ such that 
\begin{equation*}
	\vert k_2^{ns}(v,u)\vert \leq C \ve^{\gamma-1} \frac{\exp\left(-\frac{1}{8} \vert u - v\vert^2 -\frac{1}{8} \frac{(\vert v\vert^2-\vert u \vert^2)^2}{\vert v-u \vert^2}\right)}{\vert v -u \vert}, 
\end{equation*}	
or 
\begin{equation} \label{k2esti.1}
	\vert k_2^{ns}(v,u)\vert \leq C_{\ve} \frac{\exp \left(-\frac{s_2}{8}\vert v- u\vert^2 -\frac{s_1}{8} \frac{(\vert v \vert^2 -\vert u \vert^2)^2}{\vert v -u \vert^2} \right) } {\vert v-u \vert (1+\vert v \vert + \vert u \vert)^{1-\gamma}},
\end{equation}
for any $0<s_1<s_2<1$. 
\end{Lem}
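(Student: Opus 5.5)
The first estimate follows from the pointwise bound on $k_2$ in Lemma \ref{Kker}, using only that $\chi$ vanishes on $\{|v-u|\le\ve\}$. The second, weighted estimate \eqref{k2esti.1} needs a finer representation of $k_2$, since the Gaussian factors of Lemma \ref{Kker} give no decay along the sphere $|v|=|u|$. There I will use the Carleman representation, as in \cite{Strain2008}.

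\emph{Step 1 (first bound).} On the support of $k_2^{ns}$ we have $|v-u|\ge \ve$ and $0\le\chi\le1$. Lemma \ref{Kker} gives
\begin{align*}
|k_2^{ns}(v,u)| \le C_\gamma |v-u|^{-1}\,|v-u|^{\frac{\gamma-1}{2}}\, e^{-\frac{|v-u|^2}{8}-\frac{(|v|^2-|u|^2)^2}{8|v-u|^2}} .
\end{align*}
Since $\frac{\gamma-1}{2}<0$ and $|v-u|\ge\ve$, we get $|v-u|^{\frac{\gamma-1}{2}}\le \ve^{\frac{\gamma-1}{2}}$. Since $\ve<1$ and $\frac{\gamma-1}{2}\ge \gamma-1$ (because $\gamma\le 1$), also $\ve^{\frac{\gamma-1}{2}}\le \ve^{\gamma-1}$. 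This proves the first inequality.

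\emph{Step 2 (weighted bound): setting up the representation.} Along $|v|=|u|$ the exponent $\frac{(|v|^2-|u|^2)^2}{|v-u|^2}=\frac{((v-u)\cdot(v+u))^2}{|v-u|^2}$ does not control $|v+u|$. So I return to the definition of $K_2$. Write $\eta=v-u$ and decompose $\zeta=\frac{v+u}{2}=\zeta_\parallel+\zeta_\perp$ relative to $\eta$. Apply the Carleman change of variables, integrating over the plane $\{w\perp\eta\}$. As in \cite{Strain2008}, this yields
\begin{align*}
k_2(v,u)\lesssim \frac{1}{|\eta|}\, e^{-\frac{|\eta|^2}{8}-\frac{|\zeta_\parallel|^2}{2}}\int_{w\perp\eta} (|\eta|^2+|w|^2)^{\frac{\gamma-1}{2}}\, e^{-\frac{|w+\zeta_\perp|^2}{2}}\,dw ,
\end{align*}
where $\frac{|\zeta_\parallel|^2}{2}=\frac{(|v|^2-|u|^2)^2}{8|v-u|^2}$. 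The singular power $(|\eta|^2+|w|^2)^{\frac{\gamma-1}{2}}$ combines the factor $|v-u|^\gamma$ of $B$ with the Jacobian of the Carleman map. Its exponent is what must produce the gain $(1+|v|+|u|)^{\gamma-1}$.

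\emph{Step 3 (the main obstacle): estimating the plane integral.} I must show that on $|\eta|\ge\ve$ the plane integral is $\le C_\ve (1+|\zeta_\perp|)^{\gamma-1}$. Split the plane into $|w|\ge |\zeta_\perp|/2$ and $|w|<|\zeta_\perp|/2$.
\begin{itemize}
\item On $|w|\ge |\zeta_\perp|/2$, the weight satisfies $(|\eta|^2+|w|^2)^{\frac{\gamma-1}{2}}\le C_\ve(1+|\zeta_\perp|)^{\gamma-1}$. This uses $|\eta|\ge \ve$ when $|\zeta_\perp|$ is small. The Gaussian in $w$ then has integral at most $2\pi$.
\item On $|w|<|\zeta_\perp|/2$, we have $|w+\zeta_\perp|\ge|\zeta_\perp|/2$. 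The Gaussian gives $e^{-|\zeta_\perp|^2/8}$, and the weight is bounded by $\ve^{\gamma-1}$. The area of the region is $\lesssim|\zeta_\perp|^2$. Hence this piece is $\le C_\ve e^{-|\zeta_\perp|^2/16}$, which is better than needed.
\end{itemize}

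\emph{Step 4 (converting the gain to the stated weight).} To turn $(1+|\zeta_\perp|)^{\gamma-1}$ into $(1+|v|+|u|)^{\gamma-1}$, use $|v|+|u|\lesssim |\eta|+|\zeta_\parallel|+|\zeta_\perp|$. Spend a fraction of the Gaussians in $\eta$ and $\zeta_\parallel$: keep $e^{-\frac{s_2}{8}|\eta|^2-\frac{s_1}{8}\cdot 4|\zeta_\parallel|^2}$ and use the leftover
\begin{align*}
e^{-\frac{1-s_2}{8}|\eta|^2-\frac{1-s_1}{2}|\zeta_\parallel|^2}\le C_{s_1,s_2}(1+|\eta|+|\zeta_\parallel|)^{\gamma-1}.
\end{align*}
Then use $(1+a)^{\gamma-1}(1+b)^{\gamma-1}\le (1+a+b)^{\gamma-1}$ for $a,b\ge0$, which holds because $\gamma-1<0$ and $(1+a)(1+b)\ge 1+a+b$. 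This gives \eqref{k2esti.1}. The requirement $s_1<s_2<1$ just leaves some exponential slack for this absorption.
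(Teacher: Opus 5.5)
Your proof is correct and is essentially the Strain--Guo argument that the paper cites. It uses Grad's (Carleman-type) representation of $k_2$, splits the plane integral at $|w|=|\zeta_\perp|/2$ using $|v-u|\ge\varepsilon$, and absorbs the polynomial weight into a fraction of the Gaussians in $|v-u|$ and $|\zeta_\parallel|$. One point should be made explicit: the exponent $\frac{\gamma-1}{2}$ in Step 2 requires Grad's cutoff $b(\cos\theta)\le C|\cos\theta|$, which is assumed in \cite{Strain2008} and tacitly used by the paper in the proof of Lemma \ref{pointwiseGamma+estimate}. The factor $|\cos\theta|=|z_\parallel|/|z|$ is what cancels one power of the Jacobian $|z_\parallel|^{-2}$ in the $f(v')$-part. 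With $b$ merely bounded and $b(0)>0$, that part of $k_2$ has size $|v|^{\gamma}$ rather than $|v|^{\gamma-1}$ on $\{|v|=|u|,\ |v-u|=1\}$, so \eqref{k2esti.1} would fail.
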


\begin{Lem} \label{Ksingular}
Let $-3<\gamma<0$ and $p>\frac{3}{3+\gamma}$. There exists a constant $C_{p,\gamma}>0$ such that
\begin{equation*} 
	 w_{q,\vartheta,\beta} (v) K^{s}f \leq C_{p,\gamma} \mu(v)^{\frac{1-q}{8}} \ve^{\gamma+\frac{3}{p'}} \Vert w_{q,\vartheta,\beta} f \Vert_{L^p_v}, 
\end{equation*} 
\end{Lem}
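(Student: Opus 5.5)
We write $w=w_{q,\vt,\beta}(t,v)$ and bound $K^s f = K_2^s f - K_1^s f$ pointwise by putting the weight inside the integral. We then split off the factor $|wf(u)|$ and apply H\"older's inequality in $u$ over the small ball $\{|v-u|\le 2\ve\}$, the support of $1-\chi$:
\begin{align*}
|w(v)K^sf(v)| \le \int_{|v-u|\le 2\ve} \frac{w(v)}{w(u)}\big(k_1(v,u)+k_2(v,u)\big)\,|w(u)f(u)|\,du
\le \Big\|\mathbf{1}_{|v-u|\le 2\ve}\tfrac{w(v)}{w(u)}(k_1+k_2)(v,\cdot)\Big\|_{L^{p'}_u}\|wf\|_{L^p_v}.
\end{align*}
The whole task reduces to showing that the $L^{p'}_u$ norm on the right is at most $C_{p,\gamma}\,\mu(v)^{\frac{1-q}{8}}\ve^{\gamma+3/p'}$.

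\textbf{The weight ratio.} On $|v-u|\le 2\ve<2$ we have $\langle v\rangle^\beta/\langle u\rangle^\beta\le C_\beta$. Write $a=\frac q8(1+(1+t)^{-\vt})\le \frac q4$. Then
$$
\frac{w(v)}{w(u)}\le C\exp\big(a\,(|v|^2-|u|^2)\big), \qquad \big||v|^2-|u|^2\big|\le |v-u|\,(|v|+|u|)\le 2\ve\,(2|v|+2).
$$
So the ratio is at most a factor $e^{C\ve|v|}$, which is harmless against any Gaussian in $v$.

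\textbf{The $k_1$ part.} By Lemma \ref{Kker},
$$
k_1(v,u)\le C|v-u|^\gamma e^{-|v|^2/4}e^{-|u|^2/4}.
$$
On the ball, $|u|\ge |v|-2$, so $e^{-|u|^2/4}\le Ce^{-|v|^2/8}e^{C|v|}$, say. Altogether the product of the weight ratio and $k_1$ is bounded by $C|v-u|^\gamma e^{-c|v|^2}$ with $c>\frac{1-q}{8}\cdot\frac12$; the Gaussian absorbs the linear exponentials. Since $\mu(v)^{\frac{1-q}{8}}=Ce^{-\frac{1-q}{16}|v|^2}$, this is dominated by $C\mu(v)^{\frac{1-q}{8}}|v-u|^\gamma$.

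\textbf{The $k_2$ part (the main obstacle).} Here the Gaussian decay in $v$ must be extracted from the factor $e^{-\frac{||v|^2-|u|^2|^2}{8|v-u|^2}}$ of Lemma \ref{Kker}. This is delicate because the kernel only decays in $|v-u|$ and $|v|^2-|u|^2$, not in $|v|$ itself. Set $r=|v|^2-|u|^2$ and $\rho=|v-u|\le 2\ve$. The exponent then satisfies
$$
a\,r-\frac{r^2}{8\rho^2}\le 2a^2\rho^2\le C\ve^2,
$$
so the weight ratio is absorbed with no loss of a $v$-Gaussian. This still does not produce the factor $\mu(v)^{\frac{1-q}{8}}$. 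To obtain it, I would use the remaining half of the exponent,
$$
-\frac{r^2}{16\rho^2}\quad\text{together with}\quad |v|^2=|u|^2+r.
$$
Honestly, for large $|v|$ with $r\approx0$ (for example $u$ on the sphere of radius $|v|$), $k_2$ gives no Gaussian decay in $v$. So the claimed factor $\mu(v)^{\frac{1-q}{8}}$ likely cannot come from $k_2$ in full generality. I would first check whether the authors' $K^s$ effectively involves only $k_1$-type decay, or whether the statement is intended with a bounded factor; failing that, I would use the sharper representation of $k_2$ to try to gain decay.

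Assuming the kernel bound $\lesssim \mu(v)^{\frac{1-q}{8}}|v-u|^{-\frac{3-\gamma}{2}}$ or $\lesssim\mu(v)^{\frac{1-q}{8}}|v-u|^\gamma$, the final step is
$$
\Big(\int_{|v-u|\le2\ve}|v-u|^{\gamma p'}du\Big)^{1/p'}=C\,\ve^{\gamma+3/p'}.
$$
This integral is finite exactly when $\gamma p'>-3$, i.e. $p>\frac{3}{3+\gamma}$, which is the hypothesis. Since $-\frac{3-\gamma}{2}\ge\gamma$ for $\gamma\le -1$, in that range the $k_2$ singularity bound yields at least the same power $\ve^{\gamma+3/p'}$, as $\ve<1$. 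For $-1<\gamma<0$ one must instead use the integrability of $|v-u|^{-\frac{3-\gamma}{2}p'}$, which gives $\ve^{-\frac{3-\gamma}{2}+3/p'}$; matching the stated exponent there needs the same care.
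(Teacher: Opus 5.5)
There is a genuine gap: the $K_2^s$ contribution, which is the whole difficulty, is never bounded. You end with ``assuming the kernel bound $\lesssim\mu(v)^{\frac{1-q}{8}}|v-u|^{-\frac{3-\gamma}{2}}$'', but your argument never establishes it. Your own diagnosis shows it cannot be obtained along this route. The exact kernel $k_2(v,\eta)$, and not merely its bound in Lemma \ref{Kker}, decays only polynomially in $|v|$ near $\{|\eta|=|v|,\ |\eta-v|\lesssim\varepsilon\}$ (for non-degenerate $b$). So with the cutoff placed on the kernel variable $\eta$, as \eqref{K-chi} literally reads, no estimate of $\int(1-\chi)k_2(v,\eta)|f(\eta)|\,d\eta$ can produce $\mu(v)^{\frac{1-q}{8}}$.

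Your fallback for the power of $\varepsilon$ is also wrong. We have $-\frac{3-\gamma}{2}<\gamma$ for every $\gamma>-3$, so the $k_2$ singularity is always worse than $|v-u|^{\gamma}$. It is $L^{p'}$-integrable near $v$ only if $p>\frac{6}{3+\gamma}$, and it yields $\varepsilon^{-\frac{3-\gamma}{2}+\frac{3}{p'}}$, which exceeds $\varepsilon^{\gamma+\frac{3}{p'}}$ when $\varepsilon<1$.

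The missing idea is to avoid $k_2(v,\eta)$ altogether. The paper keeps $K_2$ in collision form, $\int\int B(v-u,\omega)\mu^{1/2}(u)\big[\mu^{1/2}(u')f(v')+\mu^{1/2}(v')f(u')\big]d\omega\,du$. It takes the singular part to be the region $|v-u|\le 2\varepsilon$ with $u$ the collision partner, i.e.\ the cutoff is inserted into $B(v-u,\omega)$. For $K_1$ this agrees with \eqref{K-chi}. For $K_2$ it does not, since $v'$ or $u'$ lying near $v$ does not force $u$ near $v$.

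On that region $|u|\ge|v|-2\varepsilon$ and $|v'|\ge|v|-|(v-u)\cdot\omega|\ge|v|-2\varepsilon$. Hence $\mu^{1/2}(u)\mu^{1/2}(v')\le C\mu^{1/2}(v)$ (trivially so if $|v|\le 2\varepsilon$), and $w(v)\mu^{1/2}(v)\le C\mu(v)^{\frac{1-q}{8}}$. The Gaussian comes from the partner velocity, not from the kernel.

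The only singularity left is $|v-u|^{\gamma}$ from $B$. H\"older jointly in $(u,\omega)$ then gives $\big(\int_{|v-u|\le 2\varepsilon}|v-u|^{p'\gamma}du\big)^{1/p'}\sim\varepsilon^{\gamma+\frac{3}{p'}}$, exactly under the hypothesis $p>\frac{3}{3+\gamma}$. The other factor satisfies $\big(\int\int|wf(u')|^p\,d\omega\,du\big)^{1/p}\lesssim\|wf\|_{L^p_v}$, by the change of variables $u\mapsto u'$ with Jacobian at least $\frac18$ in the $\sigma$-representation, \eqref{Lpchangeofvariable}, after the $v'\leftrightarrow u'$ interchange.

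Your $k_1$ step and the final H\"older computation are fine. What must change is the splitting of $K_2$.
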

\begin{proof}
In \eqref{K-chi}, recall that
\begin{equation} \label{Ksplit.1}
	w_{q,\vartheta,\beta}K^{s}f = w_{q,\vartheta,\beta}K^{s}_1f+w_{q,\vartheta,\beta} K^{s}_2f.
\end{equation}  
First of all, we consider the term $w_{q,\vartheta,\beta}K^{s}_1f$ in \eqref{Ksplit.1}:
\begin{align} \label{K1esti}
	w_{q,\vt,\beta}(v) K_1^{s} f &\leq  C w_{q,\vartheta,\beta}(v)\mu(v)^{1/2} \int_{\vert v-u \vert \le 2\ve} 
	\vert v-u \vert^{\gamma} \mu(u)^{1/2} \vert f(u)\vert du\nonumber \\
	&\le C(1+|v|^2)^{\beta/2}\exp\left\{\frac{q}{8}(1+(1+t)^{-\vartheta}) \vert v \vert^{2}\right\}\mu(v)^{1/2} \Vert w_{q,\vartheta,\beta}f \Vert_{L^p_v}\nonumber\\
	& \quad \times \left(\int_{|v-u| \le 2 \ve} |v-u|^{p'\gamma} \mu(u)^{p'/2} \frac{1}{w_{q,\vt,\beta}(u)^{p'}}du \right)^{1/p'} \nonumber \\
	&\leq C_{p,\gamma} \mu(v)^{\frac{1-q}{8}}\ve^{\gamma+\frac{3}{p'}} \Vert w_{q,\vartheta,\beta} f \Vert_{L^p_v}, 
\end{align}
where $p>\frac{3}{3+\gamma}$.
Next, we address the remaining part $w_{q,\vartheta,\beta}K_2^{s}f$ in \eqref{Ksplit.1}:
\begin{equation} \label{K2ssss}
	w_{q,\vartheta,\beta}(v)K_2^{s} f \leq Cw_{q,\vartheta,\beta}(v) \int_{\vert v-u \vert \leq 2\ve}\int_{\S^2} \vert v-u \vert^{\gamma} \mu(u)^{1/2}\left[ \mu(u')^{1/2}|f(v')| +\mu(v')^{1/2} |f(u')| \right]d\omega du.
\end{equation}
By the rotation, we can make an interchange of $v'$ and $u'$, and change the second term in \eqref{K2ssss} to the same form as the first term in \eqref{K2ssss}. Thus it suffices to estimate the second term in \eqref{K2ssss}:
\begin{align} \label{w222}
	w_{q,\vartheta,\beta}(v) \int_{\vert v-u \vert \leq 2\ve}\int_{\S^2} \vert v-u \vert^{\gamma} \mu(u)^{1/2} \mu(v')^{1/2}|f(u')|d\omega du.
\end{align}
On $\vert v-u \vert \leq 2\ve$ and $|v|> 2 \ve$, it holds that
\begin{align*} \label{w.2}
	\vert v' \vert &= \vert v + [(u-v) \cdot \omega] \omega \vert \geq \vert v \vert - \vert v-u \vert \geq \vert v \vert - 2\ve, \\ 
	\vert u \vert &= \vert v+u-v \vert \geq \vert v \vert - \vert v-u \vert \geq \vert v \vert -2\ve,
\end{align*}
which implies 
\begin{align}
	\mu(u)^{1/2}\mu(v')^{1/2} &\leq e^{-\frac{ (\vert v \vert -2\ve)^2}{4}} e^{- \frac{(\vert v \vert -2\ve)^2}{4}}\leq e^{-\frac{\vert v \vert^2}{2}} e^{2\ve\vert v \vert}  \leq C \mu(v)^{1/2},
\end{align}
where we have used $\ve<1$ and the following fact 
\begin{equation*}
		C\ve \vert v \vert =\left(\frac{\vert v \vert}{2} \right)\left(2C\ve\right) \leq \frac{\vert v \vert^2}{4} + C.
\end{equation*}
On the other hand, under $|v|\le 2\ve$, we have
\begin{align} \label{w3333}	
	1\le e^{\ve^2}e^{-\frac{|v|^2}{4}} < e \cdot e^{-\frac{|v|^2}{4}} \le C\mu(v)^{1/2},
\end{align} 
where we have used $\ve<1$.
From \eqref{w.2} and \eqref{w3333}, the term \eqref{w222} becomes
\begin{align} \label{K2 esti}
	& w_{q,\vartheta,\beta}(v) \int_{\vert v-u \vert \leq 2\ve}\int_{\S^2} \vert v-u \vert^{\gamma} \mu(u)^{1/2} \mu(v')^{1/2}|f(u')|d\omega du\nonumber\\
	& \le C w_{q,\vartheta,\beta}(v)\mu(v)^{1/2} \int_{\vert v-u \vert \leq 2\ve}\int_{\S^2} \vert v-u \vert^{\gamma} |f(u')|d\omega du\nonumber\\
	& \le C w_{q,\vartheta,\beta}(v)\mu(v)^{1/2}\left( \int_{\vert v-u \vert \leq 2\ve}\int_{\S^2} \vert v-u \vert^{p'\gamma} d\omega du\right)^{1/p'} \left(\int_{\vert v-u \vert \leq 2\ve}\int_{\S^2}  |w_{q,\vt,\beta}f(u')|^p d\omega du\right)^{1/p} \nonumber\\
	& \le C_{p,\gamma} \mu(v)^{\frac{ 1-q }{8}}\ve^{\gamma+\frac{3}{p'}} \Vert w_{q,\vartheta,\beta}f\Vert_{L^p_v},
\end{align}
where $p>\frac{3}{3+\gamma}$.
Inserting \eqref{K1esti} and \eqref{K2 esti} into \eqref{Ksplit.1}, we can deduce that 
\begin{align*}
	w_{q,\vartheta,\beta}(v) K^{s}f \leq  C_{p,\gamma} \mu(v)^{\frac{1-q}{8}} \ve^{\gamma+\frac{3}{p'}} \Vert w_{q,\vartheta,\beta}f \Vert_{L^p_v}.
\end{align*}

\end{proof}
\begin{Lem} \label{Knonsing}
Let $-3<\gamma<0$ and $p>\max\left\{\frac{3}{3+\gamma},2\right\}$. There exists a constant $C_{p,q,\ve}>0$ such that
\begin{equation*}
	w_{q,\vartheta,\beta}(v) \int_{\R^3} k^{ns} (v,u) e^{\ve \vert v-u \vert^2} \vert f(u) \vert \, du \leq C_{p,q,\ve} \langle v \rangle^{\gamma-1-\frac{1}{p'}} \Vert w_{q,\vartheta,\beta} f \Vert_{L^p_v}. 
\end{equation*}
 \end{Lem}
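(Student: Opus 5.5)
The plan is to split $k^{ns}=k_2^{ns}-k_1^{ns}$, treat the two pieces separately, and in each case apply H\"older in $u$ with exponents $(p',p)$. Concretely, we write
\begin{align*}
w(v)\int k^{ns}(v,u)e^{\ve|v-u|^2}|f(u)|\,du \le \Big(\int |k^{ns}(v,u)|^{p'} e^{p'\ve|v-u|^2}\frac{w(v)^{p'}}{w(u)^{p'}}du\Big)^{1/p'}\|wf\|_{L^p_v},
\end{align*}
where $w=w_{q,\vt,\beta}(t,\cdot)$. Everything then reduces to a pointwise bound in $v$ for a weighted $L^{p'}_u$ norm of the kernel. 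The target is $\langle v\rangle^{\gamma-1-1/p'}$, which is the classical bound $\int |k_w|\,du\lesssim\langle v\rangle^{\gamma-2}$ adapted to $L^{p'}$.

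\textbf{The $k_1^{ns}$ part.} By Lemma \ref{k1ker}, $|k_1^{ns}|\le C\ve^\gamma e^{-|v|^2/4}e^{-|u|^2/4}$. The weight satisfies
\[
w(v)\le \langle v\rangle^\beta e^{q|v|^2/4} \quad\text{and}\quad w(u)^{-1}\le 1,
\]
and the cutoff factor satisfies $e^{\ve|v-u|^2}\le e^{2\ve|v|^2+2\ve|u|^2}$. Since $q<1$ and $\ve$ is small, the $u$-integral is finite. The resulting function of $v$ is $\lesssim \langle v\rangle^\beta e^{-(1-q-8\ve)|v|^2/4}$, which is far smaller than any polynomial.

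\textbf{The $k_2^{ns}$ part, exponent algebra.} This part is the main work and the main obstacle. I use \eqref{k2esti.1} with $0<s_1<s_2<1$ close to $1$. Set $\theta(t)=\frac{q}{8}(1+(1+t)^{-\vt})\le q/4<1/4$. Then
\[
\frac{w(v)}{w(u)}=\Big(\frac{\langle v\rangle}{\langle u\rangle}\Big)^\beta e^{\theta(|v|^2-|u|^2)} .
\]
Put $\eta=v-u$ and $\zeta=(|v|^2-|u|^2)/|\eta|$, so that $|v|^2-|u|^2=|\eta|\zeta$. Raising to the power $p'$, the full exponent is
\[
p'\Big(-\tfrac{s_2}{8}|\eta|^2-\tfrac{s_1}{8}\zeta^2+\theta|\eta|\zeta+\ve|\eta|^2\Big).
\]
Because $\theta<1/4$, we have $\theta^2<s_1s_2/64$ for $s_1,s_2$ near $1$. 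After choosing $\ve$ small, this quadratic form in $(|\eta|,\zeta)$ is negative definite. It is bounded by
\[
-c\big(|\eta|^2+\zeta^2\big),
\]
uniformly in $t$ since $\theta\le q/4$. The constant $c$ depends on $q$ and $\ve$, which is where $C_{p,q,\ve}$ comes from. The polynomial ratio satisfies $(\langle v\rangle/\langle u\rangle)^\beta\lesssim \langle\eta\rangle^\beta$, which is absorbed into $e^{-c|\eta|^2/2}$.

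\textbf{The $k_2^{ns}$ part, the resulting integral.} It remains to show
\begin{align*}
\int \frac{e^{-c p'(|\eta|^2+\zeta^2)}}{|\eta|^{p'}(1+|v|+|u|)^{p'(1-\gamma)}}\,du\lesssim \langle v\rangle^{-p'(1-\gamma)-1}.
\end{align*}
Taking the $1/p'$ power then gives $\langle v\rangle^{\gamma-1-1/p'}$. The factor $(1+|v|+|u|)^{-p'(1-\gamma)}\le\langle v\rangle^{-p'(1-\gamma)}$ comes out directly. For $|\eta|^{-p'}$, local integrability requires $p'<3$. This holds since $p>2$ gives $p'<2$, and the cutoff $|\eta|\ge\ve$ removes the singularity in any case.

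The extra $\langle v\rangle^{-1}$ gain comes from $\zeta$, as in Guo's standard argument. We write $u=v-\eta$ and split $\eta$ into components parallel and perpendicular to $v$. Then
\[
\zeta=\frac{2v\cdot\eta-|\eta|^2}{|\eta|}.
\]
For $|v|\ge1$, the change of variables from the parallel component $\eta_\parallel$ to $\zeta$ has Jacobian of size $|\eta|/(2|v|)$. Hence the integral of $e^{-cp'\zeta^2}$ over $\eta_\parallel$ is $\lesssim |\eta|/|v|$. Combined with $|\eta|^{-p'}e^{-cp'|\eta|^2}$, the integral over the perpendicular components $\eta_\perp$ converges. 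This convergence is the delicate point; again $p'<2$ and $|\eta|\ge\ve$ make it harmless. For $|v|\le1$ the bound is trivial. This yields the claimed estimate.
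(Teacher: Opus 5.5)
Your route is the paper's: H\"older in $u$ with exponents $(p',p)$, a Gaussian bound for the $k_1^{ns}$ piece, negative definiteness of the total exponent for the $k_2^{ns}$ piece, and a parallel/perpendicular splitting of $\eta$ to gain $\langle v\rangle^{-1}$. For $k_1^{ns}$ you use Lemma~\ref{k1ker}, whereas the paper keeps $|v-u|^\gamma$; that is fine because of the cutoff. Two steps need correcting, both easily.

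\textbf{The discriminant.} The form $-(\frac{s_2}{8}-\ve)|\eta|^2-\frac{s_1}{8}\zeta^2+\theta|\eta|\zeta$ is negative definite iff $\theta^2<\frac{(s_2-8\ve)s_1}{16}$, not $\theta^2<s_1s_2/64$. The correct condition does follow from $\theta\le q/4<1/4$ once $s_1,s_2$ are close to $1$ and $\ve$ is small. Your stated condition does not follow from $\theta<1/4$: since $\theta$ reaches $q/4$ at $t=0$, it would force $q<1/2$, while the lemma covers all $0<q<1$.

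\textbf{The change of variables.} The step ``$\eta_\parallel\mapsto\zeta$ with Jacobian $|\eta|/(2|v|)$'' is not a legitimate change of variables. The variable $\zeta=\frac{2|v|\eta_\parallel}{|\eta|}-|\eta|$ depends on $\eta_\parallel$ also through $|\eta|=(\eta_\parallel^2+|\eta_\perp|^2)^{1/2}$. Its derivative is $\frac{2|v||\eta_\perp|^2}{|\eta|^3}-\frac{\eta_\parallel}{|\eta|}$. This is comparable to $2|v|/|\eta|$ only when $|\eta_\parallel|\ll|\eta_\perp|$, and it changes sign, so the map is not globally invertible.

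The clean fix is exactly the paper's step. Since $\zeta=2\frac{v\cdot\eta}{|\eta|}-|\eta|$ is a fixed invertible linear function of $(|\eta|,\frac{v\cdot\eta}{|\eta|})$, your negative definite bound gives $|\eta|^2+\zeta^2\gtrsim |\eta|^2+\frac{(v\cdot\eta)^2}{|\eta|^2}$. By AM--GM this is $\ge\frac{|\eta|^2}{2}+\sqrt{2}\,|v||\eta_\parallel|$. Then $\int e^{-c|v||\eta_\parallel|}\,d\eta_\parallel\lesssim|v|^{-1}$. The remaining factor $|\eta|^{-p'}$ can be bounded by $|\eta_\perp|^{-p'}$, which is integrable against $e^{-c|\eta_\perp|^2}$ on $\R^2$ since $p'<2$, or simply by $\ve^{-p'}$ on the support. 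With these two corrections your argument proves the lemma.
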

 \begin{proof}
 In \eqref{K-chi}, recall that
\begin{equation*} 
	w_{q,\vartheta,\beta}K^{ns}f = w_{q,\vartheta,\beta}K^{ns}_1f+w_{q,\vartheta,\beta} K^{ns}_2f.
\end{equation*}  
 	We firstly consider the part $w_{q,\vartheta,\beta}K^{ns}_1f$: 
\begin{align*}
	&w_{q,\vartheta,\beta}(v) \int_{\R^3} {k}_1^{ns}(v,u) e^{\ve \vert v-u\vert^2} \vert f(u) \vert  \,du \\ 
	&\quad \leq C w_{q,\vartheta,\beta}(v) \mu(v)^{1/2} \int_{\R^3} \chi(\vert v-u\vert) \vert v-u \vert^{\gamma} \mu(u)^{1/2} \frac{e^{\ve \vert v-u\vert^2} \vert w_{q,\vartheta,\beta}f(u) \vert}{w_{q,\vartheta,\beta}(u)}\, du\\
	&\quad \leq C_{p,q} \mu(v) ^{\frac{ 1-q}{8}}\Vert w_{q,\vartheta,\beta}f\Vert_{L^p_v}\left( \int_{\R^3} \vert v-u\vert^{p'\gamma}  \mu(u)^{p'/2} e^{p'\ve \vert v-u\vert^2}du \right)^{1/p'} \\
	&\quad \leq C_{p,q} \langle v \rangle^{\gamma} \mu(v)^{\frac{ 1-q }{16}}\Vert w_{q,\vartheta,\beta}f\Vert_{L^p_v},
\end{align*}	
where $p>\frac{3}{3+\gamma}$. \newline
It remains to deal with the term $w_{q,\vartheta,\beta}K^{ns}_2f$. Recall \eqref{k2esti.1} and we set $s_0=\min\{s_1,s_2\}$. We have
\begin{align} \label{K2esti.1}
	&w_{q,\vartheta,\beta}(v) \int_{\R^3} {k}_2^{ns} (v,u) \left(\frac{e^{\ve \vert v-u \vert ^2 }\vert w_{q,\vartheta,\beta}f(u)\vert}{w_{q,\vartheta,\beta}(u)}\right) \,du \nonumber \\ 
	&\quad \leq C_{\ve} \Vert w_{q,\vartheta,\beta}f \Vert_{L^p_v} \langle v \rangle^{\gamma-1} w_{q,\vartheta,\beta}(v) \left(\int_{\R^3} \frac{\exp\left(-\frac{p's_0}{8} \vert v-u \vert^2 -\frac{p's_0}{8} \frac{(\vert v \vert^2 - \vert u \vert^2)^2}{\vert v-u \vert^2} \right)}{\vert v-u \vert^{p'}} \frac{e^{p'\ve\vert v-u\vert ^2}}{w_{q,\vartheta,\beta}(u)^{p'}}du\right)^{1/p'}.
\end{align}
We can easily compute
\begin{equation*}
	\frac{w_{q,\vartheta,\beta}(v)}{w_{q,\vartheta,\beta}(u)} \leq C_{\beta}(1+|v-u|^2)^{\beta/2}e^{-\frac{\tilde{q}}{4}(\vert u \vert^2 - \vert v \vert ^2)},
\end{equation*}
where $\tilde{q}=\frac{q}{2}\left(1+(1+t)^{-\vartheta}\right)$. We mention  $\frac{q}{2} < \tilde{q} \leq q$. Denote $z:=v-u$. We now calculate the total exponent in the integral of \eqref{K2esti.1}: 
\begin{align*}
	&-\frac{p's_0}{8} \vert z \vert^2 -\frac{p's_0}{8} \frac{\vert |z \vert^2 - 2v \cdot z \vert^2}{\vert z\vert ^2} - \frac{p'\tilde{q}}{4} ( \vert v -z \vert^2 -\vert v \vert^2)\\
	&\quad =-\frac{p's_0}{4} \vert z \vert^2 +\frac{p's_0}{2} v\cdot z - \frac{p's_0}{2} \frac{ \vert v \cdot z \vert^2}{\vert z \vert^2} -\frac{ p'\tilde{q}}{4}(\vert z \vert ^2 -2v\cdot z)\\
	&\quad = -\frac{p'}{4}(\tilde{q}+s_0) \vert z \vert ^2 +\frac{p'}{2} (s_0+\tilde{q}) v\cdot z - \frac{p's_0}{2} \frac{(v\cdot z)^2}{ \vert z\vert ^2}. 
\end{align*}
Setting
\begin{equation*}  
0<\tilde{q} \leq q < s_0<1,
\end{equation*}
the discriminant of the above quadratic form of $\vert \eta \vert $ and $\frac{v\cdot \eta}{\vert \eta \vert}$ is 
\begin{equation*}
	\Delta = \frac{p'}{4}(s_0+\tilde{q})^2 - (\tilde{q}+ s_0)\frac{p's_0}{2} = \frac{p'}{4}(\tilde{q}^2 -s_0^2) <0. 
\end{equation*}
Thus, we have, for $\ve>0$ sufficiently small and $q<s_0$, that there is $C_q>0$ such that 
\begin{align} \label{e.1}
	&-\frac{s_0-8\ve}{8} \vert z \vert^2 -\frac{s_0}{8} \frac{\vert \vert z \vert ^2 -2v\cdot z \vert^2}{\vert z \vert^2} - \frac{\tilde{q}}{4} (\vert z \vert^2 -2v\cdot z) \nonumber \\ 
	&\quad \leq -C_q \left ( \vert z\vert^2 +\frac{\vert v \cdot z \vert^2}{\vert z \vert^2} \right ) \nonumber\\ 
	&\quad = -C_q\left ( \frac{\vert z \vert^2}{2} +\left(\frac{\vert z \vert^2}{2}+\frac{\vert v \cdot z \vert^2}{\vert z \vert^2}\right) \right ) \nonumber\\
	&\quad \leq -C_q \left ( \frac{\vert z \vert^2}{2} + \vert v \cdot z \vert \right ).
\end{align}

Inserting \eqref{e.1}  into \eqref{K2esti.1}, we have 
\begin{align}\label{e.3}
	&\Vert w_{q,\vartheta,\beta}f \Vert_{L^p_v} \langle v \rangle^{\gamma-1} w_{q,\vartheta,\beta}(v) \left(\int_{\R^3} \frac{\exp\left(-\frac{p's_0}{8} \vert v-u \vert^2 -\frac{p's_0}{8} \frac{(\vert v \vert^2 - \vert u \vert^2)^2}{\vert v-u \vert^2} \right)}{\vert v-u \vert^{p'}} \frac{e^{p'\ve\vert v-u\vert ^2}}{w_{q,\vartheta,\beta}(u)^{p'}}\,du\right)^{1/p'} \nonumber \\
	&\quad \leq C_{q,\ve} \langle v \rangle^{\gamma-1} \Vert w_{q,\vartheta,\beta}f \Vert_{L^p_v} \left(\int_{\R^3} \frac{(1+\vert z \vert^2)^{p'\beta/2}}{\vert z \vert^{p'}} \exp\left \{ -C_{p,q} \left ( \frac{\vert z \vert^2}{2}+ \vert v \cdot z \vert \right ) \right\} \,dz\right)^{1/p'} \nonumber \\
	&\quad \leq C_{p,q,\ve} \langle v \rangle^{\gamma-1} \Vert w_{q,\vartheta,\beta}f \Vert_{L^p_v} \left(\int_{\R^3}\frac{1}{\vert z \vert^{p'}} \exp\left \{ -C_{p,q} \left ( \frac{\vert z \vert^2}{4}+ \vert v \cdot z\vert \right ) \right\} dz\right)^{1/p'}.
\end{align} 
For $\vert v \vert \geq 1 $, we make a change of variables $z_{\shortparallel}= \left( z \cdot \frac{v}{\vert v \vert} \right ) \frac{v}{\vert v \vert}$, and $z_\perp = z -z_{\shortparallel}$ so that $ \vert v \cdot z\vert = \vert v \vert \cdot \vert z_{\shortparallel}\vert$, which implies that 
\begin{align}\label{e.4}
 &\int_{\R^3} \frac{1}{\vert z \vert^{p'}} \exp\left \{ -C_{p,q} \left ( \frac{\vert z \vert^2}{4}+ \vert v \cdot z \vert \right ) \right\} \,dz \nonumber\\ 
 &\quad \leq C \int_{\R^2} \frac{1}{\vert z_{\perp} \vert^{p'}} e^{-C_{p,q} \frac{\vert z \vert^2}{4}} \left \{\int_{-\infty}^\infty e^{-C_{p,q} \vert v \vert \cdot \vert z_{\shortparallel} \vert} \, d \vert z_{\shortparallel} \vert \right\}\, dz_{\perp} \nonumber \\
 &\quad \leq \frac{C_{p,q}}{\vert v \vert} \int_{\R^2} \frac{1}{\vert z_\perp\vert^{p'}} e^{-\frac{C_{p,q}}{4} \vert z_\perp \vert^2} \left \{ \int_{-\infty}^\infty e^{-C_q \vert y \vert} dy\right \} \, d z_{\perp} \nonumber\\
 &\quad \leq \frac{C_{p,q}}{1+\vert v \vert}, 
\end{align}
where $p' < 2$.
On the other hand, for $\vert v \vert \leq 1$, 
\begin{align} \label{e.5}
&\int_{\R^3} \frac{1}{\vert z \vert^{p'}} \exp\left \{ -C_{p,q} \left ( \frac{\vert z \vert^2}{4}+ \vert v \cdot z \vert \right ) \right\} \,dz\nonumber \\ 
 &\quad \leq C \int_{\R^2} \frac{1}{\vert z_{\perp} \vert^{p'}}  e^{-\frac{C_{p,q}}{4} \vert z_\perp \vert^2}  \left \{\int_{-\infty}^\infty e^{-\frac{C_{p,q}}{4}\vert z_{\shortparallel}\vert^2}e^{-C_{p,q} \vert v \vert \cdot \vert z_{\shortparallel} \vert} \, d \vert z_{\shortparallel} \vert \right\}\, dz_{\perp}\nonumber\\
 &\quad \leq C \int_{\R^2} \frac{1}{\vert z_\perp\vert^{p'}} e^{-\frac{C_{p,q}}{4} \vert z_\perp \vert^2} \left \{ \int_{-\infty}^\infty e^{-\frac{C_{p,q}}{4}\vert z_{\shortparallel}\vert^2} d\vert z_{\shortparallel}\vert\right \} \, d z_{\perp} \quad \nonumber\\
 &\quad \leq \frac{C_{p,q}}{1+\vert v \vert}, 
\end{align}
where $p'<2$.
Inserting \eqref{e.3}, \eqref{e.4}, and \eqref{e.5} into \eqref{K2esti.1}, we obtain 
\begin{equation*}
	w_{q,\vartheta,\beta}(v) \int_{\R^3} {k}_2^{ns} (v,u) \left( \frac{e^{\ve \vert v-u \vert^2}\vert w_{q,\vartheta,\beta}f(u) \vert}{w_{q,\vartheta,\beta}(u)}\right) \,du \leq C_{p,q,\ve} \langle v \rangle^{\gamma-1-\frac{1}{p'}} \Vert w_{q,\vartheta,\beta}f \Vert_{L^p_v}.
\end{equation*}
Thus, we complete the proof of Lemma \ref{Knonsing}.
 \end{proof}
 
\begin{Coro} \label{pKestimate} 
	Let $-3<\gamma<0$ and $p>\max\left\{\frac{3}{3+\gamma},2 \right\}$. We have
	\begin{align*}
		\int_{\R^3} |k_w(v,u)|^{p'}du \le \frac{C_{p,q,\beta}}{(1+|v|)^{p'(1-\gamma)+1}}
	\end{align*}	
	for $v \in \R^3$.
\end{Coro}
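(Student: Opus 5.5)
Here $k_w(v,u)$ is the kernel of $K_w$, i.e. $k_w(v,u)=k(v,u)\,w_{q,\vt,\beta}(v)/w_{q,\vt,\beta}(u)$; presumably it is the kernel of the nonsingular part $K^{ns}_w$, or the cutoff $\ve$ is fixed so that constants may depend on it. The plan is to redo the computation in the proof of Lemma \ref{Knonsing} at the level of the kernel, without pairing against $f$. Split $k_w=k_{1,w}+k_{2,w}$ and use $|a+b|^{p'}\le 2^{p'}(|a|^{p'}+|b|^{p'})$, so the two pieces can be bounded separately.

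\textbf{The $k_1$ piece.} By Lemma \ref{Kker} (or Lemma \ref{k1ker} after the cutoff),
\begin{align*}
|k_{1,w}(v,u)|\le C|v-u|^\gamma\,\frac{w(v)\mu(v)^{1/2}\mu(u)^{1/2}}{w(u)} .
\end{align*}
Since $q<1$, the factor $w(v)\mu(v)^{1/2}$ is bounded by $C\mu(v)^{(1-q)/8}$, and $\mu(u)^{1/2}/w(u)\le \mu(u)^{1/2}$. Hence
\begin{align*}
\int|k_{1,w}|^{p'}du\le C\mu(v)^{p'(1-q)/8}\int|v-u|^{p'\gamma}\mu(u)^{p'/2}du .
\end{align*}
The integral is finite because $p'\gamma>-3$, which is exactly $p>\frac{3}{3+\gamma}$. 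The Gaussian factor in $v$ beats any polynomial, so this piece is bounded by $C(1+|v|)^{-p'(1-\gamma)-1}$.

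\textbf{The $k_2$ piece.} Use \eqref{k2esti.1} of Lemma \ref{k2ker}, raised to the power $p'$. Then $|k_{2,w}|^{p'}$ is bounded by $C_\ve(1+|v|+|u|)^{-p'(1-\gamma)}\le C_\ve\langle v\rangle^{-p'(1-\gamma)}$ times
\begin{align*}
|v-u|^{-p'}\exp\Big(-\tfrac{p's_2}{8}|z|^2-\tfrac{p's_1}{8}\tfrac{(|v|^2-|u|^2)^2}{|z|^2}\Big)\Big(\tfrac{w(v)}{w(u)}\Big)^{p'},\qquad z=v-u .
\end{align*}
Now repeat the argument of Lemma \ref{Knonsing}. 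Bound the weight ratio by $C_\beta(1+|z|^2)^{\beta/2}e^{-\frac{\tilde q}{4}(|u|^2-|v|^2)}$. Choosing $q<s_0<1$ makes the quadratic form in $(|z|,\,v\cdot z/|z|)$ negative definite, which gives the bound \eqref{e.1}, namely an exponent $\le -C_{p,q}(|z|^2/2+|v\cdot z|)$. The polynomial $(1+|z|^2)^{p'\beta/2}$ is absorbed by half of the Gaussian in $|z|$, which yields the dependence of the constant on $\beta$.

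It remains to estimate $\int|z|^{-p'}e^{-C(|z|^2/4+|v\cdot z|)}dz$. Decompose $z=z_\shortparallel+z_\perp$ along $v$ and use $|z|\ge|z_\perp|$. The integral in $z_\perp\in\R^2$ converges since $p'<2$, i.e. $p>2$. For $|v|\ge1$ the integral in $z_\shortparallel$ gives $C/|v|$, and for $|v|\le1$ it gives a constant. This is exactly \eqref{e.4}--\eqref{e.5}, and it produces the extra factor $(1+|v|)^{-1}$. Multiplying by $\langle v\rangle^{-p'(1-\gamma)}$ gives the claimed rate, which combines with the $k_1$ bound to give the corollary.

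\textbf{Main obstacle.} The delicate step is the exponent bookkeeping for the $k_2$ piece: one needs $\tilde q\le q<s_0$ uniformly in $t$ so that the discriminant stays negative, and one must place the singular factor $|z|^{-p'}$ on $z_\perp$ to gain the $1/|v|$ decay. Both are already verified in Lemma \ref{Knonsing}, so I expect them to transfer directly. A separate gap: if the corollary is meant for the full kernel including the singular part near $u=v$, then the singular factor $|v-u|^{-p'(3-\gamma)/2}$ from Lemma \ref{Kker} must be integrable. I would first check this for the stated range of $p$ (it requires $p'(3-\gamma)/2<3$), and otherwise restrict the statement to $k^{ns}_w$.
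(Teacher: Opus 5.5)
Your proposal is correct and is essentially the paper's argument. The corollary is the kernel-level form of the computation in the proof of Lemma \ref{Knonsing}: the factor $(1+|v|+|u|)^{\gamma-1}$ from \eqref{k2esti.1}, the negative-definite quadratic form (uniform because $\tilde q\le q<s_0$), and the $z_\perp$/$z_\shortparallel$ splitting (which needs $p'<2$) together give exactly $(1+|v|)^{-p'(1-\gamma)-1}$. So it holds for $k^{ns}_w$, with a constant that also depends on the fixed cutoff $\ve$.

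One refinement to your closing caveat: for the full kernel, integrability of $|v-u|^{-p'(3-\gamma)/2}$ (that is, $p>\frac{6}{3+\gamma}$) only gives finiteness, not the stated rate. Lemma \ref{Kker} carries no $(1+|v|+|u|)^{\gamma-1}$ factor, so the near-diagonal contribution estimated through it decays only like $(1+|v|)^{-1}$, which comes from the factor $e^{-\frac{(|v|^2-|u|^2)^2}{8|v-u|^2}}$. Reading the statement for $k^{ns}_w$ is therefore the right choice.
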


\bigskip

\subsection{Estimates on nonlinear operators}
\label{Estimates on nonlinear operators}
 The estimates obtained in this subsection will be used to control the nonlinear terms in the $L^p_v L^\infty_x$ framework. Due to the different behaviors of the loss and gain parts of the collision operator, we treat them separately. We emphasize that the estimates for the gain term $\Gamma^+$ play a crucial role in the large-amplitude $L^p_v L^\infty_x$ analysis. In particular, they provide the key mechanism for deriving relative entropy. 

\begin{Lem} \label{pointwiseGamma-estimate} Let $-3 < \gamma < 0$ and $\ell>\frac{3}{3+\gamma}$. Then there exists a constant $C_{\ell}>0$, depending on $\ell$, such that
\begin{align} \label{LpGamma111111}
	\left|w_{q,\vt,\beta}(v) \Gamma^-(f,g)(v) \right| \le C_\ell \nu(v) |w_{q,\vt,\beta}f(v)| \left(\int_{\R^3}\mu(u)^{1/2}|g(u)|^{\ell}du\right)^{1/\ell}.
\end{align}
 Furthermore, if $p>\frac{3}{3+\gamma}$, it holds that
 \begin{align} \label{LpGamma1111}
 	\left\|w_{q,\vt,\beta}\Gamma^-(f,g) \right\|_{L^p_v} \le C_{p} \|w_{q,\vt,\beta}f\|_{L^p_v} \|w_{q,\vt,\beta}g\|_{L^p_v}.
 \end{align}
\end{Lem}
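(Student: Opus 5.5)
We start from the explicit form of the loss term,
\begin{align*}
	\Gamma^-(f,g)(v) = \mu(v)^{-1/2}Q^-(\mu^{1/2}f,\mu^{1/2}g)(v) = f(v)\int_{\R^3}\int_{\S^2} b(\cos\theta)|v-u|^{\gamma}\mu(u)^{1/2}g(u)\,d\omega du .
\end{align*}
The factor $f(v)$ comes out of the integral, and the weight $w_{q,\vt,\beta}(v)$ simply multiplies it. Using $b\le C_b$, the first claim \eqref{LpGamma111111} therefore reduces to the scalar bound
\begin{align*}
	\int_{\R^3}|v-u|^{\gamma}\mu(u)^{1/2}|g(u)|\,du \le C_\ell\,\nu(v)\left(\int_{\R^3}\mu(u)^{1/2}|g(u)|^{\ell}du\right)^{1/\ell}.
\end{align*}

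To prove this, split $\mu(u)^{1/2}=\mu(u)^{\frac{1}{2\ell'}}\mu(u)^{\frac{1}{2\ell}}$ and apply H\"older with exponents $(\ell',\ell)$. This gives the factor
\[
\left(\int_{\R^3}|v-u|^{\ell'\gamma}\mu(u)^{1/2}du\right)^{1/\ell'} .
\]
The integral is locally finite exactly when $\ell'\gamma>-3$, which is the condition $\ell>\frac{3}{3+\gamma}$.

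It then remains to show that this integral is $\lesssim\langle v\rangle^{\ell'\gamma}$, so that its $1/\ell'$ power is $\lesssim\langle v\rangle^{\gamma}\sim\nu(v)$. We treat two regions.
\begin{itemize}
\item For $|v|\le 1$, the integral is bounded by a constant, using local integrability near $u=v$ and the Gaussian decay at infinity.
\item For $|v|\ge1$, we split into $|u|\le |v|/2$ and $|u|\ge |v|/2$. On $|u|\le|v|/2$ we have $|v-u|\ge |v|/2$, giving the bound $C|v|^{\ell'\gamma}$. On $|u|\ge|v|/2$ the factor $\mu(u)^{1/4}\le e^{-|v|^2/32}$ can be extracted, and the remaining integral is bounded uniformly by local integrability, giving a Gaussian bound that is much smaller than $\langle v\rangle^{\ell'\gamma}$.
\end{itemize}
This is the standard computation and the only place the hypothesis on $\ell$ enters.

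For the $L^p$ bound \eqref{LpGamma1111}, take $\ell=p$ in \eqref{LpGamma111111}. Since $\nu(v)\lesssim1$ for $\gamma<0$, we get
\begin{align*}
	\|w\Gamma^-(f,g)\|_{L^p_v}\le C_p\|wf\|_{L^p_v}\left(\int\mu^{1/2}|g|^p du\right)^{1/p}.
\end{align*}
Finally, $\mu(u)^{1/2}\le w_{q,\vt,\beta}(u)^{p}$ because $w\ge1$, so $\left(\int\mu^{1/2}|g|^p\right)^{1/p}\le\|wg\|_{L^p_v}$, which concludes the proof.

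The only mildly delicate step is the uniform estimate of $\int|v-u|^{\ell'\gamma}\mu(u)^{1/2}du$ with the correct decay $\langle v\rangle^{\ell'\gamma}$. Even without the decay one obtains the weaker statement with a constant in place of $\nu(v)$, but the sharp factor $\nu(v)$ is what is claimed.
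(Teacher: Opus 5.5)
Your proposal is correct and follows the paper's proof. The steps match: you pull out $f(v)$, apply H\"older with exponents $(\ell',\ell)$, use the bound $\bigl(\int|v-u|^{\ell'\gamma}\mu(u)^{1/2}du\bigr)^{1/\ell'}\lesssim\nu(v)$ (valid because $\ell'\gamma>-3$), and then set $\ell=p$ for the $L^p_v$ estimate. You also write out details the paper leaves implicit: the region splitting that gives the $\langle v\rangle^{\gamma}$ decay, and the bounds $\nu\lesssim 1$ and $\mu^{1/2}\le w_{q,\vt,\beta}^{p}$ used in the last step.
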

\begin{proof}
We can easily compute
	\begin{align*}
		\left|w_{q,\vt,\beta}(v) \Gamma^-(f,g)(v) \right| &\le w_{q,\vt,\beta}(v) \int_{\R^3} \int_{\S^2}b(\cos \theta) |v-u|^\gamma  \mu(u)^{1/2} |f(v)||g(u)|d\omega du\\
		& \le C |w_{q,\vt,\beta}(v)f(v)| \int_{\R^3} |v-u|^\gamma \mu(u)^{1/2} |g(u)|du\\
		& \le C|w_{q,\vt,\beta}(v)f(v)| \left(\int_{\R^3}  |v-u|^{\ell'\gamma} \mu(u)^{1/2} du\right)^{1/\ell'} \left(\int_{\R^3}\mu(u)^{1/2}|g(u)|^\ell du\right)^{1/\ell}\\
		& \le C_{\ell}\nu(v)|w_{q,\vt,\beta}(v)f(v)| \left(\int_{\R^3}\mu(u)^{1/2}|g(u)|^\ell du\right)^{1/\ell},
	\end{align*}
	where $\ell'\gamma>-3$.\\
	\indent If we set $\ell =p$ and then we take the $L^p_v$ norm to \eqref{LpGamma111111}, we can derive the estimate \eqref{LpGamma1111}. 	
\end{proof}

\bigskip

Before handling the pointwise estimate for $w_{q,\vt,\beta} \Gamma^+(f,f)$, we need to calculate the Jacobian determinant $\left|\frac{du'}{du} \right|$ because it is necessary to derive the following estimate:
\begin{align*}
	\left( \int_{\R^3}\int_{\S^2}b(\cos \theta) \left|w_{q,\vt,\beta}(u')f(u')\right|^pd\omega du\right)^{1/p} \lesssim \|w_{q,\vt,\beta}f\|_{L^p_v}.
\end{align*}
To calculate the determinant $\left|\frac{du'}{du} \right|$, we temporarily use the $\sigma$-representation instead of the $\omega$-representation \eqref{omegarep} :
\begin{align*}
	v' = \frac{v+u}{2} + \frac{|v-u|}{2}\sigma , \quad u'=\frac{v+u}{2}-\frac{|v-u|}{2}\sigma
\end{align*}
with $\sigma \in \S^2$. Then the collision operator \eqref{collisionoperator} becomes
\begin{align*}
	Q(F_1,F_2) = C_\sigma\int_{\R^3}\int_{\S^2} \tilde{B}(v-u,\sigma) \left[F_1(v')F_2(u')-F_1(v)F_2(u) \right]d\sigma du,
\end{align*}
where $C_\sigma$ is some constant. 
Here, the collision kernel $\tilde{B}$ is
\begin{align*}
	\tilde{B}(v-u,\sigma) = \tilde{b}(\cos \tilde{\theta}) |v-u|^\gamma, \quad 0\le \tilde{b}(\cos \tilde{\theta}) \le C_{\tilde{b}},
\end{align*}
where $-3 < \gamma < 0$, $0\le \tilde{\theta} \le \frac{\pi}{2}$, and $\cos\tilde{\theta} = \frac{(v-u)}{|v-u|}\cdot \sigma$. For each $\sigma$ and fixed $v$, we make the change of variables $u \mapsto u'$. This change of variables is well-defined on the set $\{\tilde{\theta} : \cos\tilde{\theta}>0  \}$, and we have
\begin{align*}
	\left|\frac{du'}{du} \right| = \left|\frac{1}{2}I + \frac{1}{2} \frac{(v-u)}{|v-u|}\otimes\sigma \right| = \frac{1+\cos \tilde{\theta}}{8}\ge \frac{1}{8}.
\end{align*}
Using the change of variables, we can derive
\begin{align} \label{Lpchangeofvariable}
	\left( \int_{\R^3}\int_{\S^2}b(\cos \theta) \left|w_{q,\vt,\beta}(u')f(u')\right|^pd\omega du\right)^{1/p} &\lesssim \left( \int_{\R^3}\int_{\S^2}\tilde{b}(\cos \tilde{\theta}) \left|w_{q,\vt,\beta}(u)f(u)\right|^pd\sigma du\right)^{1/p} \nonumber\\
	&\lesssim \|w_{q,\vt,\beta}f\|_{L^p_v}.
\end{align}

\bigskip

\begin{Lem} \label{pointwiseGamma+estimate} Let $-3 < \gamma < 0$. 
Then the following hold:
\begin{enumerate}[label=(\arabic*)]
  \item If $-1\le \gamma<0$ and $p> 7-4\gamma$, then there exists a constant $C_{p,\gamma,\beta}>0$ such that
	\begin{align} \label{pointwiseGamma+estimate1}
		\left|w_{q,\vt, \beta}(v) \Gamma^+(f,f)(v) \right| \le \frac{C_{p,\gamma,\beta} \|w_{q,\vt,\beta}f\|_{L^p_v}}{(1+|v|)^{-\frac{\gamma}{p}+\frac{5p-5}{4p}}}\left(\int_{\R^3}(1+|\eta|)^{\frac{4p-8}{p}-2\beta}|w_{q,\vt,\beta}f(\eta)|^2 d\eta \right)^{1/2}
	\end{align}
	for all $t\in [0,\infty)$, $v\in \R^3$.\\
	\item If $-3<\gamma<-1$ and $p>\frac{6}{3+\gamma}$, then there exists a constant $C_{p,\gamma,\beta}>0$ such that
	\begin{align}\label{pointwiseGamma+estimate2}
		\left|w_{q,\vt, \beta}(v) \Gamma^+(f,f)(v) \right| \le \frac{C_{p,\gamma,\beta} \|w_{q,\vt,\beta}f\|_{L^p_v}}{(1+|v|)^{-\gamma+\frac{p-1}{p}\varpi}}\left(\int_{\R^3}(1+|\eta|)^{\frac{4}{m-1}-p'm'\beta}|w_{q,\vt,\beta}f(\eta)|^{p'm'} d\eta \right)^{\frac{1}{p'm'}}
	\end{align}
	for all $t\in [0,\infty)$, $v\in \R^3$, where $m=\frac{1}{2}\left(\frac{p-1}{p-2}-\frac{3}{p'\gamma}\right)$ and $\varpi=\frac{1}{2}\left(p'\gamma+\frac{3}{m}\right)$. Note that $p'm'<p$.
\end{enumerate}
\end{Lem}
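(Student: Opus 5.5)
We start from the definition $w\Gamma^+(f,f)(v)=w(v)\int_{\R^3}\int_{\S^2}b(\cos\theta)|v-u|^\gamma\mu(u)^{1/2}f(v')f(u')\,d\omega du$. We use $\mu(u)^{1/2}=\mu(v')^{1/4}\mu(u')^{1/4}\mu(v)^{-1/4}\mu(u)^{1/4}$, which follows from energy conservation \eqref{momentumenergyconserv}. We also use the weight comparison $w(v)\lesssim w(v')w(u')$: the exponential parts satisfy $e^{\frac{\tilde q}{4}|v|^2}\le e^{\frac{\tilde q}{4}(|v'|^2+|u'|^2)}$, and the polynomial parts satisfy $\langle v\rangle^\beta\lesssim\langle v'\rangle^\beta\langle u'\rangle^\beta$ since $|v|^2\le |v'|^2+|u'|^2$. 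Together these give
\begin{align*}
|w\Gamma^+(f,f)(v)|\lesssim \int_{\R^3}\int_{\S^2}|v-u|^\gamma\, e^{-c|u|^2}\,\frac{|wf(v')|\,|wf(u')|}{w(u')}\,\langle u'\rangle^{\beta}\cdots\,d\omega du .
\end{align*}
Some Gaussian factor in $u$ (or in $v'$, $u'$) survives because $q<1$. The loss $\mu(v)^{-1/4}$ is absorbed by the gain $w(v)\mu(v)^{1/4}$ coming from $q<1$. In fact we will not aim for Gaussian decay in $v$; the target only asks for polynomial decay $(1+|v|)^{-a}$. We keep a factor $\langle u'\rangle^{-\beta}$ (or $\langle v'\rangle^{-\beta}$) to produce the weight $(1+|\eta|)^{-2\beta}$ in the second factor.

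Next we apply H\"older's inequality in $(u,\omega)$ with three factors. The first is $|v-u|^\gamma$ times the Gaussian in $u$, raised to a suitable power. The second is $|wf(v')|$ in $L^p$; by the change of variables $u\mapsto v'$ (the analogue of \eqref{Lpchangeofvariable}, with Jacobian bounded below on the relevant angular region) it is bounded by $\|wf\|_{L^p_v}$. The third is $|wf(u')|\langle u'\rangle^{-\beta}$ in $L^2$ (case (1)) or $L^{p'm'}$ (case (2)), which turns into the integral over $\eta=u'$ on the right-hand side.

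The polynomial decay in $v$ comes from the first factor. The standard estimate is that the integral $\int|v-u|^{r\gamma}e^{-cr|u|^2}du$ behaves like $\langle v\rangle^{r\gamma}$, giving $\langle v\rangle^{\gamma}$ after taking the $1/r$ power. We only get this if $r\gamma>-3$, which is where $p>6/(3+\gamma)$ enters in case (2). The additional decay $(1+|v|)^{-\frac{5p-5}{4p}}$, respectively $(1+|v|)^{-\frac{p-1}{p}\varpi}$, comes from trading Gaussian decay in $u$ against the weights $\langle\eta\rangle^{\frac{4p-8}{p}}$ and $\langle\eta\rangle^{\frac{4}{m-1}}$. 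Since $|v|\lesssim|v'|+|u'|$, when $|u'|$ is large we move powers of $\langle v\rangle$ onto $\langle u'\rangle$, at the cost of the positive exponent appearing in the $\eta$-integral. When $|v'|$ is large we instead use the averaging over $\omega$ near the sphere, as in Lemma \ref{Knonsing}, which gains $|v|^{-1}$-type decay.

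The main obstacle is the case $-3<\gamma<-1$. There $|v-u|^{\gamma}$ cannot be placed in $L^2$, so the exponent must be split as $m$, $m'$ between the singular kernel and the $u'$-factor. The relation $|v-u|=|v'-u'|$ and the Jacobian $\frac{1+\cos\tilde\theta}{8}$ must then be handled simultaneously. We would first fix $m$ so that $|v-u|^{p'\gamma m}$ is integrable, which determines the stated $m=\frac12\left(\frac{p-1}{p-2}-\frac{3}{p'\gamma}\right)$. We would then check $p'm'<p$, and finally track the exponent bookkeeping to obtain $\varpi$.
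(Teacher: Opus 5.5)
The gap is in your second H\"older factor. You claim $\big(\int_{\R^3}\int_{\S^2}b(\cos\theta)\,|w_{q,\vt,\beta}f(v')|^p\,d\omega\,du\big)^{1/p}\lesssim\|w_{q,\vt,\beta}f\|_{L^p_v}$ via the change of variables ``$u\mapsto v'$''. This quantity is in general infinite.

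For fixed $\omega$, $v'=v+((u-v)\cdot\omega)\omega$ depends only on $s=(u-v)\cdot\omega$. So the $u$-integration runs over entire planes $\{(u-v)\cdot\omega=s\}$, on which your second factor has no decay. These planes run into the grazing region $\cos\theta\to 0$, and already for $b=|\cos\theta|$ one gets $\int_{\R^2}|s|(s^2+|z_\perp|^2)^{-1/2}dz_\perp=\infty$.

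In the $\sigma$-representation the same fact reads: the Jacobian of $u\mapsto v'$ is $\frac{1-\cos\tilde\theta}{8}$. It vanishes at grazing collisions $\cos\tilde\theta\to 1$, which lie inside the half $\cos\tilde\theta\ge 0$ where \eqref{Lpchangeofvariable} treats $u\mapsto u'$ (and symmetrically on the other half). So there is no ``relevant angular region'' on which both your second and third factors become norms of $f$. Only one post-collisional velocity can go into an unweighted $L^p(d\omega\,du)$ norm. The Gaussian $\mu(u)^{1/2}$ and the singularity $|v-u|^{\gamma}$ must stay with the other velocity, not be isolated in your first factor.

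That is exactly what the paper does. Only $wf(u')$ is put in $L^p$. The factor $f(v')$ is kept with $|v-u|^{p'\gamma}\mu(u)^{p'/2}$ and handled by the Carleman-type change of variables $u'=v+z_\perp$, $v'=v+z_\shortparallel$, $(\omega,|z_\shortparallel|)\mapsto\eta=v+z_\shortparallel$, with $d|z_\shortparallel|\,d\omega=|z_\shortparallel|^{-2}d\eta$.

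The paper then splits $(|\eta-v|^2+|z_\perp|^2)^{\frac{p'\gamma-1}{2}}\le|z_\perp|^{-2+\varpi}|\eta-v|^{p'\gamma+1-\varpi}$; this is where $\varpi$ comes from. The Gaussian $e^{-p'|\eta+z_\perp|^2/4}$ then makes the integral over the plane $\{(\eta-v)\cdot z_\perp=0\}$ finite. What remains is the singular kernel $|\eta-v|^{p'\gamma-\varpi}$ acting on $|e^{\tilde q|\eta|^2/4}f(\eta)|^{p'}$, with no Gaussian left in $\eta$.

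The decay in $v$ then comes from H\"older against $(1+|\eta|)^{-4}$:
$\int|\eta-v|^{(p'\gamma-\varpi)\frac{2p-2}{p-2}}(1+|\eta|)^{-4}d\eta\lesssim(1+|v|)^{(p'\gamma-\varpi)\frac{2p-2}{p-2}}$, with $m$ in place of $\frac{2p-2}{p-2}$ in case (2). The price is the positive power $(1+|\eta|)^{\frac{4p-8}{p}}$, resp. $(1+|\eta|)^{\frac{4}{m-1}}$, in the $L^2$, resp. $L^{p'm'}$, factor.

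Your proposed sources of decay are not shown to produce the stated exponents. These are $\int|v-u|^{r\gamma}e^{-c|u|^2}du\sim\langle v\rangle^{r\gamma}$, ``moving powers of $\langle v\rangle$ onto $\langle u'\rangle$'', and an unspecified averaging ``as in Lemma~\ref{Knonsing}'' when $|v'|\gtrsim|v|$.

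The weight step is also off. Using $w(v)\lesssim w(v')w(u')$ leaves no spare $\langle u'\rangle^{-\beta}$. The paper instead uses the dichotomy $|v|^2\le 2|u'|^2$ or $|v|^2\le 2|v'|^2$. It puts the full weight on one velocity and only the exponential part $e^{\tilde q|\cdot|^2/4}$ on the other, and that is where $(1+|\eta|)^{-2\beta}$ comes from.
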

\begin{proof}
	From the energy conservation law \eqref{momentumenergyconserv}, we get $|v|^2 \le |u'|^2+|v'|^2$, which implies that
	\begin{align*}
		\text{either} \quad |v|^2 \le 2|u'|^2 \quad \text{or} \quad |v|^2 \le 2|v'|^2.
	\end{align*}
	Then we have
	\begin{equation} \label{gamma1}
	\begin{aligned}
		&\left|w(v) \Gamma^+(f,f)(v) \right|\\ &\le C_\beta\int_{\R^3}\int_{\S^2}B(v-u, \omega) \mu(u)^{1/2}\left|w_{q,\vt,\beta}(u')f(u')\exp\left\{\frac{q}{8} \left(1+\frac{1}{(1+t)^\vt}\right)|v'|^2\right\}f(v') \right|d\omega du\\
		&\quad +C_\beta\int_{\R^3}\int_{\S^2}B(v-u, \omega) \mu(u)^{1/2}\left|\exp\left\{\frac{q}{8} \left(1+\frac{1}{(1+t)^\vt}\right)|u'|^2\right\}f(u')w_{q,\vt,\beta}(v')f(v') \right|d\omega du.
\end{aligned}
\end{equation}
By the rotation, one can make an interchange of $v'$ and $u'$, and change the second term in \eqref{gamma1} to the same form as the first term in \eqref{gamma1}. Thus it suffices to estimate the first term in \eqref{gamma1}. We denote the first term in \eqref{gamma1} by $I_1$.\\
\indent We use the H\"older inequality to obtain
\begin{align*}
	I_1 &\le C_\beta \left( \int_{\R^3}\int_{\S^2}b(\cos \theta) \left|w_{q,\vt,\beta}(u')f(u')\right|^pd\omega du\right)^{1/p}\\
	& \quad \times \left(\int_{\R^3}\int_{\S^2}b(\cos \theta)|v-u|^{p'\gamma} \mu^{p'/2}(u)\exp\left\{\frac{qp'}{8}\left(1+\frac{1}{(1+t)^\vt} \right)|v'|^2\right\}|f(v')|^{p'} d\omega du \right)^{1/p'},
\end{align*}
where $p'$ is the exponent conjugate of $p$. Using \eqref{Lpchangeofvariable}, we obtain 
\begin{align*}
	\left( \int_{\R^3}\int_{\S^2}b(\cos \theta) \left|w_{q,\vt,\beta}(u')f(u')\right|^pd\omega du\right)^{1/p} \lesssim \|w_{q,\vt,\beta}f\|_{L^p_v}.
\end{align*}
As in \cite{Glassey1996}, we rewrite \eqref{omegarep} as $u'=v+z_\perp$, $v' = v+z_\shortparallel$, with $z=u-v$, $z_\shortparallel = (z\cdot \omega)\omega$, $z_\perp = z-(z\cdot \omega)\omega$. Then we have
\begin{align*}
	b(\cos \theta)|v-u|^{p' \gamma}\le C\left|\frac{(v-u)}{|v-u|}\cdot \omega \right||v-u|^{p'\gamma} \le C|z_\shortparallel||z|^{p'\gamma-1} \le C|z_\shortparallel|\left(|z_\shortparallel|^2+|z_\perp|^2\right)^{\frac{p'\gamma-1}{2}}.
\end{align*}
We make a change of variables $u\mapsto z$ to obtain
\begin{align*}
	I_1 &\le C_\beta \|w_{q,\theta,\beta}f\|_{L^p_v} \bigg(\int_{\R^3}\int_{\S^2}|z_\shortparallel|\left(|z_\shortparallel|^2+|z_\perp|^2\right)^{\frac{p'\gamma-1}{2}} e^{-\frac{p'|v+z|^2}{4}}\exp\left\{\frac{qp'}{8}\left(1+\frac{1}{(1+t)^\vt} \right) |v+z_{\shortparallel}|^2 \right\}\\
	&\quad \times \left| f(v+z_\shortparallel) \right|^{p'} d\omega dz \bigg)^{1/p'}.
\end{align*}
Note that $dz = dz_\perp d|z_{\shortparallel}|$ and we make a change of variables $(\omega, |z_{\shortparallel}|) \mapsto \eta:=v+z_{\shortparallel}$ with $d|z_{\shortparallel}| d\omega = \frac{1}{|z_\shortparallel|^2}d\eta$:
\begin{align*} 
	I_1 &\le C_\beta \|w_{q,\vt,\beta}f\|_{L^p_v} \Bigg( \int_{\R^3}  \frac{\exp\left\{ \frac{qp'}{8}\left(1+\frac{1}{(1+t)^\vt} \right) |\eta|^2 \right\}|f(\eta)|^{p'}}{|\eta-v|} \nonumber\\
	& \quad \times  \Bigg(\int_{(\eta-v)\cdot z_\perp=0}\left(|\eta-v|^2+|z_\perp|^2\right)^{\frac{p'\gamma-1}{2}} e^{-\frac{p'|\eta+z_\perp|^2}{4}} dz_\perp \Bigg)  d\eta \Bigg)^{1/p'}\nonumber\\
	& \le C_\beta \|w_{q,\vt,\beta}f\|_{L^p_v} \Bigg( \int_{\R^3}  \frac{\exp\left\{ \frac{qp'}{8}\left(1+\frac{1}{(1+t)^\vt} \right) |\eta|^2 \right\}|f(\eta)|^{p'}}{|\eta-v|} \nonumber\\
	& \quad \times  \Bigg(\int_{(\eta-v)\cdot z_\perp=0}|z_\perp|^{-2+\varpi}\left(|\eta-v|^2+|z_\perp|^2\right)^{\frac{p'\gamma+1-\varpi}{2}} e^{-\frac{p'|\eta+z_\perp|^2}{4}} dz_\perp \Bigg)  d\eta \Bigg)^{1/p'},
\end{align*}
where $0<\varpi<2$ is chosen later.\\
\newline 
\textbf{Case 1:} $-1\le  \gamma <0$.\\
If $p'\gamma + 1 - \varpi < 0$, which is equivalent to requiring
\[
p > \frac{1-\varpi}{1-\varpi+\gamma}
\quad \text{when } \varpi > 1+\gamma,
\]
we have
\begin{align*}
	I_1 &\le C_\beta \|w_{q,\vt,\beta}f\|_{L^p_v} \Bigg( \int_{\R^3}  |\eta-v|^{p'\gamma-\varpi}\exp\left\{ \frac{qp'}{8}\left(1+\frac{1}{(1+t)^\vt} \right) |\eta|^2 \right\}|f(\eta)|^{p'} \\
	& \quad \times  \Bigg(\int_{(\eta-v)\cdot z_\perp=0}|z_\perp|^{-2+\varpi} e^{-\frac{p'|\eta+z_\perp|^2}{4}} dz_\perp \Bigg)  d\eta \Bigg)^{1/p'}.
\end{align*}
Here, 
\begin{align*}
	\int_{(\eta-v)\cdot z_\perp=0}|z_\perp|^{-2+\varpi} e^{-\frac{p'|\eta+z_\perp|^2}{4}} dz_\perp\le C_{p,\gamma}
\end{align*}
for some constant $C_{p,\gamma}>0$. It follows that
\begin{align*}
	I_1 \le C_{p,\gamma,\beta}\|w_{q,\vt,\beta}f\|_{L^p_v} \left(\int_{\R^3}  |\eta-v|^{p'\gamma-\varpi}\left|\exp\left\{ \frac{q}{8}\left(1+\frac{1}{(1+t)^\vt} \right) |\eta|^2 \right\}f(\eta)\right|^{p'} d\eta \right)^{1/p'}.
\end{align*}
Since $p'<2$, we use the Cauchy-Schwarz inequality to derive that
\begin{align*}
	&\left(\int_{\R^3}  |\eta-v|^{p'\gamma-\varpi}\left|\exp\left\{ \frac{q}{8}\left(1+\frac{1}{(1+t)^\vt} \right) |\eta|^2 \right\}f(\eta)\right|^{p'} d\eta\right)^{1/p'} \\
	& \le C_{p,\beta}\left(\int_{\R^3} |\eta-v|^{(p'\gamma-\varpi)\frac{2p-2}{p-2}} \frac{1}{(1+|\eta|)^4}d \eta  \right)^{\frac{p-2}{2p}} \left(\int_{\R^3}(1+|\eta|)^{\frac{4p-8}{p}-2\beta}|w_{q,\vt,\beta}f(\eta)|^2 d\eta\right)^{1/2}.
\end{align*}
If $(p'\gamma - \varpi)\frac{2p-2}{p-2} > -3$, that is, $p > \dfrac{6-2\varpi}{2\gamma - 2\varpi + 3}$ when $\varpi < \dfrac{2\gamma+3}{2}$. It holds that
\begin{align*}
	\int_{\R^3} |\eta-v|^{(p'\gamma-\varpi)\frac{2p-2}{p-2}} \frac{1}{(1+|\eta|)^4}d \eta \le \frac{C_{p,\gamma}}{(1+|v|)^{(-p'\gamma+\varpi)\frac{2p-2}{p-2}}},
\end{align*}
which implies that
\begin{align*}
	I_1 \le \frac{C_{p,\gamma,\beta}\|w_{q,\vt,\beta}f\|_{L^p_v}}{(1+|v|)^{(-p'\gamma+\varpi)\frac{p-1}{p}}}\left(\int_{\R^3}(1+|\eta|)^{\frac{4p-8}{p}-2\beta}|w_{q,\vt,\beta}f(\eta)|^2 d\eta\right)^{1/2}.
\end{align*}
We now consider the condition on $\ve$ : $1+\gamma< \varpi <\gamma+3/2 $. Then we choose $\varpi := \gamma+\frac{5}{4}$.\\
The condition on $p$ : $p>\frac{1-\varpi}{1-\varpi+\gamma}=4\gamma+1$, $p>\frac{6-2\varpi}{2\gamma-2\varpi+3}=7-4\gamma$.\\
\newline
\textbf{Case 2:} $-3 < \gamma <-1$.\\
Under the condition that $p'\gamma + 1 - \varpi < 0$, that is, $p'\gamma + 1 < \varpi$, we have
\begin{align*}
	I_1 &\le C_\beta \|w_{q,\vt,\beta}f\|_{L^p_v} \Bigg( \int_{\R^3}  |\eta-v|^{p'\gamma-\varpi}\exp\left\{ \frac{qp'}{8}\left(1+\frac{1}{(1+t)^\vt} \right) |\eta|^2 \right\}|f(\eta)|^{p'} \\
	& \quad \times  \Bigg(\int_{(\eta-v)\cdot z_\perp=0}|z_\perp|^{-2+\varpi} e^{-\frac{p'|\eta+z_\perp|^2}{4}} dz_\perp \Bigg)  d\eta \Bigg)^{1/p'}.
\end{align*}
Here, 
\begin{align*}
	\int_{(\eta-v)\cdot z_\perp=0}|z_\perp|^{-2+\varpi} e^{-\frac{p'|\eta+z_\perp|^2}{4}} dz_\perp\le C_{p,\gamma}
\end{align*}
for some constant $C_{p,\gamma}>0$. It follows that
\begin{align*}
	I_1 \le C_{p,\gamma,\beta}\|w_{q,\vt,\beta}f\|_{L^p_v} \left(\int_{\R^3}  |\eta-v|^{p'\gamma-\varpi}\left|\exp\left\{ \frac{q}{8}\left(1+\frac{1}{(1+t)^\vt} \right) |\eta|^2 \right\}f(\eta)\right|^{p'} d\eta \right)^{1/p'}.
\end{align*}
We use the H\"older inequality to derive
\begin{align*}
	&\left(\int_{\R^3}  |\eta-v|^{p'\gamma-\varpi}\left|\exp\left\{ \frac{q}{8}\left(1+\frac{1}{(1+t)^\vt} \right) |\eta|^2 \right\}f(\eta)\right|^{p'} d\eta \right)^{1/p'}\\
	& \le \left(\int_{\R^3}  |\eta-v|^{(p'\gamma-\varpi)m}\frac{1}{(1+|\eta|)^4}d\eta\right)^{\frac{1}{p'm}}\\
	& \quad \times  \left(\int_{\R^3}(1+|\eta|)^{\frac{4}{m-1}}\left|\exp\left\{ \frac{q}{8}\left(1+\frac{1}{(1+t)^\vt} \right) |\eta|^2 \right\}f(\eta)\right|^{p'm'} d\eta \right)^{\frac{1}{p'm'}},
\end{align*}
where $m$ is chosen later, depending on $p$ and $\gamma$, and $m'$ is the exponent conjugate of $m$. Note that we will derive $L^p$ estimate for $w\Gamma^+$ from the above estimate, so we need the condition on $p,m$ as $p'm'<p$, namely,
\begin{align*}
m > \frac{p-1}{p-2}
\quad \text{for } p > 2.
\end{align*}
Under the condition $(p'\gamma - \varpi)m > -3$, that is,
\begin{align*}
\varpi < \frac{3}{m} + p'\gamma,
\end{align*}
it holds that
\begin{align*}
	\int_{\R^3} |\eta-v|^{(p'\gamma-\varpi)m} \frac{1}{(1+|\eta|)^4}d \eta \le \frac{C_{p,\gamma}}{(1+|v|)^{(-p'\gamma+\varpi)m}},
\end{align*}
which implies that 
\begin{align*}
	I_1 \le \frac{C_{p,\gamma,\beta}\|w_{q,\vt,\beta}f\|_{L^p_v}}{(1+|v|)^{-\gamma+\frac{p-1}{p}\varpi}}\left(\int_{\R^3}(1+|\eta|)^{\frac{4}{m-1}-p'm'\beta}|w_{q,\vt,\beta}f(\eta)|^{p'm'} d\eta\right)^{\frac{1}{p'm'}}.
\end{align*}
The condition on $\varpi$ is given by $\max\{0,p'\gamma+1\} < \varpi < \min\{2, p'\gamma+\frac{3}{m}\}$.
Here, $m$ must satisfy $1<\frac{3}{m}$ and $p'\gamma+\frac{3}{m}>0$, which implies $m< -\frac{3}{p'\gamma}$ since $-3<\gamma<-1$. The condition on $m$ is given by $\frac{p-1}{p-2}<m < -\frac{3}{p'\gamma}$. Here, $p$ must satisfy $\frac{p-1}{p-2}<-\frac{3}{p'\gamma}$, which is equivalent to $p>\frac{6}{3+\gamma}$. Under these conditions, we can choose $m:=\frac{1}{2}\left(\frac{p-1}{p-2}-\frac{3}{p'\gamma}\right)$, and we can also take $\varpi:=\frac{1}{2}\left(p'\gamma+\frac{3}{m}\right)$.
\bigskip
\begin{remark}
	The time integration $\int_0^t e^{-\nu(v)(t-s)} ds\sim \frac{1}{\nu(v)}$ absorbs the term $\nu(v)$. Thus from the pointwise estimate for $w\Gamma^+$, we lose the velocity decaying factor of $\nu(v)$. In order to guarantee a $L^p$ framework, $\frac{p-1}{p}\bar{\omega} \cdot p>3$ is necessary. This requires  
	\begin{align*}
		p> \frac{3(\sqrt{8\gamma^2 +9} +3-2\gamma)}{(-\gamma)(3+\gamma)}.
	\end{align*} 
	Notice that for all $\gamma \in (-3,-1)$,
	\begin{align*}
		\frac{3(\sqrt{8\gamma^2 +9} +3-2\gamma)}{(-\gamma)(3+\gamma)} > \frac{6}{3+\gamma}. 
	\end{align*} 
	Hence, when $-3<\gamma<-1$, we expect the range of $p$ to be $p > \frac{3(\sqrt{8\gamma^2 +9} +3-2\gamma)}{(-\gamma)(3+\gamma)}$. 
\end{remark}

\end{proof}
\begin{Coro} \label{LpGamma+est}		Let $-3 < \gamma < 0$. Assume that $p$ satisfies the condition
\begin{align*}
	 p > \begin{cases}
7-4\gamma, & \text{if } -1 \le \gamma <0, \\
\frac{6}{3+\gamma}, & \text{if } -3< \gamma <-1,
\end{cases}
\end{align*}
and $\beta$ satisfies the condition 
\begin{align*}
	\beta >\begin{cases}
\frac{7p-14}{2p}, & \text{if }-1 \le \gamma <0, \\
\frac{3pm-6m+p-1}{pm}, & \text{if }-3< \gamma <-1.
\end{cases}
\end{align*}
Then there exists a constant $C_{p,\gamma,\beta}>0$ such that
	\begin{align*}
		\left\|w_{q,\vt,\beta}\Gamma^+(f,f)\right\|_{L^p_v} &\le C_{p,\gamma,\beta} \|w_{q,\vt,\beta}f\|_{L^p_v}^2.
	\end{align*}
\end{Coro}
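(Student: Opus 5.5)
The plan is to take the $L^p_v$ norm of the pointwise bounds in Lemma \ref{pointwiseGamma+estimate} directly. In each case the right-hand side factors as $\|w_{q,\vt,\beta}f\|_{L^p_v}$, times a decaying power of $(1+|v|)$, times a $v$-independent weighted integral of $w_{q,\vt,\beta}f$. So the $L^p_v$ norm splits into $\|(1+|v|)^{-a}\|_{L^p_v}$ times that weighted integral. It then remains to check two things: the power of $(1+|v|)$ is $L^p$-integrable, and the weighted integral is dominated by $\|w_{q,\vt,\beta}f\|_{L^p_v}$ via H\"older. The constants are uniform in $t$ because the lemma is.

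\textbf{Case $-1\le\gamma<0$.} Here $a=-\frac{\gamma}{p}+\frac{5p-5}{4p}$, so
\[
ap=-\gamma+\tfrac{5(p-1)}{4}>3 \iff p>\tfrac{17}{5}+\tfrac{4\gamma}{5}.
\]
This holds because $p>7-4\gamma\ge 7$. For the second factor, write $|w f|^2=(1+|\eta|)^{\frac{4p-8}{p}-2\beta}|wf|^2$ and apply H\"older with exponents $\frac p2$ and $\frac{p}{p-2}$:
\[
\int(1+|\eta|)^{\frac{4p-8}{p}-2\beta}|wf|^2\,d\eta\le \|wf\|_{L^p_v}^2\Big(\int(1+|\eta|)^{(\frac{4p-8}{p}-2\beta)\frac{p}{p-2}}d\eta\Big)^{\frac{p-2}{p}}.
\]
The last integral is finite iff $\big(2\beta-\tfrac{4p-8}{p}\big)\frac{p}{p-2}>3$, i.e. $2\beta p-4p+8>3p-6$, i.e. $\beta>\frac{7p-14}{2p}$. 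This is exactly the stated condition, so the estimate follows.

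\textbf{Case $-3<\gamma<-1$.} Set $r=p'm'$; Lemma \ref{pointwiseGamma+estimate} notes $r<p$. Apply H\"older with exponents $\frac pr$ and $\frac{p}{p-r}$ to $\int(1+|\eta|)^{\frac{4}{m-1}-r\beta}|wf|^{r}d\eta$. This gives $\|wf\|_{L^p_v}^{r}$ times $\big(\int(1+|\eta|)^{(\frac{4}{m-1}-r\beta)\frac{p}{p-r}}\big)^{(p-r)/p}$, which requires $(r\beta-\frac{4}{m-1})\frac{p}{p-r}>3$. Using $m'=\frac{m}{m-1}$ and $p'=\frac{p}{p-1}$, we get $r=\frac{pm}{(p-1)(m-1)}$ and $p-r=\frac{p(pm-2m-p+1)}{(p-1)(m-1)}$. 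Substituting, the condition becomes
\[
pm\beta-4(p-1)>3(pm-2m-p+1),
\]
i.e. $\beta>\frac{3pm-6m+p-1}{pm}$, matching the hypothesis. I will double-check this algebra, since it is where the stated threshold must come out exactly.

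For the velocity factor, integrability of $(1+|v|)^{-(-\gamma+\frac{p-1}{p}\varpi)p}$ needs $-\gamma p+(p-1)\varpi>3$. Since $-\gamma>1$, we have $-\gamma p>p>3$ once $p>\frac{6}{3+\gamma}>3$, so this is automatic. (The stronger condition in the remark is about the later time-integrated estimate, not this corollary.)

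Combining the factors gives $\|w\Gamma^+(f,f)\|_{L^p_v}\le C\|wf\|_{L^p_v}\cdot\|wf\|_{L^p_v}$ in both cases. The second, symmetric term in \eqref{gamma1} was already reduced to the first in the lemma, so nothing further is needed. The main obstacle is only the bookkeeping of exponents in Case 2; everything else is direct.
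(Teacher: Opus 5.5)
Your proposal is correct and is exactly the argument the paper intends: the corollary is stated without proof as a direct consequence of Lemma \ref{pointwiseGamma+estimate}, obtained by taking the $L^p_v$ norm of the pointwise bound and applying H\"older in $\eta$, and your exponent bookkeeping reproduces precisely the thresholds $\beta>\frac{7p-14}{2p}$ and $\beta>\frac{3pm-6m+p-1}{pm}$. The one implicit point worth stating is why the velocity factor in Case 2 is $L^p$-integrable: $\varpi=\frac12\big(p'\gamma+\frac3m\big)>0$ by the lemma's choice $m<-\frac{3}{p'\gamma}$, hence $-\gamma p+(p-1)\varpi>p>3$.
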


\bigskip

\subsection{Relative entropy}
We recall the definition of the relative entropy associated with the Maxwellian $\mu$:
\begin{align*}
	\mathcal{E}(F)(t) = \int_{\T^3} \int_{\R^3} \left(\frac{F}{\mu} \ln \frac{F}{\mu}- \frac{F}{\mu} +1\right)\mu dvdx. 
\end{align*}
As in the $H$-theorem for the Boltzmann equation, the following lemma shows that the relative entropy is non-increasing in time.
\begin{Lem} \label{relativedecrease} \cite{Ko2022JDE}
	Assume $F$ satisfies the Boltzmann equation \eqref{Boltzmanneq} with initial data $F_0$. Then
	\begin{align*}
		\mathcal{E}(F(t)) \le \mathcal{E}(F_0)
	\end{align*}
	for all $t \ge 0$.
\end{Lem}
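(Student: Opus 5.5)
The plan is to reduce the monotonicity of $\mathcal{E}(F)$ to Boltzmann's $H$-theorem, using the conservation laws on $\T^3$. Since $\ln \mu(v) = -\frac{3}{2}\ln(2\pi) - \frac{|v|^2}{2}$, we can expand
\begin{align*}
	\mathcal{E}(F)(t) = \iint_{\T^3\times\R^3} F\ln F \,dvdx + \frac{3}{2}\ln(2\pi)\iint F\,dvdx + \frac{1}{2}\iint |v|^2F\,dvdx - \iint F\,dvdx + \iint \mu\,dvdx .
\end{align*}
By the conservation of mass and energy in the periodic box, all terms except the first are independent of $t$. Hence $\mathcal{E}(F(t))-\mathcal{E}(F_0) = H(F(t)) - H(F_0)$ with $H(F)=\iint F\ln F$, and it suffices to show that $H$ is non-increasing.

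Formally, I multiply \eqref{Boltzmanneq} by $1+\ln F$ and integrate over $\T^3\times\R^3$. The transport term $\iint v\cdot\nabla_x (F\ln F)\,dvdx$ vanishes by periodicity in $x$. For the collision term I use the standard symmetrizations: swapping $v\leftrightarrow u$ and passing to pre/post-collisional variables $(v,u)\leftrightarrow(v',u')$, which has unit Jacobian and leaves $B$ invariant. This gives
\begin{align*}
	\frac{d}{dt}H(F) = -\frac{1}{4}\int_{\T^3}\int_{\R^3}\int_{\R^3}\int_{\S^2} B\,\big(F(v')F(u')-F(v)F(u)\big)\ln\frac{F(v')F(u')}{F(v)F(u)}\,d\omega\, du\, dv\, dx \le 0,
\end{align*}
because $(a-b)\ln(a/b)\ge 0$. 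The contribution of the constant $1$ is $\iint Q(F,F)=0$ by mass conservation. Integrating in time yields the lemma.

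The main obstacle is justifying these manipulations for the solutions actually constructed, i.e.\ in the weighted $L^p_vL^\infty_x$ class with $F\ge 0$ possibly vanishing. The issues are:
\begin{itemize}
	\item $\ln F$ is singular where $F=0$;
	\item integrability at large $|v|$ is needed for $F|\ln F|$, $|v|^2F$, and the collision integrals with the soft kernel $|v-u|^\gamma$;
	\item the time derivative must be interpreted properly for a mild solution.
\end{itemize}

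For integrability I would use that $F=\mu+\mu^{1/2}f$ with $wf\in L^p_vL^\infty_x$. Since $\mu^{1/2}w^{-1}$ decays like a Gaussian, Hölder's inequality gives $F\lesssim \mu+\text{(Gaussian)}\cdot|wf|$, so all moments are finite.

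For the logarithm I would regularize. I would work with $\Phi_\delta(F)=(F+\delta)\ln(F+\delta)-F$, or equivalently apply the $H$-computation to approximate solutions with strictly positive data $F_0+\delta\mu$. The local theory of Appendix \ref{appendix} gives stability for these approximations. The Duhamel/mild formulation along characteristics gives $F_\delta>0$, and $\frac{d}{dt}H(F_\delta)\le 0$ holds rigorously in integrated-in-time form. Then I would let $\delta\to0$: strong convergence of $F_\delta\to F$ together with uniform Gaussian-type tail bounds gives convergence of $H(F_\delta(t))$, so the inequality passes to the limit. Lower semicontinuity of the convex functional would suffice for the direction needed at time $t$, combined with exact convergence at $t=0$.
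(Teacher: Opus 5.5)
Your argument is the standard one behind the cited result. The paper gives no proof of its own, only the citation and the remark that this is the $H$-theorem: on $\T^3$ the terms of $\mathcal{E}(F)$ other than $\iint F\ln F$ are fixed by conservation of mass and energy, so $\mathcal{E}(F(t))-\mathcal{E}(F_0)=H(F(t))-H(F_0)\le 0$ by the usual symmetrization, and your integrability and positivity remarks for the $L^p_vL^\infty_x$ class are a reasonable justification. One caution: the truncation $\Phi_\delta(F)=(F+\delta)\ln(F+\delta)-F$ is not equivalent to approximating by data $F_0+\delta\mu$, because $\int Q(F,F)\ln(F+\delta)\,dv$ has no sign (and $\Phi_\delta(F)$ with constant $\delta$ is not integrable over $\R^3_v$), so the positive-data approximation is the route to keep.
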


\bigskip

The following lemma provides a control mechanism for the solutions in the $L^p_v L^\infty_x$  space. In particular, combined with the gain term estimates established in the previous subsection, it allows us to use smallness of initial relative entropy within the $L^p_v L^\infty_x$ framework.
\bigskip
\begin{Lem} \label{L1L2cont} \cite{Guo2010Bounded}
	Assume $F$ satisfies the Boltzmann equation \eqref{Boltzmanneq}. We have
	\begin{align*}
		\int_{\T^3} \int_{\mathbb{R}^3} \frac{1}{4\mu}|F-\mu|^2 \mathbf{1}_{\{|F-\mu|\le \mu\}} dvdx + \int_{\T^3} \int_{\mathbb{R}^3}\frac{1}{4}|F-\mu| \mathbf{1}_{\{|F-\mu| > \mu\}} dvdx \le \mathcal{E}(F_0)
	\end{align*}
	for all $t \ge 0$.
	Moreover, if we write $F = \mu + \mu^{1/2} f$, then
	\begin{align*}
	\int_{\T^3} \int_{\mathbb{R}^3} \frac{1}{4}|f|^2 \mathbf{1}_{\{|f|\le \sqrt{\mu}\}} dvdx + \int_{\T^3} \int_{\mathbb{R}^3}\frac{\sqrt{\mu}}{4}|f| \mathbf{1}_{\{|f| > \sqrt{\mu}\}} dvdx \le \mathcal{E}(F_0)
	\end{align*}
	for all $t \ge 0$.
\end{Lem}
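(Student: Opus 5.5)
The plan is to reduce the statement to two elementary pointwise inequalities for the convex function
\begin{align*}
	\phi(s) := s\ln s - s + 1, \qquad s \ge 0,
\end{align*}
and then to invoke Lemma \ref{relativedecrease}. With $s = F/\mu$ (note $s\ge 0$ since $F\ge 0$), the integrand of $\mathcal{E}(F)$ is exactly $\mu\,\phi(F/\mu)$. Moreover $\phi(1)=0$, $\phi'(s)=\ln s$ (so $\phi'(1)=0$), and $\phi''(s)=1/s$. Once the pointwise inequalities below are established, we multiply by $\mu$, integrate over $\T^3\times\R^3$, and use $\mathcal{E}(F(t))\le \mathcal{E}(F_0)$ from Lemma \ref{relativedecrease}.

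\textbf{Step 1: the quadratic region $|s-1|\le 1$.} Here $0\le s\le 2$. Taylor's formula with integral remainder gives
\begin{align*}
	\phi(s) = (s-1)^2\int_0^1 (1-\tau)\,\frac{1}{1+\tau(s-1)}\,d\tau .
\end{align*}
Since $1+\tau(s-1)\le 2$ for $\tau\in[0,1]$, this yields $\phi(s)\ge \tfrac{1}{4}(s-1)^2$. Multiplying by $\mu$ gives
\begin{align*}
	\mu\,\phi(F/\mu)\ge \frac{|F-\mu|^2}{4\mu} \quad \text{on } \{|F-\mu|\le\mu\}.
\end{align*}

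\textbf{Step 2: the linear region $|s-1|>1$.} Since $s\ge 0$, this forces $s>2$. We have $\phi(2)=2\ln 2-1>\tfrac14$, and for $s\ge 2$ we have $\phi'(s)=\ln s\ge \ln 2>\tfrac14$. Hence
\begin{align*}
	\phi(s)\ge \tfrac14 + \tfrac14(s-2) = \tfrac14(s-1) = \tfrac14|s-1| .
\end{align*}
Multiplying by $\mu$ gives $\mu\,\phi(F/\mu)\ge \tfrac14|F-\mu|$ on $\{|F-\mu|>\mu\}$.

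\textbf{Conclusion.} Since $\phi\ge 0$ everywhere, summing Steps 1 and 2 over the two disjoint sets gives the left-hand side of the first inequality bounded by $\mathcal{E}(F(t))$, hence by $\mathcal{E}(F_0)$. For the second inequality, substitute $F-\mu=\mu^{1/2}f$. Then $\{|F-\mu|\le\mu\}=\{|f|\le\sqrt{\mu}\}$, and
\begin{align*}
	\frac{|F-\mu|^2}{4\mu}=\frac{|f|^2}{4}, \qquad \frac{|F-\mu|}{4}=\frac{\sqrt{\mu}\,|f|}{4},
\end{align*}
so the second inequality follows directly from the first.

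There is no real obstacle here. The only point needing care is checking the numerical constants in Step 2, namely $2\ln 2-1\approx 0.386>\tfrac14$ and $\ln 2\approx 0.693>\tfrac14$. Otherwise, the argument simply uses the monotonicity of entropy from Lemma \ref{relativedecrease}.
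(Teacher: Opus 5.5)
Your proof is correct: the two pointwise bounds $\phi(s)\ge\tfrac14(s-1)^2$ for $0\le s\le 2$ and $\phi(s)\ge\tfrac14(s-1)$ for $s>2$, combined with $\mathcal{E}(F(t))\le\mathcal{E}(F_0)$ from Lemma \ref{relativedecrease}, give both inequalities. The paper gives no proof of its own and only cites \cite{Guo2010Bounded}, where the argument is the same Taylor expansion of $s\ln s-s+1$ about $s=1$ on the sets $\{|F-\mu|\le\mu\}$ and $\{|F-\mu|>\mu\}$ combined with entropy monotonicity; your tangent-line bound in Step 2 is an equivalent substitute for using the Lagrange remainder $\frac{(s-1)^2}{2\tilde s}$ with $\tilde s\le s$.
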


\section{Small perturbation problem in $L^p_vL^\infty_x$}
\label{section4}

From Section \ref{section4} to Section \ref{section5}, we fix $p>0$ and $\beta>0$ which satisfy the following conditions :
\begin{align*} 
		\begin{cases}
			p>13, \quad \beta >\frac{9}{2} \quad &\text{if } -1 \le \gamma < 0,\\
			p>\frac{3 \sqrt{8\gamma^2+9}+3-2\gamma}{-\gamma^2-3\gamma}, \quad \beta>5 \quad &\text{if } -3 < \gamma < -1,
		\end{cases}
	\end{align*}
and we also fix $0<q<1$ and $0\le \vt <-\frac{2}{\gamma}$. Let $f$ satisfy the equation \eqref{FPBER} with initial data $f_0(x,v)$ and $F(t,x,v) = \mu(v)+\mu(v)^{1/2}f(t,x,v) \ge 0$ over the time interval $[0,T]$ for $0<T\le \infty$.
	
Throughout this section, we make the a priori assumption:
\begin{align} \label{apriorismall}
	\sup_{0\le t < T}\|w_{q,\vt,\beta}f(t)\|_{L^p_vL^\infty_x} \le \bar{\eta} \ll 1,
\end{align}
where $\bar{\eta}$ depends on the initial amplitude $\eta_0>0$ with $\|w_{q,\vt,\beta}f_0\|_{L^p_vL^\infty_x} \le \eta_0$, but does not depend on the solution $f$. It will be determined in subsection \ref{smallglobalproof}.

\subsection{$L^2$ estimate}
In this subsection, our aim is to prove the $L^2$ estimate to the Boltzmann equation \eqref{FPBER}. To derive the exponential time-decay in $L^p_vL^\infty_x$ for the equation \eqref{def.be.h}, we need to consider the exponential $L^2$ decay. We define the $L^2_{v}$ projection $P$ of $f$ corresponding to operator $L$ as
\begin{align*} 
	Pf(t,x,v) = a(t,x) \mu^{1/2}(v) + b(t,x)\cdot v\mu^{1/2}(v) + c(t,x)\frac{|v|^2-3}{\sqrt{6}}\mu^{1/2}(v),
\end{align*}
where
\begin{align*}
	& a(t,x) = \int_{\mathbb{R}^3}f(t,x,v) \mu^{1/2}(v)dv,\\
	& b(t,x) = \int_{\mathbb{R}^3}vf(t,x,v) \mu^{1/2}(v)dv,\\
	& c(t,x) = \int_{\mathbb{R}^3}\frac{|v|^2-3}{\sqrt{6}}f(t,x,v) \mu^{1/2}(v)dv.
\end{align*}
It is well-known the operator $L$ satisfies the $L^2$ coercivity $(Lf,f)_{L_v^2} \ge C_L\|(I-P)f\|_{L_v^2(\nu^{1/2})}^2$ for all $f$ in $L_v^2$, where $C_L$ is a generic constant. 
\newline
\indent The following lemma states the $L^2_{x,v}$ bound for $Pf$ by $(I-P)f$. The lemma gives the key estimate to derive the exponential decay in $L^2_{x,v}$.
We consider the following equation : 
\begin{align} \label{LBEwg}
	\p_t f + v\cdot \nabla_x f +Lf =g,
\end{align}
where $g$ is a given function which satisfies the condition
\begin{align} \label{gcondition}
	\int_{\T^3} \int_{\R^3} g\mu^{1/2}dvdx=0, \quad \int_{\T^3} \int_{\R^3} vg\mu^{1/2}dvdx=0, \quad \int_{\T^3} \int_{\R^3} \frac{|v|^2-3}{\sqrt{6}}g\mu^{1/2}dvdx=0.
\end{align}
\begin{Lem}  \cite{Liu2017ARMA}
	Assume that $f\in L^2_{x,v}$ is a solution to the equation \eqref{LBEwg} satisfying
	\begin{align} \label{conserforf}
		\iint_{\T^3 \times \R^3}f \mu^{1/2}  \begin{pmatrix}
			1 \\ v \\ |v|^2 
		\end{pmatrix}dvdx = 0 
	\end{align}  
	and $g$ satisfies \eqref{gcondition}. There exists a function $G(t)$ such that for all $0\le s \le t$, $G(s) \lesssim \|f(s)\|_{L^2_{x,v}}^2$ and
	\begin{align*}
		\int_s^t \|Pf(\tau)\|_{L^2_{x,v}(\nu^{1/2})}^2d\tau &\lesssim G(t)-G(s) + \int_s^t\|(I-P)f(\tau)\|_{L^2_{x,v}(\nu^{1/2})}^2 d\tau+\int_s^t \|\nu^{-1/2}g(\tau)\|_{L^2_{x,v}}^2d\tau.
	\end{align*}
\end{Lem}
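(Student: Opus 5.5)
This is the standard macroscopic (a,b,c) estimate, and I would prove it by the test-function method of Guo and Esposito--Guo--Kim--Marra, adapted to the torus. First, decompose $f=Pf+(I-P)f$ in \eqref{LBEwg}. Then take the weak formulation on $[s,t]\times\T^3\times\R^3$ against test functions $\psi(t,x,v)$, which gives
\begin{align*}
	\int_{\T^3\times\R^3}\psi f(t)-\int_{\T^3\times\R^3}\psi f(s)-\int_s^t\!\!\iint f(\partial_t+v\cdot\nabla_x)\psi=\int_s^t\!\!\iint \psi\,(-L(I-P)f+g).
\end{align*}
The conditions \eqref{conserforf} and \eqref{gcondition} are conserved along the flow. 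They ensure that $a,b,c$ have zero spatial mean on $\T^3$, so elliptic problems on the torus are solvable and Poincaré's inequality applies.

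The test functions are the usual ones. For $c$, I would take $\psi_c=(|v|^2-\beta_c)v\cdot\nabla_x\phi_c\,\mu^{1/2}$ with $-\Delta_x\phi_c=c$, where $\beta_c$ is chosen so that the $a$ and $b$ contributions vanish. For $b$, I would use $\psi_b^{ij}=(v_i^2-\beta_b)\partial_j\phi_b^j\mu^{1/2}$ together with $|v|^2v_iv_j\partial_j\phi^i_b\mu^{1/2}$, where $-\Delta\phi_b^j=b_j$. For $a$, I would use $\psi_a=(|v|^2-\beta_a)v\cdot\nabla_x\phi_a\mu^{1/2}$ with $-\Delta\phi_a=a$. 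The transport term $v\cdot\nabla_x\psi$ paired with $Pf$ produces $\|c\|_{L^2_x}^2$, $\|b\|^2$ and $\|a\|^2$ respectively. The cross terms involve $(I-P)f$, $g$, or the other macroscopic components with small constants.

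Each remaining term is controlled by Cauchy--Schwarz. All test functions carry a factor $\mu^{1/2}$ and so sit in every weighted $L^2_v$ space. By elliptic regularity, $\|\phi\|_{H^2_x}\lesssim\|(a,b,c)\|_{L^2_x}$. This gives $|\iint\psi L(I-P)f|\lesssim\|\nabla\phi\|\,\|(I-P)f\|_{L^2(\nu^{1/2})}$ and $|\iint\psi g|\lesssim\|\nabla\phi\|\,\|\nu^{-1/2}g\|_{L^2}$. The pairing of $f$ with $\partial_t\psi$ requires $\partial_t\nabla\phi$. That quantity comes from the local conservation laws obtained by testing with $\mu^{1/2}$, $v\mu^{1/2}$ and $|v|^2\mu^{1/2}$: for instance $\partial_t a+\nabla\cdot b=\langle g,\mu^{1/2}\rangle$, and similarly for $b$ and $c$. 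Hence $\|\partial_t\nabla\phi\|_{L^2}$ is bounded by $\|b\|$, $\|a\|+\|c\|$, $\|(I-P)f\|$ and $\|\nu^{-1/2}g\|$, with the components matched so that the coupling is triangular.

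Finally, add the three estimates with suitable small weights, putting $c$ and $b$ first with small multiples of $a$, to absorb the cross terms. The boundary-in-time terms $\iint\psi f(t)$ and $\iint\psi f(s)$ define $G(t)$ as the signed sum of these pairings, which obeys $|G(\tau)|\lesssim\|f(\tau)\|^2_{L^2}$. Since $\nu$ is bounded and $Pf$ is Gaussian in $v$, $\|Pf\|_{L^2(\nu^{1/2})}\sim\|(a,b,c)\|_{L^2_x}$, and the statement follows.

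I expect the main obstacle to be the $\partial_t\psi$ term and the order of absorption. In the time-derivative estimate for $\phi_a$, $\|b\|$ appears; in the estimate for $\phi_b$, $\|a\|+\|c\|$ appears. Closing the argument requires choosing $\beta_a,\beta_b,\beta_c$ carefully so that the unwanted moments vanish, and then taking the weighted combination correctly.
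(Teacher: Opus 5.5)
Your proposal is correct and is essentially the argument the paper relies on: the paper gives no proof of its own but cites Liu--Yang, who prove the estimate by this same Esposito--Guo--Kim--Marra test-function method, with $\phi_a,\phi_b,\phi_c$ obtained from elliptic problems. On $\mathbb{T}^3$ the diffuse-reflection boundary terms of that bounded-domain version drop out. Your choices (e.g.\ $\beta_a=10$, $\beta_c=5$) and the weighted absorption in the order $c$, $b$, then a small multiple of $a$ close exactly as you describe, and \eqref{gcondition} also supplies the zero mean needed to invert $-\Delta$ on $\partial_t a$, $\partial_t b$, $\partial_t c$.
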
 
\bigskip

\begin{Coro} \cite{Liu2017ARMA}
	Assume that $f\in L^2_{x,v}$ is a solution to the equation \eqref{LBEwg} satisfying \eqref{conserforf} and $g$ satisfies \eqref{gcondition}. Then we have
	\begin{align} \label{L2est2}
		\|f(t)\|_{L^2_{x,v}}^2+\int_0^t \|f(\tau)\|_{L^2_{x,v}(\nu^{1/2})}^2d\tau &\lesssim \|f_0\|_{L^2_{x,v}}^2+ \int_0^t \|\nu^{-1/2}g(\tau)\|_{L^2_{x,v}}^2d\tau.
	\end{align}

\begin{remark}
	The estimate \eqref{L2est2} is derived from the inequality
	\begin{align} \label{L2est3}
		\frac{d}{d\tau} \|f(\tau)\|_{L^2_{x,v}}^2 + \|f(\tau)\|_{L^2_{x,v}(\nu^{1/2})}^2 \lesssim \|\nu^{-1/2}g(\tau)\|_{L^2_{x,v}}^2.
	\end{align} 
\end{remark}

\end{Coro}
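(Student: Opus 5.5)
The plan is to derive the energy inequality \eqref{L2est3} and then integrate it in time. For the derivation we take the $L^2_{x,v}$ inner product of \eqref{LBEwg} with $f$. The transport term $v\cdot\nabla_x f$ integrates to zero on $\T^3$. This gives
\begin{align*}
	\frac12\frac{d}{dt}\|f(t)\|_{L^2_{x,v}}^2 + \int_{\T^3}(Lf,f)_{L^2_v}dx = \iint_{\T^3\times\R^3} g f\,dvdx .
\end{align*}

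By the coercivity $(Lf,f)_{L^2_v}\ge C_L\|(I-P)f\|_{L^2_v(\nu^{1/2})}^2$, the dissipation controls only the microscopic part. For the right-hand side we write $gf=\nu^{-1/2}g\cdot\nu^{1/2}f$ and apply Young's inequality, obtaining $\kappa\|f\|^2_{L^2_{x,v}(\nu^{1/2})}+C_\kappa\|\nu^{-1/2}g\|^2_{L^2_{x,v}}$ with $\kappa$ small. We then split $\|f\|^2_{L^2(\nu^{1/2})}\lesssim\|Pf\|^2_{L^2(\nu^{1/2})}+\|(I-P)f\|^2_{L^2(\nu^{1/2})}$.

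The macroscopic part is supplied by the preceding lemma of \cite{Liu2017ARMA}. Its hypotheses \eqref{conserforf} and \eqref{gcondition} hold, so there is a functional $G(t)$ with $|G|\lesssim\|f\|_{L^2_{x,v}}^2$ satisfying
\begin{align*}
	\int_s^t \|Pf\|^2_{L^2(\nu^{1/2})}\lesssim G(t)-G(s)+\int_s^t\|(I-P)f\|^2_{L^2(\nu^{1/2})}+\int_s^t\|\nu^{-1/2}g\|^2 .
\end{align*}
In differential form, $\frac{d}{dt}G$ bounds $\|Pf\|^2$ from below up to the microscopic and source terms. We then form the functional $\mathcal{H}(t)=\|f(t)\|_{L^2_{x,v}}^2-\delta G(t)$ with $\delta>0$ small. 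This $\mathcal{H}$ is equivalent to $\|f\|^2_{L^2_{x,v}}$ because $|G|\lesssim\|f\|^2$. Adding $\delta$ times the macroscopic estimate to the energy identity gives $\frac{d}{dt}\mathcal{H}+c\|f\|^2_{L^2_{x,v}(\nu^{1/2})}\lesssim\|\nu^{-1/2}g\|^2_{L^2_{x,v}}$. Here $\delta$ is chosen small relative to $C_L$, so that the $(I-P)f$ term produced by the macroscopic estimate is absorbed. Then $\kappa$ is chosen small relative to $\delta$, so that the $\kappa\|Pf\|^2$ term is also absorbed.

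Integrating this inequality over $[0,t]$ and using the equivalence of $\mathcal{H}$ with $\|f\|^2_{L^2_{x,v}}$ yields \eqref{L2est2}. The main obstacle is making the interpolation of the macroscopic estimate rigorous in time. If $G$ is only available through the integrated statement, we work on every subinterval $[s,t]$, which gives the same conclusion without differentiating $G$. The integrated form \eqref{L2est2} follows either way.
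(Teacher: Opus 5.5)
Your proposal is correct and follows the argument the paper defers to \cite{Liu2017ARMA}: the energy identity with coercivity controls $(I-P)f$, the preceding macroscopic lemma controls $Pf$, the two are combined through $\mathcal{H}=\|f\|^2_{L^2_{x,v}}-\delta G$, and the result is integrated in time. As you note, taking $s=0$ in the integrated form avoids differentiating $G$ altogether.

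One point in your favor: the differential inequality genuinely holds for the equivalent functional $\mathcal{H}$, not literally for $\frac{d}{d\tau}\|f\|^2_{L^2_{x,v}}$ as written in \eqref{L2est3}. To see this, take $g=0$ and $f_0=Pf_0\neq 0$ with zero conserved moments. Then at $\tau=0$ the time derivative of $\|f\|^2_{L^2_{x,v}}$ vanishes while $\|f_0\|^2_{L^2_{x,v}(\nu^{1/2})}>0$. So your formulation is the correct reading of the remark.
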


\bigskip

For simplicity, we abbreviate the weight functions
\begin{align*}
	w_{q}(v)=w_{q,\infty,0}(t,v):=e^{\frac{q}{8}|v|^2} \quad \text{and} \quad w_{q,\beta}(v)=w_{q,\infty,\beta}(t,v):=(1+|v|^2)^{\beta/2}e^{\frac{q}{8}|v|^2}.
\end{align*}

\bigskip

\begin{Lem} \label{L2estw} \cite{Liu2017ARMA}
	Assume that $f\in L^2_{x,v}(w_{q/2,\beta})$ is a solution to the equation \eqref{LBEwg} satisfying \eqref{conserforf} and $g \in L^2_{x,v}(w_{q/2,\beta})$ satifies \eqref{gcondition}. Then we have
	\begin{align*} 
		\sup_{0\le s \le t}\|w_{q/2,\beta}f(s)\|_{L^2_{x,v}}^2+\int_0^t \|w_{q/2,\beta}f(\tau)\|_{L^2_{x,v}(\nu^{1/2})}^2d\tau &\le C_w\|w_{q/2,\beta}f_0\|_{L^2_{x,v}}^2 \nonumber \\
		&\quad + C_w\int_0^t \|\nu^{-1/2}w_{q/2,\beta}g(\tau)\|_{L^2_{x,v}}^2d\tau,
	\end{align*}
	where $C_w \ge 1$ is a generic contant.
\end{Lem}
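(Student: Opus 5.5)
We prove Lemma \ref{L2estw} by a weighted energy estimate coupled with the unweighted estimate \eqref{L2est2}. Set $w:=w_{q/2,\beta}$. The plan is to multiply \eqref{LBEwg} by $w^2 f$, integrate over $\T^3\times\R^3$, and use that transport is conservative on the torus, $\iint (v\cdot\nabla_x f) w^2 f\,dvdx=0$. This gives
\begin{align*}
\frac12\frac{d}{dt}\|wf\|_{L^2_{x,v}}^2+\|wf\|_{L^2_{x,v}(\nu^{1/2})}^2=(wKf,wf)_{L^2_{x,v}}+(wg,wf)_{L^2_{x,v}}.
\end{align*}
The source term is handled by Cauchy--Schwarz:
\begin{align*}
|(wg,wf)|\le \tfrac14\|wf\|^2_{L^2(\nu^{1/2})}+C\|\nu^{-1/2}wg\|^2_{L^2}.
\end{align*}

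The key step, and the main obstacle, is the weighted $K$ term, since $\nu$ has no positive lower bound for soft potentials. We split $K$ with the cutoff $\chi$ as in \eqref{K-chi} and add a large-velocity cutoff $\mathbf 1_{|v|\ge N}$.

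For the singular part, the Schur-type bound underlying Lemma \ref{Ksingular} (with $p=2$) gives smallness $\ve^{\gamma+3/2}$. The factor $\mu^{(1-q)/8}$ makes it harmless after adding a weight $\nu$.

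For the nonsingular part on $|v|\ge N$ or $|u|\ge N$, we use Lemma \ref{k2ker} together with the exponent computation \eqref{e.1} for $w(v)/w(u)$; this needs the exponent $q/2<s_0<1$. The resulting weighted kernel $k_w=wk(\cdot,\cdot)/w$ satisfies
\begin{align*}
\int |k_w(v,u)|\,du\lesssim \langle v\rangle^{\gamma-2},
\end{align*}
in the spirit of Corollary \ref{pKestimate}. A Schur test with weight $\nu$ then bounds this piece by
\begin{align*}
C N^{-2}\|wf\|^2_{L^2(\nu^{1/2})}\le \tfrac14\|wf\|^2_{L^2(\nu^{1/2})}
\end{align*}
for $N$ large. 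I expect tuning $\ve$, $N$, and the Schur weights so that this piece is absorbed to be the delicate part.

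The remaining bounded region $|v|,|u|\le N$ is controlled by $C_{N,\ve}\|f\|^2_{L^2_{x,v}(\nu^{1/2})}$, since the weights are bounded there.

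Integrating in time then yields
\begin{align*}
\|wf(t)\|^2+\int_0^t\|wf\|^2_{L^2(\nu^{1/2})}\le \|wf_0\|^2+C\int_0^t\|f\|^2_{L^2(\nu^{1/2})}+C\int_0^t\|\nu^{-1/2}wg\|^2.
\end{align*}
The middle term is bounded by \eqref{L2est2}, which applies because \eqref{conserforf} and \eqref{gcondition} hold. Together with $\|f_0\|\le\|wf_0\|$ and $\|\nu^{-1/2}g\|\le\|\nu^{-1/2}wg\|$ (since $w\ge1$), this gives the claim. Taking the supremum over $s\le t$ and choosing $C_w\ge1$ large completes the proof.
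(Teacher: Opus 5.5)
Your architecture is the standard argument behind the cited result; the paper gives no proof of its own, only the reference \cite{Liu2017ARMA}. The weighted energy identity is right. The $\nu$-weighted Schur test for the nonsingular kernel on $\{|v|\ge N\}\cup\{|u|\ge N\}$ is right: your bookkeeping $\nu(v)^{-1}\langle v\rangle^{\gamma-2}=\langle v\rangle^{-2}\lesssim N^{-2}$ holds, and the column bound $\int|k_w^{ns}(v,u)|\,dv\lesssim\langle u\rangle^{\gamma-2}$ follows from the same quadratic-form computation as \eqref{e.1}. Closing the bounded-velocity remainder with \eqref{L2est2} is also right.

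The step that fails as written is the singular part. Lemma \ref{Ksingular} with $p=2$ requires $2>\frac{3}{3+\gamma}$, i.e.\ $\gamma>-\frac32$. For $-3<\gamma\le-\frac32$ the H\"older integral $\int_{|v-u|\le 2\ve}|v-u|^{2\gamma}du$ diverges and $\ve^{\gamma+3/2}$ is not small. So you get no estimate in part of the range the paper later uses.

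Even for $\gamma>-\frac32$, pairing the pointwise bound $|wK^sf(v)|\lesssim\ve^{\gamma+3/2}\mu(v)^{\frac{1-q}{8}}\|wf\|_{L^2_v}$ with $wf$ only gives $\ve^{\gamma+3/2}\|wf\|_{L^2_v}\|\mu^{\frac{1-q}{8}}wf\|_{L^1_v}$. The Gaussian helps on the output side. The input factor $\|wf\|_{L^2_v}$, however, carries no $\nu$-weight, and it is not controlled by $\|wf\|_{L^2_v(\nu^{1/2})}$ because $\nu(v)\to0$ as $|v|\to\infty$.

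The remedy is to estimate the singular part bilinearly, with the cutoff on the collision partner $|v-u|$ inside $B(v-u,\omega)$, as in the proof of Lemma \ref{Ksingular}. On $\{|v-u|\le2\ve\}$ all of $u,u',v'$ lie within $2\ve$ of $v$. Hence the product of the Gaussian factors and the weight ratio ($w(v)/w(v')$ or $w(v)/w(u')$) is bounded by $C\mu(v)^{c}$. By locality it is also bounded by $C\mu(v')^{c}$ or $C\mu(u')^{c}$. Cauchy--Schwarz in $(v,u,\omega)$, together with the unit-Jacobian change of variables $(v,u)\mapsto(v',u')$, then gives
\[
|(wK^sf,wf)_{L^2_v}|\le C\ve^{3+\gamma}\|\mu^{c}wf\|_{L^2_v}^2\le C\ve^{3+\gamma}\|wf\|_{L^2_v(\nu^{1/2})}^2
\]
for every $-3<\gamma<0$, because $\int_{|v-u|\le2\ve}|v-u|^{\gamma}du\sim\ve^{3+\gamma}$. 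The $K_1^s$ piece is a direct Schur test with the same bound. With this in place of $\ve^{\gamma+3/2}$, choose $\ve$ small and then $N$ large, and your argument closes.
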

By taking $g:=\Gamma(f,f)$, we consider the $L^2_{x,v}$ estimate for the full perturbed Boltzmann equation. As a step toward goal in this section, under the a priori assumpition \eqref{apriorismall}, we aim to derive
\begin{align} \label{L2est10}
	\sup_{0\le s \le t} \|w_{q/2,\beta}f(s)\|_{L^2_{x,v}}^2+\int_0^t \|w_{q/2,\beta}f(\tau)\|_{L^2_{x,v}(\nu^{1/2})}^2d\tau &\le C_{p,q,\gamma} \|w_{q/2,\beta}f_0\|_{L^2_{x,v}}^2.
\end{align} 
To derive \eqref{L2est10}, we first claim that
\begin{align} \label{L2Gamma2}
	\int_0^t \|\nu^{-1/2}w_{q/2,\beta}\Gamma(f,f)(\tau)\|_{L^2_{x,v}}^2 d\tau \le  C_{\Gamma1}\sup_{0\le \tau \le t}\|w_{q,\vt,\beta}f(\tau)\|_{L^p_v L^\infty_x}^2\int_0^t \|w_{q/2,\beta}f(\tau)\|_{L^2_{x,v}(\nu^{1/2})}^2 d\tau,
\end{align}
where $C_{\Gamma1}$ depends on $p,q,\gamma,\beta$. In order to derive \eqref{L2Gamma2}, let us estimate
\begin{align*}
	\int_0^t \|\nu^{-1/2}w_{q/2,\beta}\Gamma^-(f,f)(\tau)\|_{L^2_{x,v}}^2 d\tau
\end{align*}
By the definition of $\Gamma^-$, the above term is bounded by
\begin{align} \label{wGamma-L2}
	&\int_0^t \|\nu^{-1/2}w_{q/2,\beta}\Gamma^-(f,f)(\tau)\|_{L^2_{x,v}}^2 d\tau \nonumber \\ \nonumber
	& \le C \int_0^t  \int_{\T^3 \times \R^3}\nu(v)^{-1} w_{q/2,\beta}(v)^2\left( \int_{\R^3}|v-u|^\gamma \mu^{1/2}(u) |f(u)||f(v)|du \right)^2 dxdvd\tau \\\nonumber
	&\le C \int_0^t \int_{\T^3 \times \R^3}\nu(v)^{-1} w_{q/2,\beta}(v)^2|f(v)|^2 \left( \int_{\R^3}|v-u|^{p'\gamma} \mu^{1/2}(u) du \right)^{2/p'} \\\nonumber
	& \quad \times \left( \int_{\R^3}\mu^{1/2}(u) |f(u)|^pdu \right)^{2/p}dxdv d\tau\\\nonumber
	& \le C_p \int_0^t\int_{\T^3 \times \R^3}\nu(v)^{-1} w_{q/2,\beta}(v)^2|\nu(v) f(v)|^2 \left(\int_{\R^3}\mu^{1/2}(u)|f(u)|^p du\right)^{2/p}dxdvd\tau\\\nonumber
	& \le C_p \int_0^t \|w_{q,\vt,\beta}f(\tau)\|_{L^p_v L^\infty_x}^2 \| w_{q/2,\beta}f(\tau)\|_{L^2_{x,v}(\nu^{1/2})}^2d\tau\\ 
	& \le C_p \sup_{0\le \tau \le t}\|w_{q,\vt,\beta}f(\tau)\|_{L^p_v L^\infty_x}^2\int_0^t \| w_{q/2,\beta}f(\tau)\|_{L^2_{x,v}(\nu^{1/2})}^2 d\tau, 
\end{align}
where $\beta>0$.\\
Now, let us estimate the following time integration for the term $\Gamma^+$ :
\begin{align*}
	\int_0^t \|\nu^{-1/2}w_{q/2,\beta}\Gamma^+(f,f)(\tau)\|_{L^2_{x,v}}^2 d\tau.
\end{align*}
Using the definition for $\Gamma^+$ and the Cauchy-Schwarz inequality, we have
\begin{align*}
	&\|\nu^{-1/2}w_{q/2,\beta}\Gamma^+(f,f)(\tau)\|_{L^2_{x,v}}^2\\ & \le \int_{\T^3} \int_{\R^3} \nu^{-1}(v)w_{q/2,\beta}(v)^2 \left(\int_{\R^3}\int_{\S^2} b(\cos \theta) |u-v|^\gamma  \mu^{1/2}(u)f(v') f(u')d\omega du\right)^2dvdx\\
	& \le C_b\int_{\T^3} \int_{\R^3} \nu^{-1}(v) w_{q/2,\beta}(v)^2\left(\int_{\R^3}\int_{\S^2} b(\cos \theta) |u-v|^\gamma  \mu^{1/2}(u)|f(v')|^2 |f(u')|^2d\omega du\right)\\
	& \quad \times \left(\int_{\R^3}|u-v|^\gamma  \mu^{1/2}(u)du\right)dvdx\\
	& \le C_b \int_{\T^3} \int_{\R^3}  w_{q/2,\beta}(v)^2\left(\int_{\R^3}\int_{\S^2} b(\cos \theta) |u-v|^\gamma  \mu^{1/2}(u)|f(v')|^2 |f(u')|^2d\omega du\right)dvdx.
\end{align*}
By the energy conservation, we have
\begin{align*}
	&\|\nu^{-1/2}w_{q/2,\beta}\Gamma^+(f,f)(\tau)\|_{L^2_{x,v}}^2\\
	& \le C_b \int_{\T^3} \int_{\R^3} \left(\int_{\R^3}\int_{\S^2} b(\cos \theta) |u-v|^\gamma  \mu^{1/2}(u')|w_{q/2,\beta}(v')f(v')|^2 |w_{q/2,\beta}(u')f(u')|^2d\omega du\right)dvdx.
\end{align*}
We make a change of variables $(u,v) \mapsto (u',v')$ to derive
\begin{align*}
	&\|\nu^{-1/2}w_{q/2,\beta}\Gamma^+(f,f)(\tau)\|_{L^2_{x,v}}^2\\ &\le C_b \int_{\T^3} \int_{\R^3} |w_{q/2,\beta}(v)f(v)|^2 \int_{\R^3}\int_{\S^2} b(\cos \theta) |u-v|^\gamma  \mu^{1/2}(u') |w_{q/2,\beta}(u)f(u)|^2d\omega dudvdx\\
	& \le C_b \int_{\T^3} \int_{\R^3} |w_{q/2,\beta}(v)f(v)|^2 \left( \int_{\R^3}\int_{\S^2} b(\cos \theta)  |w_{q,\vt,\beta}f(u)|^pd\omega du\right)^{2/p}\\
	& \quad \times \left(\int_{\R^3}\int_{\S^2} b(\cos \theta) |u-v|^{\frac{p}{p-2}\gamma}  \mu(u')^{\frac{p}{2(p-2)}} \left(\frac{1}{w_{q/2}(u)}\right)^{\frac{p}{p-2}}d\omega du\right)^{\frac{p-2}{p}}dvdx\\
	& \le C_{p,q,\gamma}\|w_{q,\vt,\beta}f(\tau)\|_{L^p_vL^\infty_x}^2\int_{\T^3} \int_{\R^3} \nu(v)|w_{q/2,\beta}(v)f(v)|^2 dvdx\\
	& \le C_{p,q,\gamma}\|w_{q,\vt,\beta}f(\tau)\|_{L^p_vL^\infty_x}^2\|w_{q/2,\beta}f(\tau)\|_{L^2_{x,v}(\nu^{1/2})}^2,
\end{align*}
where $p>\frac{6}{3+\gamma}$ and we have used the \eqref{Lpchangeofvariable}. Hence we can derive
\begin{align} \label{wGamma+L2}
	&\int^t_0 \|\nu^{-1/2}w_{q/2,\beta}\Gamma^+(f,f)(\tau)\|_{L^2_{x,v}}^2d\tau \nonumber \\
	&\le C_{p, q,\gamma,\beta}\int_0^t   \|w_{q,\vt,\beta}f(\tau)\|_{L^p_vL^\infty_x}^2\|w_{q/2,\beta}f(\tau)\|_{L^2_{x,v}(\nu^{1/2})}^2 d\tau\nonumber\\
	& \le  C_{p,q, \gamma ,\beta}\sup_{0\le \tau \le t}\|w_{q,\vt,\beta}f(\tau)\|_{L^p_v L^\infty_x}^2\int_0^t \|w_{q/2,\beta}f(\tau)\|_{L^2_{x,v}(\nu^{1/2})}^2 d\tau.
\end{align}
Combining \eqref{wGamma-L2} and \eqref{wGamma+L2}, we can deduce \eqref{L2Gamma2}.\\
Next, from  Lemma \ref{L2estw} and \eqref{L2Gamma2}, we have
\begin{align*}
	&\sup_{0\le s \le t}\|w_{q/2,\beta}f(s)\|_{L^2_{x,v}}^2+\int_0^t \|w_{q/2,\beta}f(\tau)\|_{L^2_{x,v}(\nu^{1/2})}^2d\tau \\
	&\le C_w\|w_{q/2,\beta}f_0\|_{L^2_{x,v}}^2
	+ C_wC_{\Gamma1}\sup_{0\le \tau \le t}\|w_{q,\vt,\beta}f(\tau)\|_{L^p_v L^\infty_x}^2\int_0^t \|w_{q/2,\beta}f(\tau)\|_{L^2_{x,v}(\nu^{1/2})}^2 d\tau.
\end{align*}
Under the a priori assumption \eqref{apriorismall}, it holds that 
\begin{align}\label{cond1apriori}
	C_wC_{\Gamma1}\bar{\eta}^2\le 1/2,
\end{align}
and then
\begin{align} \label{L2a100}
	\sup_{0\le s \le t}\|w_{q/2,\beta}f(s)\|_{L^2_{x,v}}^2+\int_0^t \|w_{q/2,\beta}f(\tau)\|_{L^2_{x,v}(\nu^{1/2})}^2d\tau \le 2C_w\|w_{q/2,\beta}f_0\|_{L^2_{x,v}}^2.
\end{align}
\bigskip

Multiplying $e^{\lambda (1+\tau)^\rho}$ to \eqref{L2est3} and integrating the inequality with respect to the time variable over $[0,t]$, we have
\begin{align} \label{L2est5} \nonumber
	&e^{\lambda (1+t)^\rho}\|f(t)\|_{L^2_{x,v}}^2 + \int_0^t e^{\lambda (1+\tau)^\rho}\|f(\tau)\|_{L^2_{x,v}(\nu^{1/2})}^2 d\tau \nonumber \\
	&\le C \|f_0\|_{L^2_{x,v}}^2 + C \int_0^t \lambda \rho (1+\tau)^{\rho-1}e^{\lambda (1+\tau)^\rho}\|f(\tau)\|_{L^2_{x,v}}^2 d\tau \nonumber \\
	&\quad + C\int_0^t e^{\lambda (1+\tau)^\rho}\| \nu^{-1/2} \Gamma(f,f)(\tau)\|_{L^2_{x,v}}^2d\tau.
\end{align}
Now, we need to estimate the term in \eqref{L2est5}:
\begin{align} \label{L2a1}
	\int_0^t \lambda \rho (1+\tau)^{\rho-1}e^{\lambda (1+\tau)^\rho}\|f(\tau)\|_{L^2_{x,v}}^2 d\tau.
\end{align}
To estimate \eqref{L2a1}, we set
\begin{align*}
	E:= \left\{ v \in \R^3 : (1+\tau)^{\rho-1} \le \kappa_0 (1+|v|^2)^{\gamma/2}\right\},
\end{align*}
where $\kappa_0$ is chosen later. On $E$, it holds that
\begin{align*}
	\lambda \rho (1+\tau)^{\rho-1} e^{\lambda (1+\tau)^\rho}  \le C \kappa_0 \nu(v)\lambda \rho  e^{\lambda (1+\tau)^\rho} ,
\end{align*}
which implies that
\begin{align} \label{L2a2}
	\int_0^t \lambda \rho (1+\tau)^{\rho-1}e^{\lambda (1+\tau)^\rho}\|f(\tau)\mathbf{1}_{E} \|_{L^2_{x,v}}^2 d\tau \le C \lambda\kappa_0 \rho \int_0^t \|e^{\lambda(1+\tau)^\rho}f(\tau)\|_{L^2_{x,v}(\nu^{1/2})}^2d\tau.
\end{align}
On the other hands, on $E^c$, we have
\begin{align*}
	\lambda(1+\tau)^\rho \le  \lambda \kappa_0^{\frac{\rho}{\rho-1}} (1+|v|^2)^{\frac{\rho\gamma}{2(\rho-1)}},
\end{align*}
and it follows that
\begin{align} \label{L2a3} \nonumber
	&\int_0^t \lambda \rho (1+\tau)^{\rho-1}e^{\lambda (1+\tau)^\rho}\|f(\tau)\mathbf{1}_{E^c} \|_{L^2_{x,v}}^2 d\tau \\ \nonumber 
	&\le C  \int_0^t \lambda \rho (1+\tau)^{\rho-1}e^{-\lambda (1+\tau)^\rho} \left\|e^{\lambda(1+\tau)^\rho}f(\tau)\mathbf{1}_{E^c}\right\|_{L^2_{x,v}}^2d\tau\\
	& \le C \int_0^t \lambda \rho (1+\tau)^{\rho-1}e^{-\lambda (1+\tau)^\rho} \left\|\exp\left\{\lambda\kappa_0^{\frac{\rho}{\rho-1}}(1+|v|^2)^{\frac{\rho\gamma}{2(\rho-1)}} \right\}f(\tau)\mathbf{1}_{E^c}\right\|_{L^2_{x,v}}^2d\tau.
\end{align}
Here, $\frac{\rho\gamma}{2(\rho-1)} \le 1$ holds since $\rho = 1+\frac{(1+\vt)\gamma}{2-\gamma}$, which implies that
\begin{align*}
	\lambda\kappa_0^{\frac{\rho}{\rho-1}}(1+|v|^2)^{\frac{\rho\gamma}{2(\rho-1)}} \le \lambda\kappa_0^{\frac{\rho}{\rho-1}} (1+|v|^2) \le \frac{q}{16}(1+|v|^2).
\end{align*}
for a suitable $\lambda>0$. Note that $\lambda>0$ depends on $q$.
Thus \eqref{L2a3} becomes
\begin{align} \label{L2a4} \nonumber
	&\int_0^t \lambda \rho (1+\tau)^{\rho-1}e^{\lambda (1+\tau)^\rho}\|f(\tau)\mathbf{1}_{E^c} \|_{L^2_{x,v}}^2 d\tau \nonumber \\
	&\le C_q \int_0^t \lambda \rho (1+\tau)^{\rho-1}e^{-\lambda (1+\tau)^\rho} \left\|e^{\frac{q}{16}|v|^2}f(\tau)\mathbf{1}_{E^c}\right\|_{L^2_{x,v}}^2d\tau \nonumber \\
	& \le C_q \sup_{0\le \tau \le t} \|w_{q/2,\beta}f(\tau)\|_{L^2_{x.v}}^2,
\end{align}
where $\int_0^t \lambda \rho (1+\tau)^{\rho-1}e^{-\lambda (1+\tau)^\rho} d\tau \lesssim 1$. From \eqref{L2a100}, \eqref{L2a4} implies that
\begin{align} \label{L2a5}
	\int_0^t \lambda \rho (1+\tau)^{\rho-1}e^{\lambda (1+\tau)^\rho}\|f(\tau)\mathbf{1}_{E^c} \|_{L^2_{x,v}}^2 d\tau  \le 2C_qC_w\|w_{q/2,\beta}f_0\|_{L^2_{x,v}}^2.
\end{align}
Gathering \eqref{L2a2} and \eqref{L2a5}, we obtain
\begin{align} \label{L2a6}
	&\int_0^t \lambda \rho (1+\tau)^{\rho-1}e^{\lambda (1+\tau)^\rho}\|f(\tau) \|_{L^2_{x,v}}^2 d\tau \nonumber \\
	&\le C\lambda\kappa_0\rho\int_0^t \|e^{\lambda(1+\tau)^\rho}f(\tau)\|_{L^2_{x,v}(\nu^{1/2})}^2d\tau+ 2C_qC_w\|w_{q/2,\beta}f_0\|_{L^2_{x,v}}^2.
\end{align}
\bigskip

We need to estimate the time integration to the nonlinear term $\Gamma(f,f). $ Let $p>\frac{6}{3+\gamma}$ and $\beta > 0$. We claim that
\begin{align} \label{L2Gamma1}
	&\int_0^t e^{\lambda_2 (1+\tau)^{\rho}}\|\nu^{-1/2}\Gamma(f,f)(\tau)\|_{L^2_{x,v}}^2d\tau\nonumber \\ &\le C_{\Gamma2}  \sup_{0 \le \tau \le t}\|w_{q,\vt,\beta}f(\tau)\|_{L^p_vL^\infty_x}^2 \int_{0}^t e^{\lambda_2 (1+\tau)^{\rho}}\|f(\tau)\|_{L^2_{x,v}(\nu^{1/2})}^2 d\tau, 
\end{align}
where $C_{\Gamma2}$ depends on $p$ and $\gamma$.
First of all, let us estimate
\begin{align*}
	\int_0^t e^{\lambda_2 (1+\tau)^{\rho}} \|\nu^{-1/2}\Gamma^-(f,f)(\tau)\|_{L^2_{x,v}}^2d\tau.
\end{align*}
By the definition of $\Gamma^-$, the above term is bounded by
\begin{align} \label{Gamma-L2}
	&\int^t_0 e^{\lambda_2 (1+\tau)^{\rho}}\|\nu^{-1/2}\Gamma^-(f,f)(\tau)\|_{L^2_{x,v}}^2d\tau \nonumber \\ \nonumber
	& \le C \int_0^t e^{\lambda_2 (1+\tau)^{\rho}} \int_{\T^3 \times \R^3}\nu(v)^{-1} \left( \int_{\R^3}|v-u|^\gamma \mu^{1/2}(u) |f(u)||f(v)|du \right)^2 dxdvd\tau \\\nonumber
	&\le C \int_0^t e^{\lambda_2 (1+\tau)^{\rho}} \int_{\T^3 \times \R^3}\nu(v)^{-1}|f(v)|^2 \left( \int_{\R^3}|v-u|^{p'\gamma} \mu^{1/2}(u) du \right)^{2/p'} \\\nonumber
	& \quad \times \left( \int_{\R^3}\mu^{1/2}(u) |f(u)|^pdu \right)^{2/p}dxdv d\tau\\\nonumber
	& \le C_p \int_0^t e^{\lambda_2 (1+\tau)^{\rho}} \int_{\T^3 \times \R^3}\nu(v)^{-1}|\nu(v) f(v)|^2 \left(\int_{\R^3}\mu^{1/2}(u)|f(u)|^p du\right)^{2/p}dxdvd\tau\\\nonumber
	& \le C_p \int_0^t e^{\lambda_2 (1+\tau)^{\rho}} \|w_{q,\vt,\beta}f(\tau)\|_{L^p_v L^\infty_x}^2 \|f(\tau)\|_{L^2_{x,v}(\nu^{1/2})}^2d\tau\\ 
	& \le C_p \sup_{0\le \tau \le t}\|w_{q,\vt,\beta}f(\tau)\|_{L^p_v L^\infty_x}^2\int_0^t e^{\lambda_2 (1+\tau)^{\rho}}\|f(\tau)\|_{L^2_{x,v}(\nu^{1/2})}^2 d\tau, 
\end{align}
where $\beta>0$.\\
Next, let us estimate the following time integration for the term $\Gamma^+$ :
\begin{align*}
	\int_0^t e^{\lambda_2 (1+\tau)^{\rho}}\|\nu^{-1/2}\Gamma^+(f,f)(\tau)\|_{L^2_{x,v}}^2 d\tau.
\end{align*}
By the definition of $\Gamma^+$ and the Cauchy-Schwarz inequality, we have
\begin{align*}
	&\|\nu^{-1/2}\Gamma^+(f,f)(\tau)\|_{L^2_{x,v}}^2\\
	 & \le \int_{\T^3} \int_{\R^3} \nu^{-1}(v) \left(\int_{\R^3}\int_{\S^2} b(\cos \theta) |u-v|^\gamma  \mu^{1/2}(u)f(v') f(u')d\omega du\right)^2dvdx\\
	& \le C_b\int_{\T^3} \int_{\R^3} \nu^{-1}(v) \left(\int_{\R^3}\int_{\S^2} b(\cos \theta) |u-v|^\gamma  \mu^{1/2}(u)|f(v')|^2 |f(u')|^2d\omega du\right)\\
	& \quad \times \left(\int_{\R^3}|u-v|^\gamma  \mu^{1/2}(u)du\right)dvdx\\
	& \le C_b \int_{\T^3} \int_{\R^3}  \left(\int_{\R^3}\int_{\S^2} b(\cos \theta) |u-v|^\gamma  \mu^{1/2}(u)|f(v')|^2 |f(u')|^2d\omega du\right)dvdx.
\end{align*}
We make a change of variables $(u,v) \mapsto (u',v')$ to derive
\begin{align*}
	&\|\nu^{-1/2}\Gamma^+(f,f)(\tau)\|_{L^2_{x,v}}^2 \\&\le C_b \int_{\T^3} \int_{\R^3} |f(v)|^2 \int_{\R^3}\int_{\S^2} b(\cos \theta) |u-v|^\gamma  \mu^{1/2}(u') |f(u)|^2d\omega dudvdx\\
	& \le C_b \int_{\T^3} \int_{\R^3} |f(v)|^2 \left( \int_{\R^3}\int_{\S^2} b(\cos \theta)  |w_{q,\vt,\beta}f(u)|^pd\omega du\right)^{2/p}\\
	& \quad \times \left(\int_{\R^3}\int_{\S^2} b(\cos \theta) |u-v|^{\frac{p}{p-2}\gamma}  \mu(u')^{\frac{p}{2(p-2)}} \left(\frac{1}{w_{q,\vt,\beta}(u)}\right)^{\frac{p}{p-2}}d\omega du\right)^{\frac{p-2}{p}}dvdx\\
	& \le C_{p,q,\gamma}\|w_{q,\vt,\beta}f(\tau)\|_{L^p_vL^\infty_x}^2\int_{\T^3} \int_{\R^3} \nu(v)|f(v)|^2 dvdx\\
	& \le C_{p,q,\gamma}\|w_{q,\vt,\beta}f(\tau)\|_{L^p_vL^\infty_x}^2\|f(\tau)\|_{L^2_{x,v}(\nu^{1/2})}^2,
\end{align*}
where $p>\frac{6}{3+\gamma}$ and we have used \eqref{Lpchangeofvariable}. Hence we conclude that
\begin{align} \label{Gamma+L2}
	&\int_0^t e^{\lambda_2 (1+\tau)^{\rho}}\|\nu^{-1/2}\Gamma^+(f,f)(\tau)\|_{L^2_{x,v}}^2 d\tau \nonumber \\ 
	&\le C_{p,q,\gamma} \sup_{0\le \tau \le t}\|w_{q,\vt, \beta}f(\tau)\|_{L^p_v L^\infty_x}^2\int_0^t e^{\lambda_2 (1+\tau)^{\rho}}\|f(\tau)\|_{L^2_{x,v}(\nu^{1/2})}^2 d\tau.
\end{align}
Combining \eqref{Gamma-L2} and \eqref{Gamma+L2}, we can deduce \eqref{L2Gamma1}. 
\bigskip

As a final step, under the a priori assumpition \eqref{apriorismall}, we show that
\begin{align*}
	e^{\lambda (1+t)^\rho}\|f(t)\|_{L^2_{x,v}} \le C_{p,q,\gamma,\beta} \|w_{q,\beta}f_0\|_{L^p_vL^\infty_x}.
\end{align*}
Inserting \eqref{L2a6} and \eqref{L2Gamma1} into \eqref{L2a5}, it holds that 
\begin{align*} 
&e^{\lambda (1+t)^\rho}\|f(t)\|_{L^2_{x,v}}^2 + \int_0^t e^{\lambda (1+\tau)^\rho}\|f(\tau)\|_{L^2_{x,v}(\nu^{1/2})}^2 d\tau\nonumber\\
 &\le C \|f_0\|_{L^2_{x,v}}^2 + C\lambda\kappa_0\rho\int_0^t e^{\lambda(1+\tau)^\rho}\|f(\tau)\|_{L^2_{x,v}(\nu^{1/2})}^2d\tau+2C_qC_w\|w_{q/2,\beta}f_0\|_{L^2_{x,v}}^2\nonumber\\ 
	&\quad + C_{\Gamma2}  \sup_{0 \le \tau \le t}\|w_{q,\vt,\beta}f(\tau)\|_{L^p_vL^\infty_x}^2 \int_{0}^t e^{\lambda_2 (1+\tau)^{\rho}}\|f(\tau)\|_{L^2_{x,v}(\nu^{1/2})}^2 d\tau.	
\end{align*}
Under the apriori assumption \eqref{apriorismall}, it holds that
\begin{align} \label{cond2apriori}
C_{\Gamma2} \bar \eta ^2\le 1/4,
\end{align}
and we choose $\kappa_0>0$ sufficiently small so that
\begin{align*}
	C\lambda\kappa_0\rho \le 1/4,
\end{align*}
and thus we can deduce that
\begin{align*}
	e^{\lambda (1+t)^\rho}\|f(t)\|_{L^2_{x,v}}^2 + \int_0^t e^{\lambda (1+\tau)^\rho}\|f(\tau)\|_{L^2_{x,v}(\nu^{1/2})}^2 d\tau &\le 2C \|f_0\|_{L^2_{x,v}}^2 +4C_qC_w\|w_{q/2,\beta}f_0\|_{L^2_{x,v}}^2\\
	&\le C_{p,q,\gamma,\beta}\|w_{q,\beta}f_0\|_{L^p_vL^\infty_x}^2.
\end{align*}
Hence we can conclude the following lemma:
\begin{Lem} \label{L2decayunderapriori}
	Under the a priori assumption \eqref{apriorismall}, it holds that
	\begin{align*}
		e^{\frac{\lambda}{2} (1+t)^\rho}\|f(t)\|_{L^2_{x,v}} \le C_{p,q,\gamma,\beta} \|w_{q,\beta}f_0\|_{L^p_vL^\infty_x}
	\end{align*}
	for all $0\le t < T$.
\end{Lem}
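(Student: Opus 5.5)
The bound in Lemma \ref{L2decayunderapriori} follows by assembling the weighted-in-time energy inequality \eqref{L2est5} with the estimates derived just above. We treat the perturbed equation \eqref{FPBER} as \eqref{LBEwg} with $g=\Gamma(f,f)$. The conservation laws for $F$ give \eqref{conserforf} for $f$ and \eqref{gcondition} for $g$, since $\Gamma(f,f)\mu^{1/2}$ is orthogonal to the collision invariants. Hence the differential inequality \eqref{L2est3} is available.

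\textbf{Step 1: time-weighted energy inequality.} Multiply \eqref{L2est3} by $e^{\lambda(1+\tau)^\rho}$ and integrate over $[0,t]$. This produces \eqref{L2est5}, which contains three right-hand terms: the initial term, the commutator term \eqref{L2a1} coming from differentiating the time weight, and the nonlinear term.

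\textbf{Step 2: the commutator term.} Split velocity space into $E$ and $E^c$.
\begin{itemize}
\item On $E$, we have $(1+\tau)^{\rho-1}\lesssim\kappa_0\nu(v)$, so this piece is absorbed by the dissipation on the left once $\kappa_0$ is small, giving \eqref{L2a2}.
\item On $E^c$, the relation $\rho-1=\frac{(1+\vt)\gamma}{2-\gamma}$ ensures $\frac{\rho\gamma}{2(\rho-1)}\le 1$. Therefore $e^{\lambda(1+\tau)^\rho}\le e^{\frac{q}{16}|v|^2}$ for $\lambda$ small depending on $q$. Since $\int_0^\infty\lambda\rho(1+\tau)^{\rho-1}e^{-\lambda(1+\tau)^\rho}d\tau\lesssim 1$, this piece is bounded by $\sup_\tau\|w_{q/2,\beta}f(\tau)\|_{L^2_{x,v}}^2$.
\end{itemize}
That supremum is controlled by the uniform weighted $L^2$ bound \eqref{L2a100}. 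We obtain \eqref{L2a100} from Lemma \ref{L2estw}, \eqref{L2Gamma2} and the smallness \eqref{cond1apriori}. Together these give \eqref{L2a6}.

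\textbf{Step 3: the nonlinear term.} Use \eqref{L2Gamma1}, taking $\lambda_2=\lambda$. The loss part is handled by Hölder in $u$ with $\ell'\gamma>-3$, which extracts $\nu(v)$ and an $L^p_vL^\infty_x$ factor. The gain part is handled by Cauchy–Schwarz, the pre/post-collisional change of variables, and \eqref{Lpchangeofvariable}. This bounds it by $\bar\eta^2\int_0^te^{\lambda(1+\tau)^\rho}\|f\|_{L^2_{x,v}(\nu^{1/2})}^2d\tau$ under \eqref{apriorismall}. Choosing $\bar\eta$ with \eqref{cond2apriori} and $\kappa_0$ with $C\lambda\kappa_0\rho\le 1/4$, both dissipation-type terms are absorbed into the left side.

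\textbf{Step 4: initial data.} The remaining initial quantities $\|f_0\|_{L^2_{x,v}}^2$ and $\|w_{q/2,\beta}f_0\|_{L^2_{x,v}}^2$ must be bounded by $\|w_{q,\beta}f_0\|^2_{L^p_vL^\infty_x}$. Since $\T^3$ has finite measure, $\|w_{q/2,\beta}f_0\|_{L^2_{x,v}}\le\|w_{q/2,\beta}f_0\|_{L^2_vL^\infty_x}$. Hölder in $v$ with exponents $p/2$ and $p/(p-2)$ against $e^{-\frac{q}{8}|v|^2}$ then gives $\le C_{p,q}\|w_{q,\beta}f_0\|_{L^p_vL^\infty_x}$, using $w_{q/2,\beta}/w_{q,\beta}=e^{-q|v|^2/16}$. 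Taking square roots gives the claim with $e^{\frac{\lambda}{2}(1+t)^\rho}$.

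\textbf{Main obstacle.} The delicate point is Step 2 on $E^c$. The growth of the time weight has to be traded for a velocity weight that (i) is dominated by $e^{q|v|^2/16}$ uniformly in $\tau$, and (ii) is controlled by a bound independent of $t$. Point (i) is exactly where the constraints $\vt<-2/\gamma$ and the choice of $\rho$ enter. Point (ii) requires closing \eqref{L2a100} first, with its own smallness condition on $\bar\eta$.
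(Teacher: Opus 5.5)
Your proposal is correct and follows the paper's argument essentially step for step. It uses the time-weighted inequality \eqref{L2est5}, the $E$/$E^c$ splitting of the commutator term closed via the uniform weighted bound \eqref{L2a100}, the nonlinear bound \eqref{L2Gamma1} with absorption under \eqref{cond2apriori} and small $\kappa_0$, and finally control of the initial data. Your Step 4 (finite measure of $\T^3$, then H\"older in $v$ with exponents $p/2$ and $p/(p-2)$ against $e^{-\frac{q}{8}|v|^2}$) supplies the detail that the paper only asserts in its last inequality.
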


\bigskip

\subsection{$R(f)$ estimate} 
We can rewrite the Boltzmann equation \eqref{FPBER} for $h=w_{q,\vartheta,\beta}f$ as

\begin{align}\label{def.be.h}
	\p_th(t,x,v)+v\cdot\nabla_xh(t,x,v) + \tilde{\nu}(t,v)h(t,x,v) = K_wh(t) + w\Gamma(f,f)(t),
\end{align}
where
\begin{align*} 
	\tilde{\nu}(t,v) := \nu(v) + \frac{\vartheta q|v|^2}{8(1+t)^{\vartheta+1}} .
\end{align*}
\indent For the $L^p_v L^\infty_x$ solution under the soft potential, the absence of a positive lower bound of the collision frequency $\nu(v)$ and the appearance of the local term from Lemma \ref{pointwiseGamma-estimate} make the order of integration 
\begin{align*}
	\int_0^t \int_{\R^3} e^{-\nu(v)(t-s)} \nu(v) |h(s,X(s),v)| dvds
\end{align*}
highly delicate. To overcome this difficulty, it is necessary to introduce the operator
\begin{align} \label{Rf}
	R(f) := \int_{\R^3\times \S^2} B(v-u,\omega) [\mu(u) + \sqrt{\mu(u)} f(u)] d\omega du + \frac{\vartheta q|v|^2}{8(1+t)^{\vartheta+1}}.
\end{align}
Therefore, in this section, let us consider the Boltzmann equation with the operator $R(f)$, namely
\begin{align*}
	\partial_t h + v \cdot \nabla_x h + R(f) h = K_w h + w \Gamma^+(f,f).
\end{align*}
While the time-dependent collision frequency $\tilde{\nu}(t,v)$ has a positive lower bound, we need to verify whether the newly defined operator $R(f)$ also admits a positive lower bound.  From the definition \eqref{Rf} of the operator $R(f)$, it holds that 
\begin{align} \label{Rf.esti1}
		R(f) &=\int_{\R^3\times \S^2} B(v-u,\omega) [\mu(u) + \sqrt{\mu(u)} f(u)] d\omega du + \frac{\vartheta q|v|^2}{8(1+t)^{\vartheta+1}}  \nonumber\\ 
		& \geq \nu(v) -C_2 \nu(v) \int_{\R^3} e^{-\frac{|u|^2}{8}} |f(t,x,u)| du + \frac{\vartheta q|v|^2}{8(1+t)^{\vartheta+1}},
\end{align} 
for some constant $C_2>0$. Therefore, to obtain a positive lower bound of the operator $R(f)$, we need to get the following estimate: 
\begin{align*}
	\int_{\R^3} e^{-\frac{|u|^2}{8}} |h(t,x,u)| du \leq \frac{1}{2C_2}. 
\end{align*}
In this lemma, we establish an estimate for the quantity above. 
\begin{Lem} \label{exp.h.esti}
	Let $-3<\gamma<0$. The following results can be derived for each case:
	\begin{enumerate}[label=(\arabic*)]
		\item If $-1\leq \gamma <0$, then it holds that 
		\begin{align*}
			&\int_{\R^3} e^{-\frac{|u|^2}{8}} |h(t,x,u)| du\\ &\leq e^{-\lambda (1+t)^\rho} \Vert h_0 \Vert_{L^p_v L^\infty _x} + C_{p,q,\gamma,\beta,\varepsilon} \delta \left[\sup_{0\leq s \leq t}\Vert h(s) \Vert_{L^p_vL^\infty_x} + \sup_{0 \leq s \leq t} \Vert h(s) \Vert_{L^p_vL^\infty_x}^2\right] \\
		&\quad + C_{p,\gamma} \varepsilon^{\gamma+\frac{3}{p'}}\sup_{0\leq s \leq t } \Vert h(s) \Vert_{L^p_vL^\infty_x} +  C_{\gamma}\left(\frac{1}{N^{-\gamma}}+\frac{1}{N^{\frac{3-\gamma}{2}}}\right) \sup_{0\leq s \leq t} \Vert h(s)\Vert_{L^p_v L^\infty_x} \\
		& \quad +C_{N,\delta} \mathcal{E}(F_0)^{1/2} + C_{p,N,\delta}\sup_{0\le s \le t}\|h(s)\|_{L^p_vL^\infty_x}^{\frac{p}{2p-2}}\mathcal{E}(F_0)^{\frac{p-2}{2p-2}}\\
		&\quad + \frac{C_{p,\gamma,\beta,\ell,\delta}}{N} \sup_{0\leq s \leq t} \Vert h(s) \Vert_{L^p_v L^\infty_x}^2 + C_{p,\gamma,\beta,N,\delta}\sup_{0\leq s \leq t} \Vert h(s)\Vert_{L^p_v L^\infty_x}\mathcal{E}(F_0)^{1/2} \\
		& \quad + C_{p,\gamma, \beta, N,\delta}\sup_{0\le s \le t}\|h(s)\|_{L^p_vL^\infty_x}^{\frac{3p-2}{2p-2}}\mathcal{E}(F_0)^{\frac{p-2}{2p-2}}+C_{\ell,N,\delta} \sup_{0\leq s \leq t}\|h(s)\|_{L^p_vL^\infty_x}^{2-\frac{1/\ell-1/p}{1/2-1/p}}\mathcal{E}(F_0)^{\frac{1}{2}\cdot\frac{1/\ell-1/p}{1/2-1/p}}\\
		& \quad + C_{\ell,N,\delta}\sup_{0\leq s \leq t}\|h(s)\|_{L^p_vL^\infty_x}^{2-\frac{1/\ell-1/p}{1-1/p}}\mathcal{E}(F_0)^{\frac{1/\ell-1/p}{1-1/p}},
		\end{align*}
		for all $(t,x) \in [0,T] \times \T^3$. \\
		\item If $-3< \gamma <-1$, then it holds that 
		\begin{align*}
			&\int_{\R^3} e^{-\frac{|u|^2}{8}} |h(t,x,u)| du\\ &\leq e^{-\lambda (1+t)^\rho} \Vert h_0 \Vert_{L^p_v L^\infty _x} + C_{p,q,\gamma,\beta,b,\varepsilon} \delta \left[\sup_{0\leq s \leq t}\Vert h(s) \Vert_{L^p_vL^\infty_x} + \sup_{0 \leq s \leq t} \Vert h(s) \Vert_{L^p_vL^\infty_x}^2\right] \\
		&\quad +  C_{p,\gamma} \varepsilon^{\gamma+\frac{3}{p'}}\sup_{0\leq s \leq t } \Vert h(s) \Vert_{L^p_vL^\infty_x} +  C_{\gamma}\left(\frac{1}{N^{-\gamma}}+\frac{1}{N^{\frac{3-\gamma}{2}}}\right) \sup_{0\leq s \leq t} \Vert h(s)\Vert_{L^p_v L^\infty_x} \\
		& \quad +C_{N,\delta} \mathcal{E}(F_0)^{1/2} + C_{p,N,\delta}\sup_{0\le s \le t}\|h(s)\|_{L^p_vL^\infty_x}^{\frac{p}{2p-2}}\mathcal{E}(F_0)^{\frac{p-2}{2p-2}}\\
		&\quad + \frac{C_{p,\gamma,\beta,\ell,\delta}}{N}  \sup_{0 \leq s \leq t} \Vert h(s) \Vert_{L^p_v L^\infty_x}^2 +C_{\ell,N,\delta} \sup_{0\leq s \leq t}\|h(s)\|_{L^p_vL^\infty_x}^{2-\frac{1/\ell-1/p}{1/2-1/p}}\mathcal{E}(F_0)^{\frac{1}{2}\cdot\frac{1/\ell-1/p}{1/2-1/p}}\nonumber\\
	& \quad  + C_{\ell,N,\delta}\sup_{0\leq s \leq t}\|h(s)\|_{L^p_vL^\infty_x}^{2-\frac{1/\ell-1/p}{1-1/p}}\mathcal{E}(F_0)^{\frac{1/\ell-1/p}{1-1/p}}\nonumber\\
		& \quad  +C_{p,\gamma,N,\delta} \sup_{0 \leq s \leq t}\|h(s)\|_{L^p_vL^\infty_x}^{1-\frac{1/p'm'-1/p}{1/2-1/p}}\mathcal{E}(F_0)^{\frac{1}{2}\cdot\frac{1/p'm'-1/p}{1/2-1/p}} \nonumber \\
		&\quad + C_{p,\gamma,N,\delta}\sup_{0 \leq s \leq t}\|h(s)\|_{L^p_vL^\infty_x}^{1-\frac{1/p'm'-1/p}{1-1/p}}\mathcal{E}(F_0)^{\frac{1/p'm'-1/p}{1-1/p}},
		\end{align*}
		for all $(t,x) \in [0,T] \times \T^3$. 
	\end{enumerate}
\end{Lem}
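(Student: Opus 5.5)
We will work from the mild formulation of \eqref{def.be.h} along characteristics, using the time-weighted collision frequency $\tilde\nu(t,v)$. We do not use $R(f)$ here, because this lemma is exactly what is needed to produce its lower bound. Integrating $e^{-|u|^2/8}$ against the Duhamel formula gives
\begin{align*}
\int_{\R^3} e^{-\frac{|u|^2}{8}}|h(t,x,u)|du \le \int_{\R^3} e^{-\frac{|u|^2}{8}}\Big[&e^{-\int_0^t\tilde\nu}|h_0(x-tu,u)| + \int_0^t e^{-\int_s^t\tilde\nu}|K_wh(s,x-u(t-s),u)|ds\\
&+\int_0^t e^{-\int_s^t\tilde\nu}|w\Gamma(f,f)(s,x-u(t-s),u)|ds\Big]du .
\end{align*}

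\textbf{Initial-data and $K_w$ terms.} The lower bound $\int_s^t\tilde\nu\,d\tau\gtrsim (1+t)^\rho-(1+s)^\rho$ (the standard Liu--Yang computation, with $\rho$ as in the statement) together with H\"older in $u$ handles the first term and gives $e^{-\lambda(1+t)^\rho}\|h_0\|_{L^p_vL^\infty_x}$.

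For $K_w h$ we split $K=K^s+K^{ns}$.
\begin{itemize}
\item The singular part is bounded by Lemma \ref{Ksingular}. This produces the $\ve^{\gamma+3/p'}$ term.
\item For the nonsingular part we restrict to $|u|\le N$ and $|u'|\le 2N$; the complement gives the $N^{\gamma}$ and $N^{-(3-\gamma)/2}$ smallness via Lemma \ref{k2ker}.
\item We then iterate the mild formula once more at $(s,x-u(t-s),u')$. The time interval $s\in[t-\delta,t]$ gives the $\delta$-terms through Lemma \ref{Knonsing}, Corollary \ref{pKestimate} and Lemma \ref{pointwiseGamma-estimate}/Corollary \ref{LpGamma+est}.
\item On $s\le t-\delta$ we perform the usual change of variables $u'\mapsto y=x-u(t-s)-u'(s-s')$, whose Jacobian is bounded below by $\delta^3$. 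This converts the double integral into an $L^1_{x,v}$ integral of $f$ over a bounded velocity set.
\end{itemize}

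\textbf{Entropy interpolation.} On the bounded set from the last step we split $f=f\mathbf 1_{\{|f|\le\sqrt\mu\}}+f\mathbf 1_{\{|f|>\sqrt\mu\}}$ and apply Lemma \ref{L1L2cont} with Lemma \ref{relativedecrease}. The small part is bounded by $C_{N,\delta}\mathcal E(F_0)^{1/2}$. For the large part we interpolate between $L^1$ (with weight $\sqrt\mu$, controlled by $\mathcal E$) and $L^p_vL^\infty_x$ via H\"older. This yields $\|h\|^{p/(2p-2)}\mathcal E^{(p-2)/(2p-2)}$, and, when multiplied by a further $\|h\|$ factor from $K_w$ or $\Gamma$, the $\frac{3p-2}{2p-2}$ power.

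\textbf{Nonlinear term $w\Gamma=w\Gamma^+-w\Gamma^-$.} For $\Gamma^-$ we use Lemma \ref{pointwiseGamma-estimate} with exponent $\ell<p$. This gives $\nu(u)|h(u)|\,(\int\mu^{1/2}|f|^\ell)^{1/\ell}$, and the factor $\nu$ is absorbed by the time integral of $e^{-\int\tilde\nu}$. The $L^\ell$ norm is interpolated between the entropy-controlled $L^2$/$L^1$ pieces and $L^p$. This produces the two terms with exponents $\frac{1/\ell-1/p}{1/2-1/p}$ and $\frac{1/\ell-1/p}{1-1/p}$; large velocities give the $C/N\,\|h\|^2$ term.

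For $\Gamma^+$ we use the pointwise Lemma \ref{pointwiseGamma+estimate}, case by case.
\begin{itemize}
\item For $-1\le\gamma<0$ the $L^2$ factor $\big(\int(1+|\eta|)^{\frac{4p-8}{p}-2\beta}|h|^2\big)^{1/2}$ is split into a bounded-velocity part controlled by $\mathcal E$ via the same interpolation, and a tail made small by $\beta>9/2$. This gives the additional $\|h\|\mathcal E^{1/2}$ and $\|h\|^{(3p-2)/(2p-2)}\mathcal E^{(p-2)/(2p-2)}$ terms.
\item For $-3<\gamma<-1$ the $L^{p'm'}$ factor with $p'm'<p$ is interpolated in the same way. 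This gives the last two terms with exponent $\frac{1/p'm'-1/p}{\cdot}$.
\end{itemize}
In both cases the velocity decay $(1+|v|)^{-(\cdot)}$ from that lemma makes the $e^{-|u|^2/8}$ integral finite, so no $\nu$ loss occurs.

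\textbf{Main obstacle.} The main difficulty is the $\Gamma^+$ contribution. One must check that the exponents of $(1+|\eta|)$ are integrable under the stated conditions on $\beta$ and $p$, and carry out the three-way interpolation ($L^1$ weighted by $\sqrt\mu$, $L^2$ on $\{|f|\le\sqrt\mu\}$, and $L^p_vL^\infty_x$) uniformly in $x$. We will first treat the $x$-supremum by bounding the pointwise factor by the $L^p_vL^\infty_x$ norm, and only then integrate in $x$ after the $\delta$-change of variables.
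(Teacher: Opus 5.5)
The architecture mostly matches the paper, but the step you give for $K^{ns}_w$ on $s\le t-\delta$ fails as written. The matching parts are:
\begin{itemize}
\item the Duhamel formula along characteristics with $\tilde\nu$ rather than $R(f)$;
\item the cut $|u|\le N$, with the layer $s\in[t-\delta,t]$ treated crudely for all terms;
\item $K^s_w$ via Lemma \ref{Ksingular}, and kernel tails for $|\eta|\ge 2N$;
\item entropy interpolation through Lemma \ref{L1L2cont}, with $\Gamma^\pm$ handled by the pointwise lemmas.
\end{itemize}

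The failing step is the change of variables after your second iteration of the mild formula. The map $u'\mapsto y=x-u(t-s)-u'(s-s')$ has Jacobian $(s-s')^3$. Your cutoff $s\le t-\delta$ puts no restriction on $s-s'$, so the Jacobian degenerates as $s'\to s$ and there is no $\delta^{3}$ lower bound. To repair this inside your scheme you would need a second cutoff $s'\in[s-\delta,s]$. You would then also have to estimate every term generated at the second level: initial data, $K^s_w$, and both $w\Gamma^\pm$ at time $s'$. Your outline accounts for none of these.

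The idea you are missing is that the quantity in the lemma already carries an integral over the velocity $u$, so no second iteration is needed. The paper takes a single Duhamel step and applies Cauchy--Schwarz in the kernel variable on $\{|\eta|\le 2N\}$. It then changes variables in the \emph{outer} velocity, $u\mapsto y=x-u(t-s)$, whose Jacobian $(t-s)^3\ge\delta^3$ is bounded below precisely on $[0,t-\delta]$. The same outer change of variables is how the $\Gamma^{\pm}$ contributions reach the entropy.

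For the $\Gamma^{\pm}$ terms, the absorption of $\nu(u)$ into the time integral must be done uniformly in $u$. Because $X(s)=x-u(t-s)$ depends on $s$, you cannot use $\int_0^t e^{-\nu(u)(t-s)}\nu(u)\,ds\le 1$ pointwise in $u$ and then take a supremum in $s$ inside the $u$-integral. Instead, use $|u|\le N$ to get $e^{-\int_s^t\tilde\nu(\tau,u)d\tau}\nu(u)\le Ce^{-cN^{\gamma}(t-s)}$ uniformly in $u$, and pull the $s$-integral outside at cost $C_N$.

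For $\Gamma^-$, then apply H\"older in $u$ with exponents $\ell',\ell$:
\begin{itemize}
\item $e^{-|u|^2/8}|h(s,X(s),u)|$ is controlled by $\|h(s)\|_{L^p_vL^\infty_x}$, since $\ell'<p$;
\item $\int_{|u|\le N}\int_{|\eta|\le 2N}|h(s,X(s),\eta)|^\ell\, d\eta\, du$ goes through the change of variables.
\end{itemize}
This requires $\max\{\frac{3}{3+\gamma},2\}<\ell<p$, both for Lemma \ref{pointwiseGamma-estimate} and for the $L^2$--$L^p$ interpolation on $\{|f|\le\sqrt\mu\}$.
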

\begin{proof}
	Applying Duhamel's iteration to \eqref{def.be.h}, one obtains 
	\begin{align} \label{Rf.duhamel}
			&\int_{\R^3} e^{-\frac{|u|^2}{8}} |h(t,x,u)| du\nonumber \\
			&= \int_{|u|\geq N} e^{-\frac{|u|^2}{8}} |h(t,x,u)| du + \int_{|u| \leq N} e^{-\frac{|u|^2}{8}} |h(t,x,u)| du\nonumber\\
			&\leq \frac{C}{N}\sup_{0\leq s \leq t} \Vert h(s) \Vert_{L^p_v L^\infty_x}\nonumber \\
			&\quad +  \int_{|u| \leq N } e^{-\frac{|u|^2}{8}}e^{-\int_0^t 	\tilde{\nu}(\tau,u)d\tau} |h_0(x-ut,u)| du \nonumber\\
			&\quad + \int_{t-\delta}^t \int_{|u| \leq N} e^{-\frac{|u|^2}{8}}   e^{-\int_s^t 	\tilde{\nu}(\tau,u)d\tau} [|K_w^{ns} h| +|K_w^{s}h|+ |w\Gamma(f,f)|] (s,x-u(t-s),u)duds\nonumber \\
			&\quad +  \int_{0}^{t-\delta} \int_{|u| \leq N } e^{-\frac{|u|^2}{8}}   e^{-\int_s^t 	\tilde{\nu}(\tau,u)d\tau} |K_w^{s} h|(s,x-u(t-s),u)duds\nonumber\\
			&\quad +  \int_{0}^{t-\delta} \int_{|u| \leq N } e^{-\frac{|u|^2}{8}}  e^{-\int_s^t 	\tilde{\nu}(\tau,u)d\tau} |K_w^{ns} h|(s,x-u(t-s),u)duds\nonumber\\
			&\quad + \int_{0}^{t-\delta} \int_{|u| \leq N} e^{-\frac{|u|^2}{8}}  e^{-\int_s^t 	\tilde{\nu}(\tau,u)d\tau} |w\Gamma(f,f)|(s,x-u(t-s),u) duds \nonumber\\
			&:= I_1 +I_2 + I_3 +I_4 + I_5.
	\end{align}
	For $I_1$, we use the following fact 
	\begin{align*}
		\tilde{\nu}(u,t) \geq C(1+t)^{\frac{(1+\vartheta)\gamma}{2-\gamma}}, 
	\end{align*}
	for some generic constant $C$. We omit the proof for brevity and refer the reader to (3.5) in \cite{Ko2022JDE}. Thus, 
	\begin{align*} 
	e^{-\int_0^t 	\tilde{\nu}(\tau,u)d\tau} \leq e^{-C\int_0^t (1+\tau)^{\frac{(1+\vartheta)\gamma}{2-\gamma}}d\tau} =e^{-\lambda(1+t)^{\rho}},
	\end{align*}
	where $\rho = 1+ \frac{(1+\vartheta)\gamma}{2-\gamma}>0$ and $\lambda = \frac{C}{\rho}$. Therefore, it follows from H\"{o}lder's inequality that 
	\begin{align} \label{I1}
		I_1 \leq e^{-\lambda (1+t)^\rho} \Vert h_0 \Vert_{L^p_v L^\infty _x}. 
	\end{align}
	Let us consider $I_2$. Using Lemma \ref{Ksingular}, Lemma \ref{Knonsing}, Lemma \ref{pointwiseGamma-estimate} and Corollary \ref{LpGamma+est}, we easily check that 
	\begin{align} \label{I2} 
		I_2 \leq C_{p,q,\gamma,\beta,\varepsilon} \delta \left[\sup_{0\leq s \leq t}\Vert h(s) \Vert_{L^p_vL^\infty_x} + \sup_{0 \leq s \leq t} \Vert h(s) \Vert_{L^p_vL^\infty_x}^2\right].
	\end{align}
	As for $I_3$, we use Lemma \ref{Ksingular} to obtain the estimate
	\begin{align} \label{I3} 
			I_3 &\leq C_{p,\gamma} \varepsilon^{\gamma+\frac{3}{p'}}\sup_{0\leq s \leq t } \Vert h(s) \Vert_{L^p_vL^\infty_x} \int_0^{t-\delta} \int_{|u| \leq N} e^{-\frac{|u|^2}{8}}  e^{-\int_s^t 	\tilde{\nu}(\tau,u)d\tau}  \mu(u)^{\frac{1-q}{8}}  duds \nonumber \\
			& \leq C_{p,\gamma} \varepsilon^{\gamma+\frac{3}{p'}}\sup_{0\leq s \leq t } \Vert h(s) \Vert_{L^p_vL^\infty_x} \int_{|u| \leq N} e^{-\frac{|u|^2}{16}}\int_0^{t-\delta} e^{-\nu(u)(t-s)} \nu(u)ds  du \nonumber  \\
			& \leq C_{p,\gamma} \varepsilon^{\gamma+\frac{3}{p'}}\sup_{0\leq s \leq t } \Vert h(s) \Vert_{L^p_vL^\infty_x}.
	\end{align}
	For convenience, we denote $X(s):=x-u(t-s)$.
	In the case of $I_4$, using Lemma \ref{k1ker} and Lemma \ref{k2ker} gives the following estimate:
	\begin{align} \label{I4} 
			I_4 &= \int_0^{t-\delta} \int_{|u| \leq N} e^{-\frac{|u|^2}{8}}  e^{-\int_s^t 	\tilde{\nu}(\tau,u)d\tau}  \int_{|\eta| \geq 2N} k_w^{ns}(u,\eta) |h(s,X(s),\eta)| d\eta  duds \nonumber \\
			&\quad + \int_0^{t-\delta} \int_{|u| \leq N} e^{-\frac{|u|^2}{8}}  e^{-\int_s^t 	\tilde{\nu}(\tau,u)d\tau} \int_{|\eta| \leq 2N} k_w^{ns}(u,\eta) |h(s,X(s),\eta)| d\eta  duds \nonumber \\
			&\leq  C_{\gamma }\int_0^{t-\delta} \int_{|u| \leq N} e^{-\frac{|u|^2}{8}} e^{-\int_s^t 	\tilde{\nu}(\tau,u)d\tau} \nonumber \\
			&\quad \quad  \times \int_{|\eta| \geq 2N} \left[(1+|u|^2)^{\beta}|u-\eta|^{\gamma} e^{-\frac{1-q}{4}|u|^2}e^{-\frac{|\eta|^2}{4}}+ \frac{1}{|u-\eta|^{\frac{3-\gamma}{2}}} e^{-\frac{|u-\eta|^2}{8}}e^{-\frac{||u|^2-|\eta|^2|^2}{8|u-\eta|^2}}\frac{w(u)}{w(\eta)}\right]\nonumber\\
			& \qquad \quad \times  |h(s,X(s),\eta)|d\eta du ds  \nonumber \\
			&\quad +  \int_0^{t-\delta} \int_{|u| \leq N} e^{-\frac{|u|^2}{8}}  e^{-\int_s^t 	\tilde{\nu}(\tau,u)d\tau}  \int_{|\eta| \leq 2N} k_w^{ns}(u,\eta)|h(s,X(s),\eta)| d\eta  duds \nonumber \\
			&\leq C_{\gamma}\left(\frac{1}{N^{-\gamma}}+\frac{1}{N^{\frac{3-\gamma}{2}}}\right) \sup_{0\leq s \leq t} \Vert h(s)\Vert_{L^p_v L^\infty_x} \int_0^{t-\delta} \int_{|u| \leq N}  e^{-\int_s^t 	\tilde{\nu}(\tau,u)d\tau} \nu(u)e^{-\frac{|u|^2}{16}}du ds \nonumber\\
			&\quad +  C_\ve \int_0^{t-\delta} \int_{|u| \leq N} e^{-\frac{|u|^2}{16}}  e^{-\int_s^t 	\tilde{\nu}(\tau,u)d\tau}  \nu(u)    \left(\int_{|\eta|\leq 2N} |h(s,X(s),\eta)|^2 d\eta \right)^{1/2} duds   \nonumber \\
			&\leq C_{\gamma}\left(\frac{1}{N^{-\gamma}}+\frac{1}{N^{\frac{3-\gamma}{2}}}\right) \sup_{0\leq s \leq t} \Vert h(s)\Vert_{L^p_v L^\infty_x}   \nonumber \\
			&\quad +C_\ve   \sup_{0\leq s \leq t-\delta} \left(\int_{|u|\leq N} \int_{|\eta|\leq 2N} |h(s,X(s),\eta)|^2 dud\eta \right)^{1/2}.
	\end{align}
	Making a change of variables $u \mapsto y:=X(s) = x-u(t-s)$ with $\left|\frac{dy}{du} \right| =(t-s)^3$, the last term in \eqref{I4} becomes
	\begin{align*}
		 C_{N,\delta} \sup_{0\leq s \leq t-\delta} \left(\int_{\T^3} \int_{|\eta|\leq 2N} |h(s,y,\eta)|^2 d\eta dy \right)^{1/2}.
	\end{align*}
	Here, we use Lemma \ref{L1L2cont} and the interpolation inequality to obtain
\begin{equation} \label{RTIRE}
\begin{aligned}
	&\left(\int_{\T^3}\int_{|\eta|\le 2N}|h(s,y,\eta)|^2d\eta dy\right)^{1/2}\\
	& \le C_N \left(\int_{\T^3}\int_{|\eta|\le 2N}|f(s,y,\eta)|^2 \mathbf{1}_{\{|f|\le \sqrt{\mu}\}} d\eta dy\right)^{1/2}\\
	&\quad +C_N\left(\int_{\T^3}\int_{|\eta|\le 2N}|h(s,y,\eta)|^2 \mathbf{1}_{\{|f|> \sqrt{\mu}\}}d\eta dy\right)^{1/2}\\
	&\le C_N \mathcal{E}(F_0)^{1/2} + C_N\left(\int_{\T^3}\int_{|\eta|\le 2N}|h(s,y,\eta)| \mathbf{1}_{\{|f|> \sqrt{\mu}\}}d\eta dy\right)^{\frac{p-2}{2p-2}} \\ &\qquad \times \left(\int_{\T^3}\int_{|\eta|\le 2N}|h(s,y,\eta )|^p \mathbf{1}_{\{|f|> \sqrt{\mu}\}}d\eta dy\right)^{\frac{1}{2p-2}}\\
	&\le C_N \mathcal{E}(F_0)^{1/2} + C_{p,N}\|h(s)\|_{L^p_vL^\infty_x}^{\frac{p}{2p-2}}\left(\int_{\T^3}\int_{|\eta|\le 2N}\mu^{1/2}(\eta)|f(s,y,\eta)| \mathbf{1}_{\{|f|> \sqrt{\mu}\}}d\eta dy\right)^{\frac{p-2}{2p-2}}\\
	&\le C_N \mathcal{E}(F_0)^{1/2} + C_{p,N}\|h(s)\|_{L^p_vL^\infty_x}^{\frac{p}{2p-2}}\mathcal{E}(F_0)^{\frac{p-2}{2p-2}}.
\end{aligned}
\end{equation}
Hence we have
\begin{align} \label{Rf17}
	I_4 &\le  C_{\gamma}\left(\frac{1}{N^{-\gamma}}+\frac{1}{N^{\frac{3-\gamma}{2}}}\right) \sup_{0\leq s \leq t} \Vert h(s)\Vert_{L^p_v L^\infty_x}+C_{N,\delta} \mathcal{E}(F_0)^{1/2} \nonumber \\
	&\quad + C_{p,N,\delta}\sup_{0\le s \le t}\|h(s)\|_{L^p_vL^\infty_x}^{\frac{p}{2p-2}}\mathcal{E}(F_0)^{\frac{p-2}{2p-2}}.
\end{align}

	Let us handle the remaining term $I_5$. We firstly treat the case $-1 \leq \gamma <0$.  \\
	\newline 
	\indent \textbf{(Case 1. $-1 \leq \gamma <0$)}
	By using Lemma \ref{pointwiseGamma-estimate} and Lemma \ref{pointwiseGamma+estimate}, we obtain 
	\begin{align*}
		I_5 &\leq C_{p,\gamma,\beta}\sup_{0\leq s \leq t} \Vert h(s) \Vert_{L^p_v L^\infty_x } \int_0^{t-\delta} \int_{|u|\leq N} e^{-\frac{|u|^2}{8}} e^{-\int_s^t 	\tilde{\nu}(\tau,u)d\tau} \\
		&\qquad \times \left(\int_{\R^3} (1+|\eta|)^{\frac{4p-8}{p}-2\beta} |h(s,X(s),\eta)|^2 d\eta \right)^{1/2}duds\\
		&\quad + C_\ell \int_0^{t-\delta} \int_{|u|\leq N} e^{-\frac{|u|^2}{8}}  e^{-\int_s^t 	\tilde{\nu}(\tau,u)d\tau}  \nu(u) |h(s,X(s),u)| \\
		&\qquad \times \left(\int_{\R^3} \sqrt{\mu(\eta)} |h(s,X(s),\eta)|^\ell d \eta \right)^{1/\ell}duds\\
		&:=I_{5,1} + I_{5,2}.
	\end{align*}
	For $I_{5,1}$, We split the domain of integration in the $\eta$-velocity variable as follows:
	\begin{align} \label{betacondition1}
		I_{5,1} &\le C_{p,\gamma,\beta}\sup_{0\leq s \leq t} \Vert h(s) \Vert_{L^p_v L^\infty_x } \int_0^{t-\delta} \int_{|u|\leq N} e^{-\frac{|u|^2}{8}} e^{-\int_s^t 	\tilde{\nu}(\tau,u)d\tau}\nonumber\\
		&\qquad \times \left(\int_{|\eta| \geq 2N} (1+|\eta|)^{\frac{4p-8}{p}-2\beta} |h(s,X(s),\eta)|^2 d\eta \right)^{1/2}duds \nonumber\\
		&\quad +C_{p,\gamma,\beta}\sup_{0\leq s \leq t} \Vert h(s) \Vert_{L^p_v L^\infty_x } \int_0^{t-\delta} \int_{|u|\leq N} e^{-\frac{|u|^2}{8}} e^{-\int_s^t 	\tilde{\nu}(\tau,u)d\tau} \nonumber\\
		&\qquad \times \left(\int_{|\eta| \leq 2N} (1+|\eta|)^{\frac{4p-8}{p}-2\beta} |h(s,X(s),\eta)|^2 d\eta \right)^{1/2}duds\nonumber\\
		&\leq C_{p,\gamma,\beta} \sup_{0\leq s \leq t} \Vert h(s) \Vert_{L^p_v L^\infty_x}^2 \int_0^{t-\delta} \int_{|u|\leq N} e^{-\frac{|u|^2}{8}}  e^{-\int_s^t 	\tilde{\nu}(\tau,u)d\tau} \nonumber \\
		&\qquad \times \left(\int_{|\eta| \geq 2N} (1+|\eta|)^{4-\frac{2p\beta}{p-2}} d\eta \right)^{\frac{p-2}{2p}}duds\nonumber \\
		&\quad +C_{p,\gamma,\beta}\sup_{0\leq s \leq t} \Vert h(s) \Vert_{L^p_v L^\infty_x } \int_0^{t-\delta} \int_{|u|\leq N} e^{-\frac{|u|^2}{16}} e^{-\int_s^t 	\tilde{\nu}(\tau,u)d\tau} \nu(u)\nonumber\\
		&\qquad \times \left(\int_{|\eta| \leq 2N} |h(s,X(s),\eta)|^2 d\eta \right)^{1/2}duds\nonumber\\
		&\leq \frac{C_{p,\gamma,\beta}}{N} \sup_{0\leq s \leq t} \Vert h(s) \Vert_{L^p_v L^\infty_x}^2 \int_0^{t-\delta} \int_{|u|\leq N} e^{-\frac{|u|^2}{8}}  e^{-\int_s^t 	\tilde{\nu}(\tau,u)d\tau}\nonumber \\
		&\qquad \times  \left(\int_{|\eta| \geq 2N} (1+|\eta|)^{4-\frac{2p(\beta-1)}{p-2}} d\eta \right)^{\frac{p-2}{2p}}duds\\
		&\quad + C_{p,\gamma,\beta}\sup_{0\leq s \leq t} \Vert h(s) \Vert_{L^p_v L^\infty_x } \sup_{0\leq s \leq t-\delta} \left(\int_{|u|\leq N} \int_{|\eta| \leq 2N} |h(s,X(s),\eta)|^2 dud\eta \right)^{1/2}\nonumber\\
		&\leq \frac{C_{p,\gamma,\beta}}{N} \sup_{0\leq s \leq t} \Vert h(s) \Vert_{L^p_v L^\infty_x}^2 + C_{p,\gamma,\beta,N,\delta}\sup_{0\leq s \leq t} \Vert h(s)\Vert_{L^p_v L^\infty_x}\mathcal{E}(F_0)^{1/2}\nonumber \\
		&\quad + C_{p,\gamma, \beta, N,\delta}\sup_{0\le s \le t}\|h(s)\|_{L^p_vL^\infty_x}^{\frac{3p-2}{2p-2}}\mathcal{E}(F_0)^{\frac{p-2}{2p-2}},\nonumber
	\end{align} 
	where we have used a similar argument to control \eqref{I4} in the last inequality.\\
	We next deal with the term $I_{5,2}$. Note that, on the region $\{|u| \leq N\}$, the following holds 
	\begin{align*}
		e^{-\int_s^t \tilde{\nu}(\tau,u) d\tau} \leq e^{-\int_s^t \nu(u)d\tau} \leq e^{-CN^{\gamma}(t-s)}.
	\end{align*}
	We fix $\ell = \frac{1}{2}\left(\max\left\{\frac{3}{3+\gamma},2\right\}+p \right)$. Using the fact above, one obtains that 
	\begin{align} \label{I52} 
		I_{5,2} &\leq C_\ell \int_0^{t-\delta} e^{-CN^{\gamma}(t-s)}\int_{|u|\leq N} e^{-\frac{|u|^2}{8}} |h(s,X(s),u)| \left(\int_{\R^3} \sqrt{\mu(\eta)} |h(s,X(s),\eta)|^\ell d\eta \right)^{1/\ell} duds \nonumber\\
		&\le  C_\ell \int_0^{t-\delta} e^{-CN^{\gamma}(t-s)}\int_{|u|\leq N} e^{-\frac{|u|^2}{8}} |h(s,X(s),u)| \nonumber \\
		&\qquad \times \left(\int_{|\eta| \geq 2N } \sqrt{\mu(\eta)} |h(s,X(s),\eta)|^\ell d\eta \right)^{1/\ell} duds\nonumber\\
		&\quad +C_\ell \int_0^{t-\delta} e^{-CN^{\gamma}(t-s)}\int_{|u|\leq N} e^{-\frac{|u|^2}{8}} |h(s,X(s),u)| \nonumber \\
		&\qquad \times \left(\int_{|\eta| \leq 2N } \sqrt{\mu(\eta)} |h(s,X(s),\eta)|^\ell d\eta \right)^{1/\ell} duds\nonumber\\
		&\leq \frac{C_\ell}{N^4} \sup_{0\leq s \leq t} \Vert h(s) \Vert_{L^p_v L^\infty_x}^2 \int_0^{t-\delta} e^{-CN^\gamma (t-s)} ds \nonumber \\
		&\quad + C_\ell  \int_0^{t-\delta} e^{-CN^{\gamma}(t-s)} \left(\int_{|u|\leq N} |e^{-\frac{|u|^2}{8}}h(s,X(s),u)|^{\frac{\ell}{\ell-1}}du \right)^{1-\frac{1}{\ell}}\nonumber\\
		& \qquad \times \left(\int_{|u|\leq N} \int_{|\eta|\leq 2N} |h(s,X(s),\eta)|^\ell dud\eta \right)^{1/\ell}ds\nonumber \\
		&\leq \frac{C_\ell}{N} \sup_{0\leq s \leq t} \Vert h(s) \Vert_{L^p_v L^\infty_x}^2  \nonumber \\
		&\quad + C_\ell\sup_{0\leq s \leq t} \Vert h(s) \Vert_{L^p_v L^\infty_x}  \int_0^{t-\delta} e^{-CN^{\gamma}(t-s)} \left(\int_{|u|\leq N} \int_{|\eta|\leq 2N} |h(s,X(s),\eta)|^\ell dud\eta \right)^{1/\ell}ds,
	\end{align}
	where $\ell>2$.
	Making a change of variables $u \mapsto y:=X(s) = x-u(t-s)$ with $\left|\frac{dy}{du} \right| =(t-s)^3$, the last term in \eqref{I52} becomes
	\begin{align*}
		 C_{\ell,N,\delta}\sup_{0\leq s \leq t} \Vert h(s) \Vert_{L^p_v L^\infty_x}  \int_0^{t-\delta} e^{-CN^{\gamma}(t-s)} \left(\int_{\T^3} \int_{|\eta|\leq 2N} |h(s,y,\eta)|^\ell d\eta dy \right)^{1/\ell}ds.
	\end{align*}
	Here, we use Lemma \ref{L1L2cont} and the interpolation inequality to obtain
\begin{align} \label{RTIRE1212}
	&\left(\int_{\T^3}\int_{|\eta|\le 2N}|h(s,y,\eta)|^\ell d\eta dy\right)^{1/\ell}\nonumber\\
	& \le C_N \left(\int_{\T^3}\int_{|\eta|\le 2N}|h(s,y,\eta)|^\ell \mathbf{1}_{\{|f|\le \sqrt{\mu}\}} d\eta dy\right)^{1/\ell}\nonumber\\
	&\quad +C_N\left(\int_{\T^3}\int_{|\eta|\le 2N}|h(s,y,\eta)|^\ell \mathbf{1}_{\{|f|> \sqrt{\mu}\}}d\eta dy\right)^{1/\ell}\nonumber\\
	& \le C_{\ell,N}\left(\int_{\T^3}\int_{|\eta|\le 2N}|f(s,y,\eta)|^2 \mathbf{1}_{\{|f|\le \sqrt{\mu}\}} d\eta dy\right)^{\frac{1}{2} \cdot \frac{1/\ell-1/p}{1/2-1/p}}\nonumber\\
	&\qquad \times \left(\int_{\T^3}\int_{|\eta|\le 2N}|h(s,y,\eta)|^p \mathbf{1}_{\{|f|\le \sqrt{\mu}\}} d\eta dy\right)^{\frac{1}{p}\cdot \left(1-\frac{1/\ell-1/p}{1/2-1/p}\right)}\nonumber\\
	& \quad + C_{\ell,N}\left(\int_{\T^3}\int_{|\eta|\le 2N}|f(s,y,\eta)| \mathbf{1}_{\{|f|> \sqrt{\mu}\}} d\eta dy\right)^{\frac{1/\ell-1/p}{1-1/p}}\nonumber\\
	&\qquad \times \left(\int_{\T^3}\int_{|\eta|\le 2N}|h(s,y,\eta)|^p \mathbf{1}_{\{|f|>\sqrt{\mu}\}} d\eta dy\right)^{\frac{1}{p}\cdot \left(1-\frac{1/\ell-1/p}{1-1/p}\right)}\nonumber\\
	&\le C_{\ell,N} \|h(s)\|_{L^p_vL^\infty_x}^{1-\frac{1/\ell-1/p}{1/2-1/p}}\mathcal{E}(F_0)^{\frac{1}{2}\cdot\frac{1/\ell-1/p}{1/2-1/p}} + C_{\ell,N}\|h(s)\|_{L^p_vL^\infty_x}^{1-\frac{1/\ell-1/p}{1-1/p}}\mathcal{E}(F_0)^{\frac{1/\ell-1/p}{1-1/p}}.
\end{align}
Thus, we derive
\begin{align*}
	I_{5,2} &\le \frac{C_{\ell,\delta}}{N} \sup_{0\leq s \leq t} \Vert h(s) \Vert_{L^p_v L^\infty_x}^2 +C_{\ell,N,\delta} \sup_{0\leq s \leq t}\|h(s)\|_{L^p_vL^\infty_x}^{2-\frac{1/\ell-1/p}{1/2-1/p}}\mathcal{E}(F_0)^{\frac{1}{2}\cdot\frac{1/\ell-1/p}{1/2-1/p}}\\
	& \quad  + C_{\ell,N,\delta}\sup_{0\leq s \leq t}\|h(s)\|_{L^p_vL^\infty_x}^{2-\frac{1/\ell-1/p}{1-1/p}}\mathcal{E}(F_0)^{\frac{1/\ell-1/p}{1-1/p}}.
\end{align*}
	Hence, 
	\begin{align} \label{case1.I5}
		I_5 &\leq \frac{C_{p,\gamma,\beta,\ell,\delta}}{N} \sup_{0\leq s \leq t} \Vert h(s) \Vert_{L^p_v L^\infty_x}^2 + C_{p,\gamma,\beta,N,\delta}\sup_{0\leq s \leq t} \Vert h(s)\Vert_{L^p_v L^\infty_x}\mathcal{E}(F_0)^{1/2}\nonumber \\
		& \quad + C_{p,\gamma, \beta, N,\delta}\sup_{0\le s \le t}\|h(s)\|_{L^p_vL^\infty_x}^{\frac{3p-2}{2p-2}}\mathcal{E}(F_0)^{\frac{p-2}{2p-2}}+C_{\ell,N,\delta} \sup_{0\leq s \leq t}\|h(s)\|_{L^p_vL^\infty_x}^{2-\frac{1/\ell-1/p}{1/2-1/p}}\mathcal{E}(F_0)^{\frac{1}{2}\cdot\frac{1/\ell-1/p}{1/2-1/p}}\nonumber\\
		& \quad + C_{\ell,N,\delta}\sup_{0\leq s \leq t}\|h(s)\|_{L^p_vL^\infty_x}^{2-\frac{1/\ell-1/p}{1-1/p}}\mathcal{E}(F_0)^{\frac{1/\ell-1/p}{1-1/p}}.
	\end{align}
	Next, let us consider the case $-3< \gamma <-1$. \\
	\newline 
	\indent \textbf{(Case 2. $-3<\gamma <-1$)} Similar to Case 1, it follows from Lemma \ref{pointwiseGamma-estimate} and Lemma \ref{pointwiseGamma+estimate} that 
	\begin{align*}
		I_5 &\leq C_{p,\gamma,\beta} \sup_{0\leq s \leq t} \Vert h(s) \Vert_{L^p_v L^\infty_x} \int_0^{t-\delta} \int_{|u| \leq N} e^{-\frac{|u|^2}{8}}  e^{-\int_s^t 	\tilde{\nu}(\tau,u)d\tau} \\
		&\qquad \times \left(\int_{\R^3} (1+|\eta|)^{\frac{4}{m-1}-p'm'\beta}|h(s,X(s),\eta)|^{p'm'} d\eta \right)^{\frac{1}{p'm'}}duds \\
		&\quad + C_\ell \int_0^{t-\delta} \int_{|u|\leq N} e^{-\frac{|u|^2}{8}}  e^{-\int_s^t 	\tilde{\nu}(\tau,u)d\tau}  \nu(u)|h(s,X(s),u)| \\
		&\qquad \times \left(\int_{\R^3} \sqrt{\mu(\eta)} |h(s,X(s),\eta)|^\ell d\eta \right)^{1/\ell} duds \\
		&:= I_{5,1} + I_{5,2}.  
	\end{align*}
	Since the $I_{5,2}$ term is identical to Case 1, we only provide the estimate for $I_{5,1}$. Similar to Case 1, we first divide the integration domain with respect to $\eta$, and then use the interpolation inequality to obtain an estimate
	\begin{align} \label{betacondition2}
		I_{5,1} &\le C_{p,\gamma,\beta} \sup_{0\leq s \leq t} \Vert h(s) \Vert_{L^p_v L^\infty_x} \int_0^{t-\delta} \int_{|u| \leq N} e^{-\frac{|u|^2}{8}}  e^{-\int_s^t 	\tilde{\nu}(\tau,u)d\tau}\nonumber \\
		&\qquad \times  \left(\int_{|\eta| \geq 2N} (1+|\eta|)^{\frac{4}{m-1}-p'm'\beta}|h(s,X(s),\eta)|^{p'm'} d\eta \right)^{\frac{1}{p'm'}}duds\nonumber\\
		&\quad + C_{p,\gamma,\beta} \sup_{0\leq s \leq t} \Vert h(s) \Vert_{L^p_v L^\infty_x} \int_0^{t-\delta} \int_{|u| \leq N} e^{-\frac{|u|^2}{8}}  e^{-\int_s^t 	\tilde{\nu}(\tau,u)d\tau}\nonumber  \\
		&\qquad \times \left(\int_{|\eta|\leq 2N} (1+|\eta|)^{\frac{4}{m-1}-p'm'\beta}|h(s,X(s),\eta)|^{p'm'} d\eta \right)^{\frac{1}{p'm'}}duds\nonumber\\
		&\leq C_{p,\gamma,\beta} \sup_{0 \leq s \leq t} \Vert h(s) \Vert_{L^p_v L^\infty_x}^2 \int_0^{t-\delta} \int_{|u| \leq N} e^{-\frac{|u|^2}{8}}  e^{-\int_s^t 	\tilde{\nu}(\tau,u)d\tau} \nonumber\\
		&\qquad \times \left(\int_{|\eta|\geq 2N} (1+|\eta|)^{\left(\frac{4}{m-1}-p'm\beta\right)\cdot \frac{p}{p-p'm'}}d\eta \right)^{\frac{p-p'm'}{pp'm'}}duds \\
		&\quad + C_{p,\gamma,\beta} \sup_{0\leq s \leq t} \Vert h(s) \Vert_{L^p_v L^\infty_x} \int_0^{t-\delta} \int_{|u|\leq N} e^{-\frac{|u|^2}{8}}  e^{-\int_s^t 	\tilde{\nu}(\tau,u)d\tau}\nonumber \nonumber\\
		&\qquad \times  \left(\int_{|\eta| \leq 2N} |h(s,X(s),\eta)|^{p'm'} d\eta \right)^{1/p'm'}duds\nonumber \\
		&\leq \frac{C_{p,\gamma,\beta}}{N}  \sup_{0 \leq s \leq t} \Vert h(s) \Vert_{L^p_v L^\infty_x}^2\nonumber\\
		&\quad + C_{p,\gamma,\beta}\sup_{0\leq s \leq t} \Vert h(s) \Vert_{L^p_v L^\infty_x } \sup_{0\leq s \leq t-\delta} \left(\int_{|u|\leq N} \int_{|\eta| \leq 2N} |h(s,X(s),\eta)|^{p'm'} dud\eta \right)^{1/p'm'}\nonumber\\
		&\le \frac{C_{p,\gamma,\beta}}{N}  \sup_{0 \leq s \leq t} \Vert h(s) \Vert_{L^p_v L^\infty_x}^2+C_{p,\gamma,N,\delta} \sup_{0 \leq s \leq t}\|h(s)\|_{L^p_vL^\infty_x}^{2-\frac{1/p'm'-1/p}{1/2-1/p}}\mathcal{E}(F_0)^{\frac{1}{2}\cdot\frac{1/p'm'-1/p}{1/2-1/p}}\nonumber\\
		& \quad  + C_{p,\gamma,N,\delta}\sup_{0 \leq s \leq t}\|h(s)\|_{L^p_vL^\infty_x}^{2-\frac{1/p'm'-1/p}{1-1/p}}\mathcal{E}(F_0)^{\frac{1/p'm'-1/p}{1-1/p}}\nonumber
	\end{align}
	where we have used a similar argument to \eqref{RTIRE1212} in the last inequality. Thus, in Case 2, we obtain the estimate for $I_5$ as
	\begin{align} \label{case2.I5}
		I_5 &\leq \frac{C_{p,\gamma,\beta,\ell,\delta}}{N}  \sup_{0 \leq s \leq t} \Vert h(s) \Vert_{L^p_v L^\infty_x}^2 +C_{\ell,N,\delta} \sup_{0\leq s \leq t}\|h(s)\|_{L^p_vL^\infty_x}^{2-\frac{1/\ell-1/p}{1/2-1/p}}\mathcal{E}(F_0)^{\frac{1}{2}\cdot\frac{1/\ell-1/p}{1/2-1/p}}\nonumber\\
	& \quad  + C_{\ell,N,\delta}\sup_{0\leq s \leq t}\|h(s)\|_{L^p_vL^\infty_x}^{2-\frac{1/\ell-1/p}{1-1/p}}\mathcal{E}(F_0)^{\frac{1/\ell-1/p}{1-1/p}}\nonumber \\
	&\quad +C_{p,\gamma,N,\delta} \sup_{0 \leq s \leq t}\|h(s)\|_{L^p_vL^\infty_x}^{1-\frac{1/p'm'-1/p}{1/2-1/p}}\mathcal{E}(F_0)^{\frac{1}{2}\cdot\frac{1/p'm'-1/p}{1/2-1/p}}\nonumber\\
		& \quad  + C_{p,\gamma,N,\delta}\sup_{0 \leq s \leq t}\|h(s)\|_{L^p_vL^\infty_x}^{1-\frac{1/p'm'-1/p}{1-1/p}}\mathcal{E}(F_0)^{\frac{1/p'm'-1/p}{1-1/p}}.
	\end{align}
	Putting together \eqref{Rf.duhamel}, \eqref{I1}, \eqref{I2}, \eqref{I3}, \eqref{Rf17}, and \eqref{case1.I5} for Case 1, we deduce that 
	\begin{align*}
		&\int_{\R^3} e^{-\frac{|u|^2}{8}} |h(t,x,u)| du\\ &\leq e^{-\lambda (1+t)^\rho} \Vert h_0 \Vert_{L^p_v L^\infty _x} + C_{p,q,\gamma,\beta,\varepsilon} \delta \left[\sup_{0\leq s \leq t}\Vert h(s) \Vert_{L^p_vL^\infty_x} + \sup_{0 \leq s \leq t} \Vert h(s) \Vert_{L^p_vL^\infty_x}^2\right] \\
		&\quad + C_{p,\gamma} \varepsilon^{\gamma+\frac{3}{p'}}\sup_{0\leq s \leq t } \Vert h(s) \Vert_{L^p_vL^\infty_x} +  C_{\gamma}\left(\frac{1}{N^{-\gamma}}+\frac{1}{N^{\frac{3-\gamma}{2}}}\right) \sup_{0\leq s \leq t} \Vert h(s)\Vert_{L^p_v L^\infty_x} \\
		& \quad +C_{N,\delta} \mathcal{E}(F_0)^{1/2} + C_{p,N,\delta}\sup_{0\le s \le t}\|h(s)\|_{L^p_vL^\infty_x}^{\frac{p}{2p-2}}\mathcal{E}(F_0)^{\frac{p-2}{2p-2}}\\
		&\quad + \frac{C_{p,\gamma,\beta,\ell,\delta}}{N} \sup_{0\leq s \leq t} \Vert h(s) \Vert_{L^p_v L^\infty_x}^2 + C_{p,\gamma,\beta,N,\delta}\sup_{0\leq s \leq t} \Vert h(s)\Vert_{L^p_v L^\infty_x}\mathcal{E}(F_0)^{1/2}\nonumber \\
		& \quad + C_{p,\gamma, \beta, N,\delta}\sup_{0\le s \le t}\|h(s)\|_{L^p_vL^\infty_x}^{\frac{3p-2}{2p-2}}\mathcal{E}(F_0)^{\frac{p-2}{2p-2}}+C_{\ell,N,\delta} \sup_{0\leq s \leq t}\|h(s)\|_{L^p_vL^\infty_x}^{2-\frac{1/\ell-1/p}{1/2-1/p}}\mathcal{E}(F_0)^{\frac{1}{2}\cdot\frac{1/\ell-1/p}{1/2-1/p}}\nonumber\\
		& \quad + C_{\ell,N,\delta}\sup_{0\leq s \leq t}\|h(s)\|_{L^p_vL^\infty_x}^{2-\frac{1/\ell-1/p}{1-1/p}}\mathcal{E}(F_0)^{\frac{1/\ell-1/p}{1-1/p}}.
	\end{align*}
	For Case 2, collecting \eqref{Rf.duhamel}, \eqref{I1}, \eqref{I2}, \eqref{I3}, \eqref{Rf17}, and \eqref{case2.I5}, we obtain the following estimate 
	\begin{align*}
		&\int_{\R^3} e^{-\frac{|u|^2}{8}} |h(t,x,u)| du\\ &\leq e^{-\lambda (1+t)^\rho} \Vert h_0 \Vert_{L^p_v L^\infty _x} + C_{p,q,\gamma,\beta,b,\varepsilon} \delta \left[\sup_{0\leq s \leq t}\Vert h(s) \Vert_{L^p_vL^\infty_x} + \sup_{0 \leq s \leq t} \Vert h(s) \Vert_{L^p_vL^\infty_x}^2\right] \\
		&\quad +  C_{p,\gamma} \varepsilon^{\gamma+\frac{3}{p'}}\sup_{0\leq s \leq t } \Vert h(s) \Vert_{L^p_vL^\infty_x} +  C_{\gamma}\left(\frac{1}{N^{-\gamma}}+\frac{1}{N^{\frac{3-\gamma}{2}}}\right) \sup_{0\leq s \leq t} \Vert h(s)\Vert_{L^p_v L^\infty_x} \\
		& \quad +C_{N,\delta} \mathcal{E}(F_0)^{1/2} + C_{p,N,\delta}\sup_{0\le s \le t}\|h(s)\|_{L^p_vL^\infty_x}^{\frac{p}{2p-2}}\mathcal{E}(F_0)^{\frac{p-2}{2p-2}}\\
		&\quad + \frac{C_{p,\gamma,\beta,\ell,\delta}}{N}  \sup_{0 \leq s \leq t} \Vert h(s) \Vert_{L^p_v L^\infty_x}^2 +C_{\ell,N,\delta} \sup_{0\leq s \leq t}\|h(s)\|_{L^p_vL^\infty_x}^{2-\frac{1/\ell-1/p}{1/2-1/p}}\mathcal{E}(F_0)^{\frac{1}{2}\cdot\frac{1/\ell-1/p}{1/2-1/p}}\nonumber\\
	& \quad  + C_{\ell,N,\delta}\sup_{0\leq s \leq t}\|h(s)\|_{L^p_vL^\infty_x}^{2-\frac{1/\ell-1/p}{1-1/p}}\mathcal{E}(F_0)^{\frac{1/\ell-1/p}{1-1/p}}+C_{p,\gamma,N,\delta} \sup_{0 \leq s \leq t}\|h(s)\|_{L^p_vL^\infty_x}^{1-\frac{1/p'm'-1/p}{1/2-1/p}}\mathcal{E}(F_0)^{\frac{1}{2}\cdot\frac{1/p'm'-1/p}{1/2-1/p}}\nonumber\\
		& \quad  + C_{p,\gamma,N,\delta}\sup_{0 \leq s \leq t}\|h(s)\|_{L^p_vL^\infty_x}^{1-\frac{1/p'm'-1/p}{1-1/p}}\mathcal{E}(F_0)^{\frac{1/p'm'-1/p}{1-1/p}}.
	\end{align*}
	
\end{proof}
\begin{Lem} \label{Rfestimatesmall}
	Under the a priori assumption \eqref{apriorismall}, there exists a sufficiently small positive constant $\eta_0>0$ satisfying $\Vert h_0 \Vert_{L^p_vL^\infty_x} \leq \eta_0$, such that for any $T>0$, there exists a small positive constant $\varepsilon_2 = \varepsilon_2(\eta_0,T)>0$, whenever if $\mathcal{E}(F_0)  \leq \varepsilon_2$, then 
	\begin{align*}
		R(f) (t,x,v) \geq \frac{1}{2} \nu(v) + \frac{\vartheta q|v|^2}{8(1+t)^{\vartheta+1}}, \qquad \forall (t,x,v) \in [0,T) \times \T^3 \times \R^3.
	\end{align*} 
\end{Lem}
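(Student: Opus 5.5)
The plan is to combine the lower bound \eqref{Rf.esti1} with Lemma \ref{exp.h.esti}. By \eqref{Rf.esti1},
\begin{align*}
R(f)(t,x,v) \ge \nu(v)\Big(1 - C_2\int_{\R^3} e^{-\frac{|u|^2}{8}}|f(t,x,u)|\,du\Big) + \frac{\vt q|v|^2}{8(1+t)^{\vt+1}} .
\end{align*}
Since $w_{q,\vt,\beta}\ge 1$, we have $|f|\le |h|$. It therefore suffices to show that
\begin{align*}
\sup_{(t,x)\in[0,T)\times\T^3}\int_{\R^3} e^{-\frac{|u|^2}{8}}|h(t,x,u)|\,du \le \frac{1}{2C_2},
\end{align*}
and the stated bound then follows immediately.

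To get this smallness, apply Lemma \ref{exp.h.esti} (Case 1 or Case 2 according to $\gamma$) on $[0,T)$. Under the a priori assumption \eqref{apriorismall} we may replace every $\sup_{0\le s\le t}\|h(s)\|_{L^p_vL^\infty_x}$ by $\bar\eta\le 1$. The right-hand side then splits into three groups, and the constants are fixed in the following order.

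\begin{enumerate}
\item The terms without $\mathcal{E}(F_0)$ are
\[
e^{-\lambda(1+t)^\rho}\eta_0, \quad C\delta(\bar\eta+\bar\eta^2), \quad C\ve^{\gamma+3/p'}\bar\eta, \quad C\bigl(N^{\gamma}+N^{-\frac{3-\gamma}{2}}\bigr)\bar\eta, \quad \frac{C}{N}\bar\eta^2 .
\]
Each is bounded by a constant multiple of $\eta_0+\bar\eta$ times a factor that can be made small. We choose $\eta_0$ and $\bar\eta$ (which depends only on $\eta_0$ and satisfies \eqref{cond1apriori}, \eqref{cond2apriori}) small enough that $e^{-\lambda}\eta_0 + C_{p,\gamma}\ve^{\gamma+3/p'}\bar\eta \le \frac{1}{8C_2}$ for a fixed $\ve$. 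Note that $\gamma+3/p'>0$ because $p>\frac{3}{3+\gamma}$, so $\ve$ may be fixed, e.g. $\ve=1/2$. Then we take $\delta$ small and $N$ large, depending only on $p,q,\gamma,\beta,\ell,\ve$, so that the $\delta$-, $N^\gamma$- and $1/N$-terms together are at most $\frac{1}{8C_2}$.
\item The terms involving $\mathcal{E}(F_0)$ carry constants depending on $N$ and $\delta$, which are now fixed. Each of them is a power of $\bar\eta\le1$ times a strictly positive power of $\mathcal{E}(F_0)$. To check positivity, note that $2<\ell<p$ and $p'm'<p$, so exponents such as $\frac{1/\ell-1/p}{1/2-1/p}$ lie in $(0,1]$. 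Also $p'm'\ge 2$ is guaranteed by $m>\frac{p-1}{p-2}$, or else the $L^2$ branch of the interpolation is replaced by the $L^1$ branch; either way the exponent is positive. Consequently there is $\ve_2=\ve_2(\eta_0,T)$ such that if $\mathcal{E}(F_0)\le\ve_2$, their sum is at most $\frac{1}{4C_2}$.
\end{enumerate}

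Adding the two groups gives the required bound $\frac{1}{2C_2}$.

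\textbf{Main obstacle.} The delicate point is the order of quantifiers: $N$ and $\delta$ must be chosen before $\ve_2$, and independently of $\mathcal{E}(F_0)$. The dependence of $\ve_2$ on $T$ enters through the sup over $[0,T)$. It also enters through the requirement that the local solution and the a priori bound hold on this interval, and through the continuity argument needed to make \eqref{apriorismall} self-consistent.

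Finally, we should confirm that every exponent of $\mathcal{E}(F_0)$ appearing in Lemma \ref{exp.h.esti} is strictly positive. This holds because $\ell<p$ and $p'm'<p$, and it is the only place where the choice of $\ell$ in Lemma \ref{exp.h.esti} matters.
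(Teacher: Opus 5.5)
Your proposal is correct and is essentially the paper's proof. Both arguments combine \eqref{Rf.esti1} (with $|f|\le|h|$ since $w_{q,\vt,\beta}\ge1$) with Lemma \ref{exp.h.esti}, fix $\ve,\delta,N$ so that the entropy-free terms are small, choose $\varepsilon_2$ for the terms carrying positive powers of $\mathcal{E}(F_0)$, and take $\eta_0$ small (the paper uses $\eta_0<\frac{1}{4C_2}$) to absorb $e^{-\lambda(1+t)^\rho}\|h_0\|_{L^p_vL^\infty_x}$. Your variant of keeping $\ve$ fixed and absorbing $C\ve^{\gamma+3/p'}\bar\eta$ through the smallness of $\bar\eta$, instead of sending $\ve\to0$, is equally valid.

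Two side remarks:
\begin{enumerate}
\item None of the constants in Lemma \ref{exp.h.esti} depend on $t$ or $T$, so $\varepsilon_2$ can be taken uniform in $T$. The global argument for Theorem \ref{smallmain} implicitly needs this, and it contradicts the $T$-dependence mechanism suggested in your closing paragraph.
\item The inherited bound \eqref{Rf.esti1} is not literally valid for $\gamma<0$, because $|v-u|^\gamma$ is unbounded at $u=v$. It affects your argument and the paper's equally and is easily repaired. Use $F\ge0$, i.e.\ $f\ge-\sqrt{\mu}$, to bound the negative part $f_-$ and obtain $\int_{|u-v|\le\kappa}|v-u|^\gamma\sqrt{\mu(u)}\,f_-(u)\,du\le C\kappa^{3+\gamma}\nu(v)$ for small $\kappa$. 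Then apply \eqref{Rf.esti1} only on $\{|u-v|>\kappa\}$, with a $\kappa$-dependent constant in place of $C_2$.
\end{enumerate}
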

\begin{proof}
	For both cases in Lemma \ref{exp.h.esti}, we first choose $\delta>0$ and $\varepsilon>0$ sufficiently small, and then $N>0$ large enough, and finally the constant $\varepsilon_2$ satisfying $\mathcal{E}(F_0) \leq \varepsilon_2$ is small enough, so that 
	\begin{align*}
		\int_{\R^3} e^{-\frac{|u|^2}{8}} |h(t,x,u)| du \leq e^{-\lambda (1+t)^\rho} \Vert h_0 \Vert_{L^p_vL^\infty_x} + \frac{1}{4C_2}.
	\end{align*}
	Thus, if we choose the constant $\eta_0$ satisfying that 
	\begin{align} \label{cond3apriori}
		\eta_0 < \frac{1}{4C_2},
	\end{align} 
	we obtain the following result: 
	\begin{align*}
		\int_{\R^3} e^{-\frac{|u|^2}{8}} |h(t,x,u)| du \leq \frac{1}{2C_2}.
	\end{align*}
	From \eqref{Rf.esti1}, we get 
	\begin{align*}
		R(f) (t,x,v) \geq \frac{1}{2} \nu(v) + \frac{\vartheta q|v|^2}{8(1+t)^{\vartheta+1}}, \qquad \forall (t,x,v) \in [0,T) \times \T^3 \times \R^3.
	\end{align*}
\end{proof}	

\bigskip

\subsection{$L^p_vL^\infty_x$ estimate} To get the sub-exponential time-decay property of the solution operator $G_v(t,s)$ using the positive lower bound of $R(f)$, we consider the following equation as follow : 
\begin{align}\label{Rf nonlinear equation}
	\p_th+v\cdot \nabla_xh+ R(f)h = K_wh + w\Gamma^+(f,f),
\end{align}
where $R(f)$ is defined in \eqref{Rf}. Then the solution of \eqref{Rf nonlinear equation} can be written by 
\begin{align} \label{Rf Duhamel}
	h(t,x,v)
	&= G_v(t,0)h_0(x-tv,v) + \int_0^t G_v(t,s) [K_wh(s) + w\Gamma^+(f,f)(s)]ds.
\end{align}
We can define the solution operator $G_v(t,s)$ as follow : 
\begin{align*}
	G_v(t,s) : = e^{-\int_s^t R(f)(\tau,X(\tau),v) d\tau},
\end{align*}
where $X(\tau) = x-v(t-\tau)$. By Lemma \ref{Rfestimatesmall}, we derive the sub-exponential decay property as
\begin{align} \label{small,G,v} 
	G_v (t,s)  \leq e^{-\lambda(1+t)^\rho} e^{\lambda (1+s)^\rho}, 
\end{align}
where we have used $\frac{\nu(v)}{2} + \frac{\vartheta q |v|^2}{8(1+\tau)^{\vartheta+1}} \geq C(1+\tau)^{\frac{(1+\vartheta)\gamma}{2-\gamma}}$ for some generic constant $C>0$. Here, $\rho= 1+ \frac{(1+\vartheta)\gamma}{2-\gamma}>0$ and $\lambda =\frac{C}{\rho}>0$. 

\bigskip
 
\begin{Lem} \label{small,Lp}
	Let $h(t,x,v)$ satisfy the equation \eqref{Rf nonlinear equation} and  $\rho-1 = \frac{(1+\vartheta)\gamma}{2-\gamma}$. Let $0<t\le T\le \infty$. Under the assumption in Lemma \ref{Rfestimatesmall} and $\mathcal{E}(F_0) \leq \varepsilon_2$, the following estimates holds for each case:
	\begin{enumerate}[label=(\arabic*)]
		\item If $-1 \leq \gamma<0$, then 
		\begin{align*}
			&|h(t,x,v)|\\ &\leq C_{\lambda}e^{-\lambda(1+t)^{\rho}} h_0(x-tv,v)  + \frac{C}{(1+|v|)^{-\frac{\gamma}{p}+\frac{5p-5}{4p}}}e^{-\lambda (1+t)^\rho} \Vert h_0 \Vert_{L^p_v L^\infty_x} \int_0^t \Vert h(s) \Vert_{L^p_vL^\infty_x}ds \\
		&\quad +C_{p,\rho,\lambda,\gamma}e^{-\frac{\lambda}{2}(1+t)^{\rho}} \langle v \rangle^{\gamma-1-\frac{1}{p'}} \Vert h_0 \Vert_{L^p_v L^\infty_x}\\
		&\quad +\frac{C_{p,q,\gamma}}{(1+|v|)^{\frac{p-1}{4p}}} \varepsilon^{\gamma+ \frac{3}{p'}}   \left[\sup_{0 \leq s \leq t} \Vert h(s) \Vert_{L^p_v L^\infty_x}+\sup_{0 \leq s \leq t} \Vert h(s) \Vert_{L^p_v L^\infty_x}^2\right]\\
		&\quad + \frac{C_{p,q,\varepsilon,\gamma,\beta}}{(1+|v|)^{\frac{p-1}{4p}}} \left(\frac{1}{N}+\frac{1}{N^{-\gamma}}+\frac{1}{N^{\frac{\gamma+3}{2}}}+\delta  \right) \\
		& \qquad \times \left[\sup_{0\leq s \leq t} \Vert h(s) \Vert_{L^p_vL^\infty_x}+\sup_{0\leq s \leq t} \Vert h(s) \Vert_{L^p_vL^\infty_x}^2+\sup_{0\leq s \leq t} \Vert h(s) \Vert_{L^p_vL^\infty_x}^3\right]\\
		&\quad +\frac{C_{p,q,\varepsilon,\gamma,N,\delta}}{(1+|v|)^{\frac{p-1}{4p}}}\left(1+\sup_{0\leq s \leq t} \Vert h(s) \Vert_{L^p_vL^\infty_x}+\sup_{0\leq s \leq t} \Vert h(s) \Vert_{L^p_vL^\infty_x}^2\right)\\
		& \qquad \times  \left[ \mathcal{E}(F_0)^{1/2} + \sup_{0\le s \le t}\|h(s)\|_{L^p_vL^\infty_x}^{\frac{p}{2p-2}}\mathcal{E}(F_0)^{\frac{p-2}{2p-2}}\right],
		\end{align*}
		where $0<\delta\ll1, 0<\varepsilon\ll1$, and $N\gg1$ can be chosen arbitrarily small and large, respectively. 
		\item If $-3<\gamma<-1$, then
		\begin{align*}
			&|h(t,x,v)|\\ &\leq C_{\lambda}e^{-\lambda(1+t)^{\rho}} h_0(x-tv,v)  +\frac{C}{(1+|v|)^{-\gamma+ \frac{p-1}{p}\varpi}}  e^{-\lambda(1+t)^\rho} \Vert h_0 \Vert_{L^p_v L^\infty_x} \int_0^t \Vert h(s) \Vert_{L^p_v L^\infty_x} ds \nonumber \\
		&\quad +C_{p,\rho,\lambda,\gamma}e^{-\frac{\lambda}{2}(1+t)^{\rho}} \langle v \rangle^{\gamma-1-\frac{1}{p'}} \Vert h_0 \Vert_{L^p_v L^\infty_x}\\
		&\quad +\frac{C_{p,q,\gamma}}{(1+|v|)^{\frac{p-1}{p}\varpi}} \varepsilon^{\gamma+ \frac{3}{p'}}   \left[\sup_{0 \leq s \leq t} \Vert h(s) \Vert_{L^p_v L^\infty_x}+\sup_{0 \leq s \leq t} \Vert h(s) \Vert_{L^p_v L^\infty_x}^2\right]\\
		&\quad + \frac{C_{p,q,\varepsilon,\gamma,\beta}}{(1+|v|)^{\frac{p-1}{p}\varpi}} \left(\frac{1}{N}+\frac{1}{N^{-\gamma}}+\frac{1}{N^{\frac{\gamma+3}{2}}}+\delta  \right) \\
		& \qquad \times \left[\sup_{0\leq s \leq t} \Vert h(s) \Vert_{L^p_vL^\infty_x}+\sup_{0\leq s \leq t} \Vert h(s) \Vert_{L^p_vL^\infty_x}^2+\sup_{0\leq s \leq t} \Vert h(s) \Vert_{L^p_vL^\infty_x}^3\right]\\
		&\quad +\frac{C_{p,q,\varepsilon,\gamma,N,\delta}}{(1+|v|)^{\frac{p-1}{p}\varpi}}\left(1+\sup_{0\leq s \leq t} \Vert h(s) \Vert_{L^p_vL^\infty_x}\right)\left[ \mathcal{E}(F_0)^{1/2} + \sup_{0\le s \le t}\|h(s)\|_{L^p_vL^\infty_x}^{\frac{p}{2p-2}}\mathcal{E}(F_0)^{\frac{p-2}{2p-2}}\right]\\
		& \quad +\frac{C_{p,q,\varepsilon,\gamma,N,\delta}}{(1+|v|)^{\frac{p-1}{p}\varpi}}\left(\sup_{0\leq s \leq t} \Vert h(s) \Vert_{L^p_vL^\infty_x}+\sup_{0\leq s \leq t} \Vert h(s) \Vert_{L^p_vL^\infty_x}^2\right)\\
		& \qquad \times  \Bigg[\sup_{0 \leq s \leq t}\|h(s)\|_{L^p_vL^\infty_x}^{1-\frac{1/p'm'-1/p}{1/2-1/p}}\mathcal{E}(F_0)^{\frac{1}{2}\cdot\frac{1/p'm'-1/p}{1/2-1/p}}+ \sup_{0 \leq s \leq t}\|h(s)\|_{L^p_vL^\infty_x}^{1-\frac{1/p'm'-1/p}{1-1/p}}\mathcal{E}(F_0)^{\frac{1/p'm'-1/p}{1-1/p}}\Bigg],
		\end{align*}
		where $0<\delta\ll1, 0<\varepsilon\ll1$, and $N\gg1$ can be chosen arbitrarily small and large, respectively. 
	\end{enumerate}	
\end{Lem}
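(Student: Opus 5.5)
Starting from the Duhamel representation \eqref{Rf Duhamel}, I will use the sub-exponential bound \eqref{small,G,v} on $G_v(t,s)$ from Lemma \ref{Rfestimatesmall} and split $h(t,x,v)$ into three pieces. The first is the free part $G_v(t,0)h_0$. The second is the linear part $\int_0^t G_v(t,s)K_wh(s)\,ds$. The third is the nonlinear part $\int_0^t G_v(t,s)w\Gamma^+(f,f)(s)\,ds$.

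\textbf{Free part.} By \eqref{small,G,v} it is at most $e^{-\lambda(1+t)^\rho}|h_0(x-tv,v)|$, which is the first term of the statement.

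\textbf{Linear part.} I split $K_w=K_w^{s}+K_w^{ns}$ using the cutoff $\chi$.
\begin{itemize}
\item For $K_w^{s}$ I use Lemma \ref{Ksingular}. This gives the factor $\mu(v)^{\frac{1-q}{8}}\varepsilon^{\gamma+3/p'}$, and the time integral $\int_0^t G_v\,ds$ costs only $\nu(v)^{-1}$ times a Gaussian, which is harmless.
\item For $K_w^{ns}$ I iterate Duhamel once more, now at $(s,X(s),\eta)$, and insert \eqref{Rf Duhamel} for $h(s,X(s),\eta)$.
\end{itemize}
The pieces of the second iteration are handled as follows.
\begin{itemize}
\item The initial-data piece is bounded with Lemma \ref{Knonsing}, giving $\langle v\rangle^{\gamma-1-1/p'}$. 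The product $G_v(t,s)G_\eta(s,0)\le e^{-\lambda(1+t)^\rho}$ is then integrated in $s$. Here half of the exponent is spent to absorb the time integral (using $\int_0^t (1+s)^{\rho-1}$-type bounds), which yields $e^{-\frac{\lambda}{2}(1+t)^\rho}$.
\item The $\Gamma^+$ contributions inside the iteration are bounded pointwise by Lemma \ref{pointwiseGamma+estimate}. One factor $\|h_0\|$ enters through the cross terms, which explains the term with $\int_0^t\|h(s)\|\,ds$ and the decay $(1+|v|)^{\gamma/p-\frac{5p-5}{4p}}$, respectively $(1+|v|)^{\gamma-\frac{p-1}{p}\varpi}$.
\item The $K_w K_w$ double integral is cut in the standard way: $|v|\ge N$ or $|\eta|\ge 2N$ or $|\eta'|\ge 3N$, plus $s-s'\le\delta$, all give small factors $1/N$, $N^{\gamma}$, $\delta$.
\item On the remaining bounded region, $k_w^{ns}$ is bounded by $C_{\varepsilon,N}$. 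I then change variables $\eta\mapsto y=X(s)-\eta(s-s')$ with Jacobian $(s-s')^3\ge\delta^3$. This reduces the term to $\|h(s')\mathbf 1_{|\eta'|\le 3N}\|_{L^2_{x,v}}$, which is controlled exactly as in \eqref{RTIRE} by Lemma \ref{L1L2cont} and interpolation. This produces $\mathcal E(F_0)^{1/2}+\|h\|^{\frac{p}{2p-2}}\mathcal E(F_0)^{\frac{p-2}{2p-2}}$.
\end{itemize}

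\textbf{Nonlinear part.} I again apply Lemma \ref{pointwiseGamma+estimate} directly. The remaining $L^2$-type (or $L^{p'm'}$-type) integral of $h(s,X(s),\cdot)$ is handled as in the proof of Lemma \ref{exp.h.esti}.
\begin{itemize}
\item Split $|\eta|\ge 2N$, using the $\beta$-conditions \eqref{betacondition1}/\eqref{betacondition2} to get the factor $1/N$.
\item Split $|\eta|\le 2N$, where the integral is converted to the relative entropy via \eqref{RTIRE} or \eqref{RTIRE1212}.
\end{itemize}
The time integral $\int_0^t G_v(t,s)\,ds$ is bounded by $C\nu(v)^{-1}\sim \langle v\rangle^{-\gamma}$. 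This loss is why only the weaker powers $(1+|v|)^{-\frac{p-1}{4p}}$ and $(1+|v|)^{-\frac{p-1}{p}\varpi}$ survive. It matches the exponents in the statement, since $-\frac{\gamma}{p}+\frac{5p-5}{4p}+\gamma\ge\frac{p-1}{4p}$ for $-1\le\gamma<0$. In the bounded-velocity region the extra factor $\|h\|$ or $\|h\|^2$ produces the products $(1+\|h\|+\|h\|^2)[\cdots]$.

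\textbf{Main obstacle.} I expect the hardest step to be the $K_w\Gamma^+$ and $\Gamma^+\Gamma^+$ cross terms in the second iteration, where only $L^p_v$ control is available at the intermediate velocity. There I would apply Hölder in $\eta$ with the kernel bound of Corollary \ref{pKestimate}. I would then bring in the pointwise $\Gamma^+$ decay so that the $\eta$-integrability is restored. Checking that the powers of $\langle v\rangle$ combine to at least the claimed decay will require the conditions $p>13$ (respectively the $\gamma<-1$ threshold). Finally, all constants are collected, keeping $\delta$, $\varepsilon$ small and $N$ large as free parameters.
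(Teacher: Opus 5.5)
Your free part and your $K_w$ part (the singular/non-singular split, the second Duhamel iteration, the cut-offs, and the change of variables with Jacobian $(s-s')^3\ge\delta^3$ followed by \eqref{RTIRE}) match the paper's treatment of $I$ and $II$. The gap is in the nonlinear part. Applying Lemma~\ref{pointwiseGamma+estimate} at the fixed velocity $v$ leaves you with
\[
\int_0^t G_v(t,s)\,\frac{\|h(s)\|_{L^p_vL^\infty_x}}{(1+|v|)^{-\frac{\gamma}{p}+\frac{5p-5}{4p}}}\Bigl(\int_{|\eta|\le 2N}|h(s,x-v(t-s),\eta)|^2\,d\eta\Bigr)^{1/2}ds ,
\]
and here the $\eta$-integral is taken at the single spatial point $X(s)=x-v(t-s)$. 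In Lemma~\ref{exp.h.esti} the conversion to relative entropy worked only because of the outer integral $\int_{|u|\le N}du$ together with $s\le t-\delta$, which allowed the change of variables $u\mapsto y=x-u(t-s)$. Here $v$ is fixed, so there is nothing to change variables in. Lemma~\ref{L1L2cont} only controls integrals over $\T^3\times\R^3$, so this step fails. If you instead bound the factor by $\|h\|_{L^p_vL^\infty_x}$, you get $C\|h\|^2$ with no small prefactor. That is incompatible with the statement, where every term containing $\sup_s\|h(s)\|$ carries $\varepsilon^{\gamma+3/p'}$, $1/N$, $N^{\gamma}$, $\delta$, a power of $\mathcal{E}(F_0)$, or a factor $\|h_0\|$. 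It would also be fatal in the large-amplitude application.

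The missing idea is one more Duhamel iteration inside the nonlinear term. On $\{|\eta|\le N\}$ (the complement gives $1/N$ via the $\beta$-condition), expand $h(s,X(s),\eta)$ by \eqref{Rf Duhamel}; this is the paper's $III_1,III_2,III_3$.
\begin{itemize}
\item The new characteristic $X(s)-\eta(s-\tau)$ depends on $\eta$. After separating off $\tau\in[s-\delta,s]$, you can change variables $\eta\mapsto y$ with Jacobian $(s-\tau)^3\ge\delta^3$ and then apply \eqref{RTIRE} or \eqref{RTIRE1212}.
\item The $\Gamma^+$-inside-$\Gamma^+$ piece is treated again with Lemma~\ref{pointwiseGamma+estimate}. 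It produces the cubic terms $\|h\|^3$ and $\|h\|^2[\cdots]$.
\item The initial-data piece is exactly the source of the term $C(1+|v|)^{\frac{\gamma}{p}-\frac{5p-5}{4p}}e^{-\lambda(1+t)^\rho}\|h_0\|\int_0^t\|h(s)\|\,ds$. Here $G_v(t,s)G_\eta(s,0)\le e^{-\lambda(1+t)^\rho}$ uses up the whole exponential, and no $\nu(v)$ is left to absorb the time integral.
\end{itemize}
Your attribution of that term to cross terms in the $K_w$ iteration is therefore wrong. There $\Gamma^+$ is evaluated at the intermediate velocity, so its pointwise decay is in that variable, and the $v$-decay comes from $k_w^{ns}(v,\cdot)$. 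The outer velocity integral already permits the change of variables, and no $h_0$ appears. Likewise, the $\Gamma^+\Gamma^+$ terms you anticipate in your last paragraph only exist once this extra iteration of the nonlinear term is carried out.
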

\begin{proof}
	For fixed $(t,x,v) \in \R^+ \times \T^3 \times \R^3$, using \eqref{Rf Duhamel}, one obtains that 
	\begin{align*}
		|h(t,x,v)| &\leq G_v(t,0) |h_0 (x-vt,v)| \\
		&\quad +  \int_0^t G_v(t,s)[|K_w h| + |w\Gamma^+(f,f)|](s,x-v(t-s),v) ds\\
		&:= I + II + III. 
	\end{align*}
	For $I$, it follows from \eqref{G,v} that 
	\begin{align} \label{I}
		I \leq C_{\lambda}e^{-\lambda (1+t)^\rho} h_0(x-vt, v). 
	\end{align}
	To handle $II$, we split the term $II$ as follows:
	\begin{align*}
		II = \int_0^t G_v(t,s) [|K_w^{ns} h| + |K_w^{s}h |](s,x-v(t-s),v)ds. 
	\end{align*}
	To estimate the second term above, we apply the Lemma \ref{Ksingular} and obtain the following
	\begin{align} \label{K,est1}
			&\int_0^t G_v(t,s) |K^{s}_w h|(s,x-v(t-s),v) ds \nonumber \\
			&\leq C_{p,\gamma}\int_0^t G_v(t,s) \nu(v) \nu^{-1}(v) \mu(v) ^{\frac{1-q}{8}} \varepsilon^{\gamma+ \frac{3}{p'}} ds \sup_{0\leq s \leq t} \Vert h(s) \Vert_{L^p_v L^\infty_x} \nonumber \\ 
			&\leq C_{p,\gamma} \varepsilon^{\gamma+ \frac{3}{p'}} \mu(v)^{\frac{1-q}{16}}  \sup_{0 \leq s \leq t} \Vert h(s) \Vert_{L^p_v L^\infty_x},
	\end{align}	
	where the last inequality comes from $G_v(t,s)\leq e^{-\frac{\nu(v)}{2}(t-s)}$  and 
	\begin{align*}
		\int_0^t G_v(t,s) \nu(v) ds \leq  \int_0^t e^{-\frac{\nu(v)}{2}(t-s)} \nu(v) ds \leq 2. 
	\end{align*}
	For the remaining part of $II$, applying Duhamel’s principle once more yields the following estimate
	\begin{align} \label{II,esti1}
			&\int_0^t G_v(t,s) \int_{\R^3} k_w^{ns}(v,u) |h(s,x-v(t-s),u)|du ds \nonumber \\
			&\leq \int_0^t G_v(t,s) \int_{\R^3} k_w^{ns}(v,u) G_u(s,0) |h_0(x-v(t-s)-us,u)| duds \nonumber \\
			&\quad + \int_0^t G_v(t,s) \int_{\R^3} k_w^{ns}(v,u) \int_0^sG_u(s,s')[|K_wh| + |w\Gamma^+(f,f)|](s') ds' duds \nonumber \\
			&:= II_1+II_2+II_3.
	\end{align}
	For $II_1$, using \eqref{G,v}, H\"{o}lder's inequality, and Corollary \ref{pKestimate}, we have 
	\begin{align} \label{II1,final}
			II_1 &\leq C\int_0^t e^{-\lambda (1+t)^{\rho}} \int_{\R^3} k_w(v,u)  |h_0(x-v(t-s)-us,u)| du ds \nonumber \\
			&\leq C\int_0^t e^{-\lambda (1+t)^{\rho}} \left(\int_{\R^3} |k_w(v,u)|^{p'} du \right)^{1/p'} \left(\int_{\R^3} |h_0(x-v(t-s)-us,u)|^p du\right)^{1/p} ds \nonumber\\
			&\leq C_{p,\gamma}\int_0^t e^{-\lambda (1+t)^{\rho}}  \langle v \rangle^{\gamma-1-\frac{1}{p'}} ds  \Vert h_0 \Vert_{L^p_v L^\infty_x} \nonumber \\
			&\leq C_{p,\rho,\lambda,\gamma}e^{-\frac{\lambda}{2}(1+t)^{\rho}} \langle v \rangle^{\gamma-1-\frac{1}{p'}} \Vert h_0 \Vert_{L^p_v L^\infty_x},
	\end{align}
	because $t e^{-\frac{\lambda}{2}(1+t)^\rho} \leq C_{\rho,\lambda}$ for all $t \geq 0$.  \\
	
	\noindent  To deal with $II_2$, we decompose the integration domain into $\{|u| \leq 2N\}$ and $\{|u|\geq 2N\}$: 
	\begin{align} \label{II2,1}
		II_2 &\leq \int_0^t \int_0^s  \int_{|u| \geq 2N} G_v(t,s)G_u(s,s')k_w^{ns}(v,u)\nonumber \\
		&\qquad \times \int_{\R^3} k_w^{ns} (u,u') |h(s',X(s)-u(s-s'), u')|du'duds'ds \nonumber \\
		&\quad +\int_0^t \int_0^s  \int_{|u| \leq 2N} G_v(t,s)G_u(s,s')k_w^{ns}(v,u)\nonumber \\
		&\qquad \times  \int_{\R^3} k_w^{ns} (u,u') |h(s',X(s)-u(s-s'), u')|du'duds'ds\nonumber\\
		&\quad + \int_0^t \int_0^s \int_{\R^3} G_v(t,s)G_u(s,s')k_w^{ns}(v,u) |K_w^{s}(s',X(s)-u(s-s'),u)|du'duds'ds,
	\end{align}
	where we have denoted $X(s)= x-v(t-s)$. 
	For the last term in \eqref{II2,1}, we use Lemma \ref{Ksingular} and Lemma \ref{Knonsing} to obtain
	\begin{align} \label{II2,ss}
		&\int_0^t \int_0^s \int_{\R^3} G_v(t,s)G_u(s,s')k_w^{ns}(v,u) |K_w^{s}(s',X(s)-u(s-s'),u)|du'duds'ds\nonumber\\
		& \le C_{p,\gamma}\ve^{\gamma+\frac{3}{p'}}\sup_{0\leq s \leq t } \Vert h(s)\Vert_{L^p_vL^\infty_x}\int_0^t G_v(t,s) \int_{\R^3} k_w^{ns}(v,u) \nu(u)^{-1}\mu(u)^{\frac{1-q}{8}}\int_0^sG_u(s,s')\nu(u) ds'duds\nonumber \\
		& \le C_{p,\gamma}\ve^{\gamma+\frac{3}{p'}}\sup_{0\leq s \leq t } \Vert h(s)\Vert_{L^p_vL^\infty_x}\int_0^t G_v(t,s) \langle v \rangle^{\gamma-2} ds \nonumber\\
		& \le C_{p,\gamma}\ve^{\gamma+\frac{3}{p'}} \langle v\rangle^{-2}\sup_{0\leq s \leq t } \Vert h(s)\Vert_{L^p_vL^\infty_x}.
	\end{align}
	For the first term in \eqref{II2,1}, it holds from Lemma \ref{Knonsing} that 
	\begin{align} \label{II2,2}
		&\int_0^t \int_0^s  \int_{|u| \geq 2N} G_v(t,s)G_u(s,s')k_w^{ns}(v,u) \int_{\R^3} k_w^{ns} (u,u') |h(s',X(s)-u(s-s'), u')|du'duds'ds \nonumber \\
		&\leq \int_0^t \int_0^s \int_{|u| \geq 2N} e^{-\frac{\nu(v)}{2}(t-s)}e^{-\frac{\nu(u)}{2}(s-s')} k_w^{ns}(v,u) \nonumber \\
		&\qquad \times \int_{\R^3} k_w^{ns} (u,u') |h(s',X(s)-u(s-s'), u')|du'duds'ds \nonumber \\
		&\leq C_{p,\varepsilon}\sup_{0\leq s \leq t } \Vert h(s)\Vert_{L^p_vL^\infty_x} \int_0^t \int_0^s \int_{|u| \geq 2N} e^{-\frac{\nu(v)}{2}(t-s)}e^{-\frac{\nu(u)}{2}(s-s')} k_w^{ns}(v,u) \langle u \rangle^{\gamma-1-\frac{1}{p'}}duds'ds \nonumber \\
		&\leq \frac{C_{p,\varepsilon}}{N}\sup_{0\leq s \leq t } \Vert h(s)\Vert_{L^p_vL^\infty_x} \int_0^t e^{-\frac{\nu(v)}{2}(t-s)}\int_{|u|\geq 2N} \int_0^s  e^{-\frac{\nu(u)}{2}(s-s')} \nu(u) ds' k_w^{ns}(v,u) du ds  \nonumber \\
		&\leq \frac{C_{p,\varepsilon}}{N}\sup_{0\leq s \leq t } \Vert h(s)\Vert_{L^p_vL^\infty_x} \int_0^t e^{-\frac{\nu(v)}{2}(t-s)} \int_{|u|\geq 2N} k_w^{ns}(v,u)duds \nonumber \\
		&\leq \frac{C_{p,\varepsilon}}{N}\sup_{0\leq s \leq t } \Vert h(s)\Vert_{L^p_vL^\infty_x} \int_0^t e^{-\frac{\nu(v)}{2}(t-s)} \langle v \rangle^{\gamma-2}ds \nonumber \\
		&\leq \frac{C_{p,\varepsilon}}{N}\frac{1}{(1+|v|)^2}\sup_{0\leq s \leq t } \Vert h(s)\Vert_{L^p_vL^\infty_x}.
	\end{align}
	Then, let us consider the remaining region $\{|u| \leq 2N\}$ in \eqref{II2,1}: 
	\begin{align*}
		&\int_0^t \int_0^s  \int_{|u| \leq 2N} G_v(t,s)G_u(s,s')k_w^{ns}(v,u) \int_{\R^3} k_w^{ns} (u,u') |h(s',X(s)-u(s-s'), u')|du'duds'ds\\
		&=\int_0^t \int_0^s  \int_{|u| \leq 2N} G_v(t,s)G_u(s,s')k_w^{ns}(v,u) \\
		&\qquad \times \int_{|u'| \geq 3N} k_w^{ns} (u,u') |h(s',X(s)-u(s-s'), u')|du'duds'ds\\
		&\quad +\int_0^t \int_0^s  \int_{|u| \leq 2N} G_v(t,s)G_u(s,s')k_w^{ns}(v,u) \\
		&\qquad \times \int_{|u'| \leq 3N} k_w^{ns} (u,u') |h(s',X(s)-u(s-s'), u')|du'duds'ds
	\end{align*}
	Note that $|u-u'| \geq N$ on the integration region $\{|u| \leq 2N\} $ and $\{ |u'| \geq 3N \}$. Using Lemma \ref{Kker} and Lemma \ref{Knonsing}, one obtains that 
	\begin{align} \label{II2,3}
		&\int_0^t \int_0^s  \int_{|u| \leq 2N} G_v(t,s)G_u(s,s')k_w^{ns}(v,u) \int_{|u'| \geq 3N} k_w^{ns} (u,u') |h(s',X(s)-u(s-s'), u')|du'duds'ds \nonumber \\
		&\leq \int_0^t \int_0^s  \int_{|u| \leq 2N} G_v(t,s)G_u(s,s')k_w^{ns}(v,u) \nonumber \\
		&\quad \times \int_{|u'| \geq 3N} [k_{w,1} (u,u') + k_{w,2}(u,u')] |h(s',X(s)-u(s-s'), u')|du'duds'ds \nonumber \\
		&\leq C_{\gamma} \int_0^t \int_0^s  \int_{|u| \leq 2N} G_v(t,s)G_u(s,s')k_w^{ns}(v,u) \nonumber \\
		&\quad \times \int_{|u'| \geq 3N} \left[|u-u'|^\gamma e^{-\frac{|u|^2}{4}}e^{-\frac{|u'|^2}{4}} + \frac{1}{|u-u'|^{\frac{3-\gamma}{2}}} e^{-\frac{|u-u'|^2}{8}}e^{-\frac{||u|^2-|u'|^2|^2}{8|u-u'|^2}} \right] \frac{w(u)}{w(u')} 
	\nonumber \\
		&\qquad \times |h(s',X(s)-u(s-s'), u')|du'duds'ds\nonumber \\
		&\leq C_{\gamma }\int_0^t \int_0^s  \int_{|u| \leq 2N} G_v(t,s)G_u(s,s')k_w^{ns}(v,u)\nonumber \\
		&\quad \times \int_{|u'|\geq 3N} \left[(1+|u|^2)^{\beta}|u-u'|^{\gamma} e^{-\frac{1-q}{4}|u|^2}e^{-\frac{|u'|^2}{4}} \right. \nonumber \\
		&\left. \qquad + \frac{1}{|u-u'|^{\frac{3-\gamma}{2}}} e^{-\frac{|u-u'|^2}{8}}e^{-\frac{||u|^2-|u'|^2|^2}{8|u-u'|^2}}\frac{w(u)}{w(u')}\right] |h(s',X(s)-u(s-s'),u')| du'duds'ds\nonumber\\
		&\leq \frac{C_{\gamma}}{N^{-\gamma}} \sup_{0\leq s \leq t} \Vert h(s)\Vert_{L^p_v L^\infty_x} \int_0^t \int_0^s \int_{|u| \leq 2N} G_v(t,s) G_u(s,s') k_w^{ns}(v,u) e^{-\frac{1-q}{8}|u|^2} du ds'ds \nonumber\\
		&\quad + \frac{C_{\gamma}}{N^{\frac{3-\gamma}{2}}}\sup_{0\leq s \leq t} \Vert h(s) \Vert_{L^p_vL^\infty_x} \int_0^t \int_0^s \int_{|u| \leq 2N} G_v(t,s) G_u(s,s') k_w^{ns}(v,u)\frac{1}{1+|u|}duds'ds\nonumber\\
		&\leq \frac{C_{\gamma}}{N^{-\gamma}} \sup_{0\leq s \leq t} \Vert h(s)\Vert_{L^p_v L^\infty_x} \nonumber \\
		&\qquad \times \int_0^t \int_0^s \int_{|u| \leq 2N} e^{-\frac{\nu(v)}{2}(t-s)} e^{-\frac{\nu(u)}{2}(s-s')} \nu(u) k_w^{ns}(v,u)\nu^{-1}(u) e^{-\frac{1-q}{8}|u|^2} du ds'ds \nonumber\\
		&\quad + \frac{C_{\gamma}}{N^{\frac{3-\gamma}{2}}}\sup_{0\leq s \leq t} \Vert h(s) \Vert_{L^p_vL^\infty_x} \nonumber \\
		&\qquad \times \int_0^t \int_0^s \int_{|u| \leq 2N} e^{-\frac{\nu(v)}{2}(t-s)}  e^{-\frac{\nu(u)}{2}(s-s')} \nu(u) k_w^{ns}(v,u)\nu^{-1}(u) \frac{1}{1+|u|}duds'ds\nonumber\\
		&\leq C_{p,q,\varepsilon,\gamma}\left(\frac{1}{N^{-\gamma}}+\frac{1}{N^{\frac{\gamma+3}{2}}} \right) \sup_{0\leq s \leq t} \Vert h(s) \Vert_{L^p_vL^\infty_x}\int_0^t e^{-\frac{\nu(v)}{2}(t-s)} \langle v \rangle^{\gamma-1-\frac{1}{p'}} dv \nonumber\\
		&\leq \frac{C_{p,q,\varepsilon,\gamma}}{(1+|v|)^{1+\frac{1}{p'}}} \left(\frac{1}{N^{-\gamma}}+\frac{1}{N^{\frac{\gamma+3}{2}}} \right) \sup_{0\leq s \leq t} \Vert h(s) \Vert_{L^p_vL^\infty_x}.
	\end{align}
	Note that from Lemma \ref{k1ker} and Lemma \ref{k2ker}, we have
	\begin{align*}
		\left(\int_{\R^3} |k^{ns}_w(v,u)|^2du\right)^{1/2} \le C_{\ve} \langle v\rangle^{\gamma-1}.
	\end{align*}
	On the region $\{|u| \leq 2N\}$ and $\{|u'| \leq 3N\}$, we derive the following estimate: 
	\begin{align} \label{II2,4}
		&\int_0^t \int_0^s  \int_{|u| \leq 2N} G_v(t,s)G_u(s,s')k_w^{ns}(v,u) \nonumber \\
		&\qquad \times \int_{|u'| \leq 3N} k_w^{ns} (u,u') |h(s',X(s)-u(s-s'), u')|du'duds'ds \nonumber \\
		&=\int_0^t \int_{s-\delta}^s  \int_{|u| \leq 2N} G_v(t,s)G_u(s,s')k_w^{ns}(v,u) \nonumber \\
		&\qquad \times  \int_{|u'| \leq 3N} k_w^{ns} (u,u') |h(s',X(s)-u(s-s'), u')|du'duds'ds \nonumber \\
		&\quad + \int_0^t \int_{0}^{s-\delta}  \int_{|u| \leq 2N} G_v(t,s)G_u(s,s')k_w^{ns}(v,u)\nonumber \\
		&\qquad \times  \int_{|u'| \leq 3N} k_w^{ns} (u,u') |h(s',X(s)-u(s-s'), u')|du'duds'ds \nonumber \\
		&\leq \frac{C_{p,q,\varepsilon,\gamma}}{(1+|v|)^{1+\frac{1}{p'}}} \delta \sup_{0\leq s \leq t} \Vert h(s) \Vert_{L^p_v L^\infty_x} \nonumber \\
		&\quad + C\int_0^t e^{-\frac{\nu(v)}{2}(t-s)} \int_0^{s-\delta} e^{-CN^{\gamma}(s-s')} \int_{|u|\leq 2N}   k_w^{ns}(v,u) \nonumber \\
		&\qquad \times \int_{|u'| \leq 3N} k_w^{ns} (u,u') |h(s',X(s)-u(s-s'), u')|du'duds'ds \nonumber \\
		&\leq \frac{C_{p,q,\varepsilon,\gamma}}{(1+|v|)^{1+\frac{1}{p'}}} \delta \sup_{0\leq s \leq t} \Vert h(s) \Vert_{L^p_v L^\infty_x}  \nonumber \\
		&\quad +C_{\gamma,N} \int_0^t e^{-\frac{\nu(v)}{2}(t-s)} \nonumber \\
		&\qquad \times \sup_{0\leq s' \leq s-\delta} \int_{|u| \leq 2N} k_w^{ns}(v,u) \int_{|u'|\leq 3N} k_w^{ns}(u,u') |h(s',X(s)-u(s-s'),u') | du' du ds \nonumber \\
		&\leq \frac{C_{p,q,\varepsilon,\gamma}}{(1+|v|)^{1+\frac{1}{p'}}} \delta \sup_{0\leq s \leq t} \Vert h(s) \Vert_{L^p_v L^\infty_x}  \nonumber \\
		&\quad + C_{p,q,\varepsilon,\gamma,N} \int_0^t e^{-\frac{\nu(v)}{2}(t-s)} \langle v \rangle^{\gamma-1}\nonumber \\
		&\qquad \times \sup_{0\leq s'  \leq s-\delta} \left(\int_{|u| \leq 2N} \int_{|u'| \leq 3N} |h(s',X(s)-u(s-s'),u')|^2 du'du\right)^{1/2} \nonumber \\
		&\leq  \frac{C_{p,q,\varepsilon,\gamma}}{(1+|v|)^{1+\frac{1}{p'}}} \delta \sup_{0\leq s \leq t} \Vert h(s) \Vert_{L^p_v L^\infty_x} +\frac{C_{p,q,\varepsilon,\gamma,N,\delta}}{1+|v|}\left[ \mathcal{E}(F_0)^{1/2} + \sup_{0\le s \le t}\|h(s)\|_{L^p_vL^\infty_x}^{\frac{p}{2p-2}}\mathcal{E}(F_0)^{\frac{p-2}{2p-2}}\right]. 
	\end{align}
	where  we have made a change of variables $u \mapsto y:= X(s)-u(s-s')$ with $\left|\frac{dy}{du} \right| =(s-s')^3$ and used \eqref{RTIRE} in the last inequality. Combining \eqref{II2,1}, \eqref{II2,ss}, \eqref{II2,2}, \eqref{II2,3}, and \eqref{II2,4}, $II_2$ in \eqref{II,esti1} can be further bounded by 
	\begin{align} \label{II2,final}
		II_2 &\leq  C_{p,\gamma}\ve^{\gamma+\frac{3}{p'}} \langle v\rangle^{-2}\sup_{0\leq s \leq t } \Vert h(s)\Vert_{L^p_vL^\infty_x}+\frac{C_{p,q,\varepsilon,\gamma}}{1+|v|} \left(\frac{1}{N}+\frac{1}{N^{-\gamma}}+\frac{1}{N^{\frac{\gamma+3}{2}}}+\delta  \right) \sup_{0\leq s \leq t} \Vert h(s) \Vert_{L^p_vL^\infty_x}\nonumber\\
		&\quad + \frac{C_{p,q,\varepsilon,\gamma,N,\delta}}{1+|v|} \left[ \mathcal{E}(F_0)^{1/2} + \sup_{0\le s \le t}\|h(s)\|_{L^p_vL^\infty_x}^{\frac{p}{2p-2}}\mathcal{E}(F_0)^{\frac{p-2}{2p-2}}\right].
	\end{align}
	To estimate $II_3$ in \eqref{II,esti1}, we first consider the case $-1 \leq \gamma <0$. \\
	
	\textbf{(Case 1. $-1\leq \gamma<0$)} We firstly use the estimate \eqref{pointwiseGamma+estimate1} in Lemma \ref{pointwiseGamma+estimate}, and then divide the integration domain into $\{|\eta| \geq 3N\}$ and $\{|\eta|\leq 3N\}$: 
	\begin{align} \label{II3}
		II_3 &\leq C_{p,\gamma,\beta} \sup_{0\leq s \leq t} \Vert h(s) \Vert_{L^p_vL^\infty_x}\int_0^t G_v(t,s) \int_{\R^3}k_w^{ns} (v,u) \int_0^s G_u(s,s') \frac{1}{(1+|u|)^{-\frac{\gamma}{p}+\frac{5p-5}{4p}}} \nonumber \\
		&\quad \times  \left(\int_{\R^3}(1+|\eta|)^{\frac{4p-8}{p}-2\beta} |h(s',X(s)-u(s-s'),\eta)|^2d\eta \right)^{1/2}ds'duds \nonumber \\
		&\leq  C_{p,\gamma,\beta} \sup_{0\leq s \leq t} \Vert h(s) \Vert_{L^p_vL^\infty_x} \int_0^t G_v(t,s) \int_{\R^3}k_w^{ns} (v,u) \int_0^s G_u(s,s') \frac{1}{(1+|u|)^{-\frac{\gamma}{p}+\frac{5p-5}{4p}}}\nonumber \\
		&\quad \times \Bigg[ \left(\int_{|\eta| \geq 3N} (1+|\eta|)^{\frac{4p-8}{p}-2\beta} |h(s',X(s)-u(s-s'),\eta)|^2d\eta \right)^{1/2}\nonumber\\
		& \qquad \quad   +\left(\int_{|\eta| \leq 3N} (1+|\eta|)^{\frac{4p-8}{p}-2\beta} |h(s',X(s)-u(s-s'),\eta)|^2d\eta \right)^{1/2}  \Bigg]ds'duds \nonumber \\
		&:=II_{3,1} + II_{3,2}.
	\end{align}
	For $II_{3,1}$, it follows from H\"{o}lder's inequality and Lemma \ref{Knonsing} that 
	\begin{align} \label{II3,1}
		II_{3,1} &\leq C_{p,\gamma,\beta} \sup_{0\leq s \leq t} \Vert h(s) \Vert_{L^p_vL^\infty_x}^2 \int_0^t G_v(t,s) \int_{\R^3}k_w^{ns} (v,u) \int_0^s G_u(s,s') \frac{1}{(1+|u|)^{-\frac{\gamma}{p}+\frac{5p-5}{4p}}} \nonumber \\
		&\quad \times \left(\int_{|\eta| \geq 3N} (1+|\eta|)^{4-\frac{2p\beta}{p-2}} d\eta \right)^{\frac{p-2}{2p}}  ds'duds \nonumber \\
		&\leq  \frac{C_{p,\gamma,\beta}}{N} \sup_{0\leq s \leq t} \Vert h(s) \Vert_{L^p_vL^\infty_x}^{2} \int_0^t G_v(t,s) \int_{\R^3}k_w^{ns} (v,u) \int_0^s G_u(s,s') \frac{1}{(1+|u|)^{-\frac{\gamma}{p}+\frac{5p-5}{4p}}}ds' du ds \nonumber \\
		&\leq \frac{C_{p,\gamma,\beta}}{N} \sup_{0\leq s \leq t} \Vert h(s) \Vert_{L^p_vL^\infty_x}^{2} \int_0^t G_v(t,s) \int_{\R^3}k_w^{ns} (v,u) \nonumber \\
		&\qquad \times  \int_0^s G_u(s,s') \nu(u) \frac{1}{(1+|u|)^{\gamma(1-\frac{1}{p})+\frac{5p-5}{4p}}}ds'duds\nonumber \\
		&\leq \frac{C_{p,\gamma,\beta}}{N} \sup_{0\leq s \leq t} \Vert h(s) \Vert_{L^p_vL^\infty_x}^{2} \int_0^t G_v(t,s) \int_{\R^3}k_w^{ns} (v,u) du ds\nonumber \\ 
		&\leq \frac{C_{p,\gamma,\beta,\varepsilon}}{N}\frac{\sup_{0\leq s \leq t} \Vert h(s) \Vert_{L^p_vL^\infty_x}^{2}}{(1+|v|)^2} \int_0^t G_v(t,s) \nu(v) ds \nonumber \\
		&\leq \frac{C_{p,\gamma,\beta,\varepsilon}}{N}\frac{\sup_{0\leq s \leq t} \Vert h(s) \Vert_{L^p_vL^\infty_x}^{2}}{(1+|v|)^2},
	\end{align}
	due to $\gamma\left(1-\frac{1}{p}\right)+\frac{5p-5}{4p}>0$ for $-1 \leq \gamma <0$ and $4-\frac{2p(\beta-1)}{p-2}<-3$ for $\beta>\frac{7(p-2)}{2p}+1$. 
	To estimate $II_{3,2}$, we split the integration domain in $u$ into the regions $\{|u| \geq 2N\}$ and $\{|u| \leq 2N\}$. 
	\begin{align} \label{II3,2,1}
		II_{3,2} &= C_{p,\gamma,\beta} \sup_{0\leq s \leq t} \Vert h(s) \Vert_{L^p_v L^\infty_x}  \int_0^t G_v(t,s) \int_{|u|\geq 2N}k_w^{ns} (v,u) \int_0^s G_u(s,s') \frac{1}{(1+|u|)^{-\frac{\gamma}{p}+\frac{5p-5}{4p}}} \nonumber \\
		&\qquad \times \left(\int_{|\eta| \leq 3N} (1+|\eta|)^{\frac{4p-8}{p}-2\beta} |h(s',X(s)-u(s-s'),\eta)|^2d\eta \right)^{1/2}ds'duds \nonumber \\
		&\quad +C_{p,\gamma,\beta} \sup_{0\leq s \leq t} \Vert h(s) \Vert_{L^p_v L^\infty_x}  \int_0^t G_v(t,s) \int_{|u|\leq 2N}k_w^{ns} (v,u) \int_0^s G_u(s,s') \frac{1}{(1+|u|)^{-\frac{\gamma}{p}+\frac{5p-5}{4p}}}\nonumber \\
		&\qquad \times \left(\int_{|\eta| \leq 3N} (1+|\eta|)^{\frac{4p-8}{p}-2\beta} |h(s',X(s)-u(s-s'),\eta)|^2d\eta \right)^{1/2}ds'duds\nonumber\\
		&\leq \frac{C_{p,\gamma,\beta}}{N} \sup_{0\leq s \leq t} \Vert h(s)\Vert_{L^p_v L^\infty_x}^2 \int_0^t G_v(t,s) \int_{|u| \geq 2N} k^{ns}_{w}(v,u) \int_0^s G_u(s,s') \nu(u) \nonumber\\
		&\qquad \times \frac{1}{(1+|u|)^{\gamma\left(1-\frac{1}{p}\right)+\frac{5p-5}{4p}-1}}ds'duds \nonumber \\
		&\quad +C_{p,\gamma,\beta} \sup_{0\leq s \leq t} \Vert h(s) \Vert_{L^p_v L^\infty_x}  \int_0^t G_v(t,s) \int_{|u|\leq 2N}k_w^{ns} (v,u) \int_0^s G_u(s,s') \frac{1}{(1+|u|)^{-\frac{\gamma}{p}+\frac{5p-5}{4p}}}\nonumber \\
		&\qquad \times \left(\int_{|\eta| \leq 3N} (1+|\eta|)^{\frac{4p-8}{p}-2\beta} |h(s',X(s)-u(s-s'),\eta)|^2d\eta \right)^{1/2}ds'duds \nonumber \\
		&\leq \frac{C_{p,\gamma,\beta,\varepsilon}}{N} \frac{\sup_{0\leq s \leq t} \Vert h(s)\Vert_{L^p_v L^\infty_x}^2}{(1+|v|)^2} \nonumber \\ 
		&\quad +C_{p,\gamma,\beta} \sup_{0\leq s \leq t} \Vert h(s) \Vert_{L^p_v L^\infty_x}  \int_0^t G_v(t,s) \int_{|u|\leq 2N}k_w^{ns} (v,u) \int_0^s G_u(s,s') \frac{1}{(1+|u|)^{-\frac{\gamma}{p}+\frac{5p-5}{4p}}}\nonumber \\
		&\qquad \times \left(\int_{|\eta| \leq 3N} (1+|\eta|)^{\frac{4p-8}{p}-2\beta} |h(s',X(s)-u(s-s'),\eta)|^2d\eta \right)^{1/2}ds'duds,
	\end{align}
	where the last inequality comes from $\gamma\left(1-\frac{1}{p}\right)+\frac{5p-5}{4p}-1>0$ for  $p>7-4\gamma$, and $4-\frac{2p\beta}{p-2}<-3$  for $\beta >9/2$. In a similar way to \eqref{II2,4}, the remaining term can be bounded as follows: 
	\begin{align}\label{II3,2,2}
		&\int_0^t G_v(t,s) \int_{|u|\leq 2N}k_w^{ns} (v,u) \int_{0}^s G_u(s,s') \frac{1}{(1+|u|)^{-\frac{\gamma}{p}+\frac{5p-5}{4p}}} \nonumber \\
		&\quad \times \left(\int_{|\eta| \leq 3N} (1+|\eta|)^{\frac{4p-8}{p}-2\beta} |h(s',X(s)-u(s-s'),\eta)|^2d\eta \right)^{1/2}ds'duds \nonumber \\
		&=\int_0^t G_v(t,s) \int_{|u|\leq 2N}k_w^{ns} (v,u) \int_{s-\delta}^s G_u(s,s') \frac{1}{(1+|u|)^{-\frac{\gamma}{p}+\frac{5p-5}{4p}}} \nonumber \\
		&\qquad \times \left(\int_{|\eta| \leq 3N} (1+|\eta|)^{\frac{4p-8}{p}-2\beta} |h(s',X(s)-u(s-s'),\eta)|^2d\eta \right)^{1/2}ds'duds \nonumber \\
		&\quad + \int_0^t G_v(t,s) \int_{|u|\leq 2N}k_w^{ns} (v,u) \int_0^{s-\delta} G_u(s,s') \frac{1}{(1+|u|)^{-\frac{\gamma}{p}+\frac{5p-5}{4p}}} \nonumber \\
		&\qquad \times \left(\int_{|\eta| \leq 3N} (1+|\eta|)^{\frac{4p-8}{p}-2\beta} |h(s',X(s)-u(s-s'),\eta)|^2d\eta \right)^{1/2}ds'duds \nonumber \\
		&\leq \frac{C_{p,q,\varepsilon,\gamma}}{1+|v|} \delta \sup_{0 \leq s \leq t}\Vert h(s) \Vert_{L^p_v L^\infty_x} + C\int_0^t e^{-\frac{\nu(v)}{2}(t-s)} \int_0^{s-\delta} e^{-CN^{\gamma}(s-s')} \int_{|u|\leq 2N}  k_w^{ns}(v,u) \nonumber \\
		&\qquad \times \left(\int_{|\eta| \leq 3N} |h(s',X(s)-u(s-s'),\eta))|^2 d\eta \right)^{1/2} duds'ds \nonumber \\
		&\leq  \frac{C_{p,q,\varepsilon,\gamma}}{1+|v|} \delta \sup_{0 \leq s \leq t}\Vert h(s) \Vert_{L^p_v L^\infty_x} +C_{p,q,\varepsilon,\gamma,N} \int_0^t e^{-\frac{\nu(v)}{2}(t-s)}\langle v \rangle^{\gamma-1} \nonumber \\
		&\qquad \times \sup_{0\leq s' \leq s-\delta} \left(\int_{|u|\leq2N} \int_{|\eta| \leq 3N} |h(s',X(s)-u(s-s'),\eta)|^2 d\eta du\right)^{1/2} \nonumber \\
		&\leq  \frac{C_{p,q,\varepsilon,\gamma}}{1+|v|} \delta \sup_{0 \leq s \leq t}\Vert h(s) \Vert_{L^p_v L^\infty_x} + \frac{C_{p,q,\varepsilon,\gamma,N,\delta}}{1+|v|}  \left[\mathcal{E}(F_0)^{1/2} + \|h(s)\|_{L^p_vL^\infty_x}^{\frac{p}{2p-2}}\mathcal{E}(F_0)^{\frac{p-2}{2p-2}}\right],
	\end{align}
	where we have made a change of variables $u \mapsto y:=X(s)-u(s-s')$ with $\left|\frac{dy}{du} \right| =(s-s')^3$ and used \eqref{RTIRE} in the last inequality. Combining \eqref{II3}, \eqref{II3,1}, \eqref{II3,2,1}, and \eqref{II3,2,2}, we can bound the $II_3$ term in \eqref{II3} for the case $-1 \leq \gamma<0$ as 
	\begin{align} \label{II3,final}
		II_3 &\leq \frac{C_{p,q,\gamma,\beta,\varepsilon}}{1+|v|}\left(\frac{1}{N}+\delta \right) \sup_{0\leq s \leq t} \Vert h(s) \Vert_{L^p_vL^\infty_x}^{2} \nonumber\\
		& \quad + \frac{C_{p,q,\beta,\varepsilon,\gamma,N,\delta}}{1+|v|} \left[ \sup_{0 \leq s \leq t} \Vert h(s) \Vert_{L^p_vL^\infty_x} \mathcal{E}(F_0)^{1/2} +\|h(s)\|_{L^p_vL^\infty_x}^{\frac{3p-2}{2p-2}}\mathcal{E}(F_0)^{\frac{p-2}{2p-2}}\right].
	\end{align}
	\bigskip
	
	Next, we consider the term $II_3$ for the case $-3< \gamma<-1$.\\
	
	\textbf{(Case 2. $-3< \gamma<-1$)} Similar to \textbf{Case 1}, we apply \eqref{pointwiseGamma+estimate2} in Lemma \ref{pointwiseGamma+estimate} to the term $II_3$, and then divide the integration domain into $\{|\eta| \geq 3N\}$ and $\{|\eta| \leq 3N\}$: 
	\begin{align} \label{2,II3}
		II_3 &\leq C_{p,\gamma,\beta} \sup_{0\leq s \leq t} \Vert h(s) \Vert_{L^p_v L^\infty_x}\int_0^t G_v(t,s) \int_{\R^3} k_w^{ns}(v,u) \int_0^s G_u(s,s') \frac{1}{(1+|u|)^{-\gamma+\frac{p-1}{p}\varpi}} \nonumber\\
		&\qquad \times \left(\int_{\R^3} (1+|\eta|)^{\frac{4}{m-1}-p'm'\beta} |h(s',X(s)-u(s-s'),\eta)|^{p'm'} d\eta\right)^{\frac{1}{p'm'}} 	\nonumber \\
		&\leq C_{p,\gamma,\beta} \sup_{0\leq s \leq t} \Vert h(s) \Vert_{L^p_v L^\infty_x}\int_0^t G_v(t,s) \int_{\R^3} k_w^{ns}(v,u) \int_0^s G_u(s,s') \frac{1}{(1+|u|)^{-\gamma+\frac{p-1}{p}\varpi}} \nonumber \\
		&\quad \times \left[\left(\int_{|\eta| \geq 3N} (1+|\eta|)^{\frac{4}{m-1}-p'm'\beta} |h(s'X(s)-u(s-s'),\eta)|^{p'm'} d\eta\right)^{\frac{1}{p'm'}} \right. \nonumber \\
		&\qquad \left. + \left(\int_{|\eta| \leq 3N} (1+|\eta|)^{\frac{4}{m-1}-p'm'\beta} |h(s',X(s)-u(s-s'),\eta)|^{p'm'} d\eta\right)^{\frac{1}{p'm'}} \right]ds'duds \nonumber \\
		&:= II_{3,1} + II_{3,2}  
	\end{align}
	Similar to \eqref{II3,1} in \textbf{Case 1}, the term $II_{3,1}$ can be bounded by  
	\begin{align} \label{2,II3,1}
		II_{3,1}\leq \frac{C_{p,\gamma,\beta,\varepsilon}}{N} \frac{1}{(1+|v|)^2}\sup_{0\leq s \leq t}\Vert h(s) \Vert_{L^p_vL^\infty_x}^2,
	\end{align}
	due to $\left(\frac{4}{m-1}-p'm'(\beta-1)\right)\left(\frac{p}{p-p'm'}\right) < -3,$ which is equivalent to
	\begin{align*}
		\beta > \frac{4pm+p-6m-1}{pm}
		\quad \text{for } \beta > 5.
	\end{align*}
	We divide the $\eta$-integration region into $\{|u| \leq 2N\}$ and $\{|u| \geq 2N\}$ to estimate $II_{3,2}$
	\begin{align*}
		II_{3,2} &= C_{p,\gamma,\beta} \sup_{0\leq s \leq t} \Vert h(s) \Vert_{L^p_v L^\infty_x}\int_0^t G_v(t,s) \int_{|u| \geq 2N} k_w^{ns}(v,u) \int_0^s G_u(s,s') \frac{1}{(1+|u|)^{-\gamma+\frac{p-1}{p}\varpi}}\\
		&\qquad \times  \left(\int_{|\eta| \leq 3N} (1+|\eta|)^{\frac{4}{m-1}-p'm'\beta} |h(s',X(s)-u(s-s'),\eta)|^{p'm'} d\eta\right)^{\frac{1}{p'm'}} ds'duds\\
		&\quad + C_{p,\gamma,\beta} \sup_{0\leq s \leq t} \Vert h(s) \Vert_{L^p_v L^\infty_x}\int_0^t G_v(t,s) \int_{|u| \leq 2N} k_w^{ns}(v,u) \int_0^s G_u(s,s') \frac{1}{(1+|u|)^{-\gamma+\frac{p-1}{p}\varpi}}\\
		&\qquad \times  \left(\int_{|\eta| \leq 3N} (1+|\eta|)^{\frac{4}{m-1}-p'm'\beta} |h(s',X(s)-u(s-s'),\eta)|^{p'm'} d\eta\right)^{\frac{1}{p'm'}} ds'duds\\
		&\leq \frac{C_{p,\gamma,\beta,\varepsilon}}{N} \frac{1}{1+|v|} \sup_{0\leq s \leq t} \Vert h(s) \Vert_{L^p_v L^\infty_x}^2\\
		& \quad + C_{p,\gamma,\beta} \sup_{0\leq s \leq t} \Vert h(s) \Vert_{L^p_v L^\infty_x}\int_0^t G_v(t,s) \int_{|u| \leq 2N} k_w^{ns}(v,u) \int_0^s G_u(s,s') \frac{1}{(1+|u|)^{-\gamma+\frac{p-1}{p}\varpi}}\\
		&\qquad \times  \left(\int_{|\eta| \leq 3N} (1+|\eta|)^{\frac{4}{m-1}-p'm'\beta} |h(s',X(s)-u(s-s'),\eta)|^{p'm'} d\eta\right)^{\frac{1}{p'm'}} ds'duds,
	\end{align*}
	where the last inequality comes from $\frac{p-1}{p}\varpi-\gamma>0$. To complete the estimate for $II_{3,2}$, we further bound the following integral:
	\begin{align*}
		&\int_0^t G_v(t,s) \int_{|u| \leq 2N} k_w^{ns}(v,u) \int_0^s G_u(s,s') \frac{1}{(1+|u|)^{-\gamma+\frac{p-1}{p}\varpi}}
		\\
		&\quad \times \left(\int_{|\eta| \leq 3N} (1+|\eta|)^{\frac{4}{m-1}-p'm'\beta} |h(s',X(s)-u(s-s'),\eta)|^{p'm'} d\eta\right)^{\frac{1}{p'm'}} ds'duds\\
		&= \int_0^t G_v(t,s) \int_{|u| \leq 2N} k_w^{ns}(v,u) \int_0^{s-\delta} G_u(s,s') \frac{1}{(1+|u|)^{-\gamma+\frac{p-1}{p}\varpi}}\\
		&\qquad \times \left(\int_{|\eta| \leq 3N} (1+|\eta|)^{\frac{4}{m-1}-p'm'\beta} |h(s',X(s)-u(s-s'),\eta)|^{p'm'} d\eta\right)^{\frac{1}{p'm'}} ds'duds\\
		&\quad + \int_0^t G_v(t,s) \int_{|u| \leq 2N} k_w^{ns}(v,u) \int_{s-\delta}^s G_u(s,s') \frac{1}{(1+|u|)^{-\gamma+\frac{p-1}{p}\varpi}}\\
		&\qquad \times \left(\int_{|\eta| \leq 3N} (1+|\eta|)^{\frac{4}{m-1}-p'm'\beta} |h(s',X(s)-u(s-s'),\eta)|^{p'm'} d\eta\right)^{\frac{1}{p'm'}} ds'duds\\
		&\leq \frac{C_{p,q,\varepsilon,\gamma}}{1+|v|} \delta \sup_{0\leq s \leq t} \Vert h(s) \Vert_{L^p_v L^\infty_x} + \frac{C_{p,\gamma,\beta,\varepsilon,N,\delta}}{1+|v|}  \Bigg[\sup_{0 \leq s \leq t}\|h(s)\|_{L^p_vL^\infty_x}^{1-\frac{1/p'm'-1/p}{1/2-1/p}}\mathcal{E}(F_0)^{\frac{1}{2}\cdot\frac{1/p'm'-1/p}{1/2-1/p}}\\
		& \quad  + \sup_{0 \leq s \leq t}\|h(s)\|_{L^p_vL^\infty_x}^{1-\frac{1/p'm'-1/p}{1-1/p}}\mathcal{E}(F_0)^{\frac{1/p'm'-1/p}{1-1/p}}\Bigg], 
	\end{align*}
	where the last inequality comes from $\frac{p-1}{p}\varpi-\gamma>0$ and the interpolation used in \eqref{case2.I5}. Thus, in Case 2, we can further bound the term $II_{3,2}$ by 
	\begin{align} \label{2,II3,2}
		II_{3,2} &\leq \frac{C_{p,q,\varepsilon,\gamma,\beta}}{1+|v|} \left(\frac{1}{N} + \delta \right) \sup_{0\leq s \leq t} \Vert h(s) \Vert_{L^p_vL^\infty_x}^2 \nonumber  \\
		&\quad + \frac{C_{p,\gamma,\beta,\varepsilon,N,\delta}}{1+|v|}  \Bigg[\sup_{0 \leq s \leq t}\|h(s)\|_{L^p_vL^\infty_x}^{2-\frac{1/p'm'-1/p}{1/2-1/p}}\mathcal{E}(F_0)^{\frac{1}{2}\cdot\frac{1/p'm'-1/p}{1/2-1/p}}\nonumber\\
		& \qquad  + \sup_{0 \leq s \leq t}\|h(s)\|_{L^p_vL^\infty_x}^{2-\frac{1/p'm'-1/p}{1-1/p}}\mathcal{E}(F_0)^{\frac{1/p'm'-1/p}{1-1/p}}\Bigg]. 
	\end{align}
	For $-3<\gamma<-1$, gathering \eqref{2,II3}, \eqref{2,II3,1}, and \eqref{2,II3,2} gives 
	\begin{align}\label{2,II3,final} 
		II_3 &\leq \frac{C_{p,q,\varepsilon,\gamma,\beta}}{1+|v|} \left(\frac{1}{N} + \delta \right) \sup_{0\leq s \leq t} \Vert h(s) \Vert_{L^p_vL^\infty_x}^2 \nonumber \\
		&\quad + \frac{C_{p,\gamma,\beta,\varepsilon,N,\delta}}{1+|v|}  \Bigg[\sup_{0 \leq s \leq t}\|h(s)\|_{L^p_vL^\infty_x}^{2-\frac{1/p'm'-1/p}{1/2-1/p}}\mathcal{E}(F_0)^{\frac{1}{2}\cdot\frac{1/p'm'-1/p}{1/2-1/p}}\nonumber\\
		& \qquad  + \sup_{0 \leq s \leq t}\|h(s)\|_{L^p_vL^\infty_x}^{2-\frac{1/p'm'-1/p}{1-1/p}}\mathcal{E}(F_0)^{\frac{1/p'm'-1/p}{1-1/p}}\Bigg]. 
	\end{align}
	Consequently, for $-1 \leq \gamma <0$, it follows from \eqref{K,est1}, \eqref{II,esti1}, \eqref{II1,final}, \eqref{II2,final}, and \eqref{II3,final} that 
	\begin{align} \label{II,final}
		II &\leq  C_{p,\rho,\lambda,\gamma}e^{-\frac{\lambda}{2}(1+t)^{\rho}} \langle v \rangle^{\gamma-1-\frac{1}{p'}} \Vert h_0 \Vert_{L^p_v L^\infty_x} +C_{p,\gamma} \varepsilon^{\gamma+ \frac{3}{p'}} \mu(v)^{\frac{1-q}{16}}  \sup_{0 \leq s \leq t} \Vert h(s) \Vert_{L^p_v L^\infty_x}\nonumber \\
		& \quad  +\frac{C_{p,q,\varepsilon,\gamma}}{1+|v|} \left(\frac{1}{N}+\frac{1}{N^{-\gamma}}+\frac{1}{N^{\frac{\gamma+3}{2}}}+\delta  \right) \sup_{0\leq s \leq t} \Vert h(s) \Vert_{L^p_vL^\infty_x}
		\nonumber \\
		&\quad + \frac{C_{p,q,\gamma,\beta,\varepsilon}}{1+|v|}\left(\frac{1}{N}+\delta \right)\sup_{0\leq s \leq t}\Vert h(s)\Vert_{L^p_v L^\infty_x}^2 \nonumber \\
		&\quad + \frac{C_{p,q,\varepsilon,\gamma,N,\delta}}{1+|v|} \left[ \mathcal{E}(F_0)^{1/2} + \sup_{0\le s \le t}\|h(s)\|_{L^p_vL^\infty_x}^{\frac{p}{2p-2}}\mathcal{E}(F_0)^{\frac{p-2}{2p-2}}\right]\nonumber\\
		& \quad + \frac{C_{p,q,\beta,\varepsilon,\gamma,N,\delta}}{1+|v|} \left[ \sup_{0 \leq s \leq t} \Vert h(s) \Vert_{L^p_vL^\infty_x} \mathcal{E}(F_0)^{1/2} +\|h(s)\|_{L^p_vL^\infty_x}^{\frac{3p-2}{2p-2}}\mathcal{E}(F_0)^{\frac{p-2}{2p-2}}\right].
	\end{align}
	For $-3<\gamma<-1$, we complete the estimate for $II$ from \eqref{K,est1}, \eqref{II,esti1}, \eqref{II1,final}, \eqref{II2,final}, and \eqref{2,II3,final} 
	\begin{align} \label{2,II,final} 
		II & \leq C_{p,\rho,\lambda,\gamma}e^{-\frac{\lambda}{2}(1+t)^{\rho}} \langle v \rangle^{\gamma} \Vert h_0 \Vert_{L^p_v L^\infty_x}+C_{p,\gamma} \varepsilon^{\gamma+ \frac{3}{p'}} \mu(v)^{\frac{1-q}{16}}  \sup_{0 \leq s \leq t} \Vert h(s) \Vert_{L^p_v L^\infty_x} \nonumber \\
		& \quad  +\frac{C_{p,q,\varepsilon,\gamma}}{1+|v|} \left(\frac{1}{N}+\frac{1}{N^{-\gamma}}+\frac{1}{N^{\frac{\gamma+3}{2}}}+\delta  \right) \sup_{0\leq s \leq t} \Vert h(s) \Vert_{L^p_vL^\infty_x}
		\nonumber \\
		&\quad + \frac{C_{p,q,\gamma,\beta,\varepsilon}}{1+|v|}\left(\frac{1}{N}+\delta \right)\sup_{0\leq s \leq t}\Vert h(s)\Vert_{L^p_v L^\infty_x}^2 \nonumber \\
		&\quad + \frac{C_{p,q,\varepsilon,\gamma,N,\delta}}{1+|v|} \left[ \mathcal{E}(F_0)^{1/2} + \sup_{0\le s \le t}\|h(s)\|_{L^p_vL^\infty_x}^{\frac{p}{2p-2}}\mathcal{E}(F_0)^{\frac{p-2}{2p-2}}\right]\nonumber\\
		& \quad +\frac{C_{p,\gamma,\beta,\varepsilon,N,\delta}}{1+|v|}  \Bigg[\sup_{0 \leq s \leq t}\|h(s)\|_{L^p_vL^\infty_x}^{2-\frac{1/p'm'-1/p}{1/2-1/p}}\mathcal{E}(F_0)^{\frac{1}{2}\cdot\frac{1/p'm'-1/p}{1/2-1/p}} \nonumber \\
		&\qquad + \sup_{0 \leq s \leq t}\|h(s)\|_{L^p_vL^\infty_x}^{2-\frac{1/p'm'-1/p}{1-1/p}}\mathcal{E}(F_0)^{\frac{1/p'm'-1/p}{1-1/p}}\Bigg] .
	\end{align}
	For $III$, first of all, let us consider the case $-1\leq \gamma <0$. \\
	
	\textbf{(Case 1. $-1\leq \gamma <0$)} We use \eqref{pointwiseGamma+estimate1} in Lemma \ref{pointwiseGamma+estimate} to obtain the following estimate 
	\begin{align} \label{III}
		III &\leq C_{p,\gamma,\beta}  \int_0^t G_v(t,s) \frac{\Vert h(s) \Vert_{L^p_v L^\infty_x}}{(1+|v|)^{-\frac{\gamma}{p}+\frac{5p-5}{4p}}} \left(\int_{\R^3} (1+|\eta|)^{\frac{4p-8}{p}-2\beta} |h(s,X(s),\eta)|^2 d \eta \right)^{1/2}ds   \nonumber \\
		&\le C_{p,\gamma,\beta} \int_0^t G_v(t,s) \frac{\Vert h(s) \Vert_{L^p_v L^\infty_x}}{(1+|v|)^{-\frac{\gamma}{p}+\frac{5p-5}{4p}}} \left(\int_{|\eta| \leq N} (1+|\eta|)^{\frac{4p-8}{p}-2\beta} |h(s,X(s),\eta)|^2 d \eta \right)^{1/2}ds \nonumber\\ 
		&\quad + C_{p,\gamma,\beta}  \int_0^t G_v(t,s) \frac{\Vert h(s) \Vert_{L^p_v L^\infty_x}}{(1+|v|)^{-\frac{\gamma}{p}+\frac{5p-5}{4p}}} \left(\int_{|\eta| \geq N} (1+|\eta|)^{\frac{4p-8}{p}-2\beta} |h(s,X(s),\eta)|^2 d \eta \right)^{1/2}ds \nonumber \\
		&\leq C_{p,\gamma,\beta}  \int_0^t G_v(t,s) \frac{\Vert h(s) \Vert_{L^p_v L^\infty_x}}{(1+|v|)^{-\frac{\gamma}{p}+\frac{5p-5}{4p}}} \left(\int_{|\eta| \leq N} (1+|\eta|)^{\frac{4p-8}{p}-2\beta} |h(s,X(s),\eta)|^2 d \eta \right)^{1/2}ds \nonumber \\
		&\quad + \frac{C_{p,\gamma,\beta}}{N} \sup_{0\leq s \leq t} \Vert h(s) \Vert_{L^p_v L^\infty_x}^2 \int_0^t G_v(t,s) \nu(v) \frac{1}{(1+|v|)^{\frac{p-1}{p}(\frac{5}{4}+\gamma)}} \nonumber \\
		&\qquad \times \left(\int_{|\eta| \geq N } {(1+|\eta|)^{4- \frac{2p}{p-2}(\beta-1)}} d\eta \right)^{\frac{p-2}{2p}}ds \nonumber \\
		&\leq C_{p,\gamma,\beta}  \int_0^t G_v(t,s) \frac{\Vert h(s) \Vert_{L^p_v L^\infty_x}}{(1+|v|)^{-\frac{\gamma}{p}+\frac{5p-5}{4p}}} \left(\int_{|\eta| \leq N} (1+|\eta|)^{\frac{4p-8}{p}-2\beta} |h(s,X(s),\eta)|^2 d \eta \right)^{1/2}ds \nonumber \\
		&\quad + \frac{C_{p,\gamma,\beta}}{N} \frac{\sup_{0\leq s \leq t}\Vert h(s) \Vert_{L^p_v L^\infty_x}^2}{(1+|v|)^{\frac{p-1}{4p}}},
	\end{align} 
	where the last inequality comes from $4-\frac{2p}{p-2}(\beta-1)<-3$ for $\beta> \frac{7}{2}$. To complete the estimate for $III$, we apply Duhamel's principle to the first term in \eqref{III} once more: 
	\begin{align*}
		&\int_0^t G_v(t,s) \frac{\Vert h(s) \Vert_{L^p_v L^\infty_x}}{(1+|v|)^{-\frac{\gamma}{p}+\frac{5p-5}{4p}}} \left(\int_{|\eta| \leq N} (1+|\eta|)^{\frac{4p-8}{p}-2\beta} |h(s,X(s),\eta)|^2 d \eta \right)^{1/2}ds \\
		&\leq \int_0^t G_v(t,s)  \frac{\Vert h(s) \Vert_{L^p_v L^\infty_x}}{(1+|v|)^{-\frac{\gamma}{p}+\frac{5p-5}{4p}}} \left( \int_{|\eta|\leq N}  (1+|\eta|)^{\frac{4p-8}{p}-2\beta} G_\eta^2 (s,0)| h_0 (X(s)-\eta s,\eta)|^2 d\eta \right) ^{1/2}ds \\
		&\quad +\sup_{0\leq s \leq t}\Vert h(s) \Vert_{L^p_vL^\infty_x} \int_0^t G_v(t,s) \frac{1}{(1+|v|)^{-\frac{\gamma}{p}+\frac{5p-5}{4p}}} \\
		&\qquad \times \left(\int_{|\eta| \leq N} (1+|\eta|)^{\frac{4p-8}{p}-2\beta} \left( \int_0^s G_\eta (s,\tau) |K_wh|(\tau) d\tau \right)^2 d\eta \right)^{1/2}ds\\
		&\quad + \sup_{0\leq s \leq t}\Vert h(s) \Vert_{L^p_vL^\infty_x} \int_0^t G_v(t,s) \frac{1}{(1+|v|)^{-\frac{\gamma}{p}+\frac{5p-5}{4p}}} \\
		&\qquad \times \left(\int_{|\eta| \leq N} (1+|\eta|)^{\frac{4p-8}{p}-2\beta} \left( \int_0^s G_\eta (s,\tau) |w\Gamma^+(f,f)|(\tau) d\tau \right)^2 d\eta \right)^{1/2}ds \\
		&:= III_1 + \sup_{0\leq s \leq t}\Vert h(s) \Vert_{L^p_vL^\infty_x}\times [III_2 + III_3]. 
	\end{align*}
	For $III_1$, we easily get the following bound as 
	\begin{align}  \label{III,1}
		III_1 \leq \frac{C}{(1+|v|)^{-\frac{\gamma}{p}+\frac{5p-5}{4p}}}e^{-\lambda (1+t)^\rho} \Vert h_0 \Vert_{L^p_v L^\infty_x} \int_0^t \Vert h(s) \Vert_{L^p_vL^\infty_x}ds . 
	\end{align}
	For $III_2$, we split the $s$-integration interval into $[0,s]$ and $[s-\delta,s]$ as follows:
	\begin{align*}
		III_2 &\le  \int_0^t G_v(t,s) \frac{1}{(1+|v|)^{-\frac{\gamma}{p}+\frac{5p-5}{4p}}}\\
		&\qquad \times  \left(\int_{|\eta| \leq N} (1+|\eta|)^{\frac{4p-8}{p}-2\beta} \left( \int_0^{s-\delta} G_\eta (s,\tau) |K_wh|(\tau) d\tau \right)^2 d\eta \right)^{1/2}ds\\
		&\quad +  \int_0^t G_v(t,s) \frac{1}{(1+|v|)^{-\frac{\gamma}{p}+\frac{5p-5}{4p}}} \\
		&\qquad \times \left(\int_{|\eta| \leq N} (1+|\eta|)^{\frac{4p-8}{p}-2\beta} \left( \int_{s-\delta}^s G_\eta (s,\tau) |K_wh|(\tau) d\tau \right)^2 d\eta \right)^{1/2}ds\\
		&\leq  \int_0^t G_v(t,s) \frac{1}{(1+|v|)^{-\frac{\gamma}{p}+\frac{5p-5}{4p}}}\\
		&\qquad \times  \left(\int_{|\eta| \leq N} (1+|\eta|)^{\frac{4p-8}{p}-2\beta} \left( \int_0^{s-\delta} G_\eta (s,\tau) |K_wh|(\tau) d\tau \right)^2 d\eta \right)^{1/2}ds\\
		&\quad + \frac{C}{(1+|v|)^{\frac{p-1}{4p}}} \delta \sup_{0\leq s \leq t} \Vert h(s) \Vert_{L^p_v L^\infty_x}. 
	\end{align*}
	To close the estimate for $III_2$, by using Lemma \ref{Kker}, Lemma \ref{Ksingular}, and a similar argument in \eqref{K,est1}, let us consider the following integral
	\begin{align*}
		&\int_0^t G_v(t,s) \frac{1}{(1+|v|)^{-\frac{\gamma}{p}+\frac{5p-5}{4p}}} \left(\int_{|\eta| \leq N} (1+|\eta|)^{\frac{4p-8}{p}-2\beta} \left( \int_0^{s-\delta} G_\eta (s,\tau) |K_wh|(\tau) d\tau \right)^2 d\eta \right)^{1/2}ds \\
		&\leq \int_0^t G_v(t,s) \frac{1}{(1+|v|)^{-\frac{\gamma}{p}+\frac{5p-5}{4p}}} 
		 \left(\int_{|\eta| \leq N} (1+|\eta|)^{\frac{4p-8}{p}-2\beta} \right. \\ 
		 &\left. \qquad \times \left( \int_0^{s-\delta} G_\eta (s,\tau) \int_{\R^3} |k_w^{ns}(\eta,\eta')||h(\tau,X(s)-\eta(s-\tau),\eta')| d\eta' d\tau \right)^2 d\eta \right)^{1/2}ds\\
		&\quad + \int_0^t G_v(t,s) \frac{1}{(1+|v|)^{-\frac{\gamma}{p}+\frac{5p-5}{4p}}}\\
		&\qquad \times  \left(\int_{|\eta| \leq N} (1+|\eta|)^{\frac{4p-8}{p}-2\beta} \left( \int_0^{s-\delta} G_\eta (s,\tau) |K_w^{s}h|(\tau) d\tau \right)^2 d\eta \right)^{1/2}ds\\
		&\leq \int_0^t G_v(t,s) \frac{1}{(1+|v|)^{-\frac{\gamma}{p}+\frac{5p-5}{4p}}}  \left(\int_{|\eta| \leq N} (1+|\eta|)^{\frac{4p-8}{p}-2\beta} \right. \\ 
		&\left. \qquad \times  \left( \int_0^{s-\delta} G_\eta (s,\tau) \int_{|\eta'| \leq 2N} |k_w^{ns}(\eta,\eta')||h(\tau,X(s)-\eta(s-\tau),\eta')| d\eta' d\tau \right)^2 d\eta \right)^{1/2}ds\\
		&\quad + \int_0^t G_v(t,s) \frac{1}{(1+|v|)^{-\frac{\gamma}{p}+\frac{5p-5}{4p}}} \left(\int_{|\eta| \leq N} (1+|\eta|)^{\frac{4p-8}{p}-2\beta} \right. \\ 
		&\left. \qquad \times \left( \int_0^{s-\delta} G_\eta (s,\tau) \int_{|\eta'| \geq 2N} |k_w^{ns}(\eta,\eta')||h(\tau,X(s)-\eta(s-\tau),\eta')| d\eta' d\tau \right)^2 d\eta \right)^{1/2}ds\\ 
		&\quad + C_{p,\gamma} \varepsilon^{\gamma+\frac{3}{p'}} \sup_{0\leq s \leq t} \Vert h(s) \Vert_{L^p_v L^\infty_x} \int_0^t G_v(t,s)   \frac{1}{(1+|v|)^{-\frac{\gamma}{p}+\frac{5p-5}{4p}}}\\ 
		&\qquad \times \left(\int_{|\eta| \leq N}(1+|\eta|)^{\frac{4p-8}{p}-2\beta} \left(\int_0^{s-\delta} G_{\eta}(s,\tau) \nu(\eta) \nu^{-1}(\eta) \mu(\eta)^{\frac{1-q}{8}} d\tau \right) d\eta \right)^2 ds\\
		&\leq  \int_0^t G_v(t,s) \frac{1}{(1+|v|)^{-\frac{\gamma}{p}+\frac{5p-5}{4p}}} 
		 \left(\int_{|\eta| \leq N} (1+|\eta|)^{\frac{4p-8}{p}-2\beta} \right. \\ 
		 &\left. \qquad \times \left( \int_0^{s-\delta} G_\eta (s,\tau) \int_{|\eta'|\leq 2N} |k_w^{ns}(\eta,\eta')||h(\tau,X(s)-\eta(s-\tau),\eta')| d\eta' d\tau \right)^2 d\eta \right)^{1/2}ds\\
		&\quad + \int_0^t G_v(t,s) \frac{1}{(1+|v|)^{-\frac{\gamma}{p}+\frac{5p-5}{4p}}} \\
		&\qquad \times  \left(\int_{|\eta| \leq N} (1+|\eta|)^{\frac{4p-8}{p}-2\beta} \left( \int_0^{s-\delta} G_\eta (s,\tau) \int_{|\eta'| \geq 2N} \left[|\eta-\eta'|^\gamma e^{-\frac{|\eta|^2}{4}}e^{-\frac{|\eta'|^2}{4}}  \right. \right. \right. \\
		&\left. \left. \left. \qquad \quad + \frac{1}{|\eta-\eta'|^{\frac{3-\gamma}{2}}} e^{-\frac{|\eta-\eta'|^2}{8}}e^{-\frac{||\eta|^2-|\eta'|^2|^2}{8|\eta-\eta'|^2}} \right] \frac{w(\eta)}{w(\eta')}|h(\tau,X(s)-\eta(s-\tau), \eta')| d\eta' d\tau \right)^2 d\eta \right)^{1/2}ds\\
		&\quad + \frac{C_{p,\gamma}}{(1+|v|)^{\frac{p-1}{4p}}}  \varepsilon^{\gamma+\frac{3}{p'}} \sup_{0\leq s \leq t}  \Vert h(s) \Vert_{L^p_v L^\infty _x}\\
		&\leq  \int_0^t G_v(t,s) \frac{1}{(1+|v|)^{-\frac{\gamma}{p}+\frac{5p-5}{4p}}} 
		\left(\int_{|\eta| \leq N} (1+|\eta|)^{\frac{4p-8}{p}-2\beta} \right. \\ 
		&\left. \qquad \times \left( \int_0^{s-\delta} G_\eta (s,\tau) \int_{|\eta'|\leq 2N} |k_w^{ns}(\eta,\eta')||h(\tau,X(s)-\eta(s-\tau),\eta')| d\eta' d\tau \right)^2 d\eta \right)^{1/2}ds\\
		&\quad + C_{p,\gamma}\left(\frac{1}{N^{-\gamma}}+ \frac{1}{N^{\frac{3+\gamma}{2}}}+\varepsilon^{\gamma+\frac{3}{p'}}\right)\frac{\sup_{0\leq s \leq t} \Vert h(s)\Vert_{L^p_vL^\infty_x} }{(1+|v|)^{\frac{p-1}{4p}}},
	\end{align*}
	where we have used $|\eta - \eta'| \geq N$ whenever $|\eta| \leq N$ and $|\eta'| \geq 2N$. Using H\"{o}lder's inequality, the remaining part above can be further bounded as
	\begin{align*}
		&\int_0^t G_v(t,s) \frac{1}{(1+|v|)^{-\frac{\gamma}{p}+\frac{5p-5}{4p}}} 
		  \left(\int_{|\eta| \leq N} (1+|\eta|)^{\frac{4p-8}{p}-2\beta}\right. \\
		  &\left. \qquad \times  \left( \int_0^{s-\delta} G_\eta (s,\tau) \int_{|\eta'|\leq 2N} |k_w^{ns}(\eta,\eta')||h(\tau,X(s)-\eta(s-\tau),\eta')| d\eta' d\tau \right)^2 d\eta \right)^{1/2}ds\\
		&\leq \frac{C_{N,\varepsilon}}{(1+|v|)^{\frac{p-1}{4p}}} \left(\int_0^t e^{-\frac{\nu(v)}{2}(t-s)}\nu(v) 
		\int_0^{s-\delta} e^{-CN^{\gamma}(s-\tau)} \right. \\
		&\left. \qquad \times \int_{|\eta| \leq N}  \int_{|\eta'| \leq 2N} |h(\tau,X(s)-\eta(s-\tau),\eta')|^2 d\eta' d\eta d\tau   ds\right)^{1/2} \\
		&\leq \frac{C_{\gamma,N,\varepsilon}}{(1+|v|)^{\frac{p-1}{4p}}}\sup_{0\leq \tau \leq s-\delta} \left(\int_{|\eta| \leq N} \int_{|\eta'|\leq 2N} |h(\tau,X(s)-\eta(s-\tau),\eta')|^2d\eta' d\eta \right)^{1/2}\\
		&\leq  \frac{C_{\gamma,N,\varepsilon}}{(1+|v|)^{\frac{p-1}{4p}}} \left[ \mathcal{E}(F_0)^{1/2} + \sup_{0\le s \le t}\|h(s)\|_{L^p_vL^\infty_x}^{\frac{p}{2p-2}}\mathcal{E}(F_0)^{\frac{p-2}{2p-2}}\right],
	\end{align*}
	where we have used a similar argument in \eqref{RTIRE}, and it follows that 
	\begin{align} \label{III,2} 
		III_2 &\leq  \frac{C_{\gamma,N,\varepsilon}}{(1+|v|)^{\frac{p-1}{4p}}} \left[ \mathcal{E}(F_0)^{1/2} + \sup_{0\le s \le t}\|h(s)\|_{L^p_vL^\infty_x}^{\frac{p}{2p-2}}\mathcal{E}(F_0)^{\frac{p-2}{2p-2}}\right]\nonumber\\
		& \quad + \frac{C_{p,\gamma}}{(1+|v|)^{\frac{p-1}{4p}}}\left(\delta+ \frac{1}{N^{-\gamma}}+ \frac{1}{N^{\frac{3+\gamma}{2}}}+\varepsilon^{\gamma+\frac{3}{p'}}\right)\sup_{0\leq s \leq t} \Vert h(s)\Vert_{L^p_vL^\infty_x} .
	\end{align}
	Similar to how we dealt with the term $III_2$, we treat the term $III_3$. Using \eqref{pointwiseGamma+estimate1} in Lemma \ref{pointwiseGamma+estimate}, it holds that 
	\begin{align}\label{III,3} 
		III_3 &\leq C_{p,\gamma,\beta}\sup_{0\leq s \leq t} \Vert h(s) \Vert_{L^p_vL^\infty_x} \int_0^t G_v(t,s) \frac{1}{(1+|v|)^{-\frac{\gamma}{p}+\frac{5p-5}{4p}}} \left(\int_{|\eta|\leq N} (1+|\eta|)^{\frac{4p-8}{p}-2\beta} \right. \nonumber\\
		&\left. \qquad \times  \left(\int_0^s G_{\eta}(s,\tau) \frac{1}{(1+|\eta|)^{-\frac{\gamma}{p}+\frac{5p-5}{4p}} } \right. \right. \nonumber \\
		&\left. \left. \quad \qquad \times \left(\int_{\R^3} (1+|\eta'|)^{\frac{4p-8}{p}-2\beta} |h(\tau,X(s)-\eta(s-\tau),\eta')|^2 d\eta' \right)^{1/2}d\tau \right)^2 d\eta \right)^{1/2}  ds \nonumber \\
		&\le C_{p,\gamma,\beta}\sup_{0\leq s \leq t} \Vert h(s) \Vert_{L^p_vL^\infty_x} \int_0^t G_v(t,s) \frac{1}{(1+|v|)^{-\frac{\gamma}{p}+\frac{5p-5}{4p}}} \left(\int_{|\eta|\leq N} (1+|\eta|)^{\frac{4p-8}{p}-2\beta} \right.  \nonumber \\
		&\left.  \qquad \times  \left(\int_0^s G_{\eta}(s,\tau) \frac{1}{(1+|\eta|)^{-\frac{\gamma}{p}+\frac{5p-5}{4p}} }\right. \right. \nonumber \\
		&\left. \left. \quad \qquad \times \left(\int_{|\eta'|\leq 2N} (1+|\eta'|)^{\frac{4p-8}{p}-2\beta} |h(\tau,X(s)-\eta(s-\tau),\eta')|^2 d\eta' \right)^{1/2}d\tau \right)^2 d\eta \right)^{1/2}  ds \nonumber \\
		& \quad +C_{p,\gamma,\beta}\sup_{0\leq s \leq t} \Vert h(s) \Vert_{L^p_vL^\infty_x} \int_0^t G_v(t,s) \frac{1}{(1+|v|)^{-\frac{\gamma}{p}+\frac{5p-5}{4p}}} \left(\int_{|\eta|\leq N} (1+|\eta|)^{\frac{4p-8}{p}-2\beta} \right. \nonumber \\
		&\left. \qquad \times  \left(\int_0^s G_{\eta}(s,\tau) \frac{1}{(1+|\eta|)^{-\frac{\gamma}{p}+\frac{5p-5}{4p}} } \right. \right. \nonumber \\ 
		&\left. \left. \quad \qquad \times \left(\int_{|\eta'|\geq 2N} (1+|\eta'|)^{\frac{4p-8}{p}-2\beta} |h(\tau,X(s)-\eta(s-\tau),\eta')|^2 d\eta' \right)^{1/2}d\tau \right)^2 d\eta \right)^{1/2}  ds \nonumber \\
		&\leq C_{p,\gamma,\beta}\sup_{0\leq s \leq t} \Vert h(s) \Vert_{L^p_vL^\infty_x} \int_0^t G_v(t,s) \frac{1}{(1+|v|)^{-\frac{\gamma}{p}+\frac{5p-5}{4p}}} \left(\int_{|\eta|\leq N} (1+|\eta|)^{\frac{4p-8}{p}-2\beta} \right.\nonumber \\
		&\left. \qquad \times  \left(\int_0^{s-\delta} G_{\eta}(s,\tau) \frac{1}{(1+|\eta|)^{-\frac{\gamma}{p}+\frac{5p-5}{4p}} }\right. \right. \nonumber \\
		&\left. \left. \qquad \quad \times \left(\int_{|\eta'|\leq 2N} (1+|\eta'|)^{\frac{4p-8}{p}-2\beta} |h(\tau,X(s)-\eta(s-\tau),\eta')|^2 d\eta' \right)^{1/2}d\tau \right)^2 d\eta \right)^{1/2}  ds\nonumber \\
		&\quad + C_{p,\gamma,\beta}\sup_{0\leq s \leq t} \Vert h(s) \Vert_{L^p_vL^\infty_x} \int_0^t G_v(t,s) \frac{1}{(1+|v|)^{-\frac{\gamma}{p}+\frac{5p-5}{4p}}} \left(\int_{|\eta|\leq N} (1+|\eta|)^{\frac{4p-8}{p}-2\beta} \right. \nonumber\\
		&\left. \qquad \times  \left(\int_{s-\delta}^{s} G_{\eta}(s,\tau) \frac{1}{(1+|\eta|)^{-\frac{\gamma}{p}+\frac{5p-5}{4p}} }\right. \right.  \nonumber \\
		&\left. \left. \qquad \quad \times \left(\int_{|\eta'|\leq 2N} (1+|\eta'|)^{\frac{4p-8}{p}-2\beta} |h(\tau,X(s)-\eta(s-\tau),\eta')|^2 d\eta' \right)^{1/2}d\tau \right)^2 d\eta \right)^{1/2}  ds \nonumber\\
		&\quad + \frac{C_{p,\gamma,\beta}}{N} \frac{\sup_{0\leq s \leq t} \Vert h(s) \Vert_{L^p_vL^\infty_x}^2 }{(1+|v|)^{\frac{p-1}{4p}}} \nonumber \\
		&\leq \frac{C_{p,\gamma,\beta}}{(1+|v|)^{\frac{p-1}{4p}}} \sup_{0\leq s \leq t} \Vert h(s) \Vert_{L^p_vL^\infty_x} \left[ \mathcal{E}(F_0)^{1/2} + \sup_{0\le s \le t}\|h(s)\|_{L^p_vL^\infty_x}^{\frac{p}{2p-2}}\mathcal{E}(F_0)^{\frac{p-2}{2p-2}}\right] \nonumber\\
		& \quad + \frac{C_{p,\gamma,\beta}}{(1+|v|)^{\frac{p-1}{4p}}} \delta \sup_{0\leq s \leq t} \Vert h(s) \Vert_{L^p_vL^\infty_s}^ 2 +  \frac{C_{p,\gamma,\beta}}{N} \frac{\sup_{0\leq s \leq t} \Vert h(s) \Vert_{L^p_vL^\infty_x}^2 }{(1+|v|)^{\frac{p-1}{4p}}}.
	\end{align}
	Combining \eqref{III}, \eqref{III,1}, \eqref{III,2}, and \eqref{III,3}, we have 
	\begin{align} \label{III,final} 
		III &\leq\frac{C}{(1+|v|)^{-\frac{\gamma}{p}+\frac{5p-5}{4p}}}e^{-\lambda (1+t)^\rho} \Vert h_0 \Vert_{L^p_v L^\infty_x} \int_0^t \Vert h(s) \Vert_{L^p_vL^\infty_x}ds \nonumber\\
		&\quad + \frac{C_{p,\gamma,\beta}}{(1+|v|)^{\frac{p-1}{4p}}} \delta \left[\sup_{0\leq s \leq t} \Vert h(s) \Vert_{L^p_v L^\infty_x}^2 +\sup_{0\leq s \leq t}\Vert h(s) \Vert_{L^p_vL^\infty_x}^3 \right] + \frac{C_{p,\gamma,\beta}}{N}  \frac{\sup_{0\leq s \leq t} \Vert h(s) \Vert_{L^p_vL^\infty_x}^3 }{(1+|v|)^{\frac{p-1}{4p}}}  \nonumber \\
		&\quad + \frac{C_{p,\gamma}}{(1+|v|)^{\frac{p-1}{4p}}}\left(\frac{1}{N} + \frac{1}{N^{-\gamma}}+ \frac{1}{N^{\frac{3+\gamma}{2}}}+\varepsilon^{\gamma+\frac{3}{p'}}\right)\sup_{0\leq s \leq t} \Vert h(s)\Vert_{L^p_vL^\infty_x}^2 \nonumber \\
		&\quad +\frac{C_{p,\gamma,\beta,N, \varepsilon}}{(1+|v|)^{\frac{p-1}{4p}}}\left(\sup_{0\leq s \leq t} \Vert h(s) \Vert_{L^p_vL^\infty_x}+ \sup_{0\leq s \leq t} \Vert h(s) \Vert_{L^p_vL^\infty_x}^2 \right)\nonumber\\
		& \qquad \times \left[ \mathcal{E}(F_0)^{1/2} + \sup_{0\le s \le t}\|h(s)\|_{L^p_vL^\infty_x}^{\frac{p}{2p-2}}\mathcal{E}(F_0)^{\frac{p-2}{2p-2}}\right].
		\end{align}
		\newline
	Next, we estimate the term $III$ for the case $-3<\gamma<-1$. \\
	
	\textbf{(Case 2. $-3< \gamma <-1$)} It follows from \eqref{pointwiseGamma+estimate2} in Lemma \ref{pointwiseGamma+estimate} that 
	\begin{align*}
		III &\leq C_{p,\gamma,\beta} \int_0^t G_v(t,s) \frac{ \|h(s)\|_{L^p_vL^\infty_x}}{(1+|v|)^{-\gamma+\frac{p-1}{p}\varpi}}\left(\int_{\R^3}(1+|\eta|)^{\frac{4}{m-1}-p'm'\beta}|h(s,X(s),\eta)|^{p'm'} d\eta \right)^{\frac{1}{p'm'}}ds\\
		&\leq  C_{p,\gamma,\beta} \int_0^t G_v(t,s) \frac{ \|h(s)\|_{L^p_vL^\infty_x}}{(1+|v|)^{-\gamma+\frac{p-1}{p}\varpi}}\\
		&\qquad \times \left(\int_{|\eta|\geq N}(1+|\eta|)^{\frac{4}{m-1}-p'm'\beta}|h(s,X(s),\eta)|^{p'm'} d\eta \right)^{\frac{1}{p'm'}}ds\\
		&\quad + C_{p,\gamma,\beta} \int_0^t G_v(t,s) \frac{ \|h(s)\|_{L^p_vL^\infty_x}}{(1+|v|)^{-\gamma+\frac{p-1}{p}\varpi}}\\
		&\qquad \times \left(\int_{|\eta|\leq N}(1+|\eta|)^{\frac{4}{m-1}-p'm'\beta}|h(s,X(s),\eta)|^{p'm'} d\eta \right)^{\frac{1}{p'm'}}ds\\
		&\leq \frac{C_{p,\gamma,\beta}}{(1+|v|)^{\frac{p-1}{p}\varpi}} \frac{\sup_{0\leq s \leq t}\Vert h(s) \Vert_{L^p_v L^\infty_x}^2}{N} \\
		&\quad + C_{p,\gamma,\beta} \int_0^t G_v(t,s) \frac{ \|h(s)\|_{L^p_vL^\infty_x}}{(1+|v|)^{-\gamma+\frac{p-1}{p}\varpi}}\\
		&\qquad \times \left(\int_{|\eta|\leq N}(1+|\eta|)^{\frac{4}{m-1}-p'm'\beta}|h(s,X(s),\eta))|^{p'm'} d\eta \right)^{\frac{1}{p'm'}}ds.
	\end{align*}
	We apply Duhamel's principle to the integral above. 
	\begin{align*}
		&\int_0^t G_v(t,s) \frac{ \|h(s)\|_{L^p_vL^\infty_x}}{(1+|v|)^{-\gamma+\frac{p-1}{p}\varpi}}\left(\int_{|\eta|\leq N}(1+|\eta|)^{\frac{4}{m-1}-p'm'\beta}|h(s,X(s),\eta)|^{p'm'} d\eta \right)^{\frac{1}{p'm'}}ds\\
		&\leq \int_0^t G_v(t,s) \frac{ \|h(s)\|_{L^p_vL^\infty_x}}{(1+|v|)^{-\gamma+\frac{p-1}{p}\varpi}}\\
		&\qquad \times \left(\int_{|\eta|\leq N}(1+|\eta|)^{\frac{4}{m-1}-p'm'\beta}|G_{\eta}(s,0)h_0(X(0),\eta)|^{p'm'} d\eta \right)^{\frac{1}{p'm'}}ds\\
		&\quad +\sup_{0\leq s \leq t} \Vert h(s) \Vert_{L^p_v L^\infty_x} \int_0^t G_v(t,s) \frac{1}{(1+|v|)^{-\gamma+\frac{p-1}{p}\varpi}} \\
		&\qquad \times \left(\int_{|\eta| \leq N} (1+|\eta|)^{\frac{4}{m-1}-p'm'\beta} \left( \int_0^s G_\eta (s,\tau) |K_wh|(\tau) d\tau \right)^{p'm'} d\eta \right)^{\frac{1}{p'm'}}ds\\
		&\quad + \sup_{0\leq s \leq t}\Vert h(s) \Vert_{L^p_vL^\infty_x} \int_0^t G_v(t,s) \frac{1}{(1+|v|)^{-\gamma +\frac{p-1}{p}\varpi}} \\
		&\qquad \times \left(\int_{|\eta| \leq N} (1+|\eta|)^{\frac{4}{m-1}-p'm'\beta} \left( \int_0^s G_\eta (s,\tau) |w\Gamma^+(f,f)|(\tau) d\tau \right)^{p'm'} d\eta \right)^{\frac{1}{p'm'}}ds \\
		&:= III_1 + \sup_{0\leq s \leq t}\Vert h(s) \Vert_{L^p_vL^\infty_x}\times [III_2 + III_3]. 
	\end{align*}
	Similar to Case 1, we can further bound the terms $III_1$, $III_2$, and $III_3$
	\begin{align*} 
		III_1 &\leq \frac{C}{(1+|v|)^{-\gamma + \frac{p-1}{p}\varpi}}  e^{-\lambda(1+t)^\rho} \Vert h_0 \Vert_{L^p_v L^\infty_x} \int_0^t \Vert h(s) \Vert_{L^p_v L^\infty_x} ds, \nonumber \\
		III_2 &\leq \frac{C_{N,\varepsilon}}{(1+|v|)^{\frac{p-1}{p}\varpi}} \left[ \mathcal{E}(F_0)^{1/2} + \sup_{0\le s \le t}\|h(s)\|_{L^p_vL^\infty_x}^{\frac{p}{2p-2}}\mathcal{E}(F_0)^{\frac{p-2}{2p-2}}\right]\nonumber\\
		& \quad  + C_{p,\gamma} \left(\delta + \frac{1}{N^{-\gamma}}+\frac{1}{N^{\frac{3+\gamma}{2}}}+ \varepsilon^{\gamma+\frac{3}{p'}} \right) \frac{\sup_{0\leq s \leq t} \Vert h(s) \Vert_{L^p_v L^\infty_x}}{(1+|v|)^{\frac{p-1}{p}\varpi}}, \nonumber \\ 
		III_3 &\leq \frac{C_{p,\gamma,\beta}}{(1+|v|)^{\frac{p-1}{p}\varpi}}  \sup_{0 \leq s \leq t}\|h(s)\|_{L^p_vL^\infty_x}\Bigg[\sup_{0 \leq s \leq t}\|h(s)\|_{L^p_vL^\infty_x}^{1-\frac{1/p'm'-1/p}{1/2-1/p}}\mathcal{E}(F_0)^{\frac{1}{2}\cdot\frac{1/p'm'-1/p}{1/2-1/p}}\nonumber\\
		& \quad  + \sup_{0 \leq s \leq t}\|h(s)\|_{L^p_vL^\infty_x}^{1-\frac{1/p'm'-1/p}{1-1/p}}\mathcal{E}(F_0)^{\frac{1/p'm'-1/p}{1-1/p}}\Bigg]+ \frac{C_{p,\gamma,\beta}}{(1+|v|)^{\frac{p-1}{p}\varpi}} \delta \sup_{0\leq s \leq t} \Vert h(s) \Vert_{L^p_v L^\infty_x}^2 \nonumber \\
		&\quad + \frac{C_{p,\gamma,\beta}}{N} \frac{\sup_{0\leq s \leq t} \Vert h(s) \Vert_{L^p_v L^\infty_x}^2}{(1+|v|)^{\frac{p-1}{p}\varpi}},
	\end{align*}
	where we have applied the inequality \eqref{RTIRE1212} to estimate $III_3$,  which implies that 
	\begin{align} \label{2,III,final}
		III &\leq \frac{C}{(1+|v|)^{-\gamma+ \frac{p-1}{p}\varpi}}  e^{-\lambda(1+t)^\rho} \Vert h_0 \Vert_{L^p_v L^\infty_x} \int_0^t \Vert h(s) \Vert_{L^p_v L^\infty_x} ds \nonumber \\
		&\quad + \frac{C_{p,\gamma,\beta}}{(1+|v|)^{\frac{p-1}{p}\varpi}} \delta \left[\sup_{0\leq s \leq t} \Vert h(s) \Vert_{L^p_v L^\infty_x}^2 +\sup_{0\leq s \leq t}\Vert h(s) \Vert_{L^p_vL^\infty_x}^3 \right] \nonumber \\
		&\quad + \frac{C_{p,\gamma,\beta}}{N}  \frac{\sup_{0\leq s \leq t} \Vert h(s) \Vert_{L^p_vL^\infty_x}^3 }{(1+|v|)^{\frac{p-1}{p}\varpi}} \nonumber \\
		& \quad  + \frac{C_{p,\gamma}}{(1+|v|)^{\frac{p-1}{p}\varpi}}\left(\frac{1}{N} + \frac{1}{N^{-\gamma}}+ \frac{1}{N^{\frac{3+\gamma}{2}}}+\varepsilon^{\gamma+\frac{3}{p'}}\right)\sup_{0\leq s \leq t} \Vert h(s)\Vert_{L^p_vL^\infty_x}^2  \nonumber \\
		&\quad +\frac{C_{N, \varepsilon}}{(1+|v|)^{\frac{p-1}{p}\varpi}} \sup_{0\leq s \leq t} \Vert h(s) \Vert_{L^p_vL^\infty_x} \left[ \mathcal{E}(F_0)^{1/2} + \sup_{0\le s \le t}\|h(s)\|_{L^p_vL^\infty_x}^{\frac{p}{2p-2}}\mathcal{E}(F_0)^{\frac{p-2}{2p-2}}\right]\nonumber\\
		& \quad + \frac{C_{p,\gamma,\beta}}{(1+|v|)^{\frac{p-1}{p}\varpi}} \sup_{0 \leq s \leq t}\|h(s)\|_{L^p_vL^\infty_x}^2\Bigg[\sup_{0 \leq s \leq t}\|h(s)\|_{L^p_vL^\infty_x}^{1-\frac{1/p'm'-1/p}{1/2-1/p}}\mathcal{E}(F_0)^{\frac{1}{2}\cdot\frac{1/p'm'-1/p}{1/2-1/p}}\nonumber\\
		& \qquad  + \sup_{0 \leq s \leq t}\|h(s)\|_{L^p_vL^\infty_x}^{1-\frac{1/p'm'-1/p}{1-1/p}}\mathcal{E}(F_0)^{\frac{1/p'm'-1/p}{1-1/p}}\Bigg].
	\end{align}
	Finally, for $-1\leq \gamma<0$, we derive the following estimate from \eqref{I}, \eqref{II,final}, and \eqref{III,final}
	\begin{align*}
		&|h(t,x,v)| \\
		&\leq C_{\lambda}e^{-\lambda(1+t)^{\rho}} h_0(x-tv,v)  + \frac{C}{(1+|v|)^{-\frac{\gamma}{p}+\frac{5p-5}{4p}}}e^{-\lambda (1+t)^\rho} \Vert h_0 \Vert_{L^p_v L^\infty_x} \int_0^t \Vert h(s) \Vert_{L^p_vL^\infty_x}ds \\
		&\quad +C_{p,\rho,\lambda,\gamma}e^{-\frac{\lambda}{2}(1+t)^{\rho}} \langle v \rangle^{\gamma-1-\frac{1}{p'}} \Vert h_0 \Vert_{L^p_v L^\infty_x}\\
		&\quad +\frac{C_{p,q,\gamma}}{(1+|v|)^{\frac{p-1}{4p}}} \varepsilon^{\gamma+ \frac{3}{p'}}   \left[\sup_{0 \leq s \leq t} \Vert h(s) \Vert_{L^p_v L^\infty_x}+\sup_{0 \leq s \leq t} \Vert h(s) \Vert_{L^p_v L^\infty_x}^2\right]\\
		&\quad + \frac{C_{p,q,\varepsilon,\gamma,\beta}}{(1+|v|)^{\frac{p-1}{4p}}} \left(\frac{1}{N}+\frac{1}{N^{-\gamma}}+\frac{1}{N^{\frac{\gamma+3}{2}}}+\delta  \right) \\
		& \qquad \times \left[\sup_{0\leq s \leq t} \Vert h(s) \Vert_{L^p_vL^\infty_x}+\sup_{0\leq s \leq t} \Vert h(s) \Vert_{L^p_vL^\infty_x}^2+\sup_{0\leq s \leq t} \Vert h(s) \Vert_{L^p_vL^\infty_x}^3\right]\\
		&\quad +\frac{C_{p,q,\varepsilon,\gamma,N,\delta}}{(1+|v|)^{\frac{p-1}{4p}}}\left(1+\sup_{0\leq s \leq t} \Vert h(s) \Vert_{L^p_vL^\infty_x}+\sup_{0\leq s \leq t} \Vert h(s) \Vert_{L^p_vL^\infty_x}^2\right)\\
		& \qquad \times  \left[ \mathcal{E}(F_0)^{1/2} + \sup_{0\le s \le t}\|h(s)\|_{L^p_vL^\infty_x}^{\frac{p}{2p-2}}\mathcal{E}(F_0)^{\frac{p-2}{2p-2}}\right].
	\end{align*}
	For $-3<\gamma<-1$, it follows from \eqref{I}, \eqref{2,II,final}, and \eqref{2,III,final} that 
	\begin{align*}
		&|h(t,x,v)|\\
		 &\leq C_{\lambda}e^{-\lambda(1+t)^{\rho}} h_0(x-tv,v)  +\frac{C}{(1+|v|)^{-\gamma+ \frac{p-1}{p}\varpi}}  e^{-\lambda(1+t)^\rho} \Vert h_0 \Vert_{L^p_v L^\infty_x} \int_0^t \Vert h(s) \Vert_{L^p_v L^\infty_x} ds \nonumber \\
		&\quad +C_{p,\rho,\lambda,\gamma}e^{-\frac{\lambda}{2}(1+t)^{\rho}} \langle v \rangle^{\gamma-1-\frac{1}{p'}} \Vert h_0 \Vert_{L^p_v L^\infty_x}\\
		&\quad +\frac{C_{p,q,\gamma}}{(1+|v|)^{\frac{p-1}{p}\varpi}} \varepsilon^{\gamma+ \frac{3}{p'}}   \left[\sup_{0 \leq s \leq t} \Vert h(s) \Vert_{L^p_v L^\infty_x}+\sup_{0 \leq s \leq t} \Vert h(s) \Vert_{L^p_v L^\infty_x}^2\right]\\
		&\quad + \frac{C_{p,q,\varepsilon,\gamma,\beta}}{(1+|v|)^{\frac{p-1}{p}\varpi}} \left(\frac{1}{N}+\frac{1}{N^{-\gamma}}+\frac{1}{N^{\frac{\gamma+3}{2}}}+\delta  \right) \\
		& \qquad \times \left[\sup_{0\leq s \leq t} \Vert h(s) \Vert_{L^p_vL^\infty_x}+\sup_{0\leq s \leq t} \Vert h(s) \Vert_{L^p_vL^\infty_x}^2+\sup_{0\leq s \leq t} \Vert h(s) \Vert_{L^p_vL^\infty_x}^3\right]\\
		&\quad +\frac{C_{p,q,\varepsilon,\gamma,N,\delta}}{(1+|v|)^{\frac{p-1}{p}\varpi}}\left(1+\sup_{0\leq s \leq t} \Vert h(s) \Vert_{L^p_vL^\infty_x}\right)\left[ \mathcal{E}(F_0)^{1/2} + \sup_{0\le s \le t}\|h(s)\|_{L^p_vL^\infty_x}^{\frac{p}{2p-2}}\mathcal{E}(F_0)^{\frac{p-2}{2p-2}}\right]\\
		& \quad +\frac{C_{p,q,\varepsilon,\gamma,N,\delta}}{(1+|v|)^{\frac{p-1}{p}\varpi}}\left(\sup_{0\leq s \leq t} \Vert h(s) \Vert_{L^p_vL^\infty_x}+\sup_{0\leq s \leq t} \Vert h(s) \Vert_{L^p_vL^\infty_x}^2\right)\\
		& \qquad \times  \Bigg[\sup_{0 \leq s \leq t}\|h(s)\|_{L^p_vL^\infty_x}^{2-\frac{1/p'm'-1/p}{1/2-1/p}}\mathcal{E}(F_0)^{\frac{1}{2}\cdot\frac{1/p'm'-1/p}{1/2-1/p}}+ \sup_{0 \leq s \leq t}\|h(s)\|_{L^p_vL^\infty_x}^{2-\frac{1/p'm'-1/p}{1-1/p}}\mathcal{E}(F_0)^{\frac{1/p'm'-1/p}{1-1/p}}\Bigg].
	\end{align*}
	
\end{proof}
\bigskip

\begin{Coro} \label{smalllp}
Let $h(t,x,v)$ satisfy the equation \eqref{Rf nonlinear equation} and  $\rho-1 = \frac{(1+\vartheta)\gamma}{2-\gamma}$. Under the a priori assumption \eqref{apriorismall} and $\mathcal{E}(F_0) \leq \varepsilon_2$, we get the following estimate for each case:
\begin{enumerate}
	\item If $-1 \leq \gamma <0$, then there exists a constant $C_{3,1}$, depending on $\lambda,p,q$, and $\gamma$, so that
	\begin{align*} 
		\Vert h(t) \Vert_{L^p_v L^\infty_x} 
		&\leq C_{3,1} \Vert h_0 \Vert_{L^p_vL^\infty_x} \left(1+\int_0^t \Vert h(s) \Vert_{L^p_v L^\infty_x}ds \right) e^{-\frac{\lambda}{2} (1+t)^\rho} \nonumber\\
		&\quad +C_{p,q,\gamma} \varepsilon^{\gamma+ \frac{3}{p'}}   \left[\sup_{0 \leq s \leq t} \Vert h(s) \Vert_{L^p_v L^\infty_x}+\sup_{0 \leq s \leq t} \Vert h(s) \Vert_{L^p_v L^\infty_x}^2\right]\nonumber\\
		&\quad + C_{p,q,\varepsilon,\gamma,\beta}\left(\frac{1}{N}+\frac{1}{N^{-\gamma}}+\frac{1}{N^{\frac{\gamma+3}{2}}}+\delta  \right) \nonumber\\
		& \qquad \times \left[\sup_{0\leq s \leq t} \Vert h(s) \Vert_{L^p_vL^\infty_x}+\sup_{0\leq s \leq t} \Vert h(s) \Vert_{L^p_vL^\infty_x}^2+\sup_{0\leq s \leq t} \Vert h(s) \Vert_{L^p_vL^\infty_x}^3\right]\nonumber\\
		&\quad +C_{p,q,\varepsilon,\gamma,N,\delta}\left(1+\sup_{0\leq s \leq t} \Vert h(s) \Vert_{L^p_vL^\infty_x}+\sup_{0\leq s \leq t} \Vert h(s) \Vert_{L^p_vL^\infty_x}^2\right)\nonumber\\
		& \qquad \times  \left[ \mathcal{E}(F_0)^{1/2} + \sup_{0\le s \le t}\|h(s)\|_{L^p_vL^\infty_x}^{\frac{p}{2p-2}}\mathcal{E}(F_0)^{\frac{p-2}{2p-2}}\right],
	\end{align*}
	for all $0\leq t \leq T$, where $\varepsilon>0$ and $\delta>0$ can be arbitrarily small and $N>0$ can be arbitrarily large.
	\item If $-3 < \gamma<-1$, then then there exists a constant $C_{3,2}$, depending on $\lambda,p,q$, and $\gamma$, so that
	\begin{align*}
			\Vert h(t) \Vert_{L^p_v L^\infty_x} 
			&\leq C_{3,2}  \Vert h_0 \Vert_{L^p_v L^\infty_x} \left(1+\int_0^t \Vert h(s) \Vert_{L^p_v L^\infty_x} ds\right)e^{-\frac{\lambda}{2}(1+t)^\rho} \\ 
		&\quad +C_{p,q,\gamma}\varepsilon^{\gamma+ \frac{3}{p'}}   \left[\sup_{0 \leq s \leq t} \Vert h(s) \Vert_{L^p_v L^\infty_x}+\sup_{0 \leq s \leq t} \Vert h(s) \Vert_{L^p_v L^\infty_x}^2\right]\\
		&\quad + C_{p,q,\varepsilon,\gamma,\beta} \left(\frac{1}{N}+\frac{1}{N^{-\gamma}}+\frac{1}{N^{\frac{\gamma+3}{2}}}+\delta  \right) \\
		& \qquad \times \left[\sup_{0\leq s \leq t} \Vert h(s) \Vert_{L^p_vL^\infty_x}+\sup_{0\leq s \leq t} \Vert h(s) \Vert_{L^p_vL^\infty_x}^2+\sup_{0\leq s \leq t} \Vert h(s) \Vert_{L^p_vL^\infty_x}^3\right]\\
		&\quad +C_{p,q,\varepsilon,\gamma,N,\delta}\left(1+\sup_{0\leq s \leq t} \Vert h(s) \Vert_{L^p_vL^\infty_x}\right)\left[ \mathcal{E}(F_0)^{1/2} + \sup_{0\le s \le t}\|h(s)\|_{L^p_vL^\infty_x}^{\frac{p}{2p-2}}\mathcal{E}(F_0)^{\frac{p-2}{2p-2}}\right]\\
		& \quad +C_{p,q,\varepsilon,\gamma,N,\delta}\left(\sup_{0\leq s \leq t} \Vert h(s) \Vert_{L^p_vL^\infty_x}+\sup_{0\leq s \leq t} \Vert h(s) \Vert_{L^p_vL^\infty_x}^2\right)\\
		& \qquad \times  \Bigg[\sup_{0 \leq s \leq t}\|h(s)\|_{L^p_vL^\infty_x}^{1-\frac{1/p'm'-1/p}{1/2-1/p}}\mathcal{E}(F_0)^{\frac{1}{2}\cdot\frac{1/p'm'-1/p}{1/2-1/p}}\\
		&\qquad \quad + \sup_{0 \leq s \leq t}\|h(s)\|_{L^p_vL^\infty_x}^{1-\frac{1/p'm'-1/p}{1-1/p}}\mathcal{E}(F_0)^{\frac{1/p'm'-1/p}{1-1/p}}\Bigg],
	\end{align*}
	for all $0\leq t \leq T$, where $\varepsilon>0$ and $\delta>0$ can be arbitrarily small and $N>0$ can be arbitrarily large.
\end{enumerate}
\end{Coro}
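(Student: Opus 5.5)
The corollary follows from the pointwise bound of Lemma \ref{small,Lp} by taking the $L^\infty_x$ supremum at fixed $v$ and then the $L^p_v$ norm term by term. I treat Case 1 ($-1\le\gamma<0$) in detail; Case 2 is identical once the decay exponent is $\frac{p-1}{p}\varpi$ instead of $\frac{p-1}{4p}$.

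\textbf{Step 1: the initial-data terms.} Fix $(t,v)$ and take $\sup_{x\in\T^3}$ of every term on the right of Lemma \ref{small,Lp}; all of them except the first are already independent of $x$. The first term gives $C_\lambda e^{-\lambda(1+t)^\rho}\sup_x|h_0(x,v)|$. Its $L^p_v$ norm is bounded by $C_\lambda e^{-\lambda(1+t)^\rho}\|h_0\|_{L^p_vL^\infty_x}$, because the shift $x\mapsto x-tv$ is a bijection of $\T^3$.

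The two other initial-data terms carry the weights $(1+|v|)^{\frac{\gamma}{p}-\frac{5p-5}{4p}}$ and $\langle v\rangle^{\gamma-1-\frac{1}{p'}}$. These lie in $L^p_v$ as soon as their exponents, multiplied by $p$, are below $-3$. For the second weight, $p(1-\gamma)+p-1>3$ is automatic. For the first, $-\gamma+\frac{5p-5}{4}>3$ holds since $p>13$.

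Collecting these three contributions and using $e^{-\lambda(1+t)^\rho}\le e^{-\frac{\lambda}{2}(1+t)^\rho}$ gives the leading term
\begin{equation*}
C_{3,1}\|h_0\|_{L^p_vL^\infty_x}\Bigl(1+\int_0^t\|h(s)\|_{L^p_vL^\infty_x}ds\Bigr)e^{-\frac{\lambda}{2}(1+t)^\rho}.
\end{equation*}

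\textbf{Step 2: the remaining terms.} Each remaining term is a $v$-independent bracket multiplied by $(1+|v|)^{-\frac{p-1}{4p}}$, or by $(1+|v|)^{-\frac{p-1}{p}\varpi}$ in Case 2. Taking the $L^p_v$ norm only multiplies each bracket by $\|(1+|v|)^{-\frac{p-1}{4p}}\|_{L^p_v}$, resp.\ $\|(1+|v|)^{-\frac{p-1}{p}\varpi}\|_{L^p_v}$. The brackets are then exactly those listed in the corollary, with the constants enlarged accordingly.

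\textbf{Main obstacle: integrability of the weight.} The only real issue is whether these weights are in $L^p_v$. That requires $\frac{p-1}{4p}\cdot p>3$, i.e.\ $p>13$, which is precisely condition \eqref{pcondition1}. In Case 2 it requires $(p-1)\varpi>3$. With $m$ and $\varpi$ as in Lemma \ref{pointwiseGamma+estimate}, this is the quadratic inequality in $p$ recorded in the remark after that lemma, giving $p>\frac{3(\sqrt{8\gamma^2+9}+3-2\gamma)}{-\gamma(3+\gamma)}$, which is \eqref{pcondition2}.

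I would verify this Case 2 algebra explicitly by substituting $m$ and $\varpi$, clearing denominators, and solving the resulting quadratic in $p$. This is the step most likely to contain errors. Once both conditions hold, summing the bounds from Steps 1 and 2 yields the claimed estimate for all $0\le t\le T$, with the freedom in $\varepsilon$, $\delta$, $N$ inherited from Lemma \ref{small,Lp}.
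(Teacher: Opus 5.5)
Your proposal is correct and is essentially the paper's argument: take $\sup_{x}$ and then the $L^p_v$ norm of the pointwise bound in Lemma \ref{small,Lp}, then check that each velocity weight is $p$-integrable, which is exactly \eqref{pcondition1}--\eqref{pcondition2}. The Case 2 algebra you deferred does close: with $g=-\gamma$, the condition $(p-1)\varpi>3$ is equivalent to $g(3-g)p^2-(18+12g)p+36>0$, whose larger root is $\frac{3(\sqrt{8\gamma^2+9}+3-2\gamma)}{-\gamma(3+\gamma)}$. This is the form in the remark, and it is larger than the threshold printed in Theorem \ref{smallmain}, so it is this form that must be assumed.
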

\begin{proof}
	Within Lemma \ref{small,Lp}, for the pointwise estimate of $h$ to be in $L^p_v$, it is necessary that the following condition holds for each case: 
	\newline 
		\noindent \textbf{Case 1.}  ($-1 \leq \gamma <0$)
		\begin{align} \label{pcondition1}
				-\gamma +\frac{5p-5}{4} >3, \; p\left(-\gamma +1 + \frac{p}{p'}\right)>3, \quad \textrm{and} \quad \frac{p-1}{4} >3 
		\end{align}
		for $p >13$. \\
		
		\noindent \textbf{Case 2.} ($-3<\gamma<-1$)
		\begin{align} \label{pcondition2}
			-p\gamma +(p-1) \varpi>3, \; p\left(-\gamma +1 +\frac{1}{p'}\right)>3, \quad \textrm{and} \quad (p-1)\varpi >3 
		\end{align}
		for $p > \frac{3(\sqrt{8\gamma^2 +9} +3-2\gamma)}{(-\gamma)(3+\gamma)}$. 
\end{proof}

\subsection{Global Existence and Sub-exponential decay} 
\label{smallglobalproof}
\begin{proof} [Proof of Theorem \ref{smallmain}]
Set $C_4:=\max\{C_{3,1},C_{3,2}\}$.
Note that $(1+t)^{1-\rho} e^{-\frac{\lambda}{4}(1+t)^\rho} \le \tilde C_{\lambda,\rho}$ and $(1+t)e^{-\frac{\lambda}{4}(1+t)^\rho} \le \hat C_{\lambda,\rho}$ for some constants $\tilde C_{\lambda,\rho}$ and $\hat C_{\lambda,\rho}\ge 1$, depending on $\lambda$ and $\rho$.
Assume that $T>0$ is an arbitrary constant. Let us fix $\eta_0>0$ to satisfy the conditions \eqref{cond1apriori}, \eqref{cond2apriori}, \eqref{cond3apriori}, \eqref{cond4apriori}, and
\begin{align} \label{cond5apriori}
	\exp\left\{\frac{4C_4\tilde C_{\lambda,\rho}\eta_0}{\lambda \rho} \right\} <2.
\end{align}
Take
		\begin{align*} 
			\bar{\eta} :=8 \hat C_{\lambda,\rho}C_4 \eta_0.
		\end{align*}
		Using the a priori assumption \eqref{apriorismall} and Corollary \ref{smalllp}, we have
		\begin{align} \label{LA66}
			\|h(t)\|_{_{L^p_vL^\infty_x}} \le C_4 \eta_0 \left(1+\int_0^t \|h(s)\|_{L^p_vL^\infty_x}ds\right)e^{-\frac{\lambda}{2}(1+t)^\rho}+E \quad \text{for all } t\ge 0,
		\end{align}
		where
		\begin{align*}
			E&=C_{p,q,\gamma} \varepsilon^{\gamma+ \frac{3}{p'}}   \left[\bar{\eta}+\bar{\eta}^2\right]+ C_{p,q,\varepsilon,\gamma,\beta}\left(\frac{1}{N}+\frac{1}{N^{-\gamma}}+\frac{1}{N^{\frac{\gamma+3}{2}}}+\delta  \right) \left[\bar{\eta}+\bar{\eta}^2+\bar{\eta}^3\right]\nonumber\\
		&\quad +C_{p,q,\varepsilon,\gamma,N,\delta}\left(1+\bar{\eta}+\bar{\eta}^2\right)\left[ \mathcal{E}(F_0)^{1/2} + \bar{\eta}^{\frac{p}{2p-2}}\mathcal{E}(F_0)^{\frac{p-2}{2p-2}}\right]\\
		& \quad +C_{p,q,\varepsilon,\gamma,N,\delta}\left(\bar{\eta}+\bar{\eta}^2\right)\Bigg[\bar{\eta}^{1-\frac{1/p'm'-1/p}{1/2-1/p}}\mathcal{E}(F_0)^{\frac{1}{2}\cdot\frac{1/p'm'-1/p}{1/2-1/p}}+ \bar{\eta}^{1-\frac{1/p'm'-1/p}{1-1/p}}\mathcal{E}(F_0)^{\frac{1/p'm'-1/p}{1-1/p}}\Bigg].
		\end{align*}
		We define
		\begin{align*}
			G(t) := 1+\int_0^t \|h(s)\|_{L^p_vL^\infty_x}ds.
		\end{align*}
		Then it holds that for all $0<t \le T$,
		\begin{align} \label{LA67}
			G'(t) -C_4 \eta_0 e^{-\frac{\lambda}{2}(1+t)^\rho} G(t)
		 &\ge G'(t) -C_4\tilde C_{\lambda,\rho}\eta_0 (1+t)^{\rho-1}e^{-\frac{\lambda}{4}(1+t)^\rho} G(t)\nonumber\\
			& = \frac{d}{dt}\left\{ G(t)\exp\left\{ -\frac{4C_4\tilde C_{\lambda,\rho}\eta_0}{\lambda \rho}\left(1-e^{-\frac{\lambda}{4}(1+t)^\rho}\right)\right\}\right\}.
		\end{align}
		From \eqref{LA66} and \eqref{LA67}, we obtain
		\begin{align*}
			G(t)\exp\left\{ -\frac{4C_4\tilde C_{\lambda,\rho}\eta_0}{\lambda \rho}\left(1-e^{-\frac{\lambda}{4}(1+t)^\rho}\right)\right\} &\le G(0)\exp\left\{ -\frac{4C_4\tilde C_{\lambda,\rho}\eta_0}{\lambda \rho}\left(1-e^{-\frac{\lambda}{4}}\right)\right\}+ Et  \\
			&\le 1+ Et
		\end{align*}
		for $0<t \le T$, and it follows that
		\begin{equation} \label{LA68}
		\begin{aligned}
			G(t) &\le (1+Et)\exp\left\{\frac{4C_4\tilde C_{\lambda,\rho}\eta_0}{\lambda \rho}\left(1-e^{-\frac{\lambda}{4}(1+t)^\rho}\right)\right\} \le (1+Et) \exp\left\{\frac{4C_4\tilde C_{\lambda,\rho}\eta_0}{\lambda \rho} \right\}
		\end{aligned}
		\end{equation}
		for all $0< t \le T$.\\
		Inserting \eqref{LA68} into \eqref{LA66} and using the condition \eqref{cond5apriori}, we deduce for $0<t \le T$
		\begin{align*}
			\|h(t)\|_{L^p_vL^\infty_x} &\le C_4 \eta_0  \exp\left\{\frac{4C_4\tilde C_{\lambda,\rho}\eta_0}{\lambda \rho} \right\}(1+Et)e^{-\frac{\lambda}{2}(1+t)^\rho}+E.
		\end{align*}
		We first choose $\ve >0$ small enough, then choose $N>0$ sufficiently large, $\delta>0$ sufficiently small, and assume $\mathcal{E}(F_0) \le \ve_1(\eta_0)$ with small enough $\ve_1(\eta_0)<\ve_2$, which is determined in Lemma \ref{Rfestimatesmall},  so that
		\begin{align*}
			E \le \min\left\{1,\frac{\bar{\eta}}{4}\right\}.
		\end{align*}
		Using the condition \eqref{cond5apriori}, this yields for $0<t \le T$
		\begin{align} \label{LA69}
			\|h(t)\|_{L^p_vL^\infty_x}& \le 2\hat C_{\lambda,\rho} C_4 \eta_0  e^{-\frac{\lambda}{4}(1+t)^\rho}+E\nonumber\\
			& \le \frac{1}{4}\bar{\eta}e^{-\frac{\lambda}{4}(1+t)^\rho} +E\nonumber\\
			& \le \frac{1}{2} \bar{\eta},
		\end{align}
		where we have used $(1+t)e^{-\frac{\lambda}{4}(1+t)^\rho} \le \hat C_{\lambda,\rho}$.
		Hence we have closed the a priori assumption over $t \in [0,T)$ if $\mathcal{E}(F_0) \le \ve_1$.\\
		We claim that a solution to the Boltzmann equation \eqref{Boltzmanneq} extends into time interval $[0,T)$. From Lemma \ref{local estimate}, there exists the Boltzmann solution $F(t) \ge 0$ to \eqref{Boltzmanneq} on $[0,\hat{t}_0]$ such that
		\begin{align} \label{LA70}
			\sup_{0\le t \le  \hat{t}_0} \|h(t)\|_{L^p_vL^\infty_x} \le 2 \|h_0\|_{L^p_vL^\infty_x} \le \frac{1}{2} \bar{\eta}.
		\end{align}
		Taking $t=\hat{t}_0$ as the initial time, it follows from \eqref{LA70} and Theorem \ref{local estimate} that we can extend the Boltzmann equation solution $F(t) \ge 0$ into time interval $[0, \hat{t}_0+t_*]$ satisfying
		\begin{align*}
			\sup_{\hat{t}_0 \le t \le \hat{t}_0+t_*} \|h(t)\|_{L^p_vL^\infty_x} \le 2 \|h(\hat{t}_0)\|_{L^p_vL^\infty_x} \le \bar{\eta}.
		\end{align*}
		Thus we have
		\begin{align} \label{LA71}
			\sup_{0 \le t \le \hat{t}_0+t_*} \|h(t)\|_{L^p_vL^\infty_x} \le \bar{\eta}.
		\end{align}
		Note that \eqref{LA71} means $h(t)$ satisfies the a priori assumption \eqref{apriorismall} over $[0,\hat{t}_0+t_*]$.
		From \eqref{LA69}, we can obtain
		\begin{align*}
			\sup_{0 \le t \le \hat{t}_0+t_*} \|h(t)\|_{L^p_vL^\infty_x} \le\frac{1}{2}\bar{\eta}.
		\end{align*}
		Repeating the same process infinitely many times, we can derive that there exists the Boltzmann equation solution $F(t)\ge 0$ on the time interval $[0,T)$ such that
		\begin{align*}
			\sup_{0\le t < T} \|h(t)\|_{L^p_vL^\infty_x} \le \frac{1}{2} \bar{\eta}.
		\end{align*}
		Since $T$ is arbtrary, we can conclude global-in-time existence and uniqueness.\\
		\newline
	\indent We now turn to the sub-exponential decay of the solution to the Boltzmann equation. We recall the semigroup
\begin{align*}
	G_v(t,s)= \exp\left\{-\int_s^t R(f)(\tau,x-v(t-\tau),v))d\tau\right\}.
\end{align*}
By Lemma \ref{Rfestimatesmall}, note that
	\begin{align*}
		&G_v(t,s) \le e^{-\frac{\nu(v)}{2}(t-s)},\\
		&G_v(t,s) \le e^{-\frac{\lambda}{2}(1+t)^\rho} e^{\frac{\lambda}{2}(1+s)^\rho}.
	\end{align*}
By Duhamel's principle,
\begin{align*}
|h(t,x,v)|
&\le \big|G_v(t,0)h_0(x-vt,v)\big| \\
&\quad + \int_0^t \big|G_v(t,s)w\Gamma^+(f,f)\big(s,x-v(t-s),v\big)\big|ds \\
&\quad + \int_0^t \big|G_v(t,s)K_w h\big(s,x-v(t-s),v\big)\big|ds \\
&=: I_1+I_2+I_3.
\end{align*}

\vspace{6pt}
\noindent
First,
\begin{align} \label{smallI1}
	\|I_1\|_{L^p_vL^\infty_x}\le \big\|G_v(t,0)h_0\big\|_{L^p_vL^\infty_x}
\le C_{p,\gamma}  e^{-\frac{\lambda}{2}(1+t)^\rho}\|h_0\|_{L^p_vL^\infty_x}.
\end{align}
\newline
Next, for $I_2$, we use Lemma \ref{pointwiseGamma+estimate} to get
\begin{align*}
I_2
&\le C_{p,\gamma,\beta}\int_0^t e^{-\frac{\nu(v)}{4}(t-s)}e^{-\frac{\lambda}{4}(1+t)^\rho}e^{\frac{\lambda}{4}(1+s)^\rho}\nu(v)
\frac{\|h(s)\|_{L^p_vL^\infty_x}^2}{(1+|v|)^{\mathcal{M}(p,\gamma)}}ds \\
&\le \frac{C_{p,\gamma,\beta}}{(1+|v|)^{\mathcal{M}(p,\gamma)}}e^{-\frac{\lambda}{4}(1+t)^\rho}\sup_{0\le s\le t}\Big[e^{\frac{\lambda}{4}(1+s)^\rho}\|h(s)\|_{L^p_vL^\infty_x}^2\Big],
\end{align*}
where
\begin{align*}
	\mathcal{M}(p,\gamma) =\begin{cases}
\frac{\gamma(p-1)}{p}+\frac{5p-5}{4p}, & \text{if }-1\le \gamma <0 ,\\
\frac{p-1}{p}\varpi, & \text{if }-3 < \gamma <-1,
\end{cases}
\end{align*}
which implies that
\begin{align} \label{smallI2}
	\|I_2\|_{L^p_vL^\infty_x} \le C_{p,\gamma,\beta}e^{-\frac{\lambda}{4}(1+t)^\rho}\sup_{0\le s\le t}\Big[ e^{\frac{\lambda}{4}(1+s)^\rho}\|h(s)\|_{L^p_vL^\infty_x}^2\Big].	
\end{align}
From now on, let us consider \(I_3\).
\begin{align*}
I_3 
&\le \int_0^t G_v(t,s)\int_{\mathbb R^3} |k_w(v,u)||h(s,x-v(t-s),u)|duds\\
&\le \int_0^t G_v(t,s)\int_{\mathbb R^3} |k_w(v,u)|
G_u(s,0)|h_0(x-v(t-s)-u s,u)|duds\\
&\quad +\int_0^t G_v(t,s)\int_{\mathbb R^3} |k_w(v,u)|
\int_0^s G_u(s,0)|w\Gamma^+(f,f)(s',x-v(t-s)-u(s-s'),u)|ds'duds\\
&\quad +\int_0^t G_v(t,s) \int_{\mathbb R^3} |k_w(v,u)|
\int_0^s G_u(s,0)\\
&\qquad \times \int_{\mathbb R^3} |k_w(u,u^*)||h(s',x-v(t-s)-u(s-s'),u^*)|du^*ds'duds\\
&=: I_{31}+I_{32}+I_{33}.
\end{align*}
For $I_{31}$, we use Corollary \ref{pKestimate} to derive
\begin{align*}
I_{31}
&\le \int_0^t e^{-\frac{\lambda}{2}(1+t)^\rho}\|h_0\|_{L^p_vL^\infty_x}\Big(\int_{\mathbb R^3}|k_w(v,u)|^{p'}du\Big)^{1/p'}ds\\
&\le C_{p,\gamma}te^{-\frac{\lambda}{2}(1+t)^\rho}\langle v\rangle^{\gamma-1-\frac{1}{p'}}\|h_0\|_{L^p_vL^\infty_x}\\
&\le C_{p,\gamma,\lambda,\rho}e^{-\frac{\lambda}{4}(1+t)^\rho}\langle v\rangle^{\gamma-1-\frac{1}{p'}}\|h_0\|_{L^p_vL^\infty_x}.
\end{align*}
Thus we can deduce 
\begin{align} \label{smallI31}
	\|I_{31}\|_{L^p_v L^\infty_x} \le C_{p,\gamma,\lambda,\rho}e^{-\frac{\lambda}{4}(1+t)^\rho}\|h_0\|_{L^p_vL^\infty_x}.
\end{align}
For $I_{32}$, we use Lemma \ref{pointwiseGamma+estimate} and Corollary \ref{pKestimate} to obtain
\begin{align*}
I_{32} &\le C_{p,\gamma,\beta}\int_0^t G_v(t,s)\int_{\mathbb{R}^3} |k_w(v,u)| \int_0^s G_u(s,s')
\frac{\|h(s')\|_{L^p_vL^\infty_x}^2}{(1+|u|)^{\mathcal{M}(p,r)}}ds'duds\\
&\le C_{p,\gamma,\beta}e^{-\frac{\lambda}{4} (1+t)^\rho}\sup_{0\le s\le t}\Big[ e^{\frac{\lambda}{4} (1+s)^\rho}\|h(s)\|_{L^p_vL^\infty_x}^2\Big] \int_0^t e^{-\frac{\nu(v)}{2}(t-s)}\langle v \rangle^{\gamma-2} ds\\
&\le C_{p,\gamma,\beta}\langle v\rangle^{-2}e^{-\frac{\lambda}{4} (1+t)^\rho}\sup_{0\le s\le t}\Big[ e^{\frac{\lambda}{4} (1+s)^\rho}\|h(s)\|_{L^p_vL^\infty_x}^2\Big].
\end{align*}
Thus we can derive
\begin{align} \label{smallI32}
	\|I_{32}\|_{L^p_vL^\infty_x}\le C_{p,\gamma,\beta}e^{-\frac{\lambda}{4} (1+t)^\rho} \sup_{0\le s\le t}\Big[ e^{\frac{\lambda}{4} (1+s)^\rho}\|h(s)\|_{L^p_vL^\infty_x}^2\Big].
\end{align}
Let us consider $I_{33}$. We split it into 4 terms.
\begin{align*}
&\int_0^t G_v(t,s)\int_{\mathbb{R}^3} |k_w(v,u)|\int_0^s G_u(s,s')
\\
&\quad \times \int_{\mathbb{R}^3} |k_w(u,u^*)||h(s',x-v(t-s)-u(s-s'),u^*)|du^*ds'duds.
\end{align*}
\newline
$\mathbf{Case\ 1:}$ $|v|\ge N$.\\
From Corollary \ref{pKestimate}, we obtain
\begin{align*}
I_{33}^1 &\le C_{p,\gamma}\int_0^t G_v(t,s)\mathbf{1}_{\{|v|\ge N\}}
\int_{\mathbb{R}^3} |k_w(v,u)|\int_0^s G_u(s,s')\|h(s')\|_{L^p_vL^\infty_x}ds'duds\\
&\le C_{p,\gamma} \int_0^t G_v(t,s)\mathbf{1}_{\{|v|\ge N\}}
\int_{\mathbb{R}^3} |k_w(v,u)|e^{-\frac{\lambda}{4}(1+s)^\rho}\sup_{0\le s'\le s}\Big[e^{-\frac{\lambda}{4} (1+s')^\rho}\|h(s')\|_{L^p_vL^\infty_x}\Big]duds\\
&\le C_{p,\gamma}\int_0^t G_v(t,s) \mathbf{1}_{\{|v|\ge N\}} \langle v\rangle^{\gamma-2} e^{-\frac{\lambda}{4}(1+s)^{\rho}}\sup_{0\le s'\le s}\Big[e^{-\frac{\lambda}{4} (1+s')^\rho}\|h(s')\|_{L^p_vL^\infty_x}\Big]ds \\
&\le \frac{C_{p,\gamma}}{N}\langle v\rangle^{-2} e^{-\frac{\lambda}{4}(1+t)^{\rho}} 
\sup_{0\le s\le t}\Big[ e^{\frac{\lambda}{4}(1+s)^{\rho}}\|h(s)\|_{L^p_v L^\infty_x}\Big],
\end{align*}
and it follows that
\begin{align} \label{smallI331}
	\|I_{33}^1\|_{L^p_v L^\infty_x}
\le \frac{C_{p,\gamma}}{N} e^{-\frac{\lambda}{4}(1+t)^{\rho}}\sup_{0\le s\le t}\Big[ e^{\frac{\lambda}{4}(1+s)^{\rho}}\|h(s)\|_{L^p_v L^\infty_x}\Big].
\end{align}
\newline
$\mathbf{Case\ 2:}$ $|v|\le N, |u| \ge 2N$.\\
From Lemma \ref{Ksingular} and Lemma \ref{Knonsing}, we obtain
\begin{align*}
	I_{33}^2 
&\le C_{p,\gamma}\int_0^t G_v(t,s)\mathbf{1}_{\{|v|\le N\}}
\int_{|u|\ge 2N} |k_w(v,u)| \int_0^s G_u(s,s')\|h(s')\|_{L^p_v L^\infty_x}\langle u\rangle^{\gamma-1-\frac{1}{p'}} ds'duds \\
&\le \frac{C_{p,\gamma}}{N}\int_0^t G_v(t,s) \mathbf{1}_{\{|v|\le N\}}
\int_{|u|\ge 2N} |k_w(v,u)|  e^{-\frac{\lambda}{4}(1+s)^{\rho}} 
\sup_{0\le s'\le s}\Big[ e^{\frac{\lambda}{4}(1+s')^{\rho}}\|h(s')\|_{L^p_v L^\infty_x}\Big] duds \\
&\le \frac{C_{p,\gamma}}{N}\langle v\rangle^{-2} e^{-\frac{\lambda}{4}(1+t)^{\rho}}\sup_{0\le s\le t}\Big[ e^{\frac{\lambda}{4}(1+s)^{\rho}}\|h(s)\|_{L^p_v L^\infty_x}\Big].
\end{align*}
It follows that
\begin{align} \label{smallI332}
	\|I_{33}^2\|_{L^p_v L^\infty_x}
\le \frac{C_{p,\gamma}}{N} e^{-\frac{\lambda}{4}(1+t)^{\rho}}\sup_{0\le s\le t}\Big[ e^{\frac{\lambda}{4}(1+s)^{\rho}}\|h(s)\|_{L^p_v L^\infty_x}\Big].
\end{align}
\newline
$\mathbf{Case\ 3:}$ $|v|\le N, |u| \le 2N, |u^*|\ge 3N$.\\
We apply Lemma \ref{Kker} to get
\begin{align*}
I_{33}^3 
&\le \int_0^t G_v(t,s)\mathbf{1}_{\{|v|\le N\}}
\int_{|u|\le 2N} |k_w(v,u)| \int_0^s G_u(s,s')\int_{|u^*|\ge 3N}\big(|k_{w,1}(u,u^*)|+|k_{w,2}(u,u^*)|\big)\\
&\quad \times \big|h(s',x-v(t-s)-u(s-s'),u^*)\big|du^*ds'duds\\
&\le C_\gamma \int_0^t G_v(t,s)\mathbf{1}_{\{|v|\le N\}}
\int_{|u|\le 2N} |k_w(v,u)| \int_0^s G_u(s,s')\\
&\quad\times \int_{|u^*|\ge 3N}
\Big[|u-u^*|^\gamma e^{-\frac{|u|^2}{4}}e^{-\frac{|u^*|^2}{4}} 
+\frac{1}{|u-u^*|^{\frac{3-\gamma}{2}}} e^{-\frac{||u|^2-|u^*|^2|^2}{8|u-u^*|^2}}
\Big]\frac{w(u)}{w(u^*)}\\
& \quad \times |h(s',x-v(t-s)-u(s-s'),u^*)|du^*ds'duds\\
& \le C_\gamma \int_0^t G_v(t,s) \mathbf{1}_{\{|v|\le N\}}
\int_{|u|\le 2N} |k_w(v,u)| \int_0^s G_u(s,s')\Big(\frac{1}{N^{-\gamma}}e^{-\frac{1-q}{8}|u|^2} + \frac{1}{N^{\frac{3+\gamma}{2}}}\frac{\nu(u)}{1+|u|}\Big)\\
&\quad \times \|h(s')\|_{L^p_v L^\infty_x}ds'duds\\
&\le C_\gamma\left(\frac{1}{N^{-\gamma}}+\frac{1}{N^{\frac{3+\gamma}{2}}}\right)
\int_0^t G_v(t,s)\mathbf{1}_{\{|v|\le N\}}
\int_{|u|\le 2N} |k_w(v,u)| \int_0^s e^{-\frac{\nu(u)}{4}(s-s')}\nu(u)\\
&\quad \times e^{-\frac{\lambda}{4}(1+s)^\rho} e^{\frac{\lambda}{4}(1+s')^\rho}
\big\|h(s')\big\|_{L^p_v L^\infty_x}ds'duds\\
&\le C_\gamma\left(\frac{1}{N^{-\gamma}}+\frac{1}{N^{\frac{3+\gamma}{2}}}\right)
\int_0^t G_v(t,s)\mathbf{1}_{\{|v|\le N\}} \langle v \rangle^{\gamma-2} e^{-\frac{\lambda}{4}(1+t)^\rho}
\sup_{0\le s\le t}\Big[e^{\frac{\lambda}{4}(1+s)^\rho}\|h(s)\|_{L^p_v L^\infty_x}\Big]ds\\
&\le C_\gamma\left(\frac{1}{N^{-\gamma}}+\frac{1}{N^{\frac{3+\gamma}{2}}}\right)
\frac{1}{1+|v|}e^{-\frac{\lambda}{4}(1+t)^\rho}\sup_{0\le s\le t}\Big[e^{\frac{\lambda}{4}(1+s)^\rho}\|h(s)\|_{L^p_v L^\infty_x}\Big].
\end{align*}
Thus,
\begin{align} \label{smallI333}
	\|I^3_{33}\|_{L^p_vL^\infty_x}\le C_\gamma\left(\frac{1}{N^{-\gamma}}+\frac{1}{N^{\frac{3+\gamma}{2}}}\right)
e^{-\frac{\lambda}{4}(1+t)^\rho}\sup_{0\le s\le t}\Big[e^{\frac{\lambda}{4}(1+s)^\rho}\|h(s)\|_{L^p_v L^\infty_x}\Big].
\end{align}
\newline
$\mathbf{Case\ 4:}$ $|v|\le N, |u| \le 2N, |u^*|\le 3N$.\\
We decompose it into 4 terms:
\begin{align*}
I^4_{33}
&\le \int_0^t G_v(t,s)\mathbf{1}_{\{|v|\le N\}}
\int_{|u|\le 2N} |k_w(v,u)| \int_0^{s-\delta} G_u(s,s') \int_{|u^*|\le 3N} |k_w(u,u^*)| \\
&\qquad\times |h(s',x-v(t-s)-u(s-s'),u^*)|du^*ds'duds \\
&\le \int_0^t G_v(t,s)\mathbf{1}_{\{|v|\le N\}}
\int_{|u|\le 2N} |k_w^{s}(v,u)| \int_0^{s-\delta} G_u(s,s') \int_{|u^*|\le 3N} |k_w(u,u^*)|\\
&\qquad\times |h(s',x-v(t-s)-u(s-s'),u^*)|du^*ds'duds \\
&\quad + \int_0^t G_v(t,s)\mathbf{1}_{\{|v|\le N\}}
\int_{|u|\le 2N} |k_w^{ns}(v,u)| \int_0^{s-\delta} G_u(s,s') \int_{|u^*|\le 3N} |k_w^{s}(u,u^*)|\\
&\qquad\times |h(s',x-v(t-s)-u(s-s'),u^*)|du^*ds'duds \\
&\quad + \int_0^t G_v(t,s)\mathbf{1}_{\{|v|\le N\}}
\int_{|u|\le 2N} |k_w^{ns}(v,u)| \int_0^{s-\delta} G_u(s,s')\int_{|u^*|\le 3N} |k_w^{ns}(u,u^*)|\\
&\qquad\times |h(s',x-v(t-s)-u(s-s'),u^*)|du^*ds'duds \\
&\quad + \int_0^t G_v(t,s)\mathbf{1}_{\{|v|\le N\}}
\int_{|u|\le 2N} |k_w(v,u)| \int_{s-\delta}^{s} G_u(s,s')\int_{|u^*|\le 3N} |k_w(u,u^*)|\\
&\qquad\times |h(s',x-v(t-s)-u(s-s'),u^*)|du^*ds'duds \\
&=: J_1+J_2+J_3+J_4,
\end{align*}
where
\[
X'(s') := x- v(t-s) - u(s-s').
\]
For $J_1$, we use Corollary \ref{pKestimate} and Lemma \ref{Ksingular} to derive
\begin{align*}
J_1
&\le C_{p,\gamma}\int_0^t G_v(t,s)\mathbf{1}_{\{|v|\le N\}}
\int_{|u|\le 2N} \big|k_w^s(v,u)\big|\int_0^s  G_u(s,s') \langle v\rangle^{\gamma-1-\frac{1}{p'}}
\|h(s')\|_{L^p_vL^\infty_{x}}ds'duds\\
&\le C_{p,\gamma}\int_0^t G_v(t,s) \mathbf{1}_{\{|v|\le N\}}
\int_{|u|\le 2N} \big|k_w^s(v,u)\big|
e^{-\frac{\lambda}{4}(1+s)^\rho}\sup_{0\le s'\le s}\Big[ e^{\frac{\lambda}{4}(1+s')^\rho}\|h(s')\|_{L^p_vL^\infty_{x}}\Big]duds\\
&\le C_{p,\gamma}\ve^{\gamma+3}\int_0^t G_v(t,s)\mathbf{1}_{\{|v|\le N\}}\mu(v)^{\frac{1-q}{8}}
 e^{-\frac{\lambda}{4}(1+s)^\rho}\sup_{0\le s'\le s}\Big[ e^{\frac{\lambda}{4}(1+s')^\rho}\|h(s')\|_{L^p_vL^\infty_{x}}\Big]ds\\
&\le C_{p,q,\gamma}\ve^{\gamma+3}\frac{1}{1+|v|} e^{-\frac{\lambda}{4}(1+t)^\rho}\sup_{0\le s\le t}\Big[ e^{\frac{\lambda}{4}(1+s)^\rho}\|h(s)\|_{L^p_vL^\infty_{x}}\Big].
\end{align*}
Thus,
\begin{align} \label{smallJ1}
	\|J_1\|_{L^p_vL^\infty_x}
\le C_{p,q,\gamma}\ve^{\gamma+3} e^{-\frac{\lambda}{4}(1+t)^\rho}\sup_{0\le s\le t}\Big[ e^{\frac{\lambda}{4}(1+s)^\rho}\|h(s)\|_{L^p_vL^\infty_{x}}\Big].
\end{align}
For $J_2$, we apply Lemma \ref{Ksingular} and Corollary \ref{pKestimate} to derive
\begin{align*}
	J_2
&\le \int_0^t G_v(t,s)\mathbf{1}_{\{|v|\le N\}}
\int_{|u|\le 2N} \big|k_w^{ns}(v,u)\big|
\int_0^{s-\delta}  G_u(s,s') 
\int_{|u^*|\le 3N} \big|k_w^{s}(u,u^*)\big|\\
&\quad\times |h(s',X'(s'),u^*)|du^*ds'duds\\
&\le C_{p,\gamma}\ve^{\gamma+\frac{3}{p'}}\int_0^t G_v(t,s) \mathbf{1}_{\{|v|\le N\}}
\int_{|u|\le 2N} \big|k_w^{ns}(v,u)\big| \\
&\quad \times \int_0^{s-\delta}   G_u(s,s') \mu(u)^{\frac{1-q}{8}}\|h(s')\|_{L^p_vL^\infty_{x}}ds'duds\\
&\le C_{p,\gamma}\ve^{\gamma+\frac{3}{p'}}\int_0^t G_v(t,s) \mathbf{1}_{\{|v|\le N\}}\\
&\quad \times 
\int_{|u|\le 2N} \big|k_w^{ns}(v,u)\big|e^{-\frac{\lambda}{4}(1+s)^\rho}\sup_{0\le s'\le s}\Big[ e^{\frac{\lambda}{4}(1+s')^\rho}\|h(s')\|_{L^p_vL^\infty_{x}}\Big]duds\\
&\le C_{p,\gamma}\varepsilon^{\gamma+\frac{3}{p'}}
\int_{0}^{t} G_v(t,s)\mathbf{1}_{\{|v|\le N\}}\\
&\quad \times 
\int_{|u|\le 2N}|k_w(v,u)|e^{-\frac{\lambda}{4}(1+s)^\rho}\sup_{0\le s'\le s}\Big[ e^{\frac{\lambda}{4}(1+s')^\rho}\|h(s')\|_{L^p_vL^\infty_{x}}\Big]duds\\
&\le C_{p,\gamma}\varepsilon^{\gamma+\frac{3}{p'}} e^{-\frac{\lambda}{4}(1+t)^\rho}\sup_{0\le s\le t}\Big[ e^{\frac{\lambda}{4}(1+s)^\rho}\|h(s)\|_{L^p_vL^\infty_{x}}\Big]
\int_{0}^{t} G_v(t,s)\mathbf{1}_{\{|v|\le N\}}
\langle v\rangle^{\gamma-2}ds\\
&\le C_{p,\gamma}\varepsilon^{\gamma+\frac{3}{p'}}
\langle v\rangle^{-2} e^{-\frac{\lambda}{4}(1+t)^\rho}\sup_{0\le s\le t}\Big[ e^{\frac{\lambda}{4}(1+s)^\rho}\|h(s)\|_{L^p_vL^\infty_{x}}\Big].
\end{align*}
Thus,
\begin{align} \label{smallJ2}
	\|J_2\|_{L^p_vL^\infty_x} \le C_{p,\gamma}\varepsilon^{\gamma+\frac{3}{p'}} e^{-\frac{\lambda}{4}(1+t)^\rho}\sup_{0\le s\le t}\Big[ e^{\frac{\lambda}{4}(1+s)^\rho}\|h(s)\|_{L^p_vL^\infty_{x}}\Big],
\end{align}
where $p>\frac{6}{3+\gamma}$.\\
For $J_3$, we deduce
\begin{align*}
	J_3 &\le \int_{0}^{t} G_v(t,s)\mathbf{1}_{\{|v|\le N\}}
\int_{|u|\le 2N}|k_w^{ns}(v,u)|
\int_{0}^{s-\delta} G_u(s,s')\int_{|u^{*}|\le 3N}|k_w^{ns}(u,u^{*})|\\
&\quad\times |h(s',x-v(t-s)-u(s-s'),u^{*})|du^{*}ds'duds\\
&\le C(1+2N)^{-\gamma}\int_{0}^{t} G_v(t,s)\mathbf{1}_{\{|v|\le N\}}
\int_{|u|\le 2N}\int_{0}^{s-\delta} G_u(s,s')\nu(u)\\
&\quad\times\int_{|u^{*}|\le 3N}|k_w^{ns}(v,u)||k_w^{ns}(u,u^{*})|
|h(s',x-v(t-s)-u(s-s'),u^{*})|du^{*}ds'duds\\
&\le C_{N}\int_{0}^{t} G_v(t,s)\mathbf{1}_{\{|v|\le N\}}
e^{-\frac{\lambda}{2}s^{\rho}}
\sup_{0\le s'\le s-\delta}\Bigg[ e^{\frac{\lambda}{2}{s'}^{\rho}}\int_{|u|\le 2N}\int_{|u^{*}|\le 3N}|k_w^{ns}(v,u)||k_w^{ns}(u,u^{*})|\\
&\quad \times|h(s',x-v(t-s)-u(s-s'),u^{*})|du^{*}du\Bigg]ds\\
&\le C_N \int_0^t  G_v(t,s)\mathbf{1}_{\{|v|\le N\}}
\big\| k_w^{ns}(v,u)k_w^{ns}(u,u^*)\big\|_{L^2_{u,u^*}}
e^{-\frac{\lambda}{4}(1+s)^\rho} \\
&\quad\times
\sup_{0\le s'\le s-\delta}\Bigg[ e^{\frac{\lambda}{4}(1+s')^\rho}\Bigg(\int_{|u|\le 2N}\int_{|u^*|\le 3N}
\big|h(s',x-v(t-s)-u(s-s'),u^*)\big|^2du^*du\Bigg)^{1/2}\Bigg]ds,
\end{align*}
where $\big\| k_w^{ns}(v,u)k_w^{ns}(u,u^*)\big\|_{L^2_{u,u^*}}$ is bounded.
Making a change of variables \(u\mapsto y=x-v(t-s)-u(s-s')\) with $\left|\frac{dy}{du}\right|=(s-s')^3\ge \delta^3$, we obtain
\begin{align*}
J_3 &\le C_{N,\delta}\int_0^t  G_v(t,s)\mathbf{1}_{\{|v|\le N\}}
\big\| k_w^{ns}(v,u)k_w^{ns}(u,u^*)\big\|_{L^2_{u,u^*}} e^{-\frac{\lambda}{4}(1+s)^\rho} \\
&\quad\times \sup_{0\le s'\le s}\Bigg[ e^{\frac{\lambda}{4}(1+s')^\rho}\Bigg(\int_{\T^3}  \int_{|u^*|\le 3N}
\big|h(s',y,u^*)\big|^2du^*dy\Bigg)^{1/2}\Bigg]ds\\
&\le C_{N,\delta} e^{-\frac{\lambda}{4}(1+t)^\rho} \sup_{0\le s\le t}\Big[ e^{\frac{\lambda}{4}(1+s)^\rho}\|f(s)\|_{L^2_{x,v}}\Big] \int_0^t e^{-\frac{\nu(v)}{2}(t-s)}\mathbf{1}_{\{|v|\le N\}}\nu(v)ds\\
&\le C_{N,\delta} e^{-\frac{\lambda}{4}(1+t)^\rho}\mathbf{1}_{\{|v|\le N\}} \sup_{0\le s\le t}\Big[ e^{\frac{\lambda}{4}(1+s)^\rho}\|f(s)\|_{L^2_{x,v}}\Big].
\end{align*}
Using Lemma \ref{L2decayunderapriori}, the above inequality becomes
\begin{align*}
J_3 &\le C_{p,q,\gamma,\beta,N,\delta} e^{-\frac{\lambda}{4}(1+t)^\rho}\mathbf{1}_{\{|v|\le N\}} \|w_{q,\beta} f_0\|_{L^p_vL^\infty_x}\\
&\le C_{p,q,\gamma,\beta,N,\delta} e^{-\frac{\lambda}{4}(1+t)^\rho}\mathbf{1}_{\{|v|\le N\}} \|h_0\|_{L^p_vL^\infty_x}.
\end{align*}
Thus,
\begin{align} \label{smallJ3}
	\|J_3\|_{L^p_vL^\infty_x}\le C_{p,q,\gamma,\beta,N,\delta} e^{-\frac{\lambda}{4}(1+t)^\rho} \|h_0\|_{L^p_vL^\infty_x}.
\end{align}
For $J_4$, we use Corollary \ref{pKestimate} to deduce
\begin{align*}
J_4
&\le \int_{0}^{t} G_{v}(t,s)\mathbf{1}_{\{|v|\le N\}}\int_{|u|\le 2N}|k_{w}(v,u)| \int_{s-\delta}^{s} G_{u}(s,s')\int_{|u^{*}|\le 3N}|k_{w}(u,u^{*})|\\
 &\quad\times |h(s',x-v(t-s)-u(s-s'),u^*)|du^{*}ds'duds\\
&\le C_{p,\gamma}\int_{0}^{t} G_{v}(t,s)\mathbf{1}_{\{|v|\le N\}}\int_{|u|\le 2N}|k_{w}(v,u)|
\int_{s-\delta}^{s} G_{u}(s,s')\langle u\rangle^{\gamma-1-\frac{1}{p'}}
\|h(s')\|_{L^p_vL^\infty_{x}}ds'duds\\
&\le C_{p,\gamma}\delta \int_{0}^{t} G_{v}(t,s)\mathbf{1}_{\{|v|\le N\}}
\int_{|u|\le 2N}|k_{w}(v,u)|e^{-\frac{\lambda}{4}(1+s)^{\rho}}
\sup_{0\le s'\le s}\Big[ e^{\frac{\lambda}{4}{(1+s')}^{\rho}}\|h(s')\|_{L^p_vL^\infty_{x}}\Big]duds\\
&\le C_{p,\gamma,\ve}\delta\int_{0}^{t} G_{v}(t,s)\mathbf{1}_{\{|v|\le N\}}
\langle v\rangle^{\gamma-2} e^{-\frac{\lambda}{4}(1+s)^{\rho}} \sup_{0\le s'\le s}\Big[ e^{\frac{\lambda}{4}{(1+s')}^{\rho}}\|h(s')\|_{L^p_vL^\infty_{x}}\Big]ds\\
&\le C_{p,\gamma,\ve}\delta\langle v\rangle^{-2} e^{-\frac{\lambda}{4}(1+t)^{\rho}}
\sup_{0\le s\le t}\Big[ e^{\frac{\lambda}{4}(1+s)^{\rho}}\|h(s)\|_{L^p_vL^\infty_{x}}\Big].
\end{align*}
Thus,
\begin{align} \label{smallJ4}
	\|J_4\|_{L^p_vL^\infty_{x}}
\le C_{p,\gamma,\ve}\delta e^{-\frac{\lambda}{4}(1+t)^{\rho}}
\sup_{0\le s\le t}\Big[ e^{\frac{\lambda}{4}(1+s)^{\rho}}\|h(s)\|_{L^p_vL^\infty_{x}}\Big].
\end{align}
Gathering \eqref{smallI1}, \eqref{smallI2}, \eqref{smallI31}, \eqref{smallI32}, \eqref{smallI331}, \eqref{smallI332}, \eqref{smallI333}, \eqref{smallJ1}, \eqref{smallJ2}, \eqref{smallJ3}, and \eqref{smallJ4}, we obtain
\begin{align*}
\|h(t)\|_{L^p_vL^\infty_{x}}
&\le C_{p,q,\gamma,\beta,N,\delta,\lambda,\rho}e^{-\frac{\lambda}{4}(1+t)^{\rho}}\|h_{0}\|_{L^p_vL^\infty_{x}}\\
&\quad +C_{p,\gamma,\beta}e^{-\frac{\lambda}{4}(1+t)^{\rho}}
\sup_{0\le s\le t}\Big[ e^{\frac{\lambda}{4}(1+s)^{\rho}}\|h(s)\|_{L^p_vL^\infty_{x}}^{2}\Big]\\
&\quad +\left(\frac{C_{p,\gamma,\varepsilon}}{N}+C_{p,\gamma}\ve^{\gamma+\frac{3}{p'}}+C_{p,\gamma,\ve}\delta\right)e^{-\frac{\lambda}{4}(1+t)^{\rho}} \sup_{0\le s\le t}\Big[ e^{\frac{\lambda}{4}(1+s)^{\rho}}\|h(s)\|_{L^p_vL^\infty_{x}}\Big]
\end{align*}
Using the a priori assumption \eqref{apriorismall}, we have
\begin{align*}
	\|h(t)\|_{L^p_vL^\infty_{x}}
&\le C_{p,q,\gamma,\beta,N,\delta,\lambda,\rho}e^{-\frac{\lambda}{4}(1+t)^{\rho}}\|h_{0}\|_{L^p_vL^\infty_{x}}\\
&\quad +C_{\Gamma\Gamma}\bar{\eta} e^{-\frac{\lambda}{4}(1+t)^{\rho}}
\sup_{0\le s\le t}\Big[ e^{\frac{\lambda}{4}(1+s)^{\rho}}\|h(s)\|_{L^p_vL^\infty_{x}}\Big]\\
&\quad +\left(\frac{C_{p,\gamma,\varepsilon}}{N}+C_{p,\gamma}\ve^{\gamma+\frac{3}{p'}}+C_{p,\gamma,\ve}\delta\right)e^{-\frac{\lambda}{4}(1+t)^{\rho}} \sup_{0\le s\le t}\Big[ e^{\frac{\lambda}{4}(1+s)^{\rho}}\|h(s)\|_{L^p_vL^\infty_{x}}\Big],
\end{align*}
where $C_{\Gamma \Gamma}$ is a constant depending on $p,\gamma, \beta$. Taking $\eta$ so that 
\begin{align} \label{cond4apriori}
	C_{\Gamma \Gamma} \bar{\eta} < \frac{1}{4}
\end{align}
and choosing $\ve>0$, $\delta>0$ small enough, then taking $N>0$ sufficiently large so that
\begin{align*}
	\frac{C_{p,\gamma,\varepsilon}}{N}+C_{p,\gamma}\ve^{\gamma+\frac{3}{p'}}+C_{p,\gamma,\ve}\delta < \frac{1}{4},
\end{align*}
we conclude that
\begin{align*}
	\|h(t)\|_{L^p_vL^\infty_{x}} \le C_{p,q,\gamma,\beta,\lambda,\rho} e^{-\frac{\lambda}{4}(1+t)^{\rho}}\|h_{0}\|_{L^p_vL^\infty_{x}}
\end{align*}
for all $t \ge 0$.
\end{proof}

\section{Large-amplitude perturbation problem in $L^p_vL^\infty_x$}
\label{section5}
\noindent Recall that we fixed $p, \beta, q, \vartheta$ in Section \ref{section4}.  Let $f$ satisfy the equation \eqref{FPBER} with initial data $f_0(x,v)$ and $ \mu(v)+\mu(v)^{1/2}f(t,x,v) \ge 0$ on $[0,T]$ with $0<T < \infty$.
Throughout this section, we assume the a priori bound
\begin{align} \label{apriorilarge}
	\sup_{0\le t \le T}\|w_{q,\vt,\beta}f(t)\|_{L^p_vL^\infty_x} \le \bar{M} ,
\end{align}
where $\bar{M}$ is a large constant not depending on the solution $f$ and $T$, but depending on the initial amplitude $M_0$. Later, $M_0$ and $T$ will be determined in subsection \ref{subsection44}.
\subsection{$R(f)$ estimate} 
	Recall that we denote $h=w_{q,\vt,\beta}f$. The a priori assumption \eqref{apriorilarge} for $h$ becomes
	\begin{align} \label{apriori}
		\sup_{0\le t \le T}\|h(t)\|_{L^p_vL^\infty_x}\le \bar{M}.
	\end{align}
\begin{Lem}\label{large.Rf}
	Let us fix $T > 0$ and assume a priori bound \eqref{apriori}. 
	For any $T>t_*$ where $t_*$ will be determined later, there exists a sufficiently small positive consant $\varepsilon_2 = \varepsilon_2(\bar{M},T)>0$ such that whenever $\mathcal{E}(F_0) \leq \varepsilon_2$, the following result holds 
	\begin{align*}
		R(f)(t,x,v) \geq \frac{\nu(v)}{2} + \frac{\vartheta q|v|^2}{8(1+t)^{\vartheta+1}}, \qquad \forall (t,x,v) \in [t_*,T) \times \T^3 \times \R^3.
	\end{align*} 
	
\end{Lem}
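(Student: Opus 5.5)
By \eqref{Rf.esti1}, it suffices to show that
\begin{align*}
\int_{\R^3} e^{-\frac{|u|^2}{8}}|h(t,x,u)|\,du \le \frac{1}{2C_2}, \qquad (t,x)\in[t_*,T)\times\T^3.
\end{align*}
I will reuse the Duhamel decomposition \eqref{Rf.duhamel} from the proof of Lemma \ref{exp.h.esti}. That decomposition only uses the linear collision frequency $\tilde\nu$, not $R(f)$, so it is valid without any lower bound on $R(f)$. In particular, the estimates of Lemma \ref{exp.h.esti} hold verbatim under the large a priori bound \eqref{apriori}, with every occurrence of $\sup_{0\le s\le t}\|h(s)\|_{L^p_vL^\infty_x}$ replaced by $\bar M$.

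This gives, for all $(t,x)\in[0,T)\times\T^3$,
\begin{align*}
\int_{\R^3} e^{-\frac{|u|^2}{8}}|h(t,x,u)|\,du
&\le e^{-\lambda(1+t)^\rho}\|h_0\|_{L^p_vL^\infty_x}
+C\delta(\bar M+\bar M^2)+C\ve^{\gamma+\frac{3}{p'}}\bar M\\
&\quad+C\Big(N^{\gamma}+N^{-\frac{3-\gamma}{2}}+N^{-1}\Big)(\bar M+\bar M^2)
+C_{N,\delta,\bar M}\,\Phi\big(\mathcal{E}(F_0)\big),
\end{align*}
where $\Phi(r)\to0$ as $r\to0$ collects all the entropy terms; each carries a positive power of $\mathcal E(F_0)$. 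The constants in front of $\delta$ and of $\ve^{\gamma+3/p'}$ depend on $\ve$ and on $p,q,\gamma,\beta$, but not on $N$ or $\mathcal{E}(F_0)$.

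The new feature compared with Lemma \ref{Rfestimatesmall} is that the initial term $e^{-\lambda(1+t)^\rho}\|h_0\|_{L^p_vL^\infty_x}\le e^{-\lambda(1+t)^\rho}M_0$ cannot be made small by choosing $\eta_0$, since $M_0$ is large. This is why the lower bound is only claimed on $[t_*,T)$. I choose $t_*$ depending only on $M_0$, $\lambda$ and $\rho$ so that
\begin{align*}
e^{-\lambda(1+t_*)^\rho}M_0\le \frac{1}{8C_2}.
\end{align*}
Because the decay is uniform in $v$ via $\tilde\nu\ge C(1+t)^{\rho-1}$, as in \eqref{I1}, this bound then holds for all $t\ge t_*$.

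The parameters are then fixed in the following order. First choose $\ve$ small so that $C\ve^{\gamma+3/p'}\bar M\le \frac{1}{32C_2}$. Next choose $\delta$ small so that $C_\ve\delta(\bar M+\bar M^2)\le\frac{1}{32C_2}$. Then choose $N$ large so that the $N$-terms are at most $\frac{1}{32C_2}$. Finally choose $\ve_2=\ve_2(\bar M,T)$ so that $C_{N,\delta,\bar M}\Phi(\ve_2)\le\frac{1}{32C_2}$. Summing these contributions gives at most $\frac{1}{4C_2}<\frac{1}{2C_2}$ on $[t_*,T)$, and then \eqref{Rf.esti1} yields the claimed lower bound on $R(f)$.

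The main point needing care is the order of quantifiers. Since $\bar M$ is large, every smallness parameter must be chosen after $\bar M$ is fixed, and $\ve_2$ must come last because $C_{N,\delta,\bar M}$ blows up as $N\to\infty$ and $\delta\to0$. I also have to confirm that no step of Lemma \ref{exp.h.esti} secretly used smallness of $\|h\|_{L^p_vL^\infty_x}$. Inspecting the argument, it only uses the sup bound together with the entropy interpolation \eqref{RTIRE}, \eqref{RTIRE1212}, so it survives with $\bar M$ in place of the small bound. Here $T$ enters only through the standing assumption \eqref{apriori}.
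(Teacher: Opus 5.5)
Your proposal is correct and follows the paper's proof. The paper likewise applies Lemma~\ref{exp.h.esti} with the supremum bounded by $\bar M$, chooses $\varepsilon,\delta,N$ and then $\mathcal{E}(F_0)$ so that all non-initial terms total at most $\frac{1}{4C}$, and takes $t_*$ so that $e^{-\lambda(1+t)^\rho}\|h_0\|_{L^p_vL^\infty_x}\le\frac{1}{4C}$ for $t\ge t_*$. Your explicit parameter order is a minor refinement: $\varepsilon$ must precede $\delta$ because the $\delta$-coefficient depends on $\varepsilon$, and $N$ must follow $\delta$ because the $1/N$-coefficient depends on $\delta$. Defining $t_*$ through $M_0$ rather than $\|h_0\|_{L^p_vL^\infty_x}$ is likewise only a small variation.
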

\begin{proof}
	
	For both cases in Lemma \ref{exp.h.esti}, we first choose $\delta>0$ and $\varepsilon>0$ sufficiently small, and then $N>0$ large enough, and finally the initial relative entropy $\mathcal{E}(F_0)$ is small enough, so that 
	\begin{align*}
		\int_{\R^3} e^{-\frac{|u|^2}{8}} |h(t,x,u)| du \leq  e^{-\lambda (1+t)^\rho} \Vert h_0 \Vert_{L^p_v L^\infty _x} + \frac{1}{4C}. 
	\end{align*}
	Thus, for $t \geq t_*:= \max\left\{0,\left(\frac{1}{\lambda} \ln \left(4C\Vert h_0 \Vert_{L^p_v L^\infty_x}\right)\right)^{1/\rho}-1 \right\}$, one obtains that 
	\begin{align*}
		\int_{\R^3} e^{-\frac{|u|^2}{8}} |h(t,x,u)| du \leq \frac{1}{2C},\quad \forall (t,x) \in [t_*,T) \times \T^3,
	\end{align*}
	which implies our desired result from \eqref{Rf.esti1}. 
\end{proof}
\subsection{$L^p_vL^\infty_x$ estimate}


In contrast to Section \ref{section5}, under the large-amplitude perturbation, $R(f)$ possesses a positive lower bound for $t \geq t_*$. Hence, the solution operator satisfies the following estimate under the assumption of Lemma \ref{large.Rf}
\begin{align} \label{G,v}
		G_v(t,s) &= G_v(t,s) \chi_{\{t \leq t_*\}} +G_v(t,s)\chi_{\{s \leq t_* \leq t\}} + G_v(t,s) \chi_{\{t_* \leq s \leq t \}} \nonumber\\
		&\leq e^{\lambda(1+t_*)^\rho}e^{-\lambda(1+t)^{\rho}} e^{\lambda(1+s)^\rho}\chi_{\{t \leq t_*\}} \nonumber\\
		&\quad +\exp \left\{-\int_{t_*}^t \left(\frac{\nu(v)}{2} + \frac{\vartheta q |v|^2}{8(1+\tau)^{\vartheta+1}}\right)d\tau \right\}\chi_{\{s\leq t_*\leq t \}} \nonumber \\
		&\quad + \exp \left\{-\int_{s}^t \left(\frac{\nu(v)}{2} + \frac{\vartheta q |v|^2}{8(1+\tau)^{\vartheta+1}}\right)d\tau \right\}\chi_{\{t_*\leq s \leq t \}}   \nonumber \\ 
		&\leq e^{\lambda(1+t_*)^\rho}e^{-\lambda(1+t)^{\rho}} e^{\lambda(1+s)^\rho}\chi_{\{t \leq t_*\}} \nonumber\\
		&\quad + \exp \left \{ - \int_{t_*}^t C(1+\tau)^{\frac{(1+\vartheta)\gamma}{2-\gamma}}d\tau \right \} \chi_{\{ s\leq t_* \leq t \}}  \nonumber\\
		&\quad + \exp \left \{ - \int_{s}^t C(1+\tau)^{\frac{(1+\vartheta)\gamma}{2-\gamma}}d\tau \right \} \chi_{\{ s\leq t_* \leq t \}} \nonumber \\
		&\leq e^{\lambda(1+t_*)^\rho}e^{-\lambda(1+t)^{\rho}} e^{\lambda(1+s)^\rho},
\end{align}
where $\rho =1 + \frac{(1+\vartheta)\gamma}{2-\gamma}>0$ and $\lambda = \frac{C}{\rho}>0$.  
\begin{Lem}
	Let $h(t,x,v)$ satisfy the equation \eqref{Rf nonlinear equation} and  $\rho-1 = \frac{(1+\vartheta)\gamma}{2-\gamma}$.  Let $0<t\le T<\infty$. Under the a priori assumption \eqref{apriori} and $\mathcal{E}(F_0) \leq \varepsilon_2$, the following estimates holds for each case:
	\begin{enumerate}[label=(\arabic*)]
		 \item If $-1 \leq \gamma<0$, then 
		\begin{align*}
			&|h(t,x,v)|\\ &\leq C_{\lambda}e^{\lambda(1+t_*)^\rho}e^{-\lambda(1+t)^{\rho}} h_0(x-tv,v)  + \frac{C_{t_*}}{(1+|v|)^{-\frac{\gamma}{p}+\frac{5p-5}{4p}}}e^{-\frac{\lambda}{2} (1+t)^\rho} \Vert h_0 \Vert_{L^p_v L^\infty_x} \int_0^t \Vert h(s) \Vert_{L^p_vL^\infty_x}ds \\
			&\quad +C_{t_*,p,\rho,\lambda,\gamma}e^{-\frac{\lambda}{2}(1+t)^{\rho}} \langle v \rangle^{\gamma-1-\frac{1}{p'}} \Vert h_0 \Vert_{L^p_v L^\infty_x}\\
			&\quad +\frac{C_{t_*,p,q,\gamma}}{(1+|v|)^{\frac{p-1}{4p}}} \varepsilon^{\gamma+ \frac{3}{p'}}   \left[\sup_{0 \leq s \leq t} \Vert h(s) \Vert_{L^p_v L^\infty_x}+\sup_{0 \leq s \leq t} \Vert h(s) \Vert_{L^p_v L^\infty_x}^2\right]\\
			&\quad + \frac{C_{t_*,p,q,\varepsilon,\gamma,\beta}}{(1+|v|)^{\frac{p-1}{4p}}} \left(\frac{1}{N}+\frac{1}{N^{-\gamma}}+\frac{1}{N^{\frac{\gamma+3}{2}}}+\delta  \right) \\
			& \qquad \times \left[\sup_{0\leq s \leq t} \Vert h(s) \Vert_{L^p_vL^\infty_x}+\sup_{0\leq s \leq t} \Vert h(s) \Vert_{L^p_vL^\infty_x}^2+\sup_{0\leq s \leq t} \Vert h(s) \Vert_{L^p_vL^\infty_x}^3\right]\\
			&\quad +\frac{C_{t_*,p,q,\varepsilon,\gamma,N,\delta}}{(1+|v|)^{\frac{p-1}{4p}}}\left(1+\sup_{0\leq s \leq t} \Vert h(s) \Vert_{L^p_vL^\infty_x}+\sup_{0\leq s \leq t} \Vert h(s) \Vert_{L^p_vL^\infty_x}^2\right)\\
			& \qquad \times  \left[ \mathcal{E}(F_0)^{1/2} + \sup_{0\le s \le t}\|h(s)\|_{L^p_vL^\infty_x}^{\frac{p}{2p-2}}\mathcal{E}(F_0)^{\frac{p-2}{2p-2}}\right],
		\end{align*}
		where $0<\delta\ll1, 0<\varepsilon\ll1$, and $N\gg1$ can be chosen arbitrarily small and large, respectively. 
		where $0<\delta\ll1, 0<\varepsilon\ll1$, and $N\gg1$ can be chosen arbitrarily small and large, respectively. 
		\item If $-3<\gamma<-1$, then
		\begin{align*}
			&|h(t,x,v)|\\ &\leq C_{\lambda}e^{\lambda(1+t_*)^\rho}e^{-\lambda(1+t)^{\rho}} h_0(x-tv,v)  +\frac{C_{t_*}}{(1+|v|)^{-\gamma+ \frac{p-1}{p}\varpi}}  e^{-\frac{\lambda}{2}(1+t)^\rho} \Vert h_0 \Vert_{L^p_v L^\infty_x} \int_0^t \Vert h(s) \Vert_{L^p_v L^\infty_x} ds \nonumber \\
			&\quad +C_{t_*,p,\rho,\lambda,\gamma}e^{-\frac{\lambda}{2}(1+t)^{\rho}} \langle v \rangle^{\gamma-1-\frac{1}{p'}} \Vert h_0 \Vert_{L^p_v L^\infty_x}\\
			&\quad +\frac{C_{t_*,p,q,\gamma}}{(1+|v|)^{\frac{p-1}{p}\varpi}} \varepsilon^{\gamma+ \frac{3}{p'}}   \left[\sup_{0 \leq s \leq t} \Vert h(s) \Vert_{L^p_v L^\infty_x}+\sup_{0 \leq s \leq t} \Vert h(s) \Vert_{L^p_v L^\infty_x}^2\right]\\
			&\quad + \frac{C_{t_*,p,q,\varepsilon,\gamma,\beta}}{(1+|v|)^{\frac{p-1}{p}\varpi}} \left(\frac{1}{N}+\frac{1}{N^{-\gamma}}+\frac{1}{N^{\frac{\gamma+3}{2}}}+\delta  \right) \\
			& \qquad \times \left[\sup_{0\leq s \leq t} \Vert h(s) \Vert_{L^p_vL^\infty_x}+\sup_{0\leq s \leq t} \Vert h(s) \Vert_{L^p_vL^\infty_x}^2+\sup_{0\leq s \leq t} \Vert h(s) \Vert_{L^p_vL^\infty_x}^3\right]\\
			&\quad +\frac{C_{t_*,p,q,\varepsilon,\gamma,N,\delta}}{(1+|v|)^{\frac{p-1}{p}\varpi}}\left(1+\sup_{0\leq s \leq t} \Vert h(s) \Vert_{L^p_vL^\infty_x}\right)\left[ \mathcal{E}(F_0)^{1/2} + \sup_{0\le s \le t}\|h(s)\|_{L^p_vL^\infty_x}^{\frac{p}{2p-2}}\mathcal{E}(F_0)^{\frac{p-2}{2p-2}}\right]\\
			& \quad +\frac{C_{t_*,p,q,\varepsilon,\gamma,N,\delta}}{(1+|v|)^{\frac{p-1}{p}\varpi}}\left(\sup_{0\leq s \leq t} \Vert h(s) \Vert_{L^p_vL^\infty_x}+\sup_{0\leq s \leq t} \Vert h(s) \Vert_{L^p_vL^\infty_x}^2\right)\\
			& \qquad \times  \Bigg[\sup_{0 \leq s \leq t}\|h(s)\|_{L^p_vL^\infty_x}^{1-\frac{1/p'm'-1/p}{1/2-1/p}}\mathcal{E}(F_0)^{\frac{1}{2}\cdot\frac{1/p'm'-1/p}{1/2-1/p}}+ \sup_{0 \leq s \leq t}\|h(s)\|_{L^p_vL^\infty_x}^{1-\frac{1/p'm'-1/p}{1-1/p}}\mathcal{E}(F_0)^{\frac{1/p'm'-1/p}{1-1/p}}\Bigg],
		\end{align*}
		where $0<\delta\ll1, 0<\varepsilon\ll1$, and $N\gg1$ can be chosen arbitrarily small and large, respectively. 
	\end{enumerate}
\end{Lem}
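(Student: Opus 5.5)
This lemma is the large-amplitude analogue of Lemma \ref{small,Lp}, and I would prove it by rerunning that argument essentially line by line. The only genuine change is the bound on the solution operator. Under \eqref{apriori} and $\mathcal{E}(F_0)\le\varepsilon_2$, Lemma \ref{large.Rf} only gives the lower bound on $R(f)$ for $t\ge t_*$, so every use of \eqref{small,G,v} is replaced by the three-regime bound \eqref{G,v}:
\begin{align*}
G_v(t,s)\le e^{\lambda(1+t_*)^\rho}e^{-\lambda(1+t)^\rho}e^{\lambda(1+s)^\rho}.
\end{align*}
Every constant that comes from $G_v$ therefore picks up a factor $e^{\lambda(1+t_*)^\rho}$, which is why the constants are labelled $C_{t_*,\dots}$.

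The point needing care is the dissipative bound $G_v(t,s)\le e^{-\frac{\nu(v)}{2}(t-s)}$. It is used repeatedly, for example in \eqref{K,est1} and in the integrals $\int_0^t G_v(t,s)\nu(v)\,ds\le 2$. For $s\ge t_*$ it holds exactly as before. For $s<t_*$ I would bound the portion of the exponent over $[s,t_*]$ crudely, using $R(f)\ge 0$ (or a lower bound on $\nu$ times a factor depending on $t_*$). This gives
\begin{align*}
G_v(t,s)\le e^{\nu(v)t_*/2}\,e^{-\frac{\nu(v)}{2}(t-s)}\le C_{t_*}\,e^{-\frac{\nu(v)}{2}(t-s)},
\end{align*}
since $\nu$ is bounded. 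This is where I expect the main obstacle. For the nonlinear loss part, $R(f)$ may be negative when $t<t_*$, because $f$ is large, so the estimate $G_v\le 1$ is not free. There I would instead use the a priori bound: $|R(f)-\nu|\le C\nu(v)\bar M$, by the same Hölder step as in Lemma \ref{pointwiseGamma-estimate}. On $[0,t_*]$ this gives $G_v\le e^{C\bar M t_*}$, and that constant is absorbed into $C_{t_*}$. With these two modified bounds in hand, the terms $I$, $II$, $III$ are decomposed exactly as in the proof of Lemma \ref{small,Lp}.

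The steps are then as follows. For $I$, use \eqref{G,v} directly. For $II$, use Lemma \ref{Ksingular} for $K_w^s$ and double Duhamel for $K_w^{ns}$. The $K_w^{ns}$ piece splits into $II_1$, handled by Corollary \ref{pKestimate}; here $te^{-\lambda(1+t)^\rho}\le C e^{-\frac{\lambda}{2}(1+t)^\rho}$ produces the $e^{-\frac{\lambda}{2}(1+t)^\rho}$ factor. It also gives $II_2$, which uses the splits $|u|\ge 2N$ and $|u'|\ge 3N$, the time cutoff $s-\delta$, the change of variables $u\mapsto y$, and \eqref{RTIRE}. Finally it gives $II_3$, which uses Lemma \ref{pointwiseGamma+estimate} together with \eqref{RTIRE}/\eqref{RTIRE1212}. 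For $III$, use Lemma \ref{pointwiseGamma+estimate} followed by one further Duhamel expansion on $|\eta|\le N$, as in \eqref{III}–\eqref{III,final} and \eqref{2,III,final}. In each of these, the factor $e^{-CN^\gamma(s-s')}$ for $|u|\le 2N$ is still available (with a $C_{t_*}$ loss), which allows the $s'$-integration to be carried out. The relative entropy enters only through Lemma \ref{L1L2cont} and Lemma \ref{relativedecrease}, which hold for all times regardless of amplitude. The case split $-1\le\gamma<0$ versus $-3<\gamma<-1$, and the velocity decay exponents $\frac{p-1}{4p}$ and $\frac{p-1}{p}\varpi$, are unchanged. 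Collecting the terms reproduces the stated bounds, with $t_*$-dependent constants and decay rate $\frac{\lambda}{2}$ in the initial-data terms.
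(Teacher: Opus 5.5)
Your overall plan is the paper's own: its proof of this lemma is one sentence, saying that one reruns Lemma~\ref{small,Lp} with \eqref{small,G,v} replaced by \eqref{G,v}. You are more careful than the paper in noting that the dissipative bound $G_v(t,s)\le e^{-\frac{\nu(v)}{2}(t-s)}$ must also be modified. That bound is used in \eqref{K,est1} and \eqref{II2,2}, and for the factor $e^{-CN^{\gamma}(s-s')}$ on $|u|\le 2N$. Your first fix, $G_v(t,s)\le e^{\nu(v)t_*/2}e^{-\frac{\nu(v)}{2}(t-s)}$ obtained from $R(f)\ge 0$ on $[s,t_*]$, is exactly what is needed.

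The ``main obstacle'' you raise next is not real, and the workaround you propose for it does damage. Since $\mu(u)+\sqrt{\mu(u)}f(u)=F(u)$,
\[
R(f)(t,x,v)=\int_{\R^3\times\S^2}B(v-u,\omega)F(t,x,u)\,d\omega du+\frac{\vartheta q|v|^2}{8(1+t)^{\vartheta+1}}\ge 0 .
\]
This holds because $F\ge 0$ is a standing hypothesis of Section~\ref{section5}. Hence $G_v(t,s)\le 1$ for all $s\le t$, however large $f$ is. This is precisely what \eqref{G,v} uses on $[s,t_*]$, so if you really doubted it you could not apply \eqref{G,v} ``directly'' to $I$ either.

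Your replacement bound $G_v\le e^{C\bar M t_*}$ also cannot be ``absorbed into $C_{t_*}$''. The time $t_*$ depends only on $\|h_0\|_{L^p_vL^\infty_x}$. By contrast, $\bar M$ is fixed afterwards in \eqref{Mbar} as $2C_{\lambda,t_*}M_0\exp\{\cdots\}$, in terms of exactly the constants that multiply $\|h_0\|_{L^p_vL^\infty_x}$ here (through $C_{4,1},C_{4,2}$). A factor $e^{C\bar M t_*}$ in those constants makes that choice circular, and in general it cannot be satisfied for large $M_0$. So your argument would yield the lemma only with $\bar M$-dependent constants in the initial-data terms, and then the bootstrap in the proof of Theorem~\ref{largemain} does not close. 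In the terms collected into $D$ such a dependence would be harmless, since $\varepsilon,N,\delta$ and $\mathcal{E}(F_0)$ are chosen after $\bar M$; the initial-data terms are the problem.

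Drop the fallback and use $R(f)\ge 0$ throughout. Then the constants depend only on $t_*,\lambda,p,q,\gamma,\beta$ (and on $\varepsilon,N,\delta$ where indicated), as the statement requires.
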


\begin{proof}
The proof follows exactly the same method as in Lemma \ref{small,Lp}. Since we only need to replace the estimate \eqref{small,G,v} for the solution operator with \eqref{G,v} , we will omit the detailed proof. 
\end{proof}
\begin{Coro} \label{largelp}
Let $h(t,x,v)$ satisfy the equation \eqref{Rf nonlinear equation} and  $\rho-1 = \frac{(1+\vartheta)\gamma}{2-\gamma}$. Under the a priori assumption \eqref{apriori}, we get the following estimate for each case:
\begin{enumerate}
	\item If $-1 \leq \gamma <0$, then there exists a constant $C_{4,1}$, depending on $\lambda,t_*,p,q$, and $\gamma$, so that
		\begin{align*} 
		\Vert h(t) \Vert_{L^p_v L^\infty_x} &\leq C_{4,1} \Vert h_0 \Vert_{L^p_vL^\infty_x} \left(1+\int_0^t \Vert h(s) \Vert_{L^p_v L^\infty_x}ds \right) e^{-\frac{\lambda}{2} (1+t)^\rho} \nonumber\\
		&\quad +C_{t_*,p,q,\gamma} \varepsilon^{\gamma+ \frac{3}{p'}}   \left[\sup_{0 \leq s \leq t} \Vert h(s) \Vert_{L^p_v L^\infty_x}+\sup_{0 \leq s \leq t} \Vert h(s) \Vert_{L^p_v L^\infty_x}^2\right]\nonumber\\
		&\quad + C_{t_*,p,q,\varepsilon,\gamma,\beta}\left(\frac{1}{N}+\frac{1}{N^{-\gamma}}+\frac{1}{N^{\frac{\gamma+3}{2}}}+\delta  \right) \nonumber\\
		& \qquad \times \left[\sup_{0\leq s \leq t} \Vert h(s) \Vert_{L^p_vL^\infty_x}+\sup_{0\leq s \leq t} \Vert h(s) \Vert_{L^p_vL^\infty_x}^2+\sup_{0\leq s \leq t} \Vert h(s) \Vert_{L^p_vL^\infty_x}^3\right]\nonumber\\
		&\quad +C_{t_*,p,q,\varepsilon,\gamma,N,\delta}\left(1+\sup_{0\leq s \leq t} \Vert h(s) \Vert_{L^p_vL^\infty_x}+\sup_{0\leq s \leq t} \Vert h(s) \Vert_{L^p_vL^\infty_x}^2\right)\nonumber\\
		& \qquad \times  \left[ \mathcal{E}(F_0)^{1/2} + \sup_{0\le s \le t}\|h(s)\|_{L^p_vL^\infty_x}^{\frac{p}{2p-2}}\mathcal{E}(F_0)^{\frac{p-2}{2p-2}}\right],
	\end{align*}
	for all $0\leq t \leq T$, where $\varepsilon>0$ and $\delta>0$ can be arbitrarily small and $N>0$ can be arbitrarily large.
	\item If $-3 < \gamma<-1$, then then there exists a constant $C_{4,2}$, depending on $\lambda,t_*,p,q$, and $\gamma$, so that
	\begin{align*}
		\Vert h(t) \Vert_{L^p_v L^\infty_x} &\leq C_{4,2}  \Vert h_0 \Vert_{L^p_v L^\infty_x} \left(1+\int_0^t \Vert h(s) \Vert_{L^p_v L^\infty_x} ds\right)e^{-\frac{\lambda}{2}(1+t)^\rho} \\ 
		&\quad +C_{t_*,p,q,\gamma}\varepsilon^{\gamma+ \frac{3}{p'}}   \left[\sup_{0 \leq s \leq t} \Vert h(s) \Vert_{L^p_v L^\infty_x}+\sup_{0 \leq s \leq t} \Vert h(s) \Vert_{L^p_v L^\infty_x}^2\right]\\
		&\quad + C_{t_*,p,q,\varepsilon,\gamma,\beta} \left(\frac{1}{N}+\frac{1}{N^{-\gamma}}+\frac{1}{N^{\frac{\gamma+3}{2}}}+\delta  \right) \\
		& \qquad \times \left[\sup_{0\leq s \leq t} \Vert h(s) \Vert_{L^p_vL^\infty_x}+\sup_{0\leq s \leq t} \Vert h(s) \Vert_{L^p_vL^\infty_x}^2+\sup_{0\leq s \leq t} \Vert h(s) \Vert_{L^p_vL^\infty_x}^3\right]\\
		&\quad +C_{t_*,p,q,\varepsilon,\gamma,N,\delta}\left(1+\sup_{0\leq s \leq t} \Vert h(s) \Vert_{L^p_vL^\infty_x}\right)\left[ \mathcal{E}(F_0)^{1/2} + \sup_{0\le s \le t}\|h(s)\|_{L^p_vL^\infty_x}^{\frac{p}{2p-2}}\mathcal{E}(F_0)^{\frac{p-2}{2p-2}}\right]\\
		& \quad +C_{t_*,p,q,\varepsilon,\gamma,N,\delta}\left(\sup_{0\leq s \leq t} \Vert h(s) \Vert_{L^p_vL^\infty_x}+\sup_{0\leq s \leq t} \Vert h(s) \Vert_{L^p_vL^\infty_x}^2\right)\\
		& \qquad \times  \Bigg[\sup_{0 \leq s \leq t}\|h(s)\|_{L^p_vL^\infty_x}^{1-\frac{1/p'm'-1/p}{1/2-1/p}}\mathcal{E}(F_0)^{\frac{1}{2}\cdot\frac{1/p'm'-1/p}{1/2-1/p}}\\
		&\qquad \quad + \sup_{0 \leq s \leq t}\|h(s)\|_{L^p_vL^\infty_x}^{1-\frac{1/p'm'-1/p}{1-1/p}}\mathcal{E}(F_0)^{\frac{1/p'm'-1/p}{1-1/p}}\Bigg],
	\end{align*}
	for all $0\leq t \leq T$, where $\varepsilon>0$ and $\delta>0$ can be arbitrarily small and $N>0$ can be arbitrarily large.
\end{enumerate}
\end{Coro}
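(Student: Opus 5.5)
The plan is to integrate the pointwise bound of the preceding (unnumbered) lemma for the large-amplitude regime in the $L^p_vL^\infty_x$ norm, exactly as Corollary \ref{smalllp} was obtained from Lemma \ref{small,Lp}. For fixed $t\in[0,T]$, I take the supremum over $x\in\T^3$ of the pointwise bound for $|h(t,x,v)|$ and then the $L^p_v$ norm of each term separately, using Minkowski's inequality. The only changes from the small-amplitude case are the prefactors $e^{\lambda(1+t_*)^\rho}$ and the $t_*$-dependence of the constants. These come from replacing \eqref{small,G,v} by \eqref{G,v} and are simply absorbed into $C_{4,1}$, $C_{4,2}$ and the $C_{t_*,\dots}$ constants.

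The terms are handled in the following order.

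\emph{Initial-data term.} For $C_\lambda e^{\lambda(1+t_*)^\rho}e^{-\lambda(1+t)^\rho}h_0(x-tv,v)$, taking $\sup_x$ removes the shift, so its $L^p_vL^\infty_x$ norm is at most $C_\lambda e^{\lambda(1+t_*)^\rho}e^{-\lambda(1+t)^\rho}\|h_0\|_{L^p_vL^\infty_x}$. This is dominated by the $e^{-\frac{\lambda}{2}(1+t)^\rho}$ term.

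\emph{Terms with $\int_0^t\|h(s)\|ds$ and with $\langle v\rangle^{\gamma-1-\frac1{p'}}$.} These carry the factors $(1+|v|)^{-(-\frac{\gamma}{p}+\frac{5p-5}{4p})}$, respectively $(1+|v|)^{-(-\gamma+\frac{p-1}{p}\varpi)}$, and $\langle v\rangle^{\gamma-1-\frac1{p'}}$. Each is $L^p_v$-integrable by the first two conditions in \eqref{pcondition1} and \eqref{pcondition2}. Together with the initial-data term they give $C_{4,i}\|h_0\|(1+\int_0^t\|h(s)\|ds)e^{-\frac{\lambda}{2}(1+t)^\rho}$.

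\emph{Remaining terms.} Every other term is a time-independent combination of suprema, multiplied by $(1+|v|)^{-\frac{p-1}{4p}}$ (Case 1) or $(1+|v|)^{-\frac{p-1}{p}\varpi}$ (Case 2). These weights lie in $L^p_v$ exactly when $\frac{p-1}{4}>3$, i.e.\ $p>13$, respectively $(p-1)\varpi>3$. This is the last condition in \eqref{pcondition1} and \eqref{pcondition2}, which our standing choice of $p$ guarantees. So these terms reproduce the stated brackets with constants depending additionally on $p,\gamma$.

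The main obstacle is not analytic but bookkeeping: I must confirm that every $v$-weight appearing in the lemma is at least as decaying as the listed ones, so that a single $L^p_v$ integrability check per case suffices. Case 2 needs the explicit formulas for $m$ and $\varpi$ from Lemma \ref{pointwiseGamma+estimate}. Beyond that, I must make sure the constants depend only on $t_*$ (hence on $\bar M$ through $\|h_0\|$) and not on $T$; this holds because \eqref{G,v} is uniform in $T$. Since $\delta,\varepsilon,N$ are free parameters in the lemma, they stay free here.
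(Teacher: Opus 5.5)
Your proposal is correct and follows the paper's route. The paper gives no separate proof of Corollary \ref{largelp}; as with Corollary \ref{smalllp}, it comes from taking the $L^p_vL^\infty_x$ norm of the large-amplitude pointwise bound term by term. The only inputs are the $L^p_v$-integrability of the velocity weights, guaranteed by \eqref{pcondition1} and \eqref{pcondition2}, and the absorption of the factor $e^{\lambda(1+t_*)^\rho}$ from \eqref{G,v} into the $t_*$-dependent constants.
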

\subsection{Global Existence and Sub-exponential decay}
\label{subsection44}
\begin{proof} [Proof of Theorem \ref{largemain}]
	For the convenience of the notation, we denote $w_{q,\vartheta,\beta}f$ by $h$. If we assume a prioir assumption \eqref{apriori}, then it follows from Corollary \ref{largelp} that 
	\begin{align} \label{mainesti1}
		\Vert h(t) \Vert_{L^p_v L^\infty_x} \leq C_5 \Vert h_0 \Vert_{L^p_v L^\infty_x} \left(1+ \int_0^t \Vert h(s) \Vert_{L^p_v L^\infty_x} ds \right)e^{-\frac{\lambda}{2}(1+t)^\rho}+ D,
	\end{align}
	where $C_5: = \max\{C_{4,1},C_{4,2}\}$. For the case $-1\leq \gamma <0$ and $p> 13$, we define 
		\begin{align*}
			D&:=  C_{t_*,p,q,\gamma}\varepsilon^{\gamma+\frac{3}{p'}}[\bar{M} + \bar{M}^2]+C_{t_*,p,q,\varepsilon,\gamma} \left(\frac{1}{N}+\frac{1}{N^{-\gamma}}+\frac{1}{N^{\frac{\gamma+3}{2}}}+\delta  \right) \left[\bar{M}+\bar{M}^2+\bar{M}^3\right]\\
			&\quad  +C_{t_*,p,q,\varepsilon,\gamma,N,\beta}[1+\bar{M}^2] \left(\sqrt{\mathcal{E}(F_0)}+ \bar{M}^{\frac{p}{2p-2}}\mathcal{E}(F_0)^\frac{p-2}{2p-2}\right).
		\end{align*}
		For the other case $-3 < \gamma<-1$ and $p > \frac{3(\sqrt{8\gamma^2 +9} +3-2\gamma)}{(-\gamma)(3+\gamma)}$, we define 
		\begin{align*}
			D&:= C_{t_*,p,q,\gamma}\varepsilon^{\gamma+\frac{3}{p'}}[\bar{M} + \bar{M}^2]+C_{t_*,p,q,\varepsilon,\gamma,\beta} \left(\frac{1}{N}+\frac{1}{N^{-\gamma}}+\frac{1}{N^{\frac{\gamma+3}{2}}}+\delta  \right) \left[\bar{M}+\bar{M}^2+\bar{M}^3\right]\\
			&\quad +C_{t_*,p,q,\varepsilon,\gamma,N,\delta}\left(1+\bar{M} \right)\left[ \mathcal{E}(F_0)^{1/2} + \bar{M}^{\frac{p}{2p-2}}\mathcal{E}(F_0)^{\frac{p-2}{2p-2}}\right]\\
			& \quad +C_{t_*,p,q,\varepsilon,\gamma,N,\delta}\left(\bar{M}+\bar{M}^2\right) \Bigg[\bar{M}^{1-\frac{1/p'm'-1/p}{1/2-1/p}}\mathcal{E}(F_0)^{\frac{1}{2}\cdot\frac{1/p'm'-1/p}{1/2-1/p}}+ \bar{M}^{1-\frac{1/p'm'-1/p}{1-1/p}}\mathcal{E}(F_0)^{\frac{1/p'm'-1/p}{1-1/p}}\Bigg].
		\end{align*}
		If we denote
		\begin{align*}
			G(t) := 1+ \int_0^t \Vert h(t) \Vert_{L^p_v L^\infty_x}ds,
		\end{align*}
		then we can rewrite as 
		\begin{align*}
			G'(t) -C_{t_*,\lambda} M_0 G(t)e^{-\frac{\lambda}{2}(1+t)^\rho}  \leq D. 
		\end{align*}
		Using Gr\"{o}nwall's inequality, we obtain 
		\begin{align*}
		G(t) \exp \left \{ -\frac{4CM_0}{\lambda \rho}(1-e^{-\frac{\lambda}{4}(1+t)^\rho}) \right \}\leq 1+Dt. 
		\end{align*}
		Therefore, $G(t)$ can be further bounded by 
		\begin{align*}
			G(t) \leq (1+Dt)  \exp \left \{ \frac{4CM_0}{\lambda \rho}(1-e^{-\frac{\lambda}{4}(1+t)^\rho}) \right \} \leq (1+Dt)  \exp \left \{ \frac{4CM_0}{\lambda \rho} \right \},
		\end{align*}
		for all $0\leq t \leq T$. Take 
		\begin{align}\label{Mbar}
			\bar{M} := 2C_{\lambda,t_*} M_0 \exp \left \{ \frac{4CM_0}{\lambda \rho} \right \}.
		\end{align}
		We can firstly choose sufficiently small $\varepsilon_0>0$ depending on $\bar{M}$, then choose  sufficiently large $N$ depending on $\bar{M}$ and $\varepsilon_0$, and sufficiently small $\delta>0$ depending on $\bar{M},\varepsilon_0$, and $N$ so that 
		\begin{align*}
			D \leq \min \left \{1, \frac{\bar{M}}{4}\right\}.
		\end{align*}
		From \eqref{mainesti1} and \eqref{Mbar}, we get 
		\begin{align*}
			\Vert h(t) \Vert_{L^p_v L^\infty_x} &\leq C_{\lambda,t_*} M_0  \exp \left \{ \frac{4CM_0}{\lambda \rho} \right \} (1+t) e^{-\frac{\lambda}{2} (1+t)^\rho} +D\\
			&\leq C_{\lambda,t_*} M_0 \exp \left \{ \frac{4CM_0}{\lambda \rho} \right \} e^{-\frac{\lambda}{4}(1+t)^\rho} + \frac{\bar{M}}{4}\\
			&\leq \frac{\bar{M}}{2},
		\end{align*}
		for $0\leq t \leq T$. In other words, whenever the a priori assumption \eqref{apriori} holds, we have 
		\begin{align*}
			\sup_{0\leq t \leq T} \Vert h(t) \Vert_{L^p_v L^\infty_x} \leq \frac{\bar{M}}{2}. 
		\end{align*}
		For an arbitrary $T>0$,  the solution to the Boltzmann equation on the interval
		$[0,T]$ can be constructed by the same argument as in Theorem \ref{smallmain}. 
		From now on, we consider the interval $[T,\infty)$. Assume that $\mathcal{E}(F_0) \le \epsilon_0 <\epsilon_1$, where $\epsilon_1$ is determined in Theorem \ref{smallmain}.
		 If we take time $T$ such that 
		\begin{align*}
			\Vert h(T) \Vert_{L^p_v L^\infty_x} \leq \eta_0,
		\end{align*}
		where the constant $\eta_0>0$ is defined in Theorem \ref{smallmain}, then from Lemma \ref{relativedecrease}, it holds that
		\begin{align*}
			\mathcal{E}(F(T))) \le \mathcal{E}(F_0) \le  \epsilon_0 <\epsilon_1,
		\end{align*}
		and we can construct the global solution to the Boltzmann equation by Theorem \ref{smallmain}.  Moreover, for $0\leq t \leq T$, we have 
		\begin{align*}
			\Vert h(t) \Vert_{L^p_v L^\infty_x} \leq \frac{\bar{M}}{2} \leq \frac{\bar{M}}{2} e^{\lambda_0 (1+T)^\rho} e^{-\lambda_0 (1+t)^ \rho}. 
		\end{align*}
		For all $t \geq T$, it follows from Theorem \ref{smallmain} that 
		\begin{align*}
			\Vert h(t) \Vert_{L^p_v L^\infty_x} \leq C_0 e^{-\lambda_0 (1+t)^\rho}  e^{\lambda_0 (1+T)^\rho}\Vert h(T)\Vert_{L^p_vL^\infty_x} \leq \frac{C_0}{2} \bar{M}e^{\lambda_0 (1+T)^\rho} e^{-\lambda_0 (1+t)^\rho} .
		\end{align*}
		Therefore, there exist constants $C_1>0$ and $\lambda_0>0$ such that   
		\begin{align*}
			\Vert h(t) \Vert_{L^p_v L^\infty_x} \leq C_1 e^{-\lambda_0 (1+t)^\rho} \Vert h_0 \Vert_{L^p_v L^\infty_x}, 
		\end{align*}
		for all $t\geq 0$. 
\end{proof}
\appendix

\section{Local Existence and Uniqueness}
\label{appendix}
\begin{lemma}\label{local estimate}
	Let $0<q<1$ and $0\le\vartheta<-\frac{2}{\gamma}$ be fixed in the weight function \eqref{weight}. Let $p$ and $\beta$. If $F_0(x,v) = \mu(v) +\sqrt{\mu(v)}f_0(x,v)\ge 0$ and $\|w_{q,\vartheta,\beta}f_0\|_{L^p_vL^\infty_x}<\infty$, then there exists a time $\hat{t}_0>0$ such that the inital value problem \eqref{Boltzmanneq} and \eqref{initialdata} has a unique non-negative solution $F(t,x,v)=\mu(v) + \sqrt{\mu(v)}f(t,x,v)$ for $t\in[0, \hat{t}_0]$, satisfying 
\begin{equation}\label{locales}
	\sup_{0\le t \le \hat{t}_0} \|w_{q,\vartheta,\beta}f(t)\|_{L^p_vL^\infty_x} \le 2\|w_{q,\vartheta,\beta}f_0\|_{L^p_vL^\infty_x}.
\end{equation}
\end{lemma}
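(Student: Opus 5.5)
We construct the local solution by a positivity-preserving iteration in the weighted $L^p_vL^\infty_x$ framework, following the standard scheme used for large-amplitude problems. Set $h=w_{q,\vt,\beta}f$ and define iterates $F^{n}=\mu+\mu^{1/2}f^n$, with $F^0=F_0$, by solving the linear problem
\begin{align*}
	\partial_t F^{n+1}+v\cdot\nabla_x F^{n+1}+Q^-(F^{n+1},F^n)=Q^+(F^n,F^n),\qquad F^{n+1}(0)=F_0.
\end{align*}
Along characteristics this reads $F^{n+1}=e^{-\int_0^t R^n}F_0(x-tv,v)+\int_0^t e^{-\int_s^t R^n}Q^+(F^n,F^n)\,ds$, where $R^n=\int B\,F^n(u)\,d\omega du\ge 0$ whenever $F^n\ge 0$. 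Hence nonnegativity propagates inductively, since $Q^+(F^n,F^n)\ge 0$.

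Next we prove uniform bounds. In perturbative form, $h^{n+1}$ satisfies
\begin{align*}
	\partial_t h^{n+1}+v\cdot\nabla_x h^{n+1}+R(f^n)h^{n+1}+\tfrac{\vt q|v|^2}{8(1+t)^{\vt+1}}h^{n+1}=K_wh^n+w\Gamma^+(f^n,f^n),
\end{align*}
with a damping factor $\le 1$, since $R(f^n)\ge0$ by positivity of $F^n$. This bypasses the loss-term difficulty described in the introduction. Taking $\sup_x$ and then the $L^p_v$ norm of the Duhamel formula, we bound the $K_w$ term by $C\,t\,\sup_s\|h^n(s)\|_{L^p_vL^\infty_x}$. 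This uses Lemma \ref{Ksingular} for $K^s$ and Corollary \ref{pKestimate} together with Hölder for $K^{ns}$, which gives a pointwise bound $\langle v\rangle^{\gamma-1-1/p'}\|h^n\|_{L^p_v}$ that is in $L^p_v$ for the chosen $p$. For the gain term, Lemma \ref{pointwiseGamma+estimate} gives a pointwise bound whose velocity decay lies in $L^p_v$ under the conditions \eqref{pcondition1}, \eqref{pcondition2} (or, more simply, Corollary \ref{LpGamma+est} after Minkowski in time), yielding $C\,t\,\sup_s\|h^n(s)\|^2$. The weight $w_{q,\vt,\beta}(t,v)$ decreases in $t$, so no extra growth appears. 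We therefore get
\begin{align*}
	\sup_{0\le t\le \hat t_0}\|h^{n+1}(t)\|_{L^p_vL^\infty_x}\le \|h_0\|+C\hat t_0\bigl(M+M^2\bigr),\qquad M:=\sup_{t\le\hat t_0}\|h^n(t)\|.
\end{align*}
Choosing $\hat t_0$ small relative to $\|h_0\|$ closes the induction with $M=2\|h_0\|_{L^p_vL^\infty_x}$, which gives \eqref{locales}.

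Convergence is the main obstacle. The difference $h^{n+1}-h^n$ carries a term $(R(f^n)-R(f^{n-1}))h^n$, that is, $w\Gamma^-(h^n,f^n-f^{n-1})$. By Lemma \ref{pointwiseGamma-estimate} this is pointwise bounded by $\nu(v)|h^n(v)|\,\|h^n-h^{n-1}\|$, which has no smallness in $v$ to spare. Here we would use the fact that $\nu\lesssim1$ for soft potentials, so the factor $\nu(v)$ is harmless and time integration over $[0,\hat t_0]$ yields $C\hat t_0 M\sup\|h^n-h^{n-1}\|$. The bilinear gain and $K_w$ differences are handled as above via Corollary \ref{LpGamma+est}. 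Shrinking $\hat t_0$ further then makes the map a contraction in $L^\infty_tL^p_vL^\infty_x$. The limit solves the mild equation, inherits $F\ge0$ and the bound \eqref{locales}, and uniqueness follows from the same difference estimate applied to two solutions.
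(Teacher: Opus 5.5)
Your proposal is correct and follows the paper's proof. It uses the same positivity-preserving iteration $\partial_t F^{n+1}+v\cdot\nabla_x F^{n+1}+F^{n+1}\int B F^n = Q^+(F^n,F^n)$, short-time uniform bounds from the $K_w$ and $\Gamma^+$ estimates, a contraction for $h^{n+1}-h^n$ using $\nu\lesssim 1$ for the loss term, and uniqueness from the same difference estimate. The one variation is that you keep $R(f^n)\ge 0$ inside the damping factor, whereas the appendix damps with $\tilde\nu$ and treats $w\Gamma^-$ as a source term that must be absorbed; your choice lets the uniform bound close directly, so the loss term enters only through $(R(f^n)-R(f^{n-1}))h^n$ in the contraction step.
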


\begin{proof}

For the local existence of non-negative solution of the Boltzmann equation \eqref{Boltzmanneq}, we consider the following iteration : 
\begin{align}\label{localitF}
\begin{cases}
\p_t F^{n+1} + v\cdot\nabla_xF^{n+1} + F^{n+1}\int_{\mathbb{R}^3}\int_{\mathbb{S}^2}B(v-u,\omega)F^n(u) d\omega du = Q^+(F^n,F^n)\\
F(0,x,v) = F_0(x,v) \ge 0 ,\quad F^0(t,x,v)=\mu(v).
\end{cases}
\end{align}
By induction on n, we can prove that all $F^n$ is non-negative for all $n\ge 0$.\\
Define $I^n(t,x,v) := \int_{\mathbb{R}^3}\int_{\mathbb{S}^2}B(v-u,\omega)F^n(u) d\omega du$. For $n=0$, by our assumption that $F_0$ is nonneagtive,
\begin{equation*}
F^1(t,x,v) = e^{-\nu(v)t}F_0(x-tv,v) + \int_0^t e^{-\nu(v)(t-s)}\nu(v)\mu(v) ds \ge 0,
\end{equation*}
because $\int_{\R^3}\int_{\mathbb{S}^2}B(u-v,w)\mu(u)dwdu = \nu(u)$. Assume that $F^n$ is nonnegative for $n=1,2,\cdots, k$.
By Duhamel's principle, we get
\begin{equation*}
F^{k+1} = e^{-\int_0^t I^k(\tau,x,v)d\tau}F_0 + \int_0^t e^{-\int_s^t I^k(\tau,x,v)d\tau}Q^+(F^k,F^k) ds \ge 0,
\end{equation*}
since $F_0 \ge 0$ and $Q^+(F^n,F^n) \ge 0$. Therefore $F^n$ is nonnegative for all $n>0$.\\
Hence we can rewrite the above iteration \eqref{localitF} for $h=w_{q,\vartheta,\beta}f$ as follow : 
\begin{align}\label{localitFh}
\begin{split}
\begin{cases}
 (\p_t + v\cdot\nabla_x + \tilde{\nu})h^{n+1}(t) = K_wh^n(t) + w\Gamma^+(f^n,f^n) - w\Gamma^-(f^n,f^{n+1})\\
h^{n+1}(0,x,v)=h_0(x,v), \quad  h^0(t,x,v)=0.
\end{cases}
\end{split}
\end{align}
We will show that there exists $\hat{t}_1>0$ such that \eqref{localitFh} has a solution over $[0,\hat{t}_1]$ satisfying
\begin{equation}\label{localitFhes}
\sup_{0\le t \le \hat{t}_1}\|h^n(t)\|_{L^p_vL^\infty_x} \le 2\|h_0\|_{L^p_vL^\infty_x},
\end{equation}
for all $n\ge 0$.
For $n=0$, $h^1(t)=e^{-\int_0^t \tilde \nu (\tau)d\tau}h_0$, which implies \eqref{localitFhes} holds for $n=0$.\\
To use induction, suppose that \eqref{localitFhes} holds for $n=1,2,\cdots,k$. 
By Duhamel's principle, 
\begin{align*}
	|h^{k+1}(t)|
&\le e^{-\int_0^t \tilde \nu (\tau)d\tau}|h_0|\\
&\quad + \int_0^t e^{-\int_s^t \tilde \nu (\tau)d\tau}\left[ |K_wh^k(s)|+|w\Gamma^+(f^k,f^k)(s)|+|w\Gamma^-(f^k,f^{k+1})(s)| \right] ds\\
& =: I_1+I_2.
\end{align*}
First of all, we can easily compute that
\begin{align} \label{locala1}
	\|I_1\|_{L^p_vL^\infty_x} \le \|h_0\|_{L^p_vL^\infty_x}.
\end{align}
Next, let us estimate $I_2$. From Lemma \ref{pointwiseGamma-estimate} and Lemma \ref{LpGamma+est}, we have
\begin{align} \label{locala2}
	\|I_2\|_{L^p_vL^\infty_x} &\le \int_0^t \left[\left\|K_w h^n\right\|_{L^p_vL^\infty_x} + \left\|w\Gamma^+(f^k,f^k)(s)\right\|_{L^p_vL^\infty_x} +\left\|w\Gamma^-(f^k,f^{k+1})(s)\right\|_{L^p_vL^\infty_x}\right]ds\nonumber\\
	& \le 	C_{p,\gamma,\beta} \hat{t}_1 \Bigg(\sup_{0\le s\le \hat{t}_1}\|h^k(s)\|_{L^p_vL^\infty_x}+\sup_{0\le s\le \hat{t}_1}\|h^k(s)\|^2_{L^p_vL^\infty_x}\nonumber\\
	& \quad +\sup_{0\le s\le \hat{t}_1}\|h^k(s)\|_{L^p_vL^\infty_x}\sup_{0\le s\le \hat{t}_1}\|h^{k+1}(s)\|_{L^p_vL^\infty_x} \Bigg) \nonumber\\
	& \le C_{p,\gamma,\beta}^{(1)}\hat{t}_1\|h_0\|_{L^p_vL^\infty_x}(1+\|h_0\|_{L^p_vL^\infty_x})+C_{p,\gamma,\beta}^{(1)}\hat{t}_1\|h_0\|_{L^p_vL^\infty_x}\sup_{0\le s\le \hat{t}_1}\|h^{k+1}(s)\|_{L^p_vL^\infty_x},
\end{align}
where $C_{p,\gamma,\beta}^{(1)}$ is a constant depending on $p,\gamma,\beta$ and we have used the assumption \eqref{localitFhes} for $n=k$ in the last inequality. Gathering \eqref{locala1} and \eqref{locala2}, we obtain
\begin{align*}
	\sup_{0\le t \le \hat t_1}\|h^{k+1}(t)\|_{L^p_vL^\infty_x} &\le \|h_0\|_{L^p_vL^\infty_x}+ C_{p,\gamma,\beta}^{(1)}\hat{t}_1\|h_0\|_{L^p_vL^\infty_x}(1+\|h_0\|_{L^p_vL^\infty_x})\\
	& \quad +C_{p,\gamma,\beta}^{(1)}\hat{t}_1\|h_0\|_{L^p_vL^\infty_x}\sup_{0\le s\le \hat{t}_1}\|h^{k+1}(s)\|_{L^p_vL^\infty_x}.
\end{align*}
Taking $\hat{t}_1\le \min{\left\{\frac{1}{3}\left\{C_{p,\gamma,\beta}^{(1)}\left(\|h_0\|_{L^p_vL^\infty_x}+1\right)\right\}^{-1}, \frac{1}{3}\left(C_{p,\gamma,\beta}^{(1)}\|h_0\|_{L^p_vL^\infty_x}\right)^{-1}\right\}}$, we can derive
\begin{align*}
\sup_{0\le s\le \hat{t}_1}\|h^{k+1}(s)\|_{L^p_vL^\infty_x} \le 2\|h_0\|_{L^p_vL^\infty_x}.
\end{align*}
By induction on $n$, \eqref{localitFhes} holds for all $n\ge 0$.
To show the convergence of $\{h^n\}$, we consider subtractions $\{h^{n+1}-h^{n}\}$. The sequence $h^{n+1}-h^{n}$ is the solution of the following equation : 
\begin{align*}
\begin{cases}
(\p_t + v\cdot\nabla_x + \tilde{\nu})(h^{n+1}-h^n)&= K_w(h^n-h^{n-1}) + w\Gamma^+(f^n,f^n)-w\Gamma^+(f^{n-1},f^{n-1})\\
&\quad - w\Gamma^-(f^n,f^{n+1}) + w\Gamma^+(f^{n-1},f^{n}),\\
(h^{n+1}-h^n)(0)=0.
\end{cases}
\end{align*}
Note that
\begin{align}\label{gammaminus}
w\Gamma^+(f_1,g_1)-w\Gamma^+(f_2,g_2)= w\Gamma^+(f_1-f_2,g_1) + w\Gamma^+(f_2,g_1-g_2).
\end{align}
The equality \eqref{gammaminus} also holds for $w\Gamma^-$. Applying Duhamel's principle and \eqref{gammaminus}, we have
\begin{align*}
&|h^{n+1}(t)-h^n(t)|\\
&\le \int_0^t e^{-\int_s^t \tilde \nu (\tau)d\tau}\Big[|K_w(h^n-h^{n-1})(s)| + |w\Gamma^+(f^n-f^{n-1},f^n)(s)| + |w\Gamma^+(f^{n-1},f^n-f^{n-1})(s)|\\
&\quad +|w\Gamma^-(f^n-f^{n-1},f^{n+1})(s)|+ |w\Gamma^-(f^{n-1},f^{n+1}-f^n)(s)|\Big]ds.
\end{align*}
Taking the norm in $L^p_vL^\infty_x$ to the above inequality and using Lemma \ref{pointwiseGamma-estimate} and Lemma \ref{LpGamma+est}, it follows that
\begin{align*}
	&\|h^{n+1}(t)-h^n(t)\|_{L^p_vL^\infty_x}\\
&\le C_{p,\gamma,\beta}\hat{t}_0\left(1+\sup_{0\le s \le \hat{t}_0}\|h^n(s)\|_{L^p_vL^\infty_x} + \sup_{0\le s \le \hat{t}_0}\|h^{n-1}(s)\|_{L^p_vL^\infty_x}+\sup_{0\le s \le \hat{t}_0}\|h^{n+1}(s)\|_{L^p_vL^\infty_x}\right)\\
& \qquad \times\sup_{0\le s \le \hat{t}_0}\|h^n(s)-h^{n-1}(s)\|_{L^p_vL^\infty_x}\\
&\quad  + C_{p,\gamma,\beta}\hat{t}_0\sup_{0\le s \le \hat{t}_0}\|h^{n-1}(s)\|_{L^p_vL^\infty_x}\sup_{0\le s \le \hat{t}_0}\|h^{n+1}(s)-h^n(s)\|_{L^p_vL^\infty_x}\\
&\le C_{p,\gamma,\beta}^{(2)}\hat{t}_0(1+\|h_0\|_{L^p_vL^\infty_x})\sup_{0\le s \le \hat{t}_0}\|h^n(s)-h^{n-1}(s)\|_{L^p_vL^\infty_x}\\
& \quad  + C_{p,\gamma,\beta}^{(2)}\hat{t}_0\|h_0\|_{L^p_vL^\infty_x}\sup_{0\le s \le \hat{t}_0}\|h^{n+1}(s)-h^n(s)\|_{L^p_vL^\infty_x},
\end{align*}
for $0\le t \le \hat t_0$, where $\hat t_0$ is determined later and $C_{p,\gamma,\beta}^{(2)}$ is a constant depending on $p,\gamma,\beta$.
Take $\hat{t}_0\le \min\left\{\hat{t}_1, \frac{1}{3}\{C_{p,\gamma,\beta}^{(2)}(\|h_0\|_{L^p_vL^\infty_x}+1)\}^{-1}, \frac{1}{3}(C_{p,\gamma,\beta}^{(2)}\|h_0\|_{L^p_vL^\infty_x})^{-1}\right\}$. Then it follows that
\begin{align*}
\sup_{0\le s \le \hat{t}_0}\|h^{n+1}(s)-h^n(s)\|_{L^p_vL^\infty_x} \le \frac{1}{2}\sup_{0\le s \le \hat{t}_0}\|h^{n}(s)-h^{n-1}(s)\|_{L^p_vL^\infty_x}.
\end{align*}
Therefore $\{h^n\}$ is a convergent sequence in $L^p_vL^\infty_x$ and we can denote $h^n \to h$, and $F^n \to F$ as $n\to \infty$. Since all $F^n$ is non-negative, $F$ is non-negative for $t\in[0,\hat{t}_0]$, and \eqref{localitFhes} implies \eqref{locales}.
For the uniqueness of the local solution, suppose that there is another solution $\tilde f$ to the Boltzmann equation with the same initial condition as $f$ satisfying
\begin{equation}\label{localges}
\sup_{0\le t \le \hat{t}_0} \|w_{q,\vartheta,\beta}\tilde f(t)\|_{L^p_vL^\infty_x} \le 2\|w_{q,\vartheta,\beta}f_0\|_{L^p_vL^\infty_x},
\end{equation}
and set $h_1:=w_{q,\vartheta,\beta}\tilde f$. Then by \eqref{localges}, we obtain for $0\le t \le \hat t_0$,
\begin{align*}
\begin{split}
&\|h_1(t)-h(t)\|_{L^p_vL^\infty_x} \\
& \le \left\|\int_0^t e^{-\int_s^t \tilde \nu (\tau)d\tau}\left[|K_{w}(h_1-h)(s)| + |w\Gamma(f-\tilde f,f)(s)| + |w\Gamma(\tilde f,f-\tilde f)(s)|\right]ds\right\|_{L^p_vL^\infty_x}\\
& \le C_{p,\gamma,\beta}\hat{t}_0\sup_{0\le s\le\hat{t}_0}(\|h_1(s)\|_{L^p_vL^\infty_x}+\|h(s)\|_{L^p_vL^\infty_x}+1)\sup_{0\le s\le\hat{t}_0}\|h_1(s)-h(s)\|_{L^p_vL^\infty_x}\\
&\le  C_{p,\gamma,\beta}^{(2)}\hat{t}_0(\|h_0\|_{L^p_vL^\infty_x}+1)\sup_{0\le s\le\hat{t_0}}\|h_1(s)-h(s)\|_{L^p_vL^\infty_x}\\
&\le \frac{1}{2}\sup_{0\le s\le\hat{t_0}}\|h_1(s)-h(s)\|_{L^p_vL^\infty_x},
\end{split}
\end{align*}
which implies that $h_1=h$ on $[0,\hat{t}_0]$.\\
\end{proof}

\noindent{\bf Data availability:} No data was used for the research described in the article.
\newline

\noindent{\bf Conflict of interest:} The authors declare that they have no conflict of interest.\newline

\noindent{\bf Acknowledgement}
G.Ko is supported by the National Natural Science Foundation of China (No. 12288201). J. Kim is supported by the National Research Foundation of Korea(NRF) grant funded by the Korea government(MSIT)(No.RS-2023-00212304 and No.RS-2023-00219980).

\bibliographystyle{abbrv}
\bibliography{Lpsoft}
\end{document}